\DeclareFontFamily{U}{rsfs}{\skewchar\font127 }
\DeclareFontShape{U}{rsfs}{m}{n}{%
   <-6> rsfs5
   <6-8> rsfs7
   <8-> rsfs10
}{}
\newcommand*{\be}[1]{\begin{equation}\label{#1}}
\newcommand*{\ee}{\end{equation}}
\newtheorem{theorem}{Theorem}[section]
\newtheorem{lemma}{Lemma}[section]
\newtheorem{proposition}{Proposition}[section]
\newtheorem{corollary}{Corollary}[section]
\theoremstyle{remark}
\newtheorem{remark}{Remark}[section]
\newtheorem{example}{Example}[section]
\newcommand{\revise}[1]{{\color{red}#1}}
\definecolor{pink}{RGB}{255,45,115}
\DeclareMathOperator{\grad}{grad}
\DeclareMathOperator{\hess}{hess}
\DeclareMathOperator{\curl}{curl}
\DeclareMathOperator{\inc}{inc}
\DeclareMathOperator{\dev}{dev}
\DeclareMathOperator{\sym}{sym}
\DeclareMathOperator{\diverenge}{div}
\DeclareMathOperator{\tr}{tr}
\newcommand\vskw{\operatorname{vskw}}
\newcommand\mskw{\operatorname{mskw}}
\newcommand\skw{\operatorname{skw}}
\DeclareMathOperator{\Alt}{Alt}
\newcommand\K{\mathbb{K}}
\newcommand\MM{\mathbb{M}}
\newcommand\B{{\mathcal B}}
\newcommand\W{{\mathbb{W}}}
\renewcommand{\div}{\diverenge}
\newcommand{\RR}{\mathbb{R}}
\newcommand{\VV}{\mathbb{V}}
\renewcommand{\SS}{\mathbb{S}}
\newcommand{\TT}{\mathbb{T}}
\newcommand{\kh}[1]{{[\color{blue}KH:~#1}]}
\renewcommand\ker{\mathcal{N}}
 \newcommand{\bs}{{\scriptscriptstyle \bullet}}
 \newcommand\ran{\mathcal{R}}
 \newcommand\alt{\mathrm{Alt}}
 \numberwithin{equation}{section}
\newcounter{quotecount}
\newcommand{\newquote}[1]{\vspace{0.5cm}\refstepcounter{quotecount}%
     \parbox{10cm}{\em #1}\hspace*{2cm}(\arabic{quotecount})\\[0.5cm]}
\newcommand{\xdashleftarrow}[2][]{\ext@arrow 3095\leftarrowfill@@{#1}{#2}}
\newcommand{\xdashleftrightarrow}[2][]{\ext@arrow 3359\leftrightarrowfill@@{#1}{#2}}
\def\rightarrowfill@@{\arrowfill@@\relax\relbar\rightarrow}
\def\leftarrowfill@@{\arrowfill@@\leftarrow\relbar\relax}
\def\leftrightarrowfill@@{\arrowfill@@\leftarrow\relbar\rightarrow}
\def\arrowfill@@#1#2#3#4{%
  $\m@th\thickmuskip0mu\medmuskip\thickmuskip\thinmuskip\thickmuskip
   \relax#4#1
   \xleaders\hbox{$#4#2$}\hfill
   #3$%
}
\newcounter{listcounter}
\let\oldcaption\caption
\renewcommand{\caption}[1]{\oldcaption{#1.}}
\newcommand\FIG[2]{#1#2.}
\newcommand\TBL[2]{#1#2.}
\DeclareMathOperator{\supp}{supp} 
\title{Finite element form-valued forms: Construction}
\author{Kaibo Hu}
\address{The Maxwell Institute for Mathematical Sciences \& School of Mathematics, the University of Edinburgh, James Clerk Maxwell Building, Peter Guthrie Tait Rd, Edinburgh EH9 3FD, UK.}
\email{kaibo.hu@ed.ac.uk}
\author{Ting Lin}
\address{School of Mathematics, Peking University, Beijing 100871, P.R.China}
\email{lintingsms@pku.edu.cn}
\thanks{The work of KH was supported by a Royal Society University Research Fellowship (URF$\backslash$R1$\backslash$221398) and an ERC Starting Grant (project 101164551, GeoFEM). The work of TL was supported by NSFC Project No. 123B2014.}
\DeclareMathOperator{\cS}{\mathcal S}
\DeclareMathOperator{\cP}{\mathcal P}
\DeclareMathOperator{\codim}{codim}
\newcommand{\lt}[1]{{{\color{cyan!80!black} \footnotesize \tt  TL: #1}}}
\begin{document}

\maketitle 

\begin{abstract}
We provide a finite element discretization of $\ell$-form-valued $k$-forms on triangulation in $\mathbb{R}^{n}$ for general $k$, $\ell$ and $n$ and any polynomial degree. The construction generalizes finite element Whitney forms for the de~Rham complex and their high-order and distributional versions,  the Regge finite elements and the Christiansen--Regge elasticity complex, the TDNNS element for symmetric stress tensors, the MCS element for traceless matrix fields, the Hellan--Herrmann--Johnson (HHJ) elements for biharmonic equations, and discrete divdiv and Hessian complexes in [Hu, Lin, and Zhang,  2025]. The construction discretizes the Bernstein--Gelfand--Gelfand (BGG) diagrams. Applications of the construction include discretization of strain and stress tensors in continuum mechanics and metric and curvature tensors in differential geometry in any dimension. 
\end{abstract}

\tableofcontents

\section{Introduction}

Constructing finite element spaces (and more general discrete patterns) that encode the differential structures of continuous problems has drawn growing attention in recent decades. For solving PDEs and simulating physical systems, preserving the de~Rham complex (and its cohomology) provides stability, convergence, and structure-preserving properties. This viewpoint has become central in the area of Finite Element Exterior Calculus (FEEC) \cite{arnold2018finite,arnold2006finite,arnold2010finite}. Classical finite elements for the de~Rham complex, such as N\'ed\'elec and Raviart--Thomas spaces \cite{nedelec1980mixed,raviart2006mixed}, can be unified through the notion of Whitney forms and their high-order extensions \cite{hiptmair1999canonical,hiptmair2001higher,bossavit1988whitney,whitney2012geometric}. These elements have a canonical form: in the lowest order case, $k$-forms are discretized on $k$-cells. These elements and their associated numerical schemes form the standard toolkit in computational electromagnetism and other $\curl$--$\div$ problems (see, e.g., recent quantum computing hardware simulations and geophysics applications \cite{amazon-code,adams2019lfric,melvin2019mixed,rochlitz2019custem}). Moreover, discrete topology and discrete differential forms play a crucial role in computer graphics \cite{Wang:2023:ECIG} and topological data analysis \cite{lim2020hodge}.

\begin{center}
\includegraphics[width=4in]{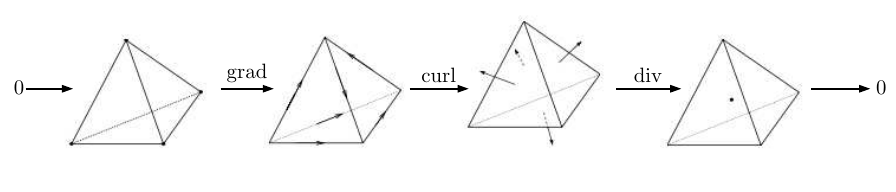} 
\end{center}

A wide range of problems involve tensors with more general symmetries (differential forms being tensors with full skew-symmetry) and more elaborate differential structures than the $\grad$--$\curl$--$\div$ operators in the de~Rham setting. For instance, elasticity typically introduces \emph{symmetric} $(0,2)$-tensors as strain and stress, while in differential geometry, the metric is a symmetric $(0,2)$-tensor and the Riemannian curvature, interpreted as a $(0,4)$-tensor, obeys multiple symmetries (skew-symmetry in the first two and the last two indices, symmetry between those two groups, plus the algebraic Bianchi identity). Related constructions (Ricci, Einstein, Weyl tensors, etc.) arise in general relativity, continuum defects, network theories, and beyond.  These tensors have various kinds of symmetries. A general approach for expressing tensor symmetries is by Young tableaux and representation theory \cite{vcap2022bgg,olverdifferential,young-tab-elements}. Inspired by the canonical form and wide applications of discrete or finite element differential forms on triangulation, a natural question is

\newquote{\label{question1}
{\it Are there discrete analogues of such tensors with symmetries and differential structures?}
}
For these tensorial objects, the Bernstein--Gelfand--Gelfand (BGG) sequences play a role analogous to that of the de~Rham complex for differential forms. Originally studied in algebraic geometry and representation theory \cite{bernstein1975differential,vcap2001bernstein,eastwood2000complex}, BGG sequences have recently been brought into functional analysis and numerical analysis \cite{arnold2006defferential,arnold2021complexes,vcap2022bgg,arnold2006finite}. Corresponding finite element discretizations have been explored in various works \cite{chen2022complexes,chen2022finiteelasticity,chen2022finitedivdiv,chen2022finite,chen2022finite2D,hu2022conforming,hu2021conforming,gong2023discrete,bonizzoni2023discrete,christiansen2023finite,christiansen2024discrete,hu2024finite,lin2025mixedfiniteelementmethod}, mostly focusing on {conforming} elements (piecewise polynomials with certain high intercell continuity). Except for one approach on cubical meshes using tensor product structures \cite{bonizzoni2023discrete}, these constructions are either dimension-specific or restricted to particular (the last several) slots in a complex. No systematic approach exists to cover all form indices in arbitrary dimensions. One possible reason is that extending conforming elements to arbitrary dimensions requires extremely high regularity on (sub)simplices \cite{hu2024construction,hu2025sharpness}. More importantly, while Whitney forms for the de~Rham complex exhibit a clear topological structure, such structures have yet to be fully discovered for tensors, either generally or more specifically in BGG-type constructions \cite{arnold2021complexes}.

The work in this paper aims to answer a more specific version of \eqref{question1}: 

\newquote{\label{question2}
\emph{\it Are there canonical finite elements for form-valued forms and BGG complexes on simplicial triangulation?} 
}

In other words, we aim to design finite elements that reflect the same differential and cohomological properties as their continuous counterparts, while also demonstrating discrete topological/geometry structures comparable to Whitney forms in the de~Rham context. Requiring these properties is not only mathematically appealing but also crucial for robust numerical solutions for tensor-valued problems, problems involving intrinsic geometry (e.g., shells, continuum defects, numerical relativity), and problems for discrete structures (e.g., networks, graphs, \cite{lim2020hodge}).


Similar to that finite element differential forms were built based on works by N\'ed\'el\'ec \cite{nedelec1980mixed}, Raviart--Thomas \cite{raviart2006mixed}, Brezzi--Douglas--Marini \cite{brezzi1985two}, etc., many building blocks are also available for formed-valued forms. {We hereby collect these building blocks as puzzles to the unified construction.} The Regge element \cite{li2018regge,christiansen2011linearization,regge1961general,williams1992regge}  represents a piecewise-flat metric using the length of edges. This leads to a distributional curvature on hinges, matching the angle-deficit interpretation of curvature in Regge geometry. In the lowest order case \cite{christiansen2011linearization}, the degrees of freedom are the evaluation of the tangential-tangential components on each edge, see the second slot in \Cref{fig:regge-complex}. 

\begin{figure}[h]
\FIG{
\includegraphics[width = 0.9\linewidth]{./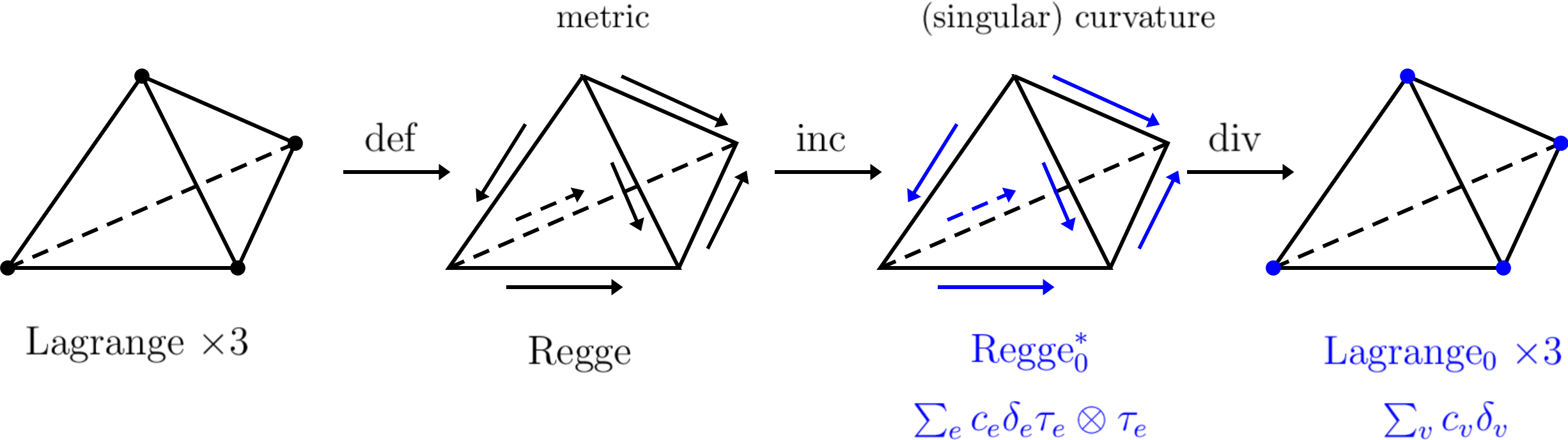}} 
{\caption{Christiansen-Regge complex: a discrete elasticity complex using Regge elements in three dimensions. The first space and the second space are the vector Lagrange and Regge finite elements, respectively. The third space and the fourth space are the dual of the Regge and vector Lagrange finite elements with vanishing boundary degrees of freedom, respectively.}\label{fig:regge-complex}}
\end{figure}

Independently, Sch\"oberl and collaborators developed {distributional} finite elements for equilibrated error estimators \cite{braess2008equilibrated} and for continuum mechanics,
 giving rise to the TDNNS method for elasticity \cite{pechstein2011tangential} and the MCS method \cite{gopalakrishnan2020mass} for fluids. Recently, a $\sym\curl$-nonconforming space MCS$^{\top}$ with a Koszul construction was presented in \cite{hu2025distributional}. 
 Moreover, the classical work of the Hellan--Herrmann--Johnson (HHJ) element \cite{hellan1967analysis,herrmann1967finite,johnson1973convergence} for biharmonic plate problems can be also interpreted in this spirit \cite{neunteufel2023hellan}. 
 These methods incorporate distributional derivatives and certain vector or matrix versions of Dirac measures.

These building blocks give an answer to \eqref{question2} in three dimensions for some specific form indices. The pattern also provides clues for more general constructions in this paper, leading to new intrinsic finite element form-value forms.
On the continuous level, 0-form-valued and $n$-form-valued de~Rham complexes can be interpreted as special (trivial) cases of the BGG complexes and fit in the same diagram, where the former is just the de~Rham complex and the latter can be identified with a de~Rham complex if a volume element is fixed (see \Cref{fig:form-form}). On the discrete level, the Whitney forms for the de~Rham complex \cite{bossavit1988whitney,hiptmair1999canonical,arnold2006finite,hiptmair2001higher} and the dual Whitney forms \cite{braess2008equilibrated} turn out to be the canonical discretization of the de~Rham complexes. This can be also generalized to higher polynomial degrees, see \cite{arnold2009geometric,hiptmair1999canonical,arnold2006finite,hiptmair2001higher} for Whitney forms and \cite{licht2017complexes} for distributional complexes. 

In two and three dimensions, some complexes involving tensor elements have been established. For example, for the elasticity complex, one may see the Christiansen--Regge complex \cite{christiansen2011linearization} as the {canonical} discretization. The canonicality can be seen from several aspects: (1) the degrees of freedom have canonical forms, (2) the spaces and operators have discrete geometric interpretations as discrete metric and curvature, (3) the discrete complex preserves the formal self-adjointness of the continuous version. The cohomology of the Christiansen--Regge complex and extensions to twisted complexes can be found in \cite{christiansen2023extended}. New finite elements and distributional spaces were needed to derive the Hessian and divdiv complexes in three dimensions \cite{hu2025distributional}. The Hessian complex starts with a Lagrange element, followed by Dirac measures. The divdiv complex is the formal adjoint of the Hessian complex, which contains the MCS$^{\top}$ element and the HHJ element before the divdiv operator. Moreover, \cite{hu2025distributional} used a diagram chase approach to establish the cohomology of the discrete complexes. The discrete complexes discussed above are summarized in \Cref{fig:table}.
\begin{figure}

\begin{center}
$
\begin{array}{c}
\Alt^{-1, -1} = 0\\
\\
\\
\end{array}
\begin{array}{c}
\tikz[baseline=(s.base)]{
  \node (s) at (0.,0.) {};
  \node (t) at (1,0) {};
  \draw[red, dashed, ->] (s) -- (t);
}\vspace{-0.3cm}\\\vspace{-0.2cm}
\tikz[baseline=(s.base)]{
  \node (s) at (0.,0.3) {};
  \node (t) at (-1,-0.3) {};
  \draw[red, dashed, ->] (s) -- (t);
}\vspace{-0.2cm}\\
\tikz[baseline=(s.base)]{
  \node (s) at (0.,0.) {};
  \node (t) at (1,0) {};
  \draw[red,dashed, ->] (s) -- (t);
}
\end{array}
\begin{array}{c}
\\
\\
\Alt^{0, 0} ~~\longrightarrow~~ \Alt^{1, 0}~~\longrightarrow~~ \Alt^{2, 0}~~\longrightarrow~~ \Alt^{3, 0}
\end{array}
$

$\begin{array}{c}
\Alt^{0, 0} \\
\\
\\
\end{array}$ $\color{red} \begin{array}{c}
\longrightarrow \\
  \swarrow\\
\longrightarrow 
\end{array}$ $\begin{array}{c}
\\
\\
\Alt^{1, 1} 
\end{array}$ $\begin{array}{c}
\\
\\
\longrightarrow 
\end{array}$  $\begin{array}{c}
\\
\\
\Alt^{2, 1} 
\end{array}$ {\vspace{+0.2cm}$
\begin{array}{c}
 \\
 \\
\longrightarrow 
\end{array}
$}    $\begin{array}{c}
 \\
\\
\Alt^{3, 1}
\end{array}$
\\
$\begin{array}{c}
\Alt^{0, 1} \\
\\
\\
\end{array}$ 
 $\begin{array}{c}
\longrightarrow \\
\\
\\
\end{array}$
 $\begin{array}{c}
\Alt^{1, 1} \\
\\
\\
\end{array}$
 $\color{red} \begin{array}{c}
\longrightarrow \\
  \swarrow\\
\longrightarrow 
\end{array}$  $\begin{array}{c}
\\
\\
\Alt^{2, 2} 
\end{array}$ $\begin{array}{c}
\\
\\
\longrightarrow 
\end{array}$  $\begin{array}{c}
\\
\\
\Alt^{3, 2} 
\end{array}$\\
$\begin{array}{c}
\Alt^{0, 2} \\
\\
\\
\end{array}$ 
$\begin{array}{c}
\longrightarrow \\
\\
\\
\end{array}$
$\begin{array}{c}
\Alt^{1, 2} \\
\\
\\
\end{array}$
$\begin{array}{c}
\longrightarrow \\
\\
\\
\end{array}$
 $\begin{array}{c}
\Alt^{2, 2} \\
\\
\\
\end{array}$ 
$\color{red} \begin{array}{c}
\longrightarrow \\
  \swarrow\\
\longrightarrow 
\end{array}$ 
  $\begin{array}{c}
 \\
\\
\Alt^{3, 3}
\end{array}$\\
$
\begin{array}{c}
\Alt^{0, 3} ~~\longrightarrow~~ \Alt^{1, 3}~~\longrightarrow~~ \Alt^{2, 3}~~\longrightarrow~~ \Alt^{3, 3}\\
~\\~\\
\end{array}
\begin{array}{c}
\tikz[baseline=(s.base)]{
  \node (s) at (0.,0.) {};
  \node (t) at (1,0) {};
  \draw[red, dashed, ->] (s) -- (t);
}\vspace{-0.3cm}\\\vspace{-0.2cm}
\tikz[baseline=(s.base)]{
  \node (s) at (0.,0.3) {};
  \node (t) at (-1,-0.3) {};
  \draw[red,dashed, ->] (s) -- (t);
}\vspace{-0.2cm}\\
\tikz[baseline=(s.base)]{
  \node (s) at (0.,0.) {};
  \node (t) at (1,0) {};
  \draw[red, dashed, ->] (s) -- (t);
}
\end{array}
 \begin{array}{c}
 \\
\\
\Alt^{4, 4} = 0
\end{array}
$
\end{center}

	 \caption{De Rham and BGG complexes with form-valued forms in $\mathbb{R}^{3}$ on the continuous level in a unified language. Here $\Alt^{k, \ell}$ stands for $\ell$-form-valued $k$-forms with certain symmetries. The complexes are the 3-form-valued de Rham complex, the Hessian complex, the elasticity complex, the divdiv complex, and the de~Rham complex, respectively. The spaces $\Alt^{-1, -1}$ and $\Alt^{4, 4}$ are {zero} in three dimensions.} 
	  \label{fig:form-form}
\end{figure}
\begin{figure}
\flushleft
\begin{tikzpicture}

  \node (deRham3D1) at (-2,0) {\includegraphics[width=0.8\textwidth]{./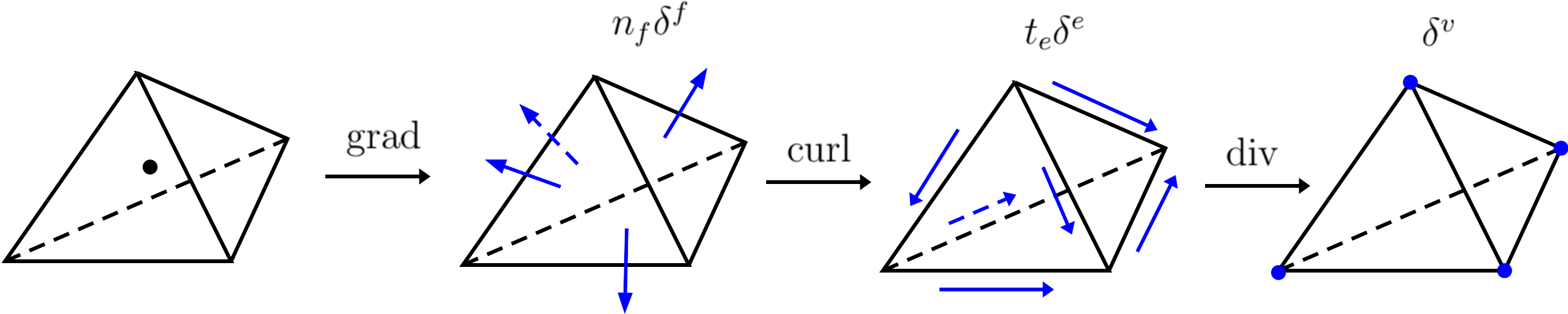}};
  \node at (-5,0.7) {\color{purple}$\Lambda^{0,0}$}; 
  \node at (-2.4,0.7) {\color{purple}$\Lambda^{1,0}$}; 
  \node at (0.5,0.7) {\color{purple}$\Lambda^{2,0}$}; 
  \node at (3.2,0.7) {\color{purple}$\Lambda^{3,0}$}; 
    \node at (5.4,0.) {\color{purple} \small distributional de Rham}; 
        \node at (5.4,-0.4) {\color{purple} \small Braess-Sch\"oberl 2008}; 
                \node at (5.4,-0.8) {\color{purple} currents};
                                \node at (1.7,1.8) {\color{blue} \textbf{distributions, Dirac measures}}; 
  \node (hessian) at (-2,-2.5) {\includegraphics[width=0.8\textwidth]{./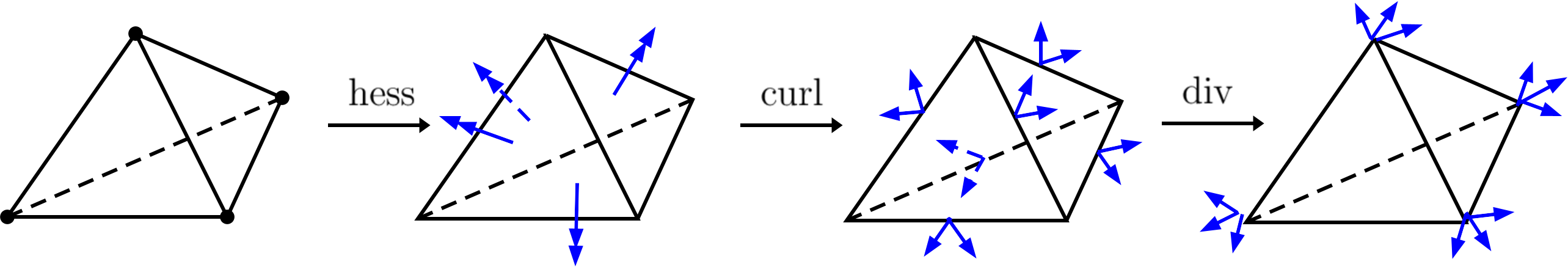}};
  \node at (-5,-1.7) {\color{purple}$\Lambda^{0,0}$}; 
  \node at (-2.4,-1.7) {\color{purple}$\Lambda^{1,1}$}; 
  \node at (0.5,-1.7) {\color{purple}$\Lambda^{2,1}$}; 
  \node at (3.2,-1.7) {\color{purple}$\Lambda^{3,1}$}; 
    \node at (5.4,-2.3) {\color{purple}Hu-Lin-Zhang 2025}; 
    
  \node (regge0) at (-2,-5) {\includegraphics[width=0.8\textwidth]{./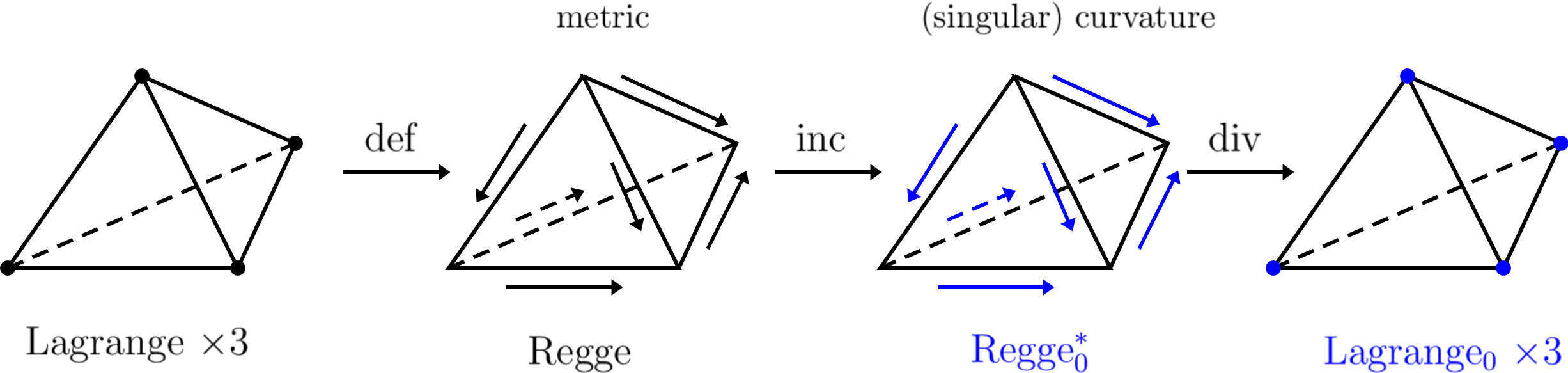}};
  \node at (-5,-4.2) {\color{purple}$\Lambda^{0,1}$}; 
  \node at (-2.4,-4.2) {\color{purple}$\Lambda^{1,1}$}; 
  \node at (0.5,-4.2) {\color{purple}$\Lambda^{2,2}$}; 
  \node at (3.2,-4.2) {\color{purple}$\Lambda^{3,2}$}; 
      \node at (5.4,-4.6) {\color{purple}Regge, Christiansen}; 
            \node at (5.4,-5.) {\color{purple}1961, 2011}; 
                        \node at (5.4,-5.4) {\color{purple}metric, curvature}; 
\node at (-5,-12) {\color{red} \textbf{finite elements, p.w. polynomials}};
  \node (divdiv) at (-2,-7.5) {\includegraphics[width=0.8\textwidth]{./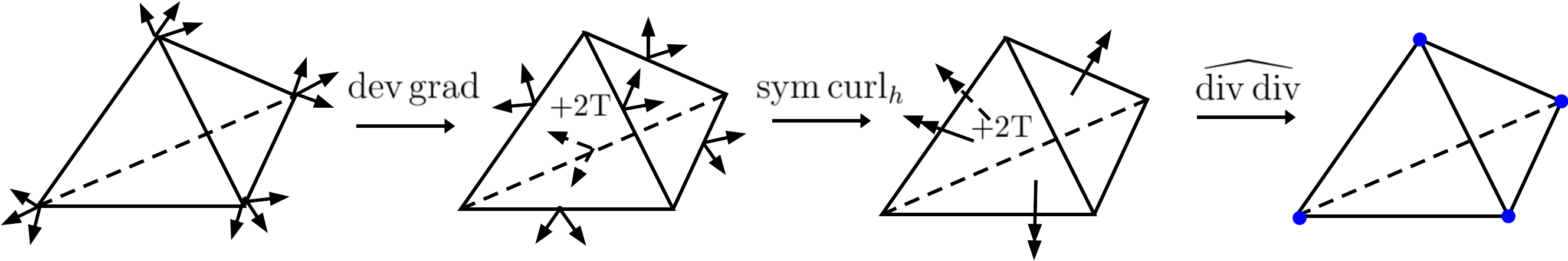}};
  \node at (-5,-6.7) {\color{purple}$\Lambda^{0,2}$}; 
  \node at (-2.4,-6.7) {\color{purple}$\Lambda^{1,2}$}; 
  \node at (0.5,-6.7) {\color{purple}$\Lambda^{2,2}$}; 
  \node at (3.2,-6.7) {\color{purple}$\Lambda^{3,3}$}; 
        \node at (5.6,-7.2) {\color{purple} \small Hellan, Herrmann, Johnson }; 
                \node at (5.4,-7.6) {\color{purple}\small Pechstein-Sch\"oberl 2011}; 
            \node at (5.4,-8) {\color{purple} \small Hu-Lin-Zhang 2025}; 
  \node (deRham3D) at (-2,-10) {\includegraphics[width=0.8\textwidth]{./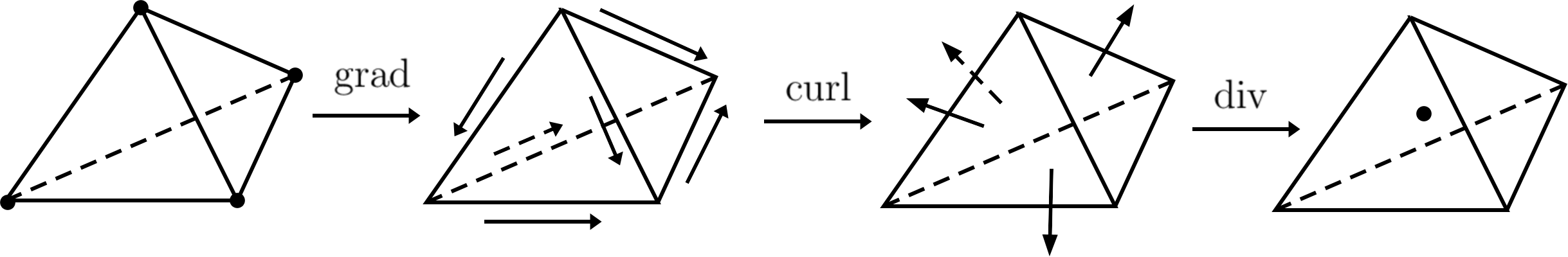}};
  \node at (-5,-9.2) {\color{purple}$\Lambda^{0,4}$}; 
  \node at (-2.4,-9.2) {\color{purple}$\Lambda^{1,4}$}; 
  \node at (0.5,-9.2) {\color{purple}$\Lambda^{2,4}$}; 
  \node at (3.2,-9.2) {\color{purple}$\Lambda^{3,4}$}; 
          \node at (5.8,-9.5) {\color{purple}\small N\'ed\'elec, Raviart--Thomas }; 
                \node at (5.4,-9.9) {\color{purple}\small 1980, 1977}; 
            \node at (5.4,-10.3) {\color{purple}\small Whitney forms 1957}; 
                        \node at (5.4,-10.7) {\color{purple} \small Bossavit, Hiptmair}; 
                        \node at (5.4, -11.1) {\color{purple} \small  Arnold-Falk-Winther};
  \draw[line width=2pt, red!30, rounded corners=5pt,dotted] (-7.8,-1.4) -- (-7.8,-11) -- (3.8,-11) -- (3.8, -9.7) -- (-6.6, -1.4) -- cycle;
  \draw[line width=2pt, blue!30, rounded corners=5pt, dotted] (-7.8,1.2) -- (3.8,1.2) -- (3.8,-8.4) -- (2.6, -8.4) -- (-7.8, -0.1) -- cycle;
\end{tikzpicture}

\caption{Finite elements (including currents) of the lowest order for \Cref{fig:table}. The first row is the distributional de Rham complex (dual Whitney forms) \cite{braess2008equilibrated,licht2017complexes}; the last row consists of Whitney forms; the elasticity complex is discretized by the Christiansen--Regge complex \cite{christiansen2011linearization}; the Hessian and divdiv complexes are due to \cite{hu2025distributional}.}
 \label{fig:table}
\end{figure}

In this paper, we identify the patterns in \Cref{fig:table} and extend them to \emph{any dimension, any form-valued form, and any polynomial degree}.  Moreover, we discretize the iterated BGG constructions (leading to the $\grad\curl$ complex, the $\curl\div$ complex, and the $\grad\div$ complex in three dimensions, see \Cref{fig:iterative-bgg}). The complexes shown in \Cref{fig:iterative-bgg-fe} glue together Whitney forms and the MCS element with high-order differentials.
  \begin{figure}[htbp]
\FIG{\includegraphics[width=0.8\linewidth]{./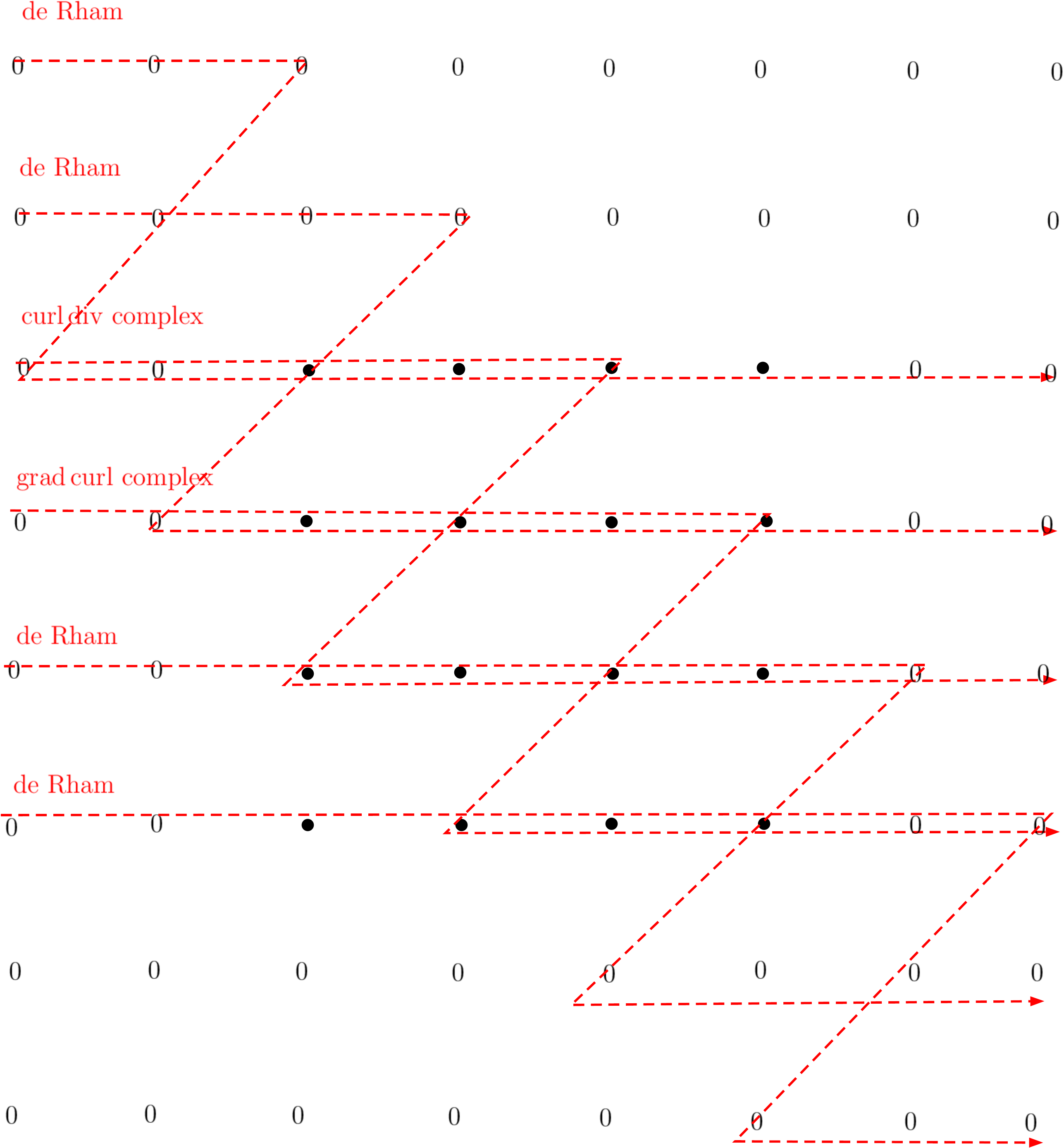}}
{\caption{Iterated BGG constructions are shown in red dotted line. The thick dots denote spaces $C^{\infty}\otimes \Alt^{k, \ell}$ with $0\leq k, \ell\leq 3$, which is nonzero in three dimensions. The diagram is extended by zero.}
\label{fig:iterative-bgg}}
\end{figure}
  \begin{figure}[htbp]
\FIG{\includegraphics[width=0.8\linewidth]{./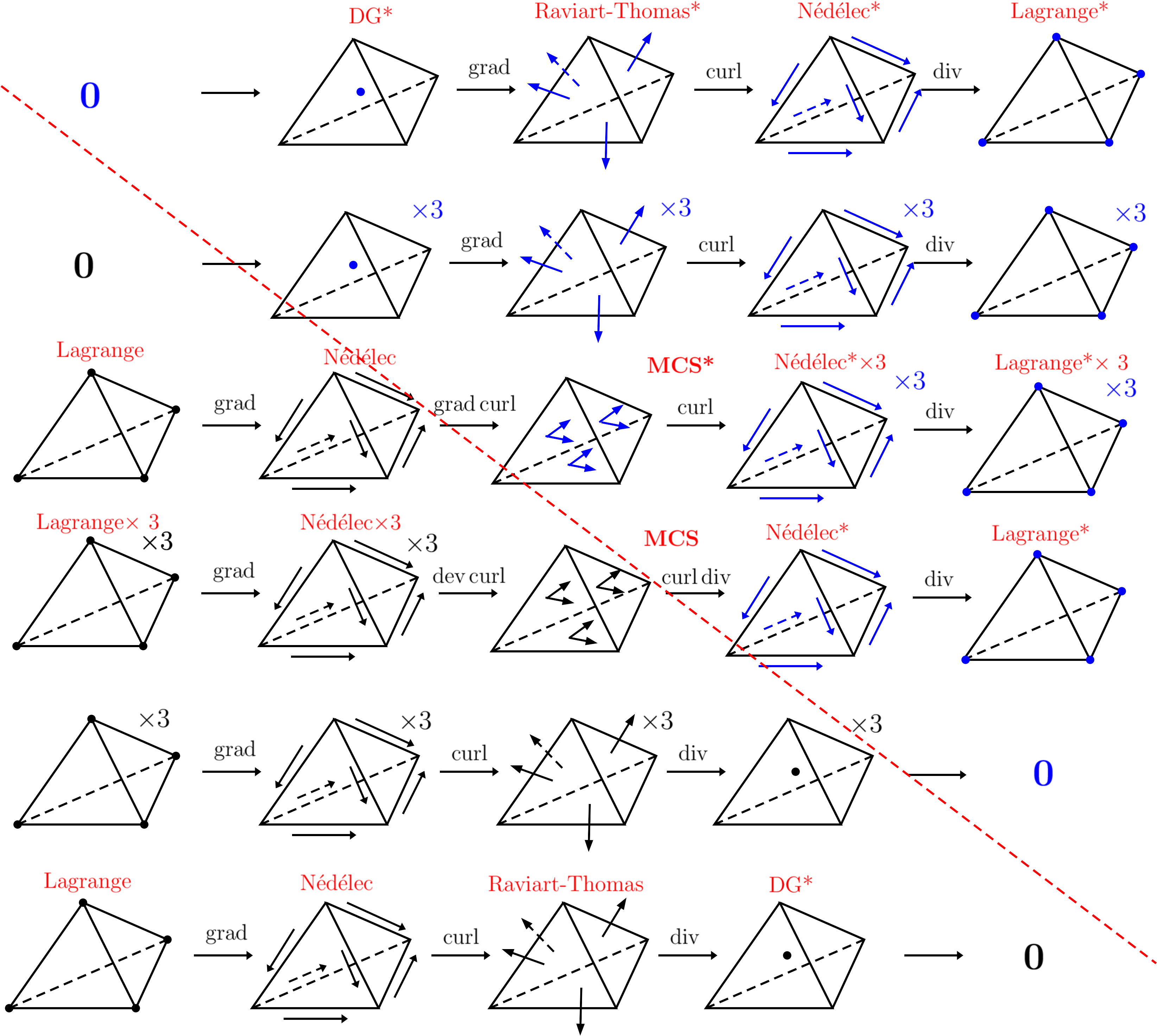}}
{\caption{Distributional finite element complexes for the iterated BGG constructions. The complex of the MCS element (the fourth row in the diagram) and its dual (the third row in the diagram) are new, to the best of our knowledge. The complex of the MCS element also fits in and can be derived from a BGG diagram; see \Cref{fig:MCS-BGG}.}
\label{fig:iterative-bgg-fe}}
\end{figure}

  \begin{figure}[htbp]
\FIG{\includegraphics[width=0.8\linewidth]{./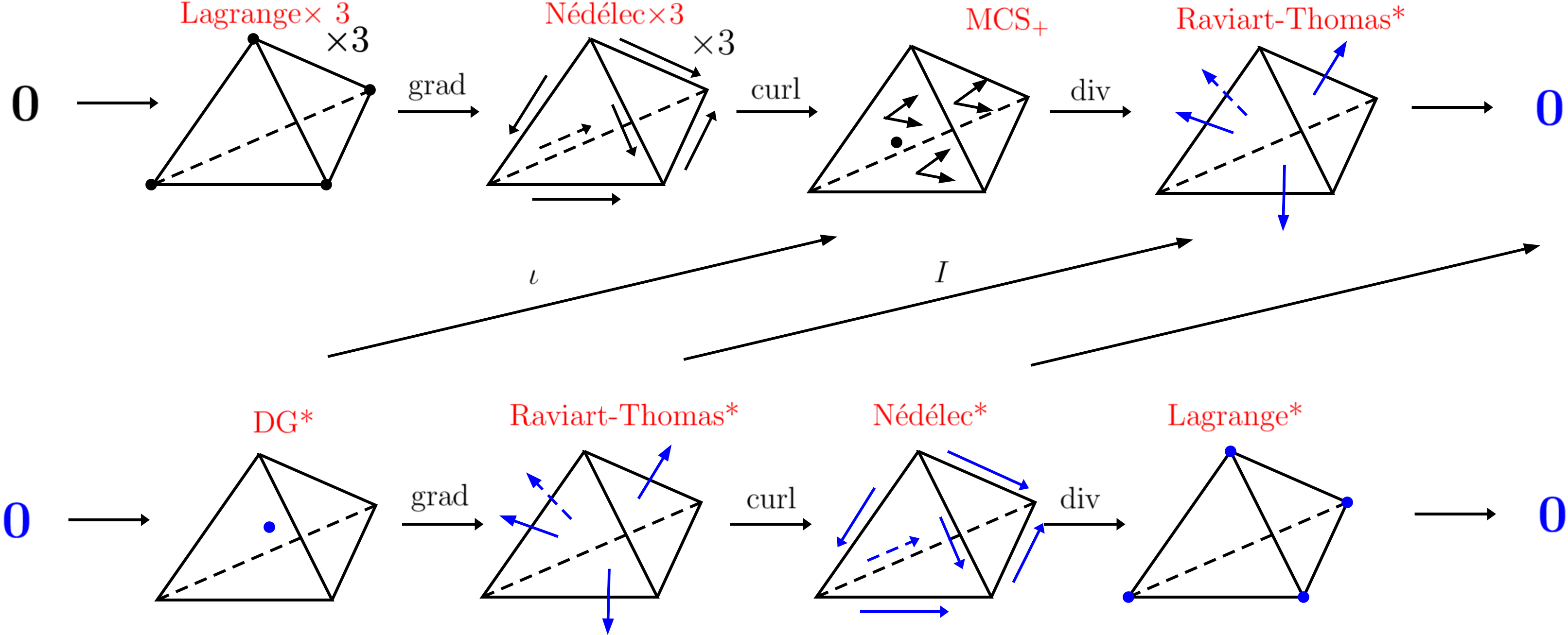}}
{\caption{BGG diagram for the MCS element and the MCS complex (the fourth row of \Cref{fig:iterative-bgg-fe}). Here $\mathrm{MCS}_{+}$ denotes the ``full matrix MCS element''. The full diagram can be useful for discretizing more complete models encoded in the twisted complex \cite{vcap2022bgg}.}
\label{fig:MCS-BGG}}
\end{figure}


We show the unisolvency of the resulting finite element spaces. This paper leaves the complex and cohomological issues open, i.e., in this paper, we do not prove that the resulting spaces fit in a complex and their cohomology is isomorphic to the continuous versions (although they do in three and lower dimensions). This is because some of the differential operators have to be interpreted discretely, and a full explanation is beyond the scope of this paper. However, we provide a dimension count as a strong indication that such results will hold in any dimension. 

Before diving into the details of the construction, we mention motivations for investigating a general construction in arbitrary dimensions.
\begin{itemize}
\item The canonical patterns in arbitrary dimensions for general form-valued forms can be identified, which also shed light on the constructions and applications in three dimensions, e.g. HHJ and MCS elements.
\item Important problems from differential geometry and general relativity require discretizing tensor fields (such as the metric and various notions of curvature tensors) in four and even higher dimensions.  For example, the Einstein equation describes a relation between the Einstein tensor of the four-dimensional spacetime and the matter field.  
\item The twisted complexes \cite{arnold2021complexes,vcap2022bgg}, which involve all the spaces in the BGG diagram, play a fundamental role in their own right. The twisted complexes incorporate richer physics and geometry. For example, the twisted complex characterizes micropolar and Cosserat models while the BGG complex describes the standard elasticity \cite{vcap2022bgg}; the twisted complex involves Riemann-Cartan geometry with curvature and torsion, while the torsion is eliminated in the BGG complex \cite{christiansen2023extended}.  Applications require discretizing the twisted complexes \cite{dziubek2024intrinsic}. The general construction in this paper discretizes the entire diagram, and therefore sheds light on discretizing generalized models in micropolar continuum, Riemann-Cartan geometry, and continuum defects \cite{yavari2012riemann,yavari2013riemann,yavari2012weyl} etc. 
\item Cliques (analogues of simplices) of any dimensions exist on graphs or hypergraphs \cite{bick2023higher}. A simplicial construction with full generality sheds light on investigating objects and applications from graph and network theory, such as the notion of graph curvature \cite{joharinad2023mathematical,lin2011ricci} {and torsion}. 
\end{itemize}
Concerning the last point, Hodge-Laplacian and discrete differential forms can be established on graphs \cite{lim2020hodge,he2020graph,ribando2024combinatorial,grigor2024eigenvalues}. The theory has a close relation with the lowest order Whitney forms as they share the same degrees of freedom. To carry tensor finite elements to other discrete structures such as graphs, one desires intrinsic finite elements with canonical degrees of freedom, leading to discrete geometric and topological interpretations. This is another reason for the preference of a construction mimicking the Whitney forms with relaxed conformity (for the de~Rham complex, the Whitney forms happen to have enough conformity for $L^{2}$ spaces with exterior derivatives in $L^{2}$; however, {for BGG complexes we need to sacrifice some conformity to obtain the geometric structure and topological interpretations}). 

\medskip

The systematic finite element discretization of form-valued forms presented in this paper has a number of corollaries. 

\paragraph{\it Finite element spaces with constant shape functions.} In our motivating examples, the Regge elements ((1,1)-forms), the HHJ elements ((2,2)-forms), and the MCS elements ((2,1)-forms) are all piecewise constant. These finite element spaces demonstrate an elegant match of the simplest possible shape functions and degrees of freedom. It is natural to ask: 

\newquote{\label{question:constant}
\emph{\it What piecewise constant finite element spaces can we construct for form-valued forms?} 
}

A similar question is posed in \cite{berchenkodraft}. The tools developed in this paper allow us to provide an answer to this question in our framework: \emph{There exists an $\iota^{\ast}\iota^{\ast}$-conforming finite element space with constant $\mathbb W^{k,\ell}_{[p]}$ as the shape function, for any $k$, $\ell$, and $p$, provided $p\leq k, \ell\leq n-p$}. See \Cref{thm:constant-construction}.

\smallskip

\paragraph{\it Tensor decomposition in differential geometry.} The second remark concerns tensor decompositions in differential geometry. Form-valued forms have various connections to differential geometry, encoding important algebraic and differential structures therein. In particular, the $(2, 2)$-forms $\Alt^{2, 2}$ are closely related to the Riemannian tensor. Using our framework, the symmetries can be imposed from the BGG diagrams to obtain $\W^{2, 2}$, which has the algebraic symmetries of the Riemannian tensor in any dimension. An orthogonal decomposition of $\Alt^{2, 2}$ in differential geometry can be interpreted using BGG diagrams ({especially, the connecting mappings,} see \eqref{alt22} below). Since we discretize the BGG diagrams, we obtain a discrete version of this decomposition with finite elements.  Moreover, these connections open a door for exploring further connections between finite elements and differential geometry, such as {conformal metrics and} Weyl tensor in higher dimensions; see \Cref{rmk:weyl}.
 
\subsection{Overview of the construction}
\label{sec:overview}
Each BGG complex involves a ``zig-zag'' at some slot, connecting two rows of the diagram. 
From the examples in \Cref{fig:table} (see also \Cref{fig:form-form}), we see that each BGG complex consists of finite element spaces (piecewise polynomials) before the zig-zag, and then Dirac measures of certain types (generalized currents) after it. The sequence of Whitney forms and its dual are two special border cases: for the former, all the spaces are finite elements in the classical sense (piecewise polynomial functions) as appeared in FEEC \cite{arnold2006finite,arnold2010finite,arnold2018finite,hiptmair1999canonical} and the periodic table of finite elements \cite{arnold2014periodic}; for the latter, all the spaces are currents (distributions) \cite{braess2008equilibrated}. To generalize this pattern in the general construction, each sequence is also split into two parts: first the finite elements and then the generalized currents. The construction of currents is relatively straightforward, as we can extend the sequences via derivatives. However, constructing the finite element spaces calls for special care in choosing local shape functions and degrees of freedom that match each other (unisolvency) and yield the desired interelement continuity.

\smallskip
{\noindent \bf Generalized trace operators.}
For a finite element space, specifying  the interelement continuity, i.e., the conformity (and hence the degrees of freedom) is essential. For the Whitney forms, the conformity condition demands that the \emph{trace} (denoted as $\iota^*$ in this paper; see \eqref{def:trace} for the definition) of a differential form from both sides of a face is single-valued on that face. Correspondingly, the degrees of freedom for Whitney forms can be given by moments of this trace over subcells. The first challenge for form-valued forms is to generalize the notion of the trace. A straightforward approach is to project each vector onto the face’s tangent space (see $\iota^{\ast}\iota^{\ast}$ below). However, this is not necessarily what we need. For example, consider the first space in the elasticity complex, which is in $\alt^{0,1}$ (1-form-valued 0-forms). The $\iota^{\ast}\iota^{\ast}$-trace vanishes at vertices. Yet the canonical Christiansen--Regge complex starts with a Lagrange space, requiring vertex evaluations.

To resolve this, we introduce {generalized} trace operators. In particular, we allow evaluating a $k$-form on an $m$-dimensional cell where $m<k$ (via the operator $\jmath^{*}$ \eqref{def:jstar}). The idea is to \emph{use tangent vectors as much as possible}. For example, to evaluate a 3-form on a 1-dimensional cell, we feed its single tangent vector plus two vectors normal to the cell into the 3-form. This definition sits between the classical trace (which feeds only tangent vectors) and the restriction operator (which can feed any vectors of the ambient space). 

For iterated BGG complexes (see \Cref{sec:iterated} for the details), we must generalize further, leading to $\jmath^{*}_{[p]}$ (the above case corresponds to $p=1$). Increasing $p$ moves the definition closer to the restriction operator, allowing $p-1$ tangent vectors to remain unused. In the example of evaluating a 3-form on a one-dimensional cell, $\jmath^{*}_{[2]}$ permits either tangent or normal vectors. On a one-dimensional cell, this reduces effectively to the restriction operator. On two-dimensional cells, for $p=2$, one must feed at least one tangent vector to the 3-form, while the remaining two slots can be tangent or normal; for $p=3$, $\jmath^{*}_{[p]}$ boils down to the restriction. The notation of $\jmath^{\ast}_{[p]}$ has not appeared in existing literature on finite elements in three dimensions since for $p > 1$,  $\jmath_{[p]}^*$ boils down to either $\rho^*$ or $\iota^*$.
The first non-trivial examples {where one sees differences between $\jmath^{\ast}_{[p]}$-conforming finite elements with $p>1$ and $\rho^{\ast}$-conforming or $\iota^{\ast}$-conforming ones} appear in four dimensions. The definition of the generalized traces and their properties are discussed in \Cref{sec:trace}.

These generalized traces recover existing elements, such as TDNNS, MCS, Regge, and Hu--Lin--Zhang in three dimensions, and facilitate new constructions in higher dimensions. However, a direct generalization to higher dimensions is challenging, as vector proxies become more complicated — beyond the tangential and normal components of vectors used in two and three dimensions — and are far less explored in the literature.

In this paper, we investigate finite element spaces for form-valued forms $\Alt^{k, \ell}$ and its variants $\mathbb W^{k,\ell}$ with symmetries encoded in the BGG diagram. To define generalized traces for these spaces, we must address the continuity conditions for the two indices separately. If we represent $\Alt^{k, \ell}$ as a matrix as we usually do in two and three dimensions, this entails applying a row-wise operator to one index and a column-wise operator to the other. The conformity of finite element spaces is characterized by the single-valuedness of these generalized trace operators (rather than belonging to certain Sobolev spaces, which is left as a further question). In principle, we can consider all combinations of the operators $\iota^{\ast}$ and $\jmath_{[p]}^{\ast}$ (namely, $\iota^{\ast}\iota^{\ast}$, $\iota^{\ast}\jmath_{[p]}^{\ast}$, $\jmath_{[p]}^{\ast}\iota^{\ast}$, and $\jmath_{[p]}^{\ast}\jmath_{[p']}^{\ast}$) as potential generalized trace operators for form-valued forms. However, our goal is to construct spaces that fit within Bernstein-Gelfand-Gelfand (BGG) complexes. As we will demonstrate, $\iota^{\ast}\jmath_{[p]}^{\ast}$-conforming spaces --- that is, finite element spaces where the generalized trace $\iota^{\ast}\jmath_{[p]}^{\ast}$ (with $\iota^{\ast}$ applied to the $k$-form part and $\jmath_{[p]}^{\ast}$ to the $\ell$-form ``value'' part) is single-valued across cell boundaries --- are suitable candidates for these complexes. Consequently, we primarily focus on $\iota^{\ast}\jmath_{[p]}^{\ast}$-conforming spaces in this paper, though we occasionally discuss $\iota^{\ast}\iota^{\ast}$-conforming spaces.

\begin{remark}
For standard finite element differential forms, conformity is typically characterized by continuity (i.e., single-valuedness of the trace) across codimension-one faces. For instance, in three dimensions, N\'ed\'elec spaces exhibit continuous tangential components across faces. However, for the finite element spaces constructed in this paper, continuity on codimension-one faces alone is insufficient, as some spaces share the same continuity on codimension-one faces but differ on lower-dimensional subcells. Thus, when we refer to $\iota^{\ast}\jmath_{[p]}^{\ast}$-conforming spaces, we mean finite element spaces where the generalized trace $\iota^{\ast}\jmath_{[p]}^{\ast}$ is single-valued across \emph{all} subcells, even though $\iota^{\ast}\jmath_{[p]}^{\ast}$ may degenerate in certain low-dimensional cases. This subtlety is analogous to the distinction between Lagrange and Hermite elements in their respective complexes \cite{christiansen2018nodal}: both exhibit global $C^{0}$ continuity, but Hermite elements possess additional smoothness at vertices.
\end{remark}



\medskip


The overall idea of arriving at these $\iota^{\ast}\jmath_{[p]}^{\ast}$-conforming spaces in this paper is to adapt Whitney forms from FEEC theory to tensor elements, following the steps outlined below.

\smallskip
\noindent{\bf Step 1: $\iota^{\ast}\iota^{\ast}$-conforming elements.}
For $\ell$-form-valued $k$-forms, we begin by tensoring Whitney $k$-forms with  constant $\ell$-forms, giving $\mathcal{P}^{-}\alt^{k,\ell} := \mathcal{P}^{-}\alt^{k}\otimes \alt^{\ell}$. The resulting space is $\iota^{\ast}\rho^{\ast}$-conforming (where $\rho^{\ast}$ is the restriction operator). We then \emph{weaken} continuity to obtain an $\iota^{\ast}\iota^{\ast}$-conforming space. For instance, to build 1-form-valued 1-forms in three dimensions, we start with three copies of the N\'ed\'elec space (tangential continuity) and weaken the continuity, leading to tangential--tangential continuity.
Intuitively, this general procedure is feasible because certain degrees of freedom can be transferred from lower-dimensional subcells to higher-dimensional ones.  For a precise argument, we compute the dimension of the bubble space (bubbles have vanishing $\iota^*\iota^*$-traces at each level) and demonstrate unisolvency and conformity using a standard argument. The resulting finite element spaces $C_{\iota^*\iota^*} \mathcal P^- \alt^{k,\ell}$  are detailed in \Cref{prop:Pminus-ii}.


\smallskip
\noindent{\bf Step 2: Symmetry reduction.}
The spaces from Step 1 do not yet reflect the tensor symmetries. To impose the desired tensor symmetry, we follow the BGG construction on the discrete level. We therefore reduce these spaces to lie in $\ker(\mathcal{S}_{\dagger})$, which appears in the BGG diagrams. This requires reducing both the shape function spaces and their degrees of freedom. We discuss these two issues below.

To reduce the local shape function spaces, we verify that the BGG machinery is compatible with the polynomial spaces $\mathcal{P}^{-}\alt^{k,\ell}$; i.e., $\mathcal{S}_{\dagger}$ maps onto from $\mathcal{P}^{-}\alt^{k,\ell}$ to $\mathcal{P}^{-}\alt^{k-1,\ell+1}$ (\Cref{lem:S-poly}). For example, in three dimensions, $\mathcal S_{\dagger}^{1,1}:= \vskw$ maps $\mathbb M+ \bm x \times \mathbb M$ to $\mathbb V + \bm x \cdot \mathbb M$, and the kernel is the constant symmetric matrix space $\mathbb S$, which is exactly the local shape function space of the Regge element. \Cref{fig:shapefunc-reduction} shows the reduction on the shape function spaces in three dimensions. 

\begin{figure}
\begin{equation*}
\begin{tikzcd}[column sep = small]
~ & \text{Whitney 0-form} & \text{Whitney 1-form} & \text{Whitney 2-form} & \text{Whitney 3-form} \\ 
\text{$\otimes$ const 0-form} & \mathbb{R} + \bm x \cdot \mathbb V \arrow{r}{\grad} & \mathbb{V}+ \bm x \times \mathbb V \arrow[ld, "I",swap]  \arrow{r}{\curl} & \mathbb{V} + \bm x \otimes \mathbb R \arrow[ld, "\mskw",swap] \arrow{r}{\div} & \mathbb{R} \arrow[ld, "I",swap]\\
\text{$\otimes$ const 1-form}   & \mathbb{V} + \bm x \cdot \mathbb M \arrow{r}{\grad} & |[shape=rectangle, color=red, draw, minimum width=1cm, minimum height=0.6cm]| \mathbb{M} + \bm x \times \mathbb M \arrow[ld, " \vskw",swap]\arrow{r}{\curl}  &  \mathbb{M} + \bm x \otimes \mathbb V \arrow[ld, "S",swap]  \arrow{r}{\div}  & \mathbb{V} \arrow[ld, "\mskw",swap]  \\
\text{$\otimes$ const 2-form}  & |[shape=rectangle, color=red, draw, minimum width=1cm, minimum height=0.6cm]| \mathbb{V} + \bm x \cdot \mathbb M \arrow{r}{\grad}  & |[shape=rectangle, color=blue, draw, minimum width=1cm, minimum height=0.6cm]|  \mathbb{M} + \bm x \times \mathbb M \arrow[ld, "\tr", swap] \arrow{r}{\curl}  & |[shape=rectangle, color=green!50!black, draw, minimum width=1cm, minimum height=0.6cm]|   \mathbb{M} + \bm x \otimes \mathbb V \arrow[ld, "2\vskw",swap] \arrow{r}{\div}  &\mathbb{V}  \arrow[ld, "I",swap] \\
\text{$\otimes$ const 3-form}  &  |[shape=rectangle, color=blue, draw, minimum width=1cm, minimum height=0.6cm]|  \mathbb{R} + \bm x \cdot \mathbb V \arrow{r}{\grad}  & |[shape=rectangle, color=green!50!black, draw, minimum width=1cm, minimum height=0.6cm]|  \mathbb{V} + \bm x \times \mathbb V\arrow{r}{\curl}  & \mathbb{V} + \bm x \otimes \mathbb R \arrow{r}{\div}  & \mathbb{R} 
\end{tikzcd}
\end{equation*}

$$\boxed{~~\text{Finite element } (k,\ell) \text{-form } = \text{Whitney } k\text{-form } \otimes \text{constant } \ell \text{-form}.~~}$$
\caption{The symmetry reduction of shape function spaces following the BGG construction. 
First, $(k, \ell)$-forms are constructed as the tensor product of Whitney $k$-forms (constant plus Koszul of constant) and constant $\ell$-forms. 
 The diagonal arrows are the vector proxies of the $\mathcal S_{\dagger}$ operators. The spaces involved in the reduction are illustrated in the figure with different colors. Red: reducing  (0,2)-forms from (1,1)-forms, giving $\mathbb S$ (the shape function space of the lowest order Regge element); Blue: reducing (0,3)-forms from (1,2)-forms, giving $\mathbb T + \bm x \times \mathbb S$ (the shape function space of the lowest order Hu-Lin-Zhang element); Green: reducing (1,3)-forms from (2,2)-forms, giving $\mathbb S$ (the shape function space of the lowest order HHJ/TDNNS element).} 
\label{fig:shapefunc-reduction}
\end{figure}

The degrees of freedom for $\mathcal{P}^{-}\alt^{k,\ell}$ involve moments against bubbles on each subcell. We remove certain bubbles likewise. Consequently, the degrees of freedom for the reduced finite elements can be defined by moments against the {reduced} bubble spaces. The key is to check that $\mathcal{S}_{\dagger}$ indeed maps {\it onto} from $\mathcal{B}^{-}\alt^{k,\ell}$ to $\mathcal{B}^{-}\alt^{k-1,\ell+1}$ (see  \Cref{lem:S-bubble}). This gives the dimension of the reduced spaces. 

This process yields spaces $C_{\iota^{\ast}\iota^{\ast}}\mathcal P^- \mathbb W^{k,\ell}$ together with their degrees of freedom, described in \Cref{prop:sym-reduction}. 

\smallskip
\noindent{\bf  Step 3: $\iota^{\ast}\jmath^{\ast}$-conforming elements.}
We then move certain degrees of freedom from higher-dimensional subcells to lower-dimensional ones, \emph{enhancing} the continuity of the finite element space. This works because:
\begin{enumerate}
\item The total dimension of the space remains unchanged.
\item Single-valuedness on lower-dimensional subcells guarantees single-valuedness on higher-dimensional subcells.
\end{enumerate}
This procedure applied to the reduced spaces $\W^{k,\ell}$ gives the final desired finite elements \Cref{prop:moving-dofs-W}. Technically, the same operation of moving degrees of freedom can also be applied to the full $\alt^{k,\ell}$ spaces (\Cref{prop:moving-dofs-alt}). However, this process is exactly the reverse of Step 1, leading to the spaces that we start with.

We apply the same recipe to obtain spaces $\mathcal{P}^{-}\W_{[p]}^{k,\ell}$ in the iterated BGG complexes (\Cref{prop:Wp}). The above recipe is demonstrated in \Cref{fig:regge-reduction,fig:hlz-reduction,fig:mcs-reduction,fig:hhj-reduction} for the Regge ($k = 1, \ell = 1, p = 1$), HLZ ($k = 2, \ell = 1, p = 1$), MCS ($k = 1, \ell = 2, p = 2$), and HHJ (TDNNS) ($k = 2, \ell = 2, p = 1$) elements, respectively. This exhausts all the nontrivial construction in three dimensions, see \Cref{example:3D-allcases}.


\begin{figure}
\FIG{\includegraphics[width=0.9\linewidth]{./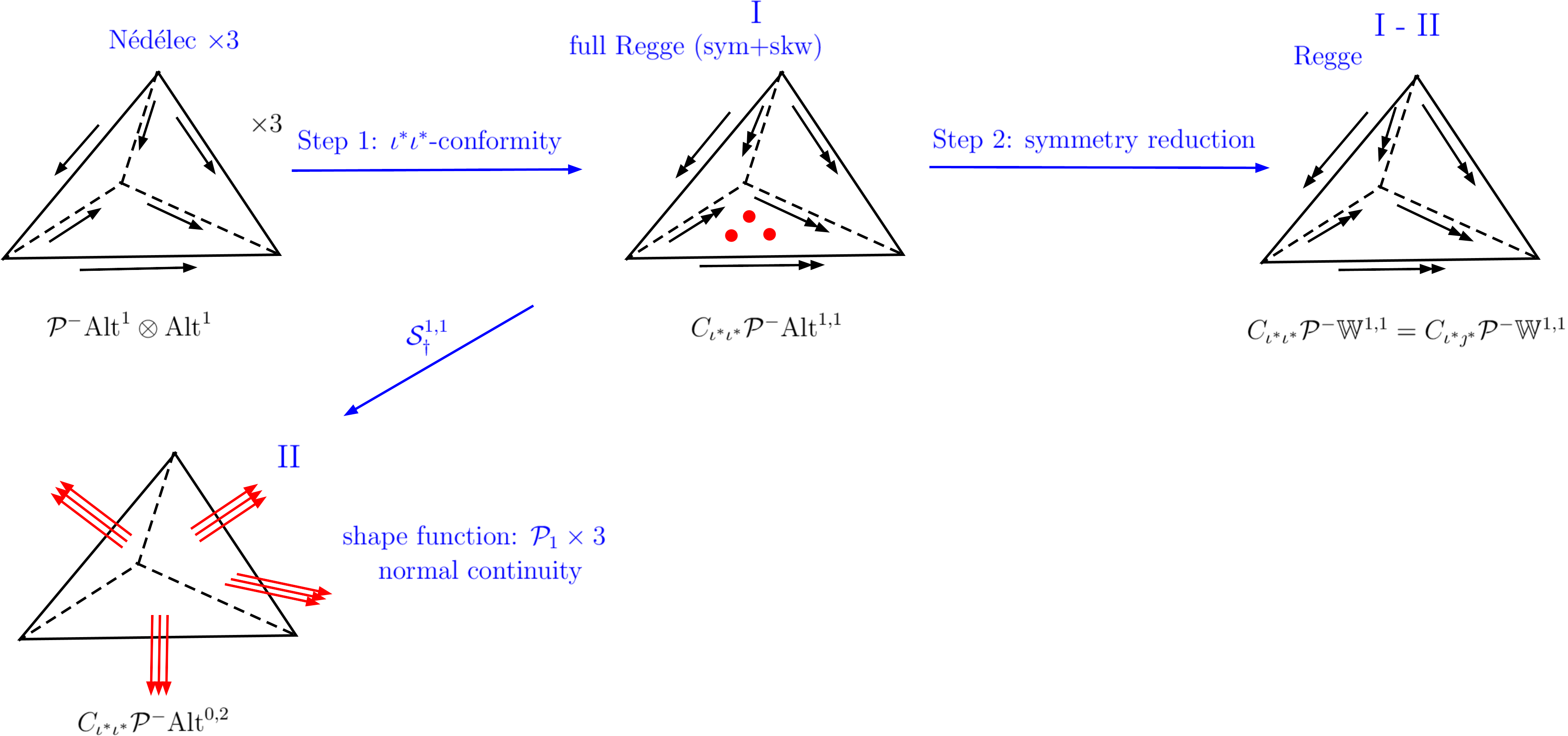}}
{\caption{Deriving the Regge element (tangential-tangential continuity) from a vector-valued N\'ed\'elec element (tangential continuity). Step 1: weakening the continuity of the N\'ed\'elec element to tangential-tangential. Step 2: eliminating the face degrees of freedom by those of a weakened vector Lagrange element connected by a $\mathcal{S}_{\dagger}$ operator.}
 \label{fig:regge-reduction}}
\end{figure}
\begin{figure}
\FIG{\includegraphics[width=1\linewidth]{./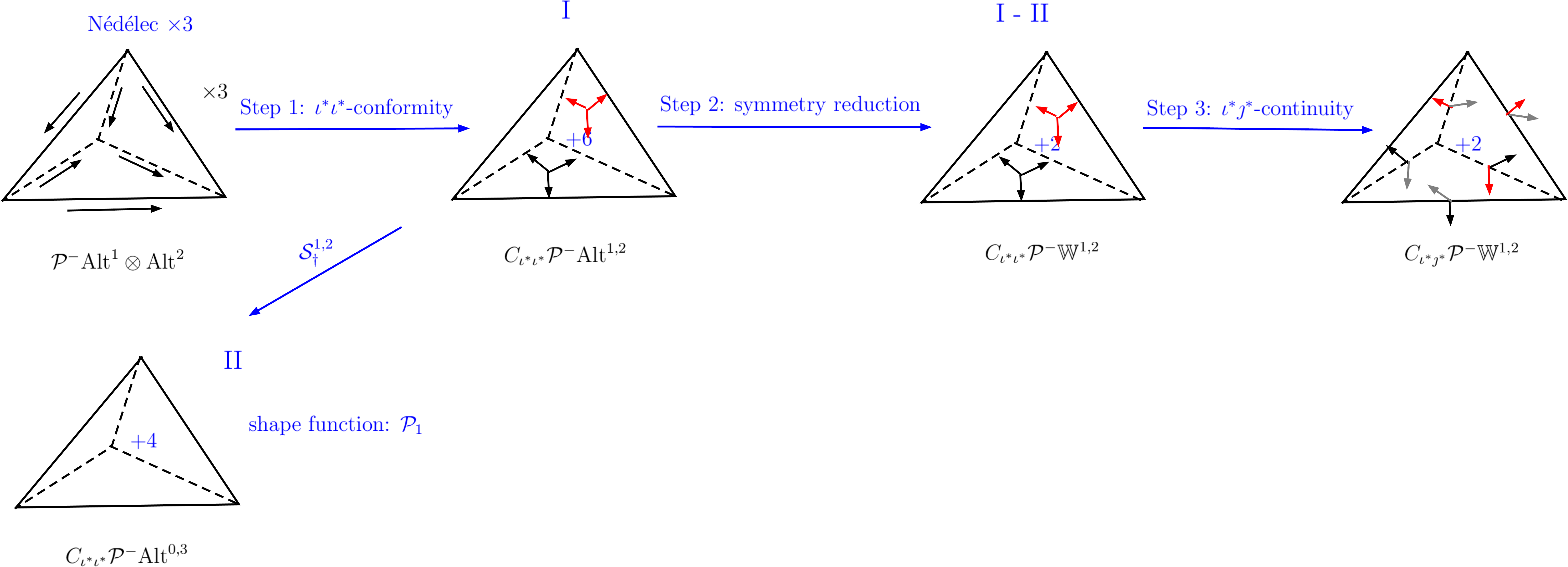}}
{\caption{Deriving the Hu-Lin-Zhang element (tangential-normal continuity) from a vector-valued N\'ed\'elec element (tangential continuity). Step 1: weakening the continuity of the N\'ed\'elec element to tangential-normal. Step 2: eliminating part of the interior degrees of freedom by those of a $\mathcal{P}_{1}$ connected by an $\mathcal{S}_{\dagger}$ operator. Step 3: moving the three degrees of freedom on each face to the three edges of the face; each edge gathers two tangential-normal degrees of freedom from its two neighbouring faces. \\
In general, we obtain $C_{\iota^*\jmath^*} \mathcal P^- \alt^{k,\ell}(K)$ from $C_{\iota^*\iota^*} \mathcal P^- \alt^{k,\ell}(K)$ by moving degrees from $\ell$-dimensional cells to $k$-dimensional ones. On each $\ell $-face $F$, the degrees of freedom are the inner product with respect to the space $\mathcal P^- \alt^{k}(F)$. To see that these degrees of freedom can be redistributed to $k$-cells, note that each $\sigma \in \mathcal T_{k}(F)$ receives $\binom{n-k}{\ell - k}$ degrees of freedom (one from each $\ell$-face containing $\sigma$), which is exactly the dimension of $\alt^{\ell - k}(\sigma^{\perp})$.
 }
 \label{fig:hlz-reduction}
 }
\end{figure}
\begin{figure}
\FIG{\includegraphics[width=0.9\linewidth]{./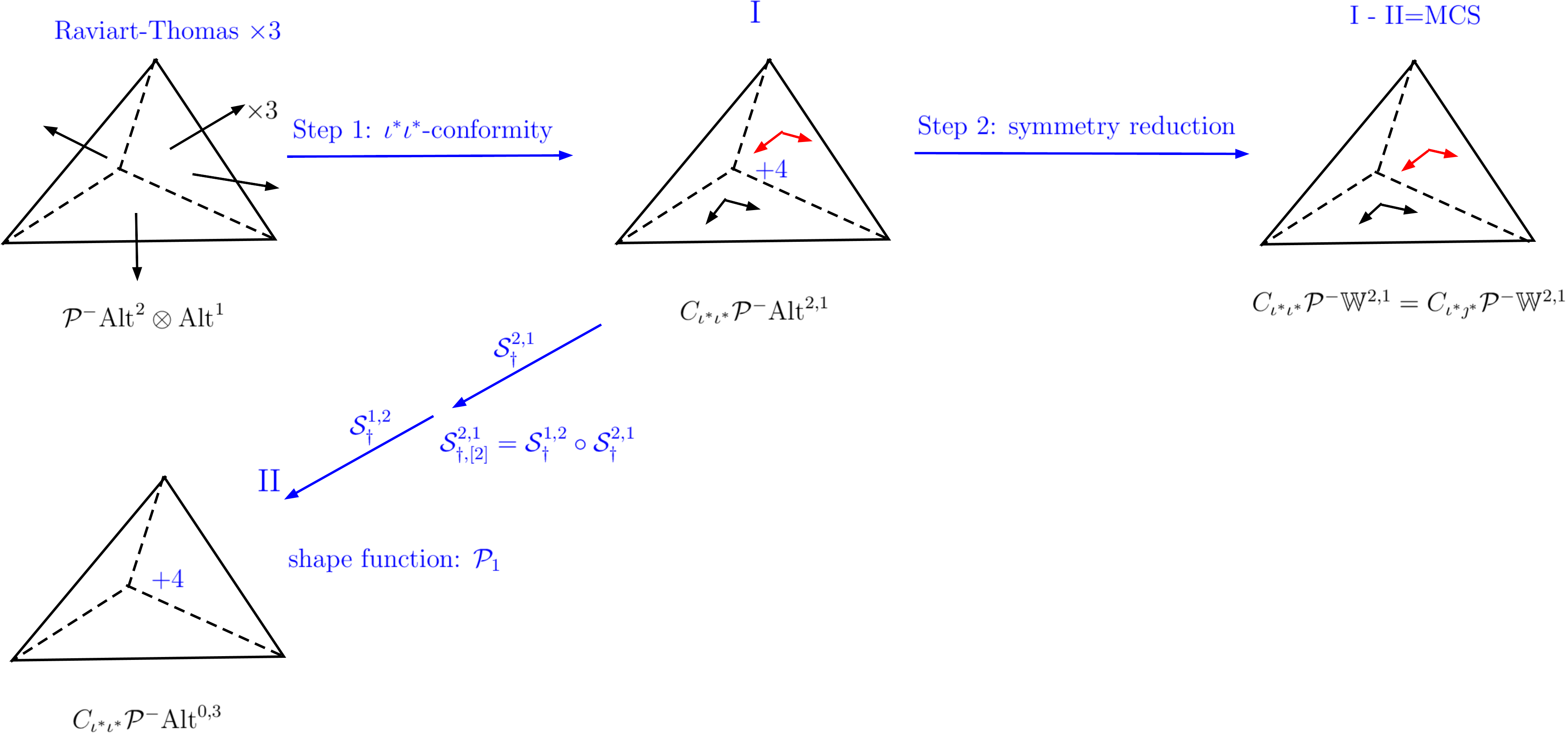}}
{\caption{Deriving the MCS element (normal-tangential continuity) from a vector-valued Raviart--Thomas element (normal continuity). Step 1: weakening the continuity of the Raviart--Thomas element to normal-tangential. Step 2: eliminating part of the interior degrees of freedom by those of a $\mathcal{P}_{1}$ connected by an $\mathcal{S}_{\dagger,[2]}$ operator.  Note that the MCS element appears in an iterated BGG construction as $k=2, \ell=1$ and $k>\ell$. Thus the reduction is by a space down two rows.}  
 \label{fig:mcs-reduction}}
\end{figure}

\begin{figure}
\FIG{\includegraphics[width=1\linewidth]{./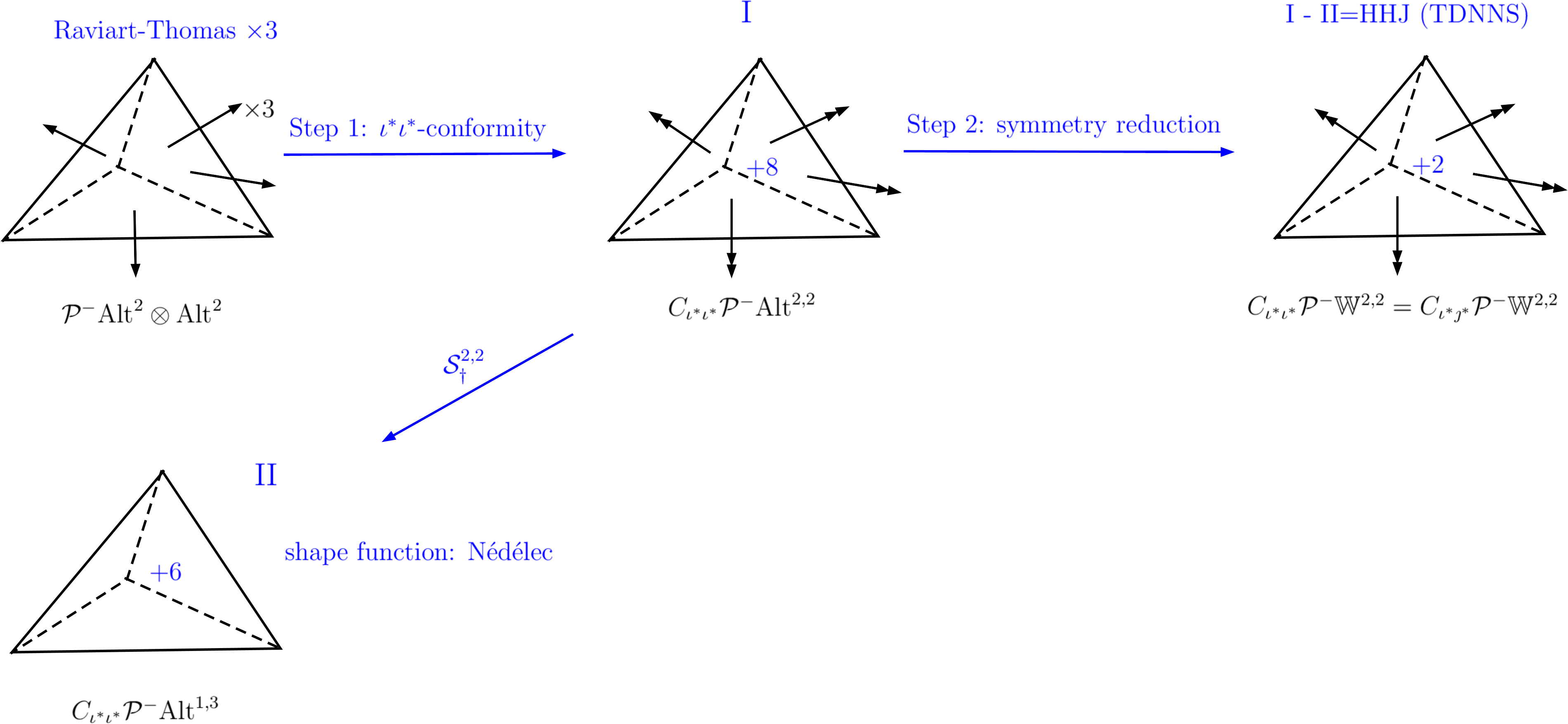}}
{\caption{Deriving the HHJ (TDNNS) element (normal-normal continuity) from a vector-valued Raviart--Thomas element (normal continuity). Step 1: weakening the continuity of the Raviart--Thomas element to normal-normal. Step 2: eliminating part of the interior degrees of freedom by N\'ed\'elec shape functions connected by a $\mathcal{S}_{\dagger}$ operator.}  
 \label{fig:hhj-reduction}}
\end{figure}

\bigskip

Finally, high-order constructions (including the families $\mathcal{P}\alt_{[p]}^{k,\ell}$ and $\mathcal{P}\W_{[p]}^{k,\ell}$) follow analogously, using the same sequence of steps. In this case, we can generate two different families: the incomplete polynomial finite element family $\mathcal P_r^-\alt^k$ and the complete polynomial family  $\mathcal P_r\alt^k$. 

\bigskip

We summarize the main steps in \Cref{fig:main-steps}. The spaces produced in the construction are the following:
\begin{enumerate}
	\item $C_{\iota^*\iota^*} \mathcal P^- \alt^{k,\ell}$ for $k \le \ell$ {(\Cref{prop:Pminus-ii})}; \label{list}
	\item $C_{\iota^*\iota^*} \mathcal P^- \W^{k,\ell}$ for $k \le \ell$ {(\Cref{prop:sym-reduction})}; 
	\item $C_{\iota^* \jmath^*} \mathcal P^- \alt^{k,\ell}$ for $k \le \ell$ {(\Cref{prop:moving-dofs-alt})}; 
	\item $C_{\iota^*\jmath^*} \mathcal P^- \W^{k,\ell}$ for $k \le \ell$ ({\Cref{prop:moving-dofs-W}}).\\
\noindent  For the iterated constructions, we have 
\item $C_{\iota^*\iota^*}\mathcal P^- \W^{k,\ell}_{[p]}$ for $k \le \ell + p -1$ {(\Cref{iiWp})};
{\item  $C_{\iota^*\jmath^*_{[p]}} \mathcal P^-\Alt^{k,\ell}$ for $k \le \ell + p -1$ (\Cref{prop:Wp});} 
\item $C_{\iota^*\jmath^*_{[p]}} \mathcal P^-\W^{k,\ell}_{[p]}$ for $k \le \ell + p -1$ {(\Cref{ijpW})}. 
\end{enumerate}
The spaces of symmetries (2)(4)(7) are candidates for discrete BGG complexes.

 \begin{figure}[htbp]
 \centering
\begin{tikzpicture}[
    level distance = 3cm,
    sibling distance = 3.5cm,
    every node/.style = {rectangle, minimum size = 0.6cm},
    invisible/.style = {draw=none, minimum size=0pt},
    edge from parent/.style={draw, ->}
]
\node (root) [label=left:\textcolor{red}{Lagrange, N\'ed\'elec, Raviart--Thomas}]  {$C_{\iota^\ast}\Alt^k=C_{\iota^\ast\rho^\ast}\Alt^{k, \ell}$}
    child {
        node (node1) [label=left:\textcolor{red}{full Regge}]  {(1) $C_{\iota^*\iota^*} \Alt^{k,\ell}$}
        child { 
            node (node2)[label=left:\textcolor{red}{\shortstack{Regge\\  $\mathrm{MCS}^{\top}$\\TDNNS/HHJ}}] {(2)$C_{\iota^{*}\iota^*} \W^{k,\ell}$}
            child { 
                node (node4)[label=left:\textcolor{red}{\shortstack{Regge\\ HLZ\\TDNNS/HHJ}}] {(4)$C_{\iota^{*}\jmath^*} \W^{k,\ell}$}
            }
        }
        child[edge from parent/.style={draw=none}] { 
            node [invisible] {} 
            child[edge from parent/.style={draw=none}] { 
                node (node3) {(3)$C_{\iota^{*}\jmath^*} \Alt^{k,\ell}$} 
            } 
        }
        child { 
            node (node5)[label=right:\textcolor{red}{MCS ($p=2$)}] {(5)$C_{\iota^{*}\iota^*} \W_{[p]}^{k,\ell}$}
            child { 
                node (node7) {(7)$C_{\iota^{*}\jmath_{[p]}^*} \W_{[p]}^{k,\ell}$}
            }
        }
        child[edge from parent/.style={draw=none}] { 
            node [invisible] {} 
            child[edge from parent/.style={draw=none}] { 
                node (node6)  {(6)$C_{\iota^{*}\iota_{[p]}^*} \Alt^{k,\ell}$} 
            } 
        }
    };

\path (root) -- (node1) node[pos=0.5, right=10pt, text=blue, font=\small] {moving DoFs};
\path (node1) -- (node2) node[pos=0.5, left=10pt, text=blue, font=\small] {sym reduction};
\path (node3) -- (node4) node[pos=0.5, below=10pt, text=blue, font=\small] {sym reduction};
\path (node2) -- (node4) node[pos=0.5, left=10pt, text=blue, font=\small] {moving DoFs};
\path (node1) -- (node3) node[pos=0.5, left=10pt, text=blue, font=\small] {moving DoFs};
\path (node6) -- (node3) node[pos=0.5, below=15pt, below=10pt, text=red, font=\small] {$p=1$};
\path (node7) -- (node4) node[pos=0.5, below=15pt, below=10pt, text=red, font=\small] {$p=1$};
\path (node1) -- (node5) node[pos=0.5, right=10pt, text=red, font=\small] {};
\path (node3) -- (node6) node[pos=0.5, right=10pt, text=red, font=\small] {};
\path (node5) -- (node7) node[pos=0.5, left=5pt, text=blue, font=\small] {moving DoFs};
\path (node1) -- (node6) node[pos=0.5, right=20pt, below=15pt, text=blue, font=\small] {moving DoFs};
\path (node6) -- (node7) node[pos=0.5, below=10pt, text=blue, font=\small] {sym reduction};

\draw [->] (node1) -- (node3) node[midway, above, text=red, font=\small] {};
\draw [dashed, ->] (node3) -- (node4) node[midway, above, text=red, font=\small] {};
\draw [dashed, ->] (node6) -- (node7) node[midway, above, text=red, font=\small] {};
\draw [->] (node1) -- (node6);
\draw [dashed, ->] (node6) to[bend left=30] (node3) node[midway, below=5pt, text=red, font=\small] {}; 
\draw [dashed, ->] (node7) to[bend left=30] (node4) node[midway, below=5pt, text=red, font=\small] {}; 
\end{tikzpicture}
\caption{Main steps of the construction by moving DoFs and symmetry reduction. The numbers indicate the items in the list on Page \pageref{list}.}
\label{fig:main-steps}
 \end{figure}

\bigskip

\noindent{\bf Remarks on complexes and cohomologies.} 
 As we mentioned before, the ultimate goal of this study is to provide a cohomology-preserving discretization of the BGG complexes. 
For a complex of finite-dimensional vector spaces,
$$
\begin{tikzcd}
0 \arrow{r} & V^{0 } \arrow{r}{d^{0}} & V^{1}\arrow{r}{d^{1}} &\cdots \arrow{r}{} & V^{n} \arrow{r}{} & 0, 
\end{tikzcd}
$$
a necessary condition for it to be exact is that the Euler characteristic is zero. That is,
\begin{equation}\label{dim-condition}
\sum_{i=0}^{n}(-1)^{i}\dim V^{i}=0. 
\end{equation}

For the lowest-order case (referred to as tensorial Whitney forms) we construct the tensor-valued distribution spaces, which serve as the candidates for the spaces following the zig-zag pattern. With these specifications, we can formally define the complete sequence, generalizing the diagrams for the BGG complexes (see \Cref{fig:table}) and the iterated BGG complexes (see \Cref{fig:iterative-bgg-fe}) to arbitrary dimensions.

Although this paper does not establish the result that the cohomologies of the finite element complexes are isomorphic to their continuous counterparts (except in dimensions up to three, as demonstrated in \cite{hu2025distributional} and \cite{christiansen2023extended}), we verify that \eqref{dim-condition} holds for all complexes when the domain has trivial topology (homeomorphic to a ball). This dimension condition also extends to the iterated case. This should be a strong indication that the discrete BGG and iterated BGG complexes possess the correct cohomologies. A comprehensive analysis of the operators and cohomologies is left for future work.

 
\subsection{Notations}

 In this subsection, we recall some notations in the study of form-valued forms and vector proxies. Let $V$ be a vector space,  typically $\mathbb R^n$ in this paper.
We use $\Alt^k(V)$ to denote the algebraic alternating $k$-forms on $V$,
and $\Alt^{k,\ell}(V) := \Alt^{k}(V) \otimes \Alt^{\ell}(V)$. \footnote{Throughout this paper, $\otimes$ is the tensor product with respect to $\mathbb R$.} 
When there is no danger of confusion, we also drop $V$ and write $ \Alt^{k}$ and $\Alt^{k,\ell}$.  
Then the space $C^{\infty}(\Omega) \otimes \Alt^{k}$ consists of smooth differential $k$-forms on a manifold $\Omega$. We use $d^{\bs}$ to denote the exterior derivative $d^{k}: C^{\infty}\otimes\Lambda^{k, \ell}\to C^{\infty}\otimes\Lambda^{k+1, \ell}$. Note that $d^{\bs}$ acts on the first index ($k$, rather than $\ell$). 

Hereafter, $\mathcal T$ is a triangulation and $\mathcal T_{<n}$ denotes the set of all subsimplices of $\mathcal T$ with dimension less than $n$. Similarly, we define $\mathcal T_{>n}, \mathcal T_{\le n}, \mathcal T_{\ge n}$, and define $\mathcal T_{[a:b]} := \mathcal T_{\ge a} \cap \mathcal T_{\le b}$. 
In the sequel, we use $\VV:=\mathbb{R}^{n}$, $\MM:=\mathbb{R}^{n\times n}$,  $\SS:=\mathbb{R}^{n\times n}_{\sym}$, and $\TT:=\mathbb{R}^{n\times n}_{\dev}$ to denote the spaces of vectors, matrices, symmetric matrices, and traceless matrices, respectively.
We introduce notation for several linear algebraic operations on $\mathbb{R}^{n\times n}$:
\begin{itemize}
\item $\skw: \mathbb{M}\to \mathbb{K}$ and $\sym:\mathbb{M}\to \mathbb{S}$ denote taking the skew-symmetric and symmetric part of a matrix.
\item $\tr:\mathbb{M}\to\mathbb{R}$ is the trace, given by summing the diagonal entries of a matrix.
\item $I: \mathbb{R}\to \mathbb{M}$ is defined by $I(u) := uI$, identifying a scalar $u$ with the corresponding diagonal matrix.
\item $\dev:\mathbb{M}\to \mathbb{T}$ is the deviator (trace-free part), $\dev(u):= u - \tfrac{1}{n}\tr(u)I$.
\end{itemize}

In three dimensions only, there is an isomorphism between skew-symmetric matrices in $\mathbb{K}$ and vectors in $\mathbb{V}$ via
\[
 \mskw
 \left (
 \begin{array}{c}
 v_{1}\\v_{2}\\v_{3}
 \end{array}
 \right )
 =
 \begin{pmatrix}
0 & -v_{3} & v_{2}\\
v_{3} & 0 & -v_{1}\\
-v_{2} & v_{1} & 0
\end{pmatrix}.
\]
This map $\mskw: \mathbb{V}\to \mathbb{K}$ is an isomorphism satisfying $\mskw(v)\,w = v\times w$ for any $v,w\in \mathbb{V}$; the vector $v$ is called the \emph{axial vector} of $\mskw(v)$. We also define $\vskw := \mskw^{-1}\circ \skw: \mathbb{M}\to \mathbb{V}$, taking a matrix to the axial vector of its skew-symmetric part.

Finally, let $S: \mathbb{M}\to\mathbb{M}$ be the linear map given by $S(u) := u^T - \tr(u)\,I$. One can verify that $S$ is invertible (with $S^{-1}(u) = u^T - \frac{1}{n-1} \tr(u)I$) for any $n>1$.

We summarize the notations and terms that will be used below, see \Cref{tab:notation-1,tab:notation-2,tab:notation-3}, with references to the place where they first appear.

\begin{table}[htbp]
\TBL{
\begin{center}
\begin{tabular}{|c|c|}
\hline
Notations & Meanings
\\ \hline 
$\Omega$ & $n$ dimensional domain\\
$\mathcal T$ & triangulation of $\Omega$ \\
$\mathcal T_{s}, \mathcal T_{\le a}, \mathcal T_{<a}, \mathcal T_{>a}, \mathcal T_{\ge a}, \mathcal T_{[a,b]}$ & the collection of the faces \\ & with dimension $s, \le a, <a, \ge a, > a $ and $\in [a,b]$ \\
$\mathcal H_{dR}$ & de Rham cohomology\\ 
$\lambda_{i}$ & barycentric coordinates \\ 
$\phi_{\sigma}$, \eqref{def:whitney}, p.\pageref{def:whitney} & Whitney forms \\ 
$[n]$ & $\{1,2,\cdots, n\}$ \\ 
$X(n,k)$ & increasing $k$ sets in $\{1,2,\cdots,n\}$ \\ 
DoFs & abbreviation for degrees of freedom \\
BGG & Bernstein-Gelfand-Gelfand \\
\hline
\end{tabular}
\end{center}}
{\caption{General notations.}
\label{tab:notation-3}}
\end{table}

\begin{table}[htbp]
\TBL{
\begin{center}
\begin{tabular}{|c|c|}
\hline
Notations & Meanings
\\ \hline 
\textbf{Forms} & \\
$\Alt^{k}$ & alternating $k$-forms \\ 
$\mathcal P_r\alt^k$, \Cref{sec:high-order}, p.\pageref{sec:high-order} & polynomial $k$-forms \\
$\mathcal B_r\alt^k$, \Cref{sec:high-order}, p.\pageref{sec:high-order}& bubble space of polynomial $k$-forms \\
 $\mathcal P_r^-\alt^k$, \Cref{sec:high-order}, p.\pageref{sec:high-order} & incomplete polynomial $k$-forms \\
$\mathcal B_r\alt^k$, \Cref{sec:high-order}, p.\pageref{sec:high-order} & bubble space of incomplete polynomial $k$-forms \\
$N^{\ell}(\sigma, K)$, \eqref{def:Nell}, p.\pageref{def:Nell} & auxiliary space for bubbles \bigskip\\ 
\textbf{Form-valued Forms} & \\ 
$\Alt^{k, \ell}:=\Alt^{k}\otimes \Alt^{\ell}$ & alternating $\ell$-form-valued alternating $k$-forms (($k,\ell$) forms) \\
$\W^{k, \ell}$, \eqref{def:W}, p.\pageref{def:W} & subspace of $\Alt^{k, \ell}$ in $\ker(\mathcal{S}^{\bs}_{\dagger})$\\
$\widetilde{\W}^{k, \ell}$, \eqref{def:tildeW}, p.\pageref{def:tildeW} & subspace of $\Alt^{k, \ell}$ in $\ker(\mathcal{S}^{\bs})$\\
$\W^{k, \ell}_{[p]}$, \eqref{def:Wp}, p.\pageref{def:Wp} & subspace of $\Alt^{k, \ell}$ in $\ker(\mathcal{S}^{\bs}_{\dagger,[p]})$, \\
$\widetilde{\W}^{k, \ell}_{[p]}$, \eqref{def:tildeWp}, p.\pageref{def:tildeWp}&  subspace of $\Alt^{k, \ell}$ in $\ker(\mathcal{S}^{\bs}_{[p]})$ \\
$\mathcal P^- \alt^{k,\ell}$,  \eqref{def:p1-}, p.\pageref{def:p1-}& Whitney $(k,\ell)$ forms \\ 
$\mathcal B^- \alt^{k,\ell}$, \eqref{def:B-}, p.\pageref{def:B-} & bubbles of Whitney $(k,\ell)$ forms \\
$\mathcal P_r^- \Alt^{k,\ell}$,  \eqref{def:P-}, p.\pageref{def:P-}& incomplete polynomial $(k,\ell)$ forms \\
$\mathcal B_r^- \alt^{k,\ell}$,  \eqref{eq:Br-}, p. \pageref{eq:Br-} &  bubbles of incomplete polynomial $(k,\ell)$ forms \\ 
$\mathcal P_r^- \W^{k,\ell}$,  \eqref{def:Pr-}, p.\pageref{def:Pr-} & incomplete polynomial subspace in $\ker(\mathcal{S}^{\bs}_{\dagger})$\\
  $\mathcal P_r^{-} \W^{k,\ell}_{[p]}$,  \Cref{lem:Pr-Wp}, p.\pageref{lem:Pr-Wp}   & incomplete polynomial subspace in  $\ker(\mathcal S^{\bs}_{\dagger,[p]})$ \\
$\mathcal P_r \alt^{k,\ell}$ & polynomial $(k,\ell)$ forms $\mathcal P_r \otimes \alt^{k,\ell}$ \\ 
$\mathcal B_r \alt^{k,\ell}$,  \eqref{eq:Br}, p.\pageref{eq:Br} & bubbles of polynomial $(k,\ell)$ forms  \\ 
$\mathcal P_r \W^{k,\ell}$, $\mathcal P_r \W^{k,\ell}_{[p]}$  & polynomial subspaces $\mathcal P_r \otimes \W^{k,\ell}$ and $\mathcal P_r \otimes \W^{k,\ell}_{[p]}$ \\ 
& \\ 
\hline 
\end{tabular}
\end{center}
}
{\caption{Notations for continuous and discrete spaces.}
\label{tab:notation-1}}
\end{table}

\begin{table}[htbp]
\TBL{
\begin{center}
\begin{tabular}{|c|c|}
\hline
Notations & Meanings
\\ \hline 

$d$ & differential operators for forms and form-valued forms \\
$\mathcal S$, \eqref{def:S}, p.\pageref{def:S} & connecting maps in the BGG diagram \\ 
$\mathcal S_{\dagger}$, \eqref{eq:Sdagger}, p.\pageref{eq:Sdagger}& adjoint operator of $\mathcal S$  \\ 
$\mathcal S_{[p]}$, \eqref{def:Sp}, p.\pageref{def:Sp} & iterated connecting maps, composition of $\mathcal S$ \\ 
  $\mathcal S_{\dagger,[p]}$, \eqref{def:Sdaggerp}, p.\pageref{def:Sdaggerp}& adjoint operator of $\mathcal S_{[p]}$ \\ 
$\kappa$, \eqref{def:kappa}, p.\pageref{def:kappa} & Koszul operator for forms and form-valued forms \\
$\iota^*$, \eqref{def:trace}, p. \pageref{def:trace} & traces / pullback of the inclusion of forms \\ 
$\iota^*\iota^*$, \eqref{double-trace}, p.\pageref{double-trace} & two-sided traces for form-valued forms \\ 
$\jmath^*$, \eqref{def:jstar}, p. \pageref{def:jstar} & generalized trace operators\\
 $\vartheta_{E,q}^*$, \eqref{def:vartheta}, p.\pageref{def:vartheta} & generalized trace (edge normal, etc.) \\ 
$\rho^*$, \eqref{def:rho}, p.\pageref{def:rho} & restriction (value on edges, etc.) \\
$\jmath^*_{[p]}$, \eqref{def:jp}, p.\pageref{def:jp} & interpolated generalized trace \\ 
$C_{\iota^*}$ & (prefix) $\iota^*$ conforming finite element forms \\
$C_{\rho^*}$ & (prefix) $\rho^*$ conforming finite element forms  \\ 
$C_{\iota^*\iota^*}$ & (prefix) $\iota^*\iota^*$ conforming finite element form-valued forms \\ 
$C_{\iota^*\jmath^*}$, $C_{\iota^*\jmath_{[p]}^*}$ & (prefix) $\iota^*\jmath^*$ (and $\iota^*\jmath_{[p]}^*)$ conforming finite element form-valued forms \\  
\hline
\end{tabular}
\end{center}}
{\caption{Notations for the operators and the prefixes.}
\label{tab:notation-2}}
\end{table}

\section{BGG complexes and form-valued form revisited}\label{sec:bgg}

In this section, we first revisit the BGG machinery in the setting of \cite{arnold2021complexes,vcap2022bgg}. Then for the purpose of this paper, we provide several generalizations, including introducing the $\mathcal{S}_{\dagger}$ operators and deriving the Koszul version of the BGG complexes. Moreover, we introduce the iterated construction, by which we obtain more complexes in the BGG diagram. 


The goal of this paper is to give a canonical discretization of the form-valued forms $C^{\infty} \otimes \Alt^{k,\ell}$. 
Moreover, we incorporate more symmetries than the skew-symmetry of the first $k$ indices and the last $\ell$ indices. 
Specifically, the symmetry considered in this paper is given by the operators $\mathcal S$ and $\mathcal S_{\dagger}$ in the framework of the BGG construction 
\cite{arnold2021complexes,vcap2022bgg}:
$$
\mathcal S^{k,\ell} : \Alt^{k,\ell} \to \Alt^{k+1,\ell-1}, \quad \mathcal S_{\dagger}^{k,\ell} : \Alt^{k,\ell} \to \Alt^{k-1,\ell+1}.
$$

The definition follows from \cite{arnold2021complexes}: for $\omega\in \alt^{k, \ell}(V)$ and $v_{1}, \cdots, v_{k+1}\in V$,  $u_{1}, \cdots, u_{\ell-1}\in V$, let the connecting mapping be defined as
\begin{equation}
\label{def:S}
\mathcal S^{k,\ell}\omega(v_{1}, \cdots, v_{k+1})(u_{1}, \cdots, u_{\ell-1}):=\sum_{j=1}^{k+1}(-1)^{j+1}\omega(v_{1}, \cdots, \widehat{v_{j}}, \cdots, v_{k+1})(v_{j}, u_{1}, \cdots, u_{\ell-1}).
\end{equation}
Here, the hat notation means that $v_j$ is deleted from the sequence $v_1,\cdots, v_{k+1}$.

The BGG construction in \cite{arnold2021complexes} follows the diagram below:
\begin{equation}\label{diagram-nD}
\begin{tikzcd}[column sep=tiny]
0 \arrow{r} & C^{\infty}\otimes\alt^{0,0}  \arrow{r}{d} &C^{\infty}\otimes\alt^{1,0}  \arrow{r}{d} & \cdots \arrow{r}{d} & C^{\infty}\otimes \alt^{n,0} \arrow{r}{} & 0\\
0 \arrow{r} & C^{\infty}\otimes\alt^{0,1}  \arrow{r}{d} \arrow[ur, "\mathcal S^{0,1}"] &C^{\infty}\otimes\alt^{1,1}  \arrow{r}{d}  \arrow[ur, "\mathcal S^{1,1}"] & \cdots \arrow{r}{d}  \arrow[ur, "\mathcal S^{n-1,1}"] & C^{\infty}\otimes \alt^{n,1} \arrow{r}{} & 0\\[-15pt]
 & \vdots & \vdots & {} & \vdots & {} \\[-15pt]
0 \arrow{r} & C^{\infty}\otimes\alt^{0,n-1}  \arrow{r}{d} &C^{\infty}\otimes\alt^{1,n-1}  \arrow{r}{d} & \cdots \arrow{r}{d} & C^{\infty}\otimes \alt^{n,n-1} \arrow{r}{} & 0\\
0 \arrow{r} & C^{\infty}\otimes\alt^{0,n}  \arrow{r}{d} \arrow[ur, "\mathcal S^{0,n}"] &C^{\infty}\otimes\alt^{1,n}  \arrow{r}{d}  \arrow[ur, "\mathcal S^{1,n}"] & \cdots \arrow{r}{d}  \arrow[ur, "\mathcal S^{n-1,n}"] & C^{\infty}\otimes \alt^{n,n} \arrow{r}{} & 0.
\end{tikzcd}
\end{equation}
Here, $d^{k}: C^{\infty}\otimes \alt^{k,\ell} \to C^{\infty}\otimes \alt^{k+1,\ell}$ acts on the first index.

We introduce the spaces of alternating forms with symmetries: for $0 \le k, \ell \le n$, 
\begin{equation}\label{def:W}
\W^{k, \ell} := 
 \ran(\cS^{k,\ell})^{\perp} \subseteq \alt^{k,\ell} , \text{ when } k \le \ell, 
\end{equation}
and
\begin{equation}\label{def:tildeW}
\widetilde{\W}^{k, \ell}:=
\ker(\cS^{k,\ell})  \subseteq \alt^{k,\ell}   \text{ when } k \geq {\ell}.
\end{equation}
Note that the indices $k=\ell$ appears in both \eqref{def:W} and \eqref{def:tildeW}. 
In this paper, the spaces involving $\widetilde\W$ will be discretized as distributions, while the spaces involving $\mathbb W$ will be discretized by piecewise functions.

The following theorem follows from \cite{arnold2021complexes}.
\begin{theorem}
The following sequence is a complex (referred to as the BGG complex linking rows $\ell$ and $\ell+1$ of \eqref{diagram-nD} hereafter)
\begin{equation} \label{BGG-seq}
\begin{tikzcd}[column sep = small]
0\ar[r] & C^{\infty} \otimes \W^{0,\ell} \ar[r,"\pi\circ d"] & C^{\infty} \otimes \W^{1,\ell} \ar[r,"\pi\circ d"] & \cdots   \ar[r,"\pi\circ d"] & C^{\infty} \otimes \W^{\ell,\ell} \ar[r,"d"] & ~ \ar[llld,"\mathcal S^{-1}"] \\
&&  ~ \ar[r,"d"] & C^{\infty} \otimes \widetilde{\W}^{\ell+1,\ell+1} \ar[r,"d"] &   C^{\infty} \otimes \widetilde{\W}^{\ell+2,\ell+1} \ar[r,"d"] & \cdots  \ar[r,"d"] & C^{\infty} \otimes \widetilde{\W}^{n,\ell+1}\ar[r] & 0,
\end{tikzcd}
\end{equation}
where the operators $\pi$ are the projections to the tensor spaces with symmetries $\W^{\bs, \bs}$ (with respect to the Frobenius norm). 
The cohomology of \eqref{BGG-seq} is isomorphic to $ \mathcal H_{dR}^{\bs}(\Omega)\otimes (\Alt^{\ell} \oplus \Alt^{\ell+1})$, where $ \mathcal H_{dR}^{\bs}(\Omega)$ is the de~Rham cohomology. 
\end{theorem}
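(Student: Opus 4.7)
The statement is the core BGG theorem of \cite{arnold2021complexes} specialized to the setting where both ``input'' sequences are de Rham complexes tensored with fixed alternating spaces $\Alt^\ell$ and $\Alt^{\ell+1}$. The plan is to reduce the proof to that general construction by assembling three ingredients: the pointwise rank properties of the linking map $\mathcal{S}$, a bicomplex structure on the two relevant rows of \eqref{diagram-nD}, and a standard diagram chase for the cohomology.

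First I would verify, either by a direct computation from \eqref{def:S} or by quoting the representation-theoretic classification in \cite{arnold2021complexes}, that the pointwise map $\mathcal{S}^{k,\ell+1}: \Alt^{k,\ell+1} \to \Alt^{k+1,\ell}$ is surjective for $k \le \ell$ and injective for $k \ge \ell+1$. These rank conditions make the spaces $\W^{k,\ell}$ in \eqref{def:W} have matching dimensions across the zig-zag at $k=\ell$, and they ensure that the orthogonal projector $\pi$ onto $\ran(\mathcal{S})^\perp$ is well-defined. Simultaneously, a Leibniz-type calculation on \eqref{def:S} yields the anti-commutativity $d\mathcal{S} + \mathcal{S} d = 0$, so the two rows $\ell, \ell+1$ of \eqref{diagram-nD} with their connecting $\mathcal{S}$ maps form an honest bicomplex.

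Next I define the differentials of \eqref{BGG-seq} in the standard BGG fashion: $\pi \circ d$ on the upper segment $k \le \ell$, the zig-zag operator $d \circ \mathcal{S}^{-1} \circ d$ at $k=\ell$ where $\mathcal{S}^{-1}$ is inverted on the subspace on which the first step makes it an isomorphism, and the restriction of $d$ to $\ker \mathcal{S}$ on the lower segment $k \ge \ell+1$ (stable because the bicomplex relations push $d$ through the kernel). Verifying that each consecutive composition vanishes is then a direct combination of $d^2=0$, the anti-commutativity $d\mathcal{S} + \mathcal{S} d = 0$, and the rank properties from the first step.

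Finally, the cohomology identification is a diagram chase, equivalent to computing the spectral sequence of the two-row bicomplex: the surjectivity and injectivity of $\mathcal{S}$ mean that every closed element of \eqref{BGG-seq} lifts to a closed element of the bicomplex and every exact element lifts to an exact one, yielding a quasi-isomorphism. The total cohomology of the bicomplex is the direct sum of the cohomologies of its two rows, each of which is the de~Rham complex tensored with a fixed alternating space, giving $\mathcal{H}_{dR}^\bullet(\Omega)\otimes(\Alt^\ell \oplus \Alt^{\ell+1})$. The main obstacle is the bookkeeping around the zig-zag---ensuring that $\mathcal{S}^{-1}$ is applied on the correct subspace so that the composite squares to zero, and that no cohomology leaks into the removed summands---while the remaining steps are routine homological algebra and are carried out at this level of generality in \cite{arnold2021complexes}.
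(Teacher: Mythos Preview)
Your outline is essentially the argument of \cite{arnold2021complexes}, which is exactly what the paper does here: the theorem is stated as following from that reference, with no proof given in the paper itself. So your approach and the paper's coincide at the level of ``cite the source,'' and your sketch is a faithful summary of that source's method (bicomplex from two de~Rham rows, anti-commutativity $d\mathcal{S}=-\mathcal{S}d$, zig-zag via $\mathcal{S}^{-1}$, cohomology by diagram chase).

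One slip worth fixing: you have the rank conditions on $\mathcal{S}^{k,\ell+1}:\Alt^{k,\ell+1}\to\Alt^{k+1,\ell}$ reversed. It is \emph{injective} for $k\le\ell$ and \emph{surjective} for $k\ge\ell$ (hence bijective at $k=\ell$, which is what makes $\mathcal{S}^{-1}$ available at the zig-zag); see Lemma~\ref{lem:S-inj-suj}. The injectivity on the upper segment is what guarantees that projecting onto $\ran(\mathcal{S})^{\perp}$ kills exactly the image and nothing more, and the surjectivity on the lower segment is what makes $\ker(\mathcal{S})$ a complement of the right size. With the directions swapped the dimension counts and the quasi-isomorphism argument would not go through, so correct this before writing the details out.
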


However, complexes of the form of \eqref{BGG-seq} do not exhaust all the possibilities. We may compose the $\mathcal{S}^{\bs}$ operators in \eqref{diagram-nD}, leading to new connecting maps, and connect any two rows in \eqref{diagram-nD}. In three space dimensions, this {\it iterated BGG construction} leads to the $\grad\curl$, $\curl\div$ and $\grad\div$ complexes, which were derived in  \cite{arnold2021complexes}. For general dimensions, we show that $\mathcal S^{\bs}$ also enjoys the desired injectivity/surjectivity properties, leading to more BGG complexes.

\subsection{The $\mathcal S_{\dagger}$ operator and adjointness}

In the above framework, the spaces in the BGG complex take value in $\ran(\mathcal S^{k-1,\ell+1})^{\perp}$. The orthogonal completement is not straightforward to work with for the purpose of this paper. Below we will instead use $\ker(\mathcal S_{\dagger}^{k,\ell})$, the kernel of the adjoint operator of $\mathcal{S}$. The introduction of $S_{\dagger}$ is closer to the BGG construction in an algebraic and geometric context \cite{vcap2001bernstein}.

We define $\mathcal S_{\dagger}^{k,\ell}: \Alt^{k,\ell} \to \Alt^{k-1,\ell+1}$ as follows:  for $\omega\in \alt^{k, \ell}(V)$ and $v_{1}, \cdots, v_{k-1}\in V$,  $u_{1}, \cdots, u_{\ell+1}\in V$, 

\begin{equation}
\label{eq:Sdagger}	
\mathcal S_{\dagger}^{k,\ell}\omega(v_{1}, \cdots, v_{k-1})(u_{1}, \cdots, u_{\ell+1})=\sum_{j=1}^{\ell+1}(-1)^{j+1}\omega(u_{j}, v_{1}, \cdots, v_{k-1})(u_{1}, \cdots,  \widehat{u_{j}}, \cdots, u_{\ell+1}).
\end{equation}

\begin{lemma}
\label{lem:S-inj-suj}
We have the following properties.
	\begin{enumerate}
    \item $\mathcal S^{k,\ell}_{\dagger}$ and $\mathcal S^{k-1, \ell+1}$ are adjoint with respect to the Frobenius norm, and therefore $\ker(\mathcal S_{\dagger}^{k,\ell}) =\ran (\mathcal S^{k-1, \ell+1})^{\perp}$.  Here $\perp$ is the orthogonal complement with the Frobenius product.
			\item  When $k \le \ell+1$, $\mathcal S_{\dagger}^{k,\ell}$ is surjective, and $\mathcal S^{k-1, \ell+1}$ is injective.
		\item  When $k \geq \ell-1$, $\mathcal S^{k,\ell} $ is surjective while $\mathcal S_{\dagger}^{k+1, \ell-1}$ is injective.
	\end{enumerate}
\end{lemma}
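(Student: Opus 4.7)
The plan is to prove (1) by direct computation from the definitions and then derive (2) and (3) by adjointness, using the known surjectivity/injectivity of $\mathcal S^{k,\ell}$ recorded in \cite{arnold2021complexes}.

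For part (1), I would fix a basis $\{e_{i}\}$ of $V$ with dual basis $\{e^{i}\}$, and take the induced orthonormal basis $\{e^{I}\otimes e^{J}: I\in X(n,k-1),\,J\in X(n,\ell+1)\}$ of $\alt^{k-1,\ell+1}$ (and analogously for $\alt^{k,\ell}$) under the Frobenius pairing. Expand $\alpha$ and $\beta$ in these bases, plug into \eqref{def:S} and \eqref{eq:Sdagger}, and compare $\langle\mathcal S^{k-1,\ell+1}\alpha,\beta\rangle_{F}$ with $\langle\alpha,\mathcal S^{k,\ell}_{\dagger}\beta\rangle_{F}$. Each side becomes a sum indexed by a choice of $j$ and a pair of multi-indices, and the crucial verification is that the sign $(-1)^{j+1}$ coming from removing the $j$-th outer slot in the definition of $\mathcal S$ matches, term-by-term, the sign obtained from removing the $j$-th inner slot in $\mathcal S_{\dagger}$; both correspond to the same transposition on an ordered tuple of arguments. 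A cleaner coordinate-free route is to note that both $\mathcal S$ and $\mathcal S_{\dagger}$ factor through contraction between the two tensor slots followed by full antisymmetrization of the remaining outer (respectively inner) arguments; since antisymmetrization is self-adjoint under the Frobenius pairing, $\mathcal S^{k-1,\ell+1}$ and $\mathcal S^{k,\ell}_{\dagger}$ are then adjoint. The orthogonality relation $\ker(\mathcal S^{k,\ell}_{\dagger})=\ran(\mathcal S^{k-1,\ell+1})^{\perp}$ follows immediately from finite-dimensional linear algebra.

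For parts (2) and (3), I would invoke the established facts from \cite{arnold2021complexes}: $\mathcal S^{k,\ell}$ is injective when $k\le \ell-1$ and surjective when $k\ge \ell-1$. Combined with the adjointness from (1), surjectivity of $\mathcal S^{k,\ell}_{\dagger}$ is equivalent to injectivity of its adjoint $\mathcal S^{k-1,\ell+1}$, which by the Arnold--Hu criterion requires $(k-1)\le (\ell+1)-1$, i.e., $k\le \ell+1$; together with the direct injectivity statement for $\mathcal S^{k-1,\ell+1}$, this yields (2). Likewise, injectivity of $\mathcal S^{k+1,\ell-1}_{\dagger}$ is equivalent to surjectivity of $\mathcal S^{k,\ell}$, which holds precisely when $k\ge \ell-1$; this yields (3).

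The main obstacle is the combinatorial sign-tracking in part (1); everything else is routine dualization. The coordinate-free reformulation sidesteps the index manipulations but still requires a clean identification of $\mathcal S$ and $\mathcal S_{\dagger}$ as compositions of contraction and antisymmetrization on complementary slots. If one prefers a fully self-contained argument (not citing \cite{arnold2021complexes} for the surjectivity of $\mathcal S$), a fallback is to establish an identity of Kostant-Laplacian type, $\mathcal S\mathcal S_{\dagger}+\mathcal S_{\dagger}\mathcal S=c(k,\ell,n)\,\mathrm{id}$ on each isotypic component, with $c\neq 0$ exactly in the stated ranges; such an identity produces explicit (up to scalar) right inverses for $\mathcal S$ and $\mathcal S_{\dagger}$ on the appropriate spaces.
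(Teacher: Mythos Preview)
Your proposal is correct and matches the paper's approach: part (1) is proved by a direct computation on monomial basis forms $dx^{\sigma}\otimes dx^{\tau}$ (your first plan), and for (2)--(3) the main text cites \cite{arnold2021complexes} exactly as you do. The appendix also supplies a self-contained argument via the identity $\langle sa,sb\rangle=\langle s_{\dagger}a,s_{\dagger}b\rangle+(n-2k)\langle a,b\rangle$ on the free Abelian groups $FX(n,k)$ generated by increasing multi-indices, which is precisely the concrete combinatorial realization of the Kostant-type identity you sketch as a fallback.
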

The proof for the surjectivity and injectivity ((2) and (3) above) can be found in \cite{arnold2021complexes}. For clarity, we present the proof of \Cref{lem:S-inj-suj} in the appendix.
\begin{proof}
We prove (1) in \Cref{lem:adjointness}, and (2) and (3) in \Cref{subsec:inj/sur}.
\end{proof}

With the above properties of $\mathcal S_{\dagger}$, we can reformulate $\mathbb W^{k,\ell}$ as 
\begin{equation}\label{def:W}
\W^{k, \ell} :=  
 \ker(\cS_{\dagger}^{k,\ell}) \subseteq \alt^{k,\ell} ,  \text{ when } k \le \ell.
\end{equation}

In three dimensions, the form-valued forms $\Alt^{k,\ell}$ and the symmetric ones $\W^{k,\ell}$ can be illustrated via vector proxies. 
\begin{table}[htbp]
\TBL{
\begin{center}
\begin{tabular}{ccccc}
\diagbox[width=\dimexpr \textwidth/20+\tabcolsep\relax, height=0.7cm]{ $k$ }{$\ell$}&  0 & 1 & 2 & 3 \\
0 & $\RR$ & $\VV$ & $\VV$ &  $\RR$ \\
1 & $\VV$ & $\MM$ & $\MM$  & $\VV$ \\
2  & $\VV$ & $\MM$ & $\MM$  & $\VV$ \\
3 & $\RR$ & $\VV$ & $\VV$ &  $\RR$  \\
\end{tabular}	
\hspace{2em}
\begin{tabular}{ccccc}
\diagbox[width=\dimexpr \textwidth/20+\tabcolsep\relax, height=0.7cm]{ $k$ }{$\ell$}&  0 & 1 & 2 & 3 \\
0 & $\RR$ & $\VV$ & $\VV$ &  $\RR$ \\
1 & $\VV$ & $\SS $ & $\TT$  & $\VV$ \\
2  & $\VV$ & $\TT$ & $\SS$  & $\VV$ \\
3 & $\RR$ & $\VV$ & $\VV$ &  $\RR$  \\
\end{tabular}	\\
\end{center}
}
{\caption{Left: vector/matrix proxies of $\Alt^{k, \ell}$ in $\mathbb{R}^{3}$. Right: vector/matrix proxies of $\W^{k, \ell}$ and $\widetilde{\W}^{k,\ell}$ in $\mathbb{R}^{3}$ (see \eqref{def:W}). We highlight that two definitions $\W^{k,k}$ and $\widetilde{\W}^{k,k}$ lead to the same proxy in $\mathbb{R}^{3}$. Any space $\W^{k, \ell}$ with $k\neq \ell$ only appears in  \eqref{def:W} once. Thus listing all the cases as in the table on the right will not lead to ambiguity.}}
\end{table}

In general dimensions, $\W^{1,1}$ can be identified with the space of symmetric matrices $\mathbb S$ in $n$ dimensions; $\W^{n-1,1}$ corresponds to the space of traceless matrices $\mathbb T$ in $n$ dimensions; $\W^{2,2}$ corresponds to the space of algebraic curvature tensors $\mathbb{AC}$, encoding the symmetries of the Riemannian tensors ($(2,2)$-forms satisfying the algebraic Bianchi identity). More spaces and results can be found in \Cref{sec:4D-results}.

\subsection{Koszul operators and local polynomial spaces}

The Koszul operators (Poincar\'e operators on polynomial spaces) are an important tool for establishing exact sequences of polynomials \cite{eisenbud2013commutative}.   It serves as an essential building block for constructing Whitney forms and high-order versions for the de~Rham complex \cite{arnold2006finite,arnold2010finite,arnold2018finite,hiptmair1999canonical,hiptmair2001higher}. In this section, we further develop this technique and apply it to the construction of local shape function spaces of form-valued forms.


Recall that the Koszul operators $\kappa: C^{\infty}(\Omega)\otimes \Alt^{k}\to C^{\infty}(\Omega)\otimes \Alt^{k-1}$ are defined as 
\begin{equation}
\label{def:kappa}
\kappa\omega(v_{1}, \cdots, v_{k-1}):=\omega(x, v_{1}, \cdots, v_{k-1}), \quad \forall v_{1}, \cdots, v_{k-1}\in C^{\infty}(\Omega)\otimes V, 
\end{equation}
where $x$ is the Euler vector field (the vector field $\bm x:=(x_{1}, \cdots, x_{n})$). In vector proxies in $\mathbb{R}^{3}$, the $\kappa$ operator corresponds to $\otimes \bm x$, $\times \bm x$, and $\cdot \bm x$, respectively.
For simplicity, we consider smooth forms in the presentation below. For smooth forms, we introduce the following {\it Koszul complex}:
\begin{equation}\label{deRham-koszul}
\begin{tikzcd}[column sep = small]
0\arrow{r} & C^{\infty} \otimes \alt^{n} \ar[r,"\kappa^{n}"] &  	C^{\infty} \otimes \alt^{n-1} \ar[r,"\kappa^{n-1}"] & \cdots  \ar[r,"\kappa^{2}"] & C^{\infty} \otimes \alt^{1} \ar[r,"\kappa^{1}"] & C^{\infty} \otimes \alt^{0}  \ar[r] & 0.
\end{tikzcd}
\end{equation}
 An important property is that $\kappa$ maps polynomials of degree $r$ to those of degree $r+1$.
The relationship between $d$ and $\kappa$ has been investigated in various contexts. See \cite{arnold2006finite,arnold2010finite,arnold2018finite} for applications in Finite Element Exterior Calculus.   

The operators in the Koszul complex \eqref{deRham-koszul} are the symbols of the exterior derivatives, where one replaces derivatives $\frac{\partial}{\partial x^{i}}$ by the mutiplication by $x^{i}$. Next, we develop the BGG complex versions of \eqref{deRham-koszul}. For the BGG versions, we lose the interpretation of $\kappa$ as the contraction with the Eulerian vector field, but the interpretation of $\kappa$ as symbols of differential operators is still valid. We thus refer to the resulting complexes as the {\it symbol complexes}, or still the Koszul complexes. For example, in the symbol (Koszul) version of the elasticity complex, the operators $\sym\grad$, $\inc$ and $\div$ are replaced by $\sym(\bm x\otimes \bs)$, $\bm x\times \bs \times \bm x$, and $\bm x\cdot \bs$, respectively.

In two and three dimensions, symbol complexes with polynomial spaces for special instances of the BGG complexes have been studied in \cite{chen2022finiteelasticity,chen2022finitedivdiv}. These works provided direct proofs for the exactness of these polynomial symbol complexes. For the purpose of this paper, we will show the exactness of polynomial symbol complexes in any dimension for any forms. To this end, we view BGG diagrams from a different angle: the Koszul operators as ``differentials'' and the exterior derivatives as the null-homotopy operators. This serves as a new derivation of the BGG symbol complexes and establishes the desired properties.


The general forms of the symbol complexes call for more notations. For form-valued forms $C^{\infty}\otimes\alt^{k,\ell}$, there are two indices $k$ and $\ell$. Correspondingly, exterior derivatives and the Koszul operators can be defined for each of the slots.
To unify the notation, we use $\kappa^{k,\ell}: C^{\infty}\otimes\alt^{k,\ell}\to C^{\infty}\otimes\alt^{k-1,\ell}$ to denote the Koszul operator with respect to the first index. Recall that $d^{k,\ell}:C^{\infty}\otimes\alt^{k,\ell}\to C^{\infty}\otimes\alt^{k+1,\ell}$ is the exterior derivative in the first index. We also introduce Koszul type algebraic operators $K^{k, \ell}: C^{\infty}\otimes\alt^{k,\ell}\to C^{\infty}\otimes\alt^{k,\ell-1}$ and the exterior derivatives for the second index $D^{k, \ell}: C^{\infty}\otimes\alt^{k,\ell}\to C^{\infty}\otimes\alt^{k,\ell+1}$. The following identities are crucial for the construction in \cite{arnold2021complexes}:

\begin{equation}
\mathcal S^{k, \ell}=d^{k, \ell-1}K^{k, \ell}-K^{k+1, \ell}d^{k, \ell},
\end{equation}
and consequently, 
\begin{equation}
d^{k+1, \ell-1} \cS^{k, \ell}=-\cS^{k+1, \ell}d^{k, \ell}.
\end{equation}
As the two indices in form-valued forms play symmetric roles, we similarly have for the other two operators:
\begin{equation}\label{DPPD}
\cS_{\dagger}^{k, \ell}=D^{k-1, \ell}\kappa^{k, \ell}-\kappa^{k, \ell+1}D^{k, \ell},
\end{equation}
and consequently, 
\begin{equation}\label{SdaggerP}
\cS_{\dagger}^{k-1, \ell} \kappa^{k, \ell}=-\kappa^{k-1, \ell+1} \cS_{\dagger}^{k, \ell}.
\end{equation}

With the identities \eqref{DPPD} and \eqref{SdaggerP}, viewing \eqref{diagram-nD} from bottom to top and from right to left, we get
\begin{equation}\label{diagram-nD-2}
\begin{tikzcd}[column sep=tiny]
0 \arrow{r} & C^{\infty}\otimes\alt^{n, n}  \arrow{r}{\kappa} &C^{\infty}\otimes\alt^{n-1, n}  \arrow{r}{\kappa} & \cdots \arrow{r}{\kappa} & C^{\infty}\otimes \alt^{0,n} \arrow{r}{} & 0\\
0 \arrow{r} & C^{\infty}\otimes\alt^{n, n-1}  \arrow{r}{\kappa} \arrow[ur, "\cS_{\dagger}^{n,n-1}"] &C^{\infty}\otimes\alt^{n-1, n-1}  \arrow{r}{d}  \arrow[ur, "\cS_{\dagger}^{n-1,n-1}"] & \cdots \arrow{r}{\kappa}  \arrow[ur, "\cS_{\dagger}^{1,n-1}"] & C^{\infty}\otimes \alt^{0, n-1} \arrow{r}{} & 0\\[-15pt]
 & \vdots & \vdots & {} & \vdots & {} \\[-15pt]
0 \arrow{r} & C^{\infty}\otimes\alt^{n,1}  \arrow{r}{\kappa} &C^{\infty}\otimes\alt^{n-1,1}  \arrow{r}{\kappa} & \cdots \arrow{r}{\kappa} & C^{\infty}\otimes \alt^{0,1} \arrow{r}{} & 0\\
0 \arrow{r} & C^{\infty}\otimes\alt^{n,0}  \arrow{r}{\kappa} \arrow[ur, "\cS_{\dagger}^{n, 0}"] &C^{\infty}\otimes\alt^{n-1,0}  \arrow{r}{\kappa}  \arrow[ur, "\cS_{\dagger}^{n-1, 0}"] & \cdots \arrow{r}{\kappa}  \arrow[ur, "\cS_{\dagger}^{1,0}"] & C^{\infty}\otimes \alt^{0,0} \arrow{r}{} & 0.
\end{tikzcd}
\end{equation}

 Compared to the framework in \cite{arnold2021complexes}, here $\kappa$, $D$, and $\mathcal{S}_{\dagger}$ play the role of $d$, $K$, and $\mathcal{S}$ in \cite{arnold2021complexes}, respectively, thanks to the identities \eqref{DPPD} and \eqref{SdaggerP}. Therefore we can carry out a similar construction as in \cite{arnold2021complexes} to derive a Koszul version of the BGG complexes as follows.
\begin{theorem}[Koszul BGG Complexes]
The following sequence is a complex 
\begin{equation} \label{reduced-complex}
\begin{tikzcd}[column sep = small]
0 & C^{\infty} \otimes \W^{0,\ell-1}\ar[l] & \cdots  \ar[l,"\kappa",swap] & C^{\infty} \otimes \W^{\ell-2,\ell-1} \ar[l,"\kappa",swap] & C^{\infty} \otimes \W^{\ell-1,\ell-1} \ar[l,"\kappa",swap] & ~ \ar[l,"\kappa",swap] \\
 & ~&~ \ar[urrr,"\mathcal S"] &  C^{\infty} \otimes \widetilde{\W}^{\ell,\ell} \ar[l,"\kappa",swap] & \cdots   \ar[l,"\pi\circ \kappa",swap] & C^{\infty} \otimes \widetilde{\W}^{n,\ell} \ar[l,"\pi\circ \kappa",swap] & 0.\ar[l] 
\end{tikzcd}
\end{equation}
Moreover, the polynomial Koszul complex is exact. 
\begin{equation} \label{reduced-complex-poly}
\begin{tikzcd}[column sep = small]
0 & \mathcal P_r \otimes \W^{0,\ell-1}\ar[l] & \cdots  \ar[l,"\kappa",swap] & \mathcal P_{r-\ell + 2} \otimes \W^{\ell-2,\ell-1} \ar[l,"\kappa",swap] & \mathcal P_{r-\ell+1} \otimes \W^{\ell-1,\ell-1} \ar[l,"\kappa",swap] & ~ \ar[l,"\kappa",swap] \\
~&~ & \ar[urrr,"\mathcal S"] & \mathcal{P}_{r-\ell-1} \otimes \widetilde{\W}^{\ell,\ell} \ar[l,"\kappa",swap] & \cdots   \ar[l,"\pi\circ \kappa",swap]  & \mathcal P_{r-n-1} \otimes \widetilde{\W}^{n,\ell} \ar[l,"\pi\circ \kappa",swap] & 0.\ar[l] 
\end{tikzcd}
\end{equation}
\end{theorem}
Note that for $i \le \ell -1$, $\kappa$  maps $C^{\infty} \otimes \mathbb W^{i, \ell - 1} $ to $C^{\infty} \otimes \mathbb W^{i-1, \ell - 1}$ due to the anticommutativity \eqref{SdaggerP}.

Polynomial Koszul complexes will be the local shape functions of finite element complexes. Let $\cP_r$ be the polynomial space with degree $\le r$, and $\mathcal H_r$ be the homogenous polynomial space with degree $ = r$. We first recall the Koszul version of the de~Rham complex with polynomials \cite{arnold2018finite}:
\begin{equation*}
\begin{tikzcd}[column sep = small]
0 \arrow{r}&\mathcal P_r^-  \alt^{n} \ar[r,"\kappa"] &  	\mathcal P_r^- \alt^{n-1} \ar[r,"\kappa"] & \cdots \ar[r,"\kappa"] & \mathcal P_r^- \alt^{1} \ar[r,"\kappa"] & \mathcal P_r^-  \alt^{0} \ar[r] \ar[r] & 0,
\end{tikzcd}
\end{equation*}
where $\mathcal P_r^-  \alt^{k}:=\mathcal P_{r-1}  \alt^{k}+\kappa^{k+1}\mathcal P_{r-1}  \alt^{k+1} := \mathcal P_{r-1} \alt^k \oplus \kappa^{k+1} \mathcal H_{r-1} \alt^{k+1}$.

Similarly, the Koszul spaces for form-valued forms are defined by
\begin{equation}\label{def:P-}
\mathcal P^{-}_{r} \Alt^{k,\ell} := \mathcal P_{r-1} \Alt^{k,\ell} + \kappa^{k+1} \mathcal P_{r-1}\Alt^{k+1,\ell} = (\mathcal P_{r-1}\Alt^k + \kappa^{k+1} \mathcal P_{r-1} \Alt^{k+1}) \otimes \Alt^\ell.
\end{equation}

By the commutativity of $\kappa$ and $\mathcal S_{\dagger}$, the following lemma holds.
\begin{lemma}\label{lem:S-poly}
The operators $\cS^{k,\ell}: \mathcal P^-_{r} \alt^{k,\ell} \to \mathcal P^-_{r} \alt^{k+1,\ell-1}$ and their adjoints $\cS_{\dagger}^{k+1,\ell-1}$ are well defined.  We have the following properties:
	\begin{enumerate}
			\item  When $k \le \ell$, 
			$\mathcal S_{\dagger}^{k,\ell}$ is surjective, and $\mathcal S^{k-1, \ell+1}$ is injective.
		\item  When $k \geq \ell+1$,
		 $\mathcal S^{k,\ell} $ is surjective, and $\mathcal S_{\dagger}^{k+1, \ell-1}$ is injective.
	\end{enumerate}
\end{lemma}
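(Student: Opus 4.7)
The plan is to reduce the polynomial statements to the algebraic ones of \Cref{lem:S-inj-suj}, exploiting the anticommutation $\mathcal S_\dagger \kappa = -\kappa \mathcal S_\dagger$ from \eqref{SdaggerP} together with the Koszul direct-sum decomposition
\[
\mathcal P_r^- \Alt^{k,\ell} = \mathcal P_{r-1} \Alt^{k,\ell} \oplus \kappa \mathcal H_{r-1} \Alt^{k+1,\ell}.
\]
For well-definedness of $\mathcal S_\dagger : \mathcal P_r^- \Alt^{k,\ell} \to \mathcal P_r^- \Alt^{k-1,\ell+1}$, note that $\mathcal S_\dagger$ is a purely algebraic tensor-rearrangement that leaves polynomial coefficients untouched. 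On the first summand it lands in $\mathcal P_{r-1} \Alt^{k-1,\ell+1} \subset \mathcal P_r^- \Alt^{k-1,\ell+1}$. On the Koszul summand, for $\eta \in \mathcal H_{r-1} \Alt^{k+1,\ell}$, identity \eqref{SdaggerP} gives $\mathcal S_\dagger(\kappa \eta) = -\kappa(\mathcal S_\dagger \eta) \in \kappa \mathcal H_{r-1} \Alt^{k,\ell+1} \subset \mathcal P_r^- \Alt^{k-1,\ell+1}$. The corresponding claim for $\mathcal S$ is handled by a parallel argument, or by adjointness with $\mathcal S_\dagger$.

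Injectivity is immediate pointwise: if $\mathcal S \omega = 0$ (resp.\ $\mathcal S_\dagger \omega = 0$) for some polynomial $\omega$, then $\mathcal S(\omega(x)) = 0$ algebraically at each $x$, and the algebraic injectivity from \Cref{lem:S-inj-suj} forces $\omega \equiv 0$ whenever the corresponding index hypothesis holds. For surjectivity of $\mathcal S_\dagger$ under $k \le \ell$, I would match the Koszul decompositions on both source and target. On the $\mathcal P_{r-1} \Alt^{k-1,\ell+1}$ part of the target, surjectivity reduces (by tensoring with $\mathcal P_{r-1}$) to algebraic surjectivity of $\mathcal S_\dagger^{k,\ell}$, valid whenever $k \le \ell + 1$. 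On the $\kappa \mathcal H_{r-1} \Alt^{k,\ell+1}$ part, any $\kappa \zeta$ is realized as $\mathcal S_\dagger(\kappa \eta) = -\kappa(\mathcal S_\dagger \eta)$ where $\eta \in \mathcal H_{r-1} \Alt^{k+1,\ell}$ is produced by the algebraic surjectivity of $\mathcal S_\dagger^{k+1,\ell}$; this last step requires the tighter condition $k+1 \le \ell + 1$, i.e., $k \le \ell$, matching exactly the hypothesis in (1). Statement (2) follows by the entirely symmetric argument, using \Cref{lem:S-inj-suj}(3).

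The main point to keep straight is the interplay between the algebraic and polynomial conditions: the narrowing from the algebraic ranges ($k \le \ell+1$ and $k \ge \ell-1$) to the polynomial ranges ($k \le \ell$ and $k \ge \ell + 1$) arises precisely from the Koszul summand's demand that the relevant BGG operator be algebraically surjective one slot higher. Once the anticommutativity with $\kappa$ and the direct-sum decomposition are set up, the proof is a clean bookkeeping reduction to the algebraic lemma, with no essentially new input beyond what is already recorded in the excerpt.
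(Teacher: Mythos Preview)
Your argument for $\mathcal S_\dagger$ is correct and is exactly what the paper intends: the one-line justification in the text is ``By the commutativity of $\kappa$ and $\mathcal S_{\dagger}$, the following lemma holds,'' and your expansion---splitting $\mathcal P_r^-$ into its polynomial and Koszul summands and invoking \eqref{SdaggerP} on the latter---is the right way to unpack it. The pointwise injectivity arguments for both $\mathcal S$ and $\mathcal S_\dagger$ are also fine.

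The ``parallel argument'' for $\mathcal S$, however, does not go through. There is no clean anticommutation between $\mathcal S$ and $\kappa$: a direct computation gives $\mathcal S\kappa + \kappa\mathcal S = K$ (the second-index Koszul), so on $\kappa\,\mathcal H_{r-1}\Alt^{k+1,\ell}$ the image of $\mathcal S$ picks up a term $K\eta$ that is homogeneous of degree $r$ and generically lies outside $\mathcal P_r^- \Alt^{k+1,\ell-1}$. For instance, take $k = n-1$: then $\mathcal S\eta \in \Alt^{n+1,\ell-1} = 0$, so $\mathcal S(\kappa\eta) = K\eta \in \mathcal H_r\Alt^{n,\ell-1}$, while $\mathcal P_r^-\Alt^{n,\ell-1} = \mathcal P_{r-1}\Alt^{n,\ell-1}$. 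The adjointness appeal also fails here, since pointwise Frobenius adjointness does not transfer to any natural inner product on the $\mathcal P_r^-$ spaces that would make $\mathcal S$ the Hilbert-space adjoint of your well-defined $\mathcal S_\dagger$. In practice the paper only ever needs---and its one-line proof only ever addresses---the $\mathcal S_\dagger$ direction (cf.\ \Cref{lem:S-bubble} and \Cref{charact-ker-Sdagger}); the $\mathcal S$-half of the header should be read as a pointwise algebraic statement (where injectivity survives), not as a claim that $\mathcal S$ respects $\mathcal P_r^-$. You should flag this rather than assert a symmetric proof that does not exist.
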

\begin{proof}
    See \Cref{subsec:inj/sur}.
\end{proof}



%
%
%
Moreover, we have the following characterization of 
\begin{equation}\label{def:Pr-}
\mathcal P_r^- \W^{k,\ell} := \ker(\mathcal S_{\dagger}^{k,\ell} : \mathcal P_r^- \alt^{k,\ell} \to \mathcal P_r^- \alt^{k-1,\ell+1})
\end{equation}
 whenever $ k \le \ell$.
\begin{lemma}\label{charact-ker-Sdagger}
For $k < \ell$, we have 
\begin{equation}
	{\mathcal P_r^- \W^{k,\ell}} =   \cP_{r-1} \W^{k,\ell} \oplus \kappa \mathcal H_{r-1} \W^{k+1,\ell}.
\end{equation}

For $k = \ell$, we have 
$${\mathcal P_r^- \W^{k,\ell}} = \cP_{r-1} \W^{k,\ell} \oplus  \kappa \mathcal (S^{k+1,\ell}_{\dagger})^{-1} \kappa \mathcal H_{r-2} \alt^{k+1,\ell+1}.$$ Note that $\mathcal S_{\dagger}^{k+1,\ell}$ is an isomorphism.
\end{lemma}
\begin{proof}
We prove the lemma by diagram chasing argument; see the following diagram. 
\begin{equation}
    \begin{tikzcd}
        ~ & \alt^{k,\ell} \ar[phantom, " \color{blue}a", pos=0, shift={(0.5,0.5)}]
         \ar[dl,"{\mathcal S_{\dagger}}",swap] 
         & \alt^{k+1,\ell} \ar[l,"{\kappa}",swap] \ar[dl,"{\mathcal S_{\dagger}}"]  \ar[phantom, "\color{blue} b", pos=0, shift={(0.5,0.5)}] & \alt^{k+2,\ell}\ar[phantom, "\color{blue} e", pos=0, shift={(0.5,0.5)}] \ar[l,"{\kappa}",swap] \ar[dl,"{\mathcal S_{\dagger}}"] \\
         \alt^{k-1,\ell+1} & \alt^{k,\ell+1 }\ar[l,"{\kappa}",swap]  & \alt^{k+1,\ell+1 }\ar[l,"{\kappa}",swap] \ar[phantom, "\color{blue}c", pos=0, shift={(0.5,-0.5)}] & 
    \end{tikzcd}
\end{equation}
Suppose that $a + \kappa b$ lies in the kernel of $\mathcal S_{\dagger}$, where $a \in \mathcal P_{r-1} \alt^{k,\ell}$ and $b \in \mathcal H_{r-1} \alt^{k+1,\ell}$. By the commuting property of $\mathcal S_{\dagger}$ and $\kappa$, it holds that $\mathcal S_{\dagger} a = 0$ and $\kappa \mathcal S_{\dagger} b = 0$. Since $\mathcal S_{\dagger} b \in \mathcal H_{r-1} \alt^{k, \ell + 1}$, it follows from the exactness of the Koszul complex with the spaces of homogeneous polynomials that 
$\mathcal S_{\dagger} b = \kappa c$ for some $c \in \mathcal H_{r-2} \alt^{k+1,\ell+1}$.

When $k = \ell$, $\mathcal S_{\dagger}^{k+1,\ell}$ is an isomorphism. Therefore, it holds that $b = (\mathcal S_{\dagger}^{k+1,\ell})^{-1}\kappa c$. 

When $k < \ell$, $\mathcal S_{\dagger}^{k+1,\ell}$ is onto, with right inverse denoted as $\mathcal (S_{\dagger}^{k,\ell+1})^{-1}$. Therefore, it holds that $b$ lies in $$\ker(\mathcal S_{\dagger}) + (\mathcal S_{\dagger}^{k+1,\ell+1})^{-1}(\kappa \mathcal H_{r-2}\alt^{k+1,\ell+1}) = \mathbb W^{k+1,\ell} + (\mathcal S_{\dagger}^{k+1,\ell+1})^{-1}(\kappa\mathcal H_{r-2}\alt^{k+1,\ell+1}).$$

To conclude the result, it suffices to show that for each $c \in \mathcal H_{r-2}\alt^{k+1,\ell+1}$, $\kappa \mathcal S_{\dagger}^{-1} \kappa c$ is in $\kappa \mathcal H_{r-1} \mathbb W^{k+1,\ell}$. Since $k +1 \le \ell$, the operator $\mathcal S_{\dagger}^{k+1,\ell}$ is onto. Therefore, there exists $e$ such that $S_{\dagger}^{k+1,\ell}e = -c$. Let $b' = \mathcal S_{\dagger}^{-1} \kappa c - \kappa e \in \mathcal H_{r-1}\alt^{k+1,\ell}$, then it follows that $\mathcal S_{\dagger}b' = \kappa c - S_{\dagger} \kappa e = 0$. Therefore, $b' \in \mathcal H_{r-1} \W^{k+1,\ell}$, and we have 
$$
 \kappa \mathcal S_{\dagger}^{-1} \kappa c = \kappa b' + \kappa \kappa e =  \kappa b'\in \kappa \mathcal H_{r-1}\W^{k+1, \ell}.
$$

 This concludes the proof. 
\end{proof}

%
%
%
 
\begin{remark}
    Due to the existence of null spaces of $\kappa$, in general $\dim \kappa \mathcal H_{r-1}\mathbb W^{k+1} \neq \dim \mathcal H_{r-1}\mathbb W^{k+1}.$ 
\end{remark}

\subsection{Iterated constructions}
\label{sec:iterated}
We consider the BGG diagram of algebraic forms between row $k$ and row $\ell$:
\begin{equation}\label{diagram-nD-algebraic}
\begin{tikzcd}
  \alt^{0,k}    & \alt^{1,k}   & \cdots  &   \alt^{n,k}  \\
  \alt^{0,k+1}  \arrow[ur, "\mathcal S^{0,k+1}"] &\alt^{1,1}   \arrow[ur, "\mathcal S^{1,k+1}"] & \cdots   \arrow[ur, "\mathcal S^{n-1,k+1}"] &   \alt^{n,1}\\[-15pt]
  \vdots & \vdots & {} & \vdots  {} &{}\\[-15pt]
 \alt^{0,\ell-1}   & \alt^{1,\ell-1}  & \cdots  &   \alt^{n,\ell-1} \\
 \alt^{0,\ell}  \arrow[ur, "\mathcal S^{0,\ell}"] & \alt^{1,\ell}   \arrow[ur, "\mathcal S^{1,\ell}"] & \cdots   \arrow[ur, "\mathcal S^{n-1,\ell}"] &  \alt^{n,\ell} .
\end{tikzcd}
\end{equation}

Define 
\begin{equation} \label{def:Sp}\mathcal S^{k,\ell}_{[p]}: \alt^{k,\ell} \to \alt^{k+p,\ell - p}\end{equation}
by $$\mathcal S^{k,\ell}_{[p]} = \cS^{k+p-1,\ell-p+1 } \circ \cdots \circ \cS^{k+1,\ell-1} \circ \cS^{k,\ell}.$$ 
An illustration of iterated constructions with $p=2$ is shown below. 
\begin{equation}
\begin{tikzcd}
  \alt^{0,k-1} & \alt^{1,k-1} & \alt^{2,k-1} & \cdots & \alt^{n,k-1} \\
  \alt^{0,k} & \alt^{1,k} & \alt^{2,k} & \cdots & \alt^{n,k} \\
  \alt^{0,k+1} \arrow[uurr, "{\color{red} \mathcal{S}^{0,k+1}_{[2]}}", color = red] & \alt^{1,k+1} \arrow[uurr, "{\color{red}\mathcal{S}^{1,k+1}_{[2]}}",color=red] & \alt^{2,k+1} & \cdots & \alt^{n,k+1} \\
  \vdots & \vdots & \vdots & {} & \vdots \\
  \alt^{0,\ell-1} & \alt^{1,\ell-1} & \alt^{2,\ell-1} & \cdots & \alt^{n,\ell-1} \\
  \alt^{0,\ell} \arrow[uurr, "{\color{red}\mathcal{S}^{0,\ell}_{[2]}}",color=red] & \alt^{1,\ell} \arrow[uurr, "{\color{red}\mathcal{S}^{1,\ell}_{[2]}}",color=red] & \alt^{2,\ell} & \cdots & \alt^{n,\ell}
\end{tikzcd}
\end{equation}

Note that for large $p$, the above map can be zero. We also define 
\begin{equation}\label{def:Sdaggerp}\mathcal S^{k,\ell}_{\dagger, [p]} : \alt^{k,\ell} \to \alt^{k-p,\ell + p}\end{equation}
by $$\mathcal S^{k,\ell}_{\dagger, [p]} = \cS^{k-p+1,\ell+p-1 }_{\dagger} \circ \cdots \circ \cS^{k-1,\ell+1}_{\dagger} \circ \cS^{k,\ell}_{\dagger}.$$

Similar to the non-iterated case $p = 1$, we also have the adjointness, and the injectivity/surjectivity. The proof is given in the appendix.
\begin{lemma}
\label{lem:Sp-inj-sur}
We have the following properties.
	\begin{enumerate}
    \item $\mathcal S^{k,\ell}_{\dagger,[p]}$ and $\mathcal S^{k-p, \ell+p}_{[p]}$ are adjoint with respect to the Frobenius norm, and therefore $\ker(\mathcal S_{\dagger,[p]}^{k,\ell}) =\ran (\mathcal S^{k-p, \ell+p}_{[p]})^{\perp}$.
			\item  When $k \le \ell+p$, $\mathcal S^{k,\ell}_{\dagger,[p]}$ is surjective, and $\mathcal S^{k-p, \ell+p}_{[p]}$ is injective.
		\item  When $k \geq \ell-p$, $\mathcal S^{k,\ell}_{[p]} $ is surjective while $\mathcal S_{\dagger,[p]}^{k+p, \ell-p}$ is injective.
	\end{enumerate}
\end{lemma}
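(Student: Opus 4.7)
Part (1) follows at once from the fact that the adjoint of a composition of linear maps is the composition of the adjoints in reverse order, together with Lemma \ref{lem:S-inj-suj}(1), which identifies the adjoint of each single-step $\mathcal{S}^{k-j,\ell+j}_{\dagger}$ as $\mathcal{S}^{k-j-1,\ell+j+1}$. Unwinding \eqref{def:Sdaggerp} and reassembling these adjoints in reverse order reproduces exactly $\mathcal{S}^{k-p,\ell+p}_{[p]}$ from \eqref{def:Sp}. The identification $\ker(\mathcal{S}^{k,\ell}_{\dagger,[p]}) = \ran(\mathcal{S}^{k-p,\ell+p}_{[p]})^{\perp}$ is then the standard rank--nullity identity in finite-dimensional inner product spaces.

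For (2) and (3), I would first note that within each, the two halves are equivalent by adjointness: a linear map on finite-dimensional inner product spaces is surjective iff its adjoint is injective. Moreover, the slot-transposition isomorphism $T:\alt^{k,\ell}\to\alt^{\ell,k}$, $(T\omega)(u)(v) := \omega(v)(u)$, satisfies $T\circ\mathcal{S} = \mathcal{S}_{\dagger}\circ T$ by direct inspection of \eqref{def:S} and \eqref{eq:Sdagger}. Iterating gives $T\circ\mathcal{S}^{k,\ell}_{[p]} = \mathcal{S}^{\ell,k}_{\dagger,[p]}\circ T$, which intertwines (3) with (2). It therefore suffices to prove that $\mathcal{S}^{k,\ell}_{\dagger,[p]}$ is surjective whenever $k \le \ell + p$.

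This I would establish by induction on $p$. The base case $p=1$ is Lemma \ref{lem:S-inj-suj}(2). For the inductive step, decompose $\mathcal{S}^{k,\ell}_{\dagger,[p]} = \mathcal{S}^{k-p+1,\ell+p-1}_{\dagger}\circ \mathcal{S}^{k,\ell}_{\dagger,[p-1]}$. The inductive hypothesis makes the inner factor surjective whenever $k \le \ell+p-1$, and Lemma \ref{lem:S-inj-suj}(2) gives surjectivity of the outer factor throughout our range (the condition $k-p+1 \le \ell+p$ is automatic). This settles all cases $k \le \ell+p-1$.

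The main obstacle is the remaining boundary $k = \ell+p$, at which neither natural decomposition of $\mathcal{S}^{k,\ell}_{\dagger,[p]}$ has a surjective first factor. Fortunately, $\dim\alt^{\ell+p,\ell} = \binom{n}{\ell+p}\binom{n}{\ell} = \dim\alt^{\ell,\ell+p}$ there, so surjectivity is equivalent to injectivity. To close the gap I would either (a) iterate the explicit formula \eqref{eq:Sdagger} and verify directly that $\mathcal{S}^{\ell+p,\ell}_{\dagger,[p]}$ coincides, up to a nonzero combinatorial constant, with the canonical slot-exchange isomorphism $\alt^{\ell+p,\ell}\to\alt^{\ell,\ell+p}$ obtained by fully antisymmetrized ``moving $p$ arguments from the first slot to the second,'' whose non-degeneracy follows from the non-degeneracy of the natural pairings on $\alt^{\ell+p}$ and $\alt^{\ell}$; or (b) observe that $\bigoplus_{k+\ell=N}\alt^{k,\ell}$ carries an $\mathfrak{sl}_2$-module structure with $\mathcal{S}$ and $\mathcal{S}_{\dagger}$ as the raising and lowering operators, so that the full range $k\le\ell+p$ follows uniformly from the representation theory of $\mathfrak{sl}_2$. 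Approach (b) is conceptually cleaner and would simultaneously streamline the appendix proof of Lemma \ref{lem:S-inj-suj}.
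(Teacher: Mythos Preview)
Your reductions in (1) and the slot-transposition argument reducing (3) to (2) are correct and clean. The induction on $p$ for (2) is also correct in the open range $k\le \ell+p-1$, and you are right that the only missing case is the boundary $k=\ell+p$, where the composition cannot be broken into two surjective pieces.

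The gap is at this boundary, and neither of your proposed fixes is complete as written. Option (a) is not correct in the form stated: $\mathcal{S}^{\ell+p,\ell}_{\dagger,[p]}$ is \emph{not} in general a scalar multiple of a simple ``slot-exchange'' map. Already for $p=1$ the paper records (in the four-dimensional discussion) that $\mathcal{S}^{2,1}_{\dagger}$ acts as $\bm A\mapsto \bm A^{\top}-\operatorname{ictr}^{\top}\operatorname{ctr}\bm A$, and in three dimensions the analogous proxy is $A\mapsto A^{\top}-(\tr A)I$; these are isomorphisms, but not scalar multiples of the transpose. So ``nonzero combinatorial constant times a canonical isomorphism'' is not what is going on. Option (b) is the right idea and would indeed give the full range uniformly, but you have only asserted, not verified, that $\mathcal{S}$ and $\mathcal{S}_{\dagger}$ satisfy the $\mathfrak{sl}_2$ relations; computing $[\mathcal{S},\mathcal{S}_{\dagger}]$ on $\alt^{k,\ell}$ is a genuine (if short) calculation that you still owe.

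For comparison, the paper takes a quite different route that avoids induction on $p$ entirely. It works on the free abelian group on index pairs $X(n,k,\ell)=X(n,k)\times X(n,\ell)$, decomposes it into blocks $Y_{F,G}$ indexed by $F=I\cup J$ and $G=I\cap J$ (which are preserved by the combinatorial avatars $s_{[p]},s_{\dagger,[p]}$), and proves on each block a Gram-matrix identity of the form $\langle s_{[p]}a,s_{[p]}b\rangle=\sum_{q=0}^{p}\binom{n-2k}{p-q}\langle s_{\dagger,[q]}a,s_{\dagger,[q]}b\rangle$. This yields injectivity/surjectivity in the full closed range $k\le \ell+p$ (and the dual range) in one stroke. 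Your inductive/structural approach is more conceptual and, once the $\mathfrak{sl}_2$ relation is verified, shorter; the paper's approach is more elementary and self-contained, and does not leave the boundary case as a separate lemma.
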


We then define $\mathbb W_{[p]}^{\bs,\bs}$ {and $\widetilde{\W}_{[p]}^{\bs,\bs}$} as 
\begin{equation}\label{def:Wp}
\W^{k, \ell}_{[p]} := \ker(\cS_{\dagger,[p]}^{k,\ell}) \subseteq \alt^{k,\ell} ,  \text{ when } k \le \ell + p - 1,
\end{equation}
and 
\begin{equation}\label{def:tildeWp}
\widetilde\W^{k, \ell}_{[p]} := \ker(\cS_{[p]}^{k,\ell}) \subseteq \alt^{k,\ell} ,  \text{ when } k \ge \ell - p + 1.
\end{equation}


%
Therefore, the BGG complexes can be derived for the iterated constructions.  The following complexes are new, enriching the study in \cite{arnold2021complexes}, where the non-iterated case ($p = 1$) was discussed systematically and for $p>1$, only examples in three dimensions were presented.
\begin{theorem}[BGG complexes for iterated constructions]
The following sequence is a complex (referred to as the BGG complexes linking rows $\ell$ and $\ell + p$) \begin{equation} \label{BGG-seq-p}
\begin{tikzcd}[column sep = small]
0\ar[r] & C^{\infty} \otimes \W^{0,\ell}_{[p]} \ar[r,"\pi\circ d"] & C^{\infty} \otimes \W^{1,\ell}_{[p]}  \ar[r,"\pi\circ d"] & \cdots   \ar[r,"\pi\circ d"] & C^{\infty} \otimes \W^{\ell + p - 1,\ell}_{[p]}  \ar[r,"d"] & ~ \ar[llld,"\mathcal S^{-1}"] \\
& & ~ \ar[r,"d"] & C^{\infty} \otimes \widetilde\W^{\ell+1,\ell+p}_{[p]}  \ar[r,"d"]  & \cdots  \ar[r,"d"] & C^{\infty} \otimes \widetilde\W^{n,\ell+p}_{[p]} \ar[r] & 0.
\end{tikzcd}
\end{equation}
The cohomology of \eqref{BGG-seq-p} is isomorphic to $ (\mathcal H_{dR}^{\bs}(\Omega)\otimes \Alt^{\ell}) \oplus (\mathcal H_{dR}^{\bs - p + 1}(\Omega)\otimes  \Alt^{\ell+p})$, where $ \mathcal H_{dR}^{\bs}(\Omega)$ is the de~Rham cohomology. 
\end{theorem}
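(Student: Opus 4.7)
The plan is to extend the single-step BGG reduction of \cite{arnold2021complexes} to the iterated connecting operator $\mathcal{S}_{[p]}$. The essential algebraic input is that iterating the anticommutativity $d^{k+1,\ell-1}\mathcal{S}^{k,\ell}=-\mathcal{S}^{k+1,\ell}d^{k,\ell}$ yields $d\circ\mathcal{S}_{[p]}=(-1)^{p}\mathcal{S}_{[p]}\circ d$, and dually $\kappa\circ\mathcal{S}_{\dagger,[p]}=(-1)^{p}\mathcal{S}_{\dagger,[p]}\circ\kappa$. Combined with the injectivity/surjectivity properties of $\mathcal{S}_{[p]}$ and $\mathcal{S}_{\dagger,[p]}$ collected in \Cref{lem:Sp-inj-sur}, these identities are exactly what the BGG machinery requires, with $\mathcal{S}$ replaced by $\mathcal{S}_{[p]}$ throughout.

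First, I would verify that \eqref{BGG-seq-p} is a complex. On the upper arm ($0\le k\le \ell+p-1$), the projection $\pi$ onto $\W^{k,\ell}_{[p]}=\ker\mathcal{S}^{k,\ell}_{\dagger,[p]}$ is the Frobenius orthogonal projection, whose complement is $\ran\mathcal{S}^{k-p,\ell+p}_{[p]}$ by \Cref{lem:Sp-inj-sur}. Hence $(I-\pi)d\omega=\mathcal{S}_{[p]}\gamma$ for some $\gamma$, and anticommutativity with $d$ keeps $d(I-\pi)d\omega$ inside $\ran\mathcal{S}_{[p]}$, so $\pi\circ d\circ\pi\circ d=0$. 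On the lower arm ($\ell+1\le k\le n$), $d$ preserves $\widetilde{\W}^{k,\ell+p}_{[p]}=\ker\mathcal{S}^{k,\ell+p}_{[p]}$ directly by the same anticommutativity. The connecting map $d\circ\mathcal{S}_{[p]}^{-1}\circ d\colon \W^{\ell+p-1,\ell}_{[p]}\to \widetilde{\W}^{\ell+1,\ell+p}_{[p]}$ is well-defined because $\mathcal{S}_{[p]}^{\ell,\ell+p}\colon \Alt^{\ell,\ell+p}\to \Alt^{\ell+p,\ell}$ is bijective at this borderline degree (both conditions (2) and (3) of \Cref{lem:Sp-inj-sur} are satisfied there). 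Checking that the image lies in $\ker\mathcal{S}^{\ell+1,\ell+p}_{[p]}$ and that the compositions with the adjacent maps vanish reduces to standard zig-zag diagram chases using $d^{2}=0$ and the sign conventions above.

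Second, for the cohomology, the plan is induction on $p$. The base case $p=1$ is established in \cite{arnold2021complexes}. For the inductive step, factor $\mathcal{S}_{[p]}=\mathcal{S}\circ \mathcal{S}_{[p-1]}$ and view \eqref{BGG-seq-p} as the BGG reduction of the $(p+1)$-row bicomplex formed by rows $\ell,\ell+1,\ldots,\ell+p$ of \eqref{diagram-nD-algebraic}, with horizontal differential $d$ and vertical connecting maps $\mathcal{S}^{k,\ell+j}$ (with suitable alternating signs producing a genuine bicomplex). Each of the $p-1$ intermediate rows is eliminated by the single-step BGG reduction, invoking \Cref{lem:S-inj-suj} to pair its contribution with that of a neighboring row. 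After peeling off all intermediate rows, only the top and bottom row cohomologies survive: $\mathcal{H}^{\bs}_{dR}(\Omega)\otimes \Alt^{\ell}$ from the top, and $\mathcal{H}^{\bs-p+1}_{dR}(\Omega)\otimes \Alt^{\ell+p}$ from the bottom, where the shift $p-1$ arises because position $s\ge \ell+p$ in the reduced complex corresponds to $\widetilde{\W}^{s-p+1,\ell+p}_{[p]}$, which sits in de~Rham degree $s-p+1$ on the bottom row.

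The hard part will be the careful bookkeeping of signs and of the degree shift $p-1$ throughout the inductive reduction. A clean alternative is to realize \eqref{BGG-seq-p} as the algebraic mapping cone of a chain map between two de~Rham complexes---one with coefficients in $\Alt^{\ell}$ and one with coefficients in $\Alt^{\ell+p}$ shifted in degree by $p-1$---so that the long exact sequence in cohomology, combined with the de~Rham identification, immediately yields the stated direct sum decomposition. Verifying that this mapping cone coincides with \eqref{BGG-seq-p} parallels the construction in \cite{arnold2021complexes} and constitutes the principal technical obstruction.
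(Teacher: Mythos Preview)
The paper does not supply a proof of this theorem; it is stated immediately after \Cref{lem:Sp-inj-sur} and the definitions of $\W^{k,\ell}_{[p]}$, $\widetilde\W^{k,\ell}_{[p]}$, with the understanding that once the injectivity/surjectivity properties of $\mathcal S_{[p]}$ and $\mathcal S_{\dagger,[p]}$ are in place, the BGG machinery of \cite{arnold2021complexes} carries over verbatim with $\mathcal S$ replaced by $\mathcal S_{[p]}$. Your proposal is therefore considerably more detailed than what the paper offers, and the ingredients you isolate---the iterated anticommutativity $d\circ\mathcal S_{[p]}=(-1)^{p}\mathcal S_{[p]}\circ d$, the bijectivity of $\mathcal S_{[p]}^{\ell,\ell+p}$ at the borderline degree, and the degree shift $p-1$ on the lower arm---are exactly the right ones and are consistent with how \cite{arnold2021complexes} handles the $p=1$ case and the three-dimensional iterated examples.

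Your two strategies for the cohomology (induction on $p$ by peeling intermediate rows, versus a mapping-cone/twisted-complex realization) are both viable, and the second is closer in spirit to \cite{arnold2021complexes}, where one first forms a twisted complex on the relevant rows, shows it is quasi-isomorphic to the direct sum of the two outer de~Rham complexes, and then identifies the BGG complex with it. Either route would complete the argument; the paper simply declines to spell this out.
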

\begin{remark}
The cohomology of \eqref{BGG-seq-p} is isomorphic to the sum of the input, i.e., $\mathcal{H}^{k}_{\ell, \ell+p}\cong (\mathcal{H}^{k}_{dR}\otimes \Alt^{\ell})\oplus (\mathcal{H}^{k-(p-1)}_{dR}\otimes \Alt^{\ell+p})$. In particular, on a topologically trivial domain --- where we have $\mathcal{H}^{0}_{dR}\cong \mathbb{R}$ --- for $p>1$, in the middle of the derived complex with $k=p-1$, $\mathcal{H}^{p-1}_{\ell, \ell+p}\cong (\mathcal{H}^{p-1}_{dR}\otimes \Alt^{\ell})\oplus (\mathcal{H}^{0}_{dR}\otimes \Alt^{\ell+p})\cong \Alt^{\ell+p}$. This indicates that even on topologically trivial domain, the iterated complexes have a nontrivial cohomology in the middle. This does not happen with $p=1$. Numerical consequences of this fact can be found in \cite{hu2023spurious}.  
\end{remark}

Regarding the Koszul spaces, we have the following results. This leads to a candidate for the local shape function space in the iterated diagram. 
\begin{lemma}
\label{lem:Pr-Wp}
The operators $\cS^{k,\ell}_{[p]}: \mathcal P^-_{r} \alt^{k,\ell} \to \mathcal P^-_{r} \alt^{k+p,\ell-p}$ and their adjoint $\cS_{\dagger}^{k+p,\ell-p}$ are well-defined.  We have the following properties.
	\begin{enumerate}
			\item  When $k \le \ell + p - 1$, $\mathcal S_{\dagger,[p]}^{k,\ell}$ is surjective, and $\mathcal S^{k-p, \ell+p}_{[p]}$ is injective.
		\item  When $k \geq \ell+p$, $\mathcal S^{k,\ell}_{[p]} $ is surjective, and $\mathcal S_{\dagger,[p]}^{k+p, \ell-p}$ is injective.
		\item For $k <  \ell + p - 1$, the space ${\mathcal P_r^- \W^{k,\ell}_{[p]}}$ is the kernel of $\mathcal S_{\dagger,[p]}^{k,\ell},$ and is characterized as 
$${\mathcal P_r^- \W^{k,\ell}_{[p]}} = \cP_{r-1} \W^{k,\ell}_{[p]} \oplus \kappa \mathcal H_{r-1} \W^{k+1,\ell}_{[p]} 
.
$$
\item  For $k =   \ell + p - 1$, the space ${\mathcal P_r^- \W^{k,\ell}_{[p]}}$ is the kernel of $\mathcal S_{\dagger,[p]}^{k,\ell},$ and is characterized as 
$${\mathcal P_r^- \W^{k,\ell}} = \cP_{r-1} \W^{k,\ell} \oplus  \kappa (\mathcal S^{k+1,\ell}_{\dagger,[p]})^{-1} \kappa \mathcal H_{r-2} \alt^{k-p+2,\ell+p}. $$
	\end{enumerate}
\end{lemma}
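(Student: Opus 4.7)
The plan is to mirror the development of \Cref{lem:S-poly} and \Cref{charact-ker-Sdagger}, treating $\mathcal S_{[p]}$ and $\mathcal S_{\dagger,[p]}$ as iterated compositions and exploiting the fact that each factor commutes (up to sign) with the Koszul operator $\kappa$. I will split the argument into three stages: well-definedness on $\mathcal P_r^-$, the algebraic injectivity/surjectivity claims (items (1)--(2)), and the polynomial characterization of $\mathcal P_r^- \W^{k,\ell}_{[p]}$ (items (3)--(4)).

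\textbf{Stage 1 (well-definedness).} Since $\mathcal P_r^-\alt^{k,\ell} = \mathcal P_{r-1}\alt^{k,\ell} \oplus \kappa\,\mathcal H_{r-1}\alt^{k+1,\ell}$, I first need $\mathcal S^{k,\ell}_{[p]}$ and $\mathcal S^{k,\ell}_{\dagger,[p]}$ to preserve these $\mathcal P_r^-$ spaces. Iterating the anticommutation identity \eqref{SdaggerP} yields $\mathcal S_{\dagger,[p]}^{k-1,\ell}\kappa = (-1)^{p}\kappa\,\mathcal S_{\dagger,[p]}^{k,\ell}$, so applying $\mathcal S_{\dagger,[p]}$ to an element $a+\kappa b$ of $\mathcal P_r^-\alt^{k,\ell}$ yields a sum of a term in $\mathcal P_{r-1}\alt^{k-p,\ell+p}$ and a term in $\kappa\mathcal H_{r-1}\alt^{k-p+1,\ell+p}$; this is precisely a member of $\mathcal P_r^-\alt^{k-p,\ell+p}$. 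An analogous argument using the identity $\mathcal S^{k,\ell} = d\,K - K\,d$ (and its iteration) gives well-definedness of $\mathcal S_{[p]}$ on the Koszul spaces.

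\textbf{Stage 2 (surjectivity and injectivity).} Items (1) and (2) follow by combining the adjointness clause of \Cref{lem:Sp-inj-sur} with the $\kappa$-intertwining established above. For instance, to show surjectivity of $\mathcal S^{k,\ell}_{\dagger,[p]}:\mathcal P_r^-\alt^{k,\ell}\to\mathcal P_r^-\alt^{k-p,\ell+p}$ when $k\le\ell+p-1$, I will take $\omega = \alpha + \kappa\beta \in \mathcal P_r^-\alt^{k-p,\ell+p}$ with $\alpha\in\mathcal P_{r-1}\alt^{k-p,\ell+p}$ and $\beta\in\mathcal H_{r-1}\alt^{k-p+1,\ell+p}$, lift $\alpha$ and $\beta$ individually under the pointwise-surjective map $\mathcal S^{\bs,\bs}_{\dagger,[p]}$ of \Cref{lem:Sp-inj-sur}, and then assemble the preimage using $\kappa\mathcal S_{\dagger,[p]} = \pm\mathcal S_{\dagger,[p]}\kappa$. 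Injectivity of the adjoint $\mathcal S^{k-p,\ell+p}_{[p]}$ on the Koszul polynomial space is immediate from the algebraic injectivity of \Cref{lem:Sp-inj-sur} applied pointwise.

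\textbf{Stage 3 (characterization of the kernel).} Here I follow the template of \Cref{charact-ker-Sdagger}. Writing a generic element of $\mathcal P_r^-\W^{k,\ell}_{[p]}$ as $a+\kappa b$ with $a\in\mathcal P_{r-1}\alt^{k,\ell}$ and $b\in\mathcal H_{r-1}\alt^{k+1,\ell}$, the anticommutation $\mathcal S_{\dagger,[p]}\kappa = \pm\kappa\mathcal S_{\dagger,[p]}$ forces (by separating the polynomial degrees that lie in $\mathcal P_{r-1}$ and $\kappa\mathcal H_{r-1}$) both $\mathcal S_{\dagger,[p]} a = 0$ and $\kappa\,\mathcal S_{\dagger,[p]} b = 0$. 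The first condition gives the summand $\mathcal P_{r-1}\W^{k,\ell}_{[p]}$. For the second, the exactness of the polynomial Koszul complex for alternating forms in the first slot yields $c\in\mathcal H_{r-2}\alt^{k-p+2,\ell+p}$ with $\mathcal S_{\dagger,[p]} b = \kappa c$; using the right inverse $(\mathcal S^{k+1,\ell}_{\dagger,[p]})^{-1}$ constructed in Stage 2, I can subtract $(\mathcal S^{k+1,\ell}_{\dagger,[p]})^{-1}\kappa c$ from $b$ to land in $\ker\mathcal S_{\dagger,[p]} \cap \mathcal H_{r-1}\alt^{k+1,\ell}$. When $k<\ell+p-1$, this kernel is $\mathcal H_{r-1}\W^{k+1,\ell}_{[p]}$, producing the middle summand; when $k=\ell+p-1$, the next row $k+1=\ell+p$ puts $\mathcal S_{\dagger,[p]}$ at the boundary of its injectivity range, and by item (2) we have $\ker\mathcal S_{\dagger,[p]}=\{0\}$ on $\alt^{k+1,\ell}$, eliminating the middle summand and producing the special form in (4). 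The reverse inclusion in both cases is immediate from the vanishing properties of $\mathcal S_{\dagger,[p]}$ on each generator.

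\textbf{Main obstacle.} The delicate point is Stage 3, specifically verifying that the right inverse $(\mathcal S^{k+1,\ell}_{\dagger,[p]})^{-1}$ built from Stage 2 really sends $\kappa\mathcal H_{r-2}\alt^{k-p+2,\ell+p}$ into $\mathcal H_{r-1}\alt^{k+1,\ell}$ (i.e., it preserves homogeneity and the appropriate Koszul stratum), and that the decomposition at $k=\ell+p-1$ collapses cleanly because $\W^{k+1,\ell}_{[p]}$ vanishes at the top of the surjectivity range. Careful bookkeeping of the bidegree shifts $(k,\ell)\mapsto(k\pm p,\ell\mp p)$ under the iterated operators, together with keeping track of which row of the diagram \eqref{diagram-nD-algebraic} hosts the surjective versus injective regime, is the main technical burden.
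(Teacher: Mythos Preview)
Your proposal is correct and follows exactly the template the paper uses: the paper does not write out a separate proof of \Cref{lem:Pr-Wp}, treating it as the evident iteration of \Cref{lem:S-poly} and \Cref{charact-ker-Sdagger}, and your Stages~1--3 reproduce that argument with the substitution $\mathcal S_\dagger \rightsquigarrow \mathcal S_{\dagger,[p]}$ and the iterated anticommutation $\mathcal S_{\dagger,[p]}\kappa = (-1)^p\kappa\,\mathcal S_{\dagger,[p]}$. One small caveat: in Stage~1 the identity $\mathcal S = dK - Kd$ does not by itself give $\kappa$-compatibility for $\mathcal S_{[p]}$ (there is no clean anticommutation of $\mathcal S$ with $\kappa$, unlike for $\mathcal S_\dagger$), but since the substantive content of items (1)--(4) only requires $\mathcal S_{\dagger,[p]}$, this does not affect your argument.
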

\begin{proof}
    The proofs of (1) and (2) can be found in the appendix. The proofs of (3) and (4) follow a similar argument for \Cref{charact-ker-Sdagger}.
\end{proof}

Finally, we discuss some properties of the symmetric spaces. The spaces $\W_{[p]}^{k,\ell}$ form a nested sequence in the sense that $\W_{[p]}^{k,\ell}\subseteq \W_{[q]}^{k,\ell}$ if $p\leq q$. This can also be observed through the following telescope decomposition. 
Starting from $\alt^{k,\ell}$ with $k \le \ell$, then it follows that 
$$\alt^{k,\ell} = \W^{k,\ell} \oplus \mathcal S^{k-1,\ell + 1} \alt^{k-1,\ell+1}.$$
By the definition of $\mathbb W$ and the adjointness of $\mathcal S$ and $\mathcal S_{\dagger}$, the above decomposition is an orthogonal decomposition.

We can continue decomposing $\alt^{k-1,\ell+1}$ on the right-hand side and obtain
\begin{equation}
\label{altkl}
    \alt^{k,\ell} = \W^{k,\ell} \oplus \mathcal S^{k-1,\ell + 1} \W^{k-1,\ell+1} \oplus \mathcal S^{k-2,\ell+2}_{[2]} \W^{k-2,\ell+2} \oplus \cdots \oplus \mathcal S^{0,\ell + k}_{[k]} \alt^{0,\ell + k}.
\end{equation}

Therefore, we have
\begin{align*}
\mathbb W^{k,\ell} & =  \mathbb W^{k,\ell}, \\ 
\mathbb W^{k,\ell}_{[2]} & = \mathbb W^{k,\ell} \oplus \mathcal S^{k-1,\ell + 1} \W^{k-1,\ell+1}, \\ 
\cdots & = \cdots \\ 
\mathbb W^{k,\ell}_{[p]} & = \mathbb W^{k,\ell} \oplus \mathcal S^{k-1,\ell + 1} \W^{k-1,\ell+1} \oplus \cdots \oplus \mathcal S_{p}^{k-p+1, \ell + p -1} \mathbb W^{k-p+1, \ell + p -1}.
\end{align*}
The right-hand side also provides an orthogonal decomposition.

    We take (2,2)-forms as an example, which often appears in the study of geometry. In this case, we have 
    \begin{equation}
    \label{alt22}
    \alt^{2,2} = \W^{2,2} \oplus \mathcal S^{1,3} \W^{1,3} \oplus \mathcal S^{0,4}_{[2]} \Alt^{0,4}.\end{equation}
    The decomposition gives three parts. The first part is the {\it algebraic curvature space}, namely, symmetric (2,2)-forms satisfying the algebraic Bianchi identities. Here, symmetric (2,2)-forms mean 4-tensors which are skew-symmetric with the first two indices and the last two indices, respectively, and symmetric when we exchange the two groups, i.e., $A_{ij, kl}=-A_{ji,kl}=-A_{ij,lk}=A_{kl,ij}$. A standard way to get the algebraic curvature space $\W^{2, 2}$ is to consider the Bianchi map $b : \sym\alt^{2,2} \to \alt^{4} \subseteq  \sym\alt^{2,2}$, which is defined by skew-symmetrization (see \eqref{def:bianchi-map} below). Here, $ \sym\alt^{2,2}$ is the space of {\it symmetric (2,2)-forms}, i.e., 4-tensors $R_{ijkl}$ satisfying $R_{ijkl}=-R_{jikl}=-R_{ijlk}=R_{klij}$. 
      It can be shown that $b$ is a projection and $\W^{2, 2}=\ker(b)$, the kernel of skew-symmetrization \cite{gallot1990riemannian}.  

    We explain this construction using \eqref{alt22}. First, we can show that $\mathcal S^{1,3}\mathbb W^{1,3}$ gives the skew-symmetric (2,2)-forms.
        \begin{lemma}
        $\mathcal S^{1,3}\mathbb W^{1,3} = \skw \alt^{2,2}$, where $ \skw \alt^{2,2}$ is the space of skew-symmetric (2, 2)-forms. 
    \end{lemma}
    \begin{proof}
Let $\omega \in \mathbb W^{1,3}$. Then, for any $X, Y, Z, W $,
    \begin{align*}
    \mathcal S^{1,3}& \omega(X,Y)(Z,W) + \mathcal S^{1,3} \omega(Z,W)(X,Y)
\\&     = \omega(X)(Y,Z,W) - \omega(Y)(X,Z,W) + \omega(W)(Z,X,Y) - \omega(Z)(W,X,Y) \\&= (\mathcal S^{0,4}_{\dagger}\omega)(\cdot)(X,Y,W,Z)=0.
    \end{align*}
    This shows    $\mathcal S^{1,3}\mathbb W^{1,3} \subseteq \skw \alt^{2,2}$. 
    Since $\mathcal S^{1,3}$ is injective, it then suffices to check $\dim \W^{1,3} = \dim \skw \alt^{2,2}$, which holds since 
    $$\binom{n}{1} \binom{n}{3} - \binom{n}{4} = \frac{1}{2}(\binom{n}{2})(\binom{n}{2} - 1).$$
    \end{proof}
     Second, the Bianchi map can be regarded as $\mathcal S_{[2]}^{0,4}\mathcal S_{\dagger,[2]}^{2,2}$. 
    Therefore, in terms the telescope decomposition, the Bianchi map is the projection from ``symmetric (2, 2)-tensors'' (the sum of the first and the third term $\W^{2,2} \oplus \mathcal S^{0,4}_{[2]} \Alt^{0,4}$ to the full skew-symmetric term $\mathcal S^{0,4}_{[2]} \Alt^{0,4}$. The kernel of this map gives the algebraic curvature $\W^{2,2}$).

 It can be verified that 
$$\mathcal S^{0,4}_{[2]}\omega (X,Y)(Z,W) = 2 \omega (X,Y,Z,W)$$ 
for any $\omega \in \Alt^4 \cong \Alt^{0,4}.$

The Bianchi map maps $\sym \Alt^{2,2} \to \Alt^{4}$, defined as 
\begin{equation}\label{def:bianchi-map}
b\omega(X,Y,Z,W) = \frac{1}{2} (\omega(X,Y)(Z,W) + \omega(Y,Z)(X,W) + \omega(Z,X)(Y,W)).
\end{equation}
We can show that 
$$\mathcal S_{\dagger,[2]}^{2,2}\omega = 6b\sym \omega,$$
and $\mathcal S^{0,4}_{[2]}$ is the inclusion from $\Alt^{4} \to \sym \Alt^{2,2}$ (up to a scalar multiple).

The project from $\Alt^{2, 2}$ to the first term in \eqref{alt22} is called the {\it Kulkarni–Nomizu operator}; the project to the third term is called the {\it Bianchi symmetrization}  \cite{gallot1990riemannian}.

\begin{remark}\label{rmk:weyl}
    The study of differential geometry involves further decompositions of tensors. For example, the trace operator $\W^{2,2} \to \W^{1,1} = \sym \Alt^{1,1}$  leads to the Ricci decomposition of Riemann tensors (decomposing the Riemann tensor into the Ricci part and the Weyl part). The trace operator can be also constructed using $\mathcal S$ operator: 
    $$\Alt^{2,2} \xrightarrow{\star} \Alt^{n-2,2} \xrightarrow{\mathcal S_{\dagger}} \Alt^{n-1,1} \xrightarrow{\star} \Alt^{1,1}.$$
    or, using a Hodge star for the second index, 
    \begin{equation}\label{trace-operator}
    \Alt^{2,2} \xrightarrow{\star} \Alt^{2,n-2} \xrightarrow{\mathcal S} \Alt^{1,n-1} \xrightarrow{\star} \Alt^{1,1}.
    \end{equation}
    
    The adjoint of traces (restricted to symmetric (1, 1)-forms) is usually constructed explicitly via {\it Kulkarni–Nomizu operator} $\bigcirc \hspace{-1em} \wedge \hspace{0.2em} g $. For example, dualizing \eqref{trace-operator} by a Hodge star, we get
     $$(\bigcirc \hspace{-1em} \wedge \hspace{0.2em} g ) \circ \sym : \Alt^{1,1} \xrightarrow{\star} \Alt^{1,n-1} \xrightarrow{\mathcal S_{\dagger}} \Alt^{2,n-2} \xrightarrow{\star} \Alt^{2,2}.$$
    A further investigation of Ricci decomposition through the BGG paradigm will be discussed in the future.
\end{remark}

\section{Generalized traces for differential forms}
\label{sec:trace}

To introduce the continuity condition of the form-valued forms, we generalize the concept of traces of differential forms. 
 Let $\Omega$ be a bounded Lipschitz domain and $F\subseteq {\partial} \Omega$ be a submanifold.  The {\it trace} operator $\iota^*$ is defined as the pullback of the inclusion operator $\iota: F\rightarrow \Omega$. That is, for $F \subseteq \Omega$, 
$\iota_{F}^{\ast}: C^{\infty} \otimes \alt^k(\Omega) \to C^{\infty} \otimes \alt^k(F)$ is defined by
\begin{equation}\label{def:trace}
\iota_{F}^{\ast}\omega(v_{1}, \cdots, v_{k})=\omega(\iota_{F,\ast}v_{1}, \cdots, \iota_{F,\ast}v_{k}), \quad \omega\in C^{\infty}\alt^{k}(\Omega),
\end{equation}
where $v_{1}, \cdots v_{k}$ are any vector fields on $\Omega$. Here the pushforward $\iota_{F,\ast}$ projects the $k$-vectors $v_{1}, \cdots, v_{k}$ to the submanifold $F$. 

 When $k > \dim(F)$, the pullback $\iota_{F}^*$ vanishes on $k$-forms. To see that the classical trace operator is not enough for our purpose, consider the elasticity complex (Figure \ref{fig:table}), where 1-form-valued 0-forms (the first nonzero space in the complex) are discretized by a vector Lagrange element. The vertex degrees of freedom of the Lagrange element cannot be interpreted as the trace of  1-form-valued 0-forms, as $\iota_{F}^*$ of a 1-form at vertices vanishes. This demonstrates that a generalized notation of trace operators for $k$-forms is necessary when $k > \dim (F)$ and actually also necessary for $k \le \dim(F)$.

To interpret the vertex evaluation degrees of freedom appearing in the Lagrange elements in the context of differential forms, 
 we can define {\it restrictions} of differential forms:
\begin{equation}\label{def:rho}\rho_{F}^{\ast} : C^{\infty}(\Omega) \otimes \alt^k(\mathbb R^n) \to C^{\infty} (F) \otimes \alt^k(\mathbb{R}^{n}),\end{equation} which regards a $k$-form as a $k$-form-valued $0$-form and takes the trace of 0-forms. 



To generalize the spaces in Figure \ref{fig:table} to form-valued forms, we need a generalized notion of traces to define the continuity conditions.
Specifically, we will first define the generalized trace $\jmath_{F}^*$, extending the definition of the trace $\iota^{*}_{F}$ to $k$-forms with $k > \dim F$. Note that with the original definition, $\iota^*_{F}$ will vanish in this case. In the next step, we construct a family of linear functionals $\jmath_{F,[p]}^*$ such that they interpolate between the trace $\jmath_{F}^* = \jmath_{F,[1]}^*$ and the restriction $\rho = \jmath_{F,[n]}^*$. The generalized trace operator $\jmath_{F,[p]}^*$ is used to characterize the continuity of the finite element spaces from the iterated constructions.

\subsection{Generalized trace} 

 We begin with a tangential-normal decomposition formula of alternating forms.

\begin{lemma}[Tangential-normal decomposition on forms]
\label{lem:tan-nor}
For every fixed simplex $F$, the following tangential-normal decomposition holds:
\begin{equation}\label{decomp:altkRn}
\alt^k(\mathbb R^n) \cong \bigoplus_{q = 0}^{k} \alt^q(F) \otimes \Alt^{k-q}(F^{\perp}).
\end{equation}
Here we view $F$ as a subspace of $\mathbb{R}^{n}$ and $F^{\perp}$ is the orthogonal complement in the  Euclidean space.
\end{lemma} 

\begin{proof}
The isomorphism \eqref{decomp:altkRn}, denoted as $\bm \Theta$, can be explicitly given as follows. Suppose that $\dim F=m$ and $\bm \alpha_1,\cdots, \bm \alpha_m$ are a basis of $F$, and $\bm\alpha_{m+1},\cdots, \bm \alpha_{n}$ are a basis of $F^{\perp}$. Let $d\alpha^i$ be the dual basis of 1-forms, i.e., $\langle d\alpha^i, \bm \alpha_{j}\rangle =\delta_{ij}$.  Here $\delta_{ij}$ is the Kronecker's delta.  Clearly, $d\alpha^i, i = 1,2,\cdots, m$ span the space of 1-forms on $F$, and $d\alpha^i, i = m+1,\cdots, n$ span the 1-forms on $F^{\perp}$. Set
$$\bm \Theta(d\alpha^{i_1}\wedge \cdots d\alpha^{i_k})  
 = \underbrace{\bigwedge_{s\in [1:k]: i_s \le m} d\alpha^{i_s}}_{\text{tangential part}} \otimes \underbrace{\bigwedge_{s\in [1:k]: i_s > m} d\alpha^{i_s}}_{\text{normal part}},
$$
where $[1:k]$ is the set $\{ 1, 2, \cdots, k\}$. 
 Then, $\bm \Theta$ maps one basis in $\alt^k(\mathbb R^n)$ to $\alt^{q}(F) \otimes \alt^{n - q} (F^{\perp})$, where $q := \#\{i_s \le m\}.$ 
Clearly, the decomposition does not depend on the choice of basis of $F$ and $F^{\perp}$.
\end{proof}

 In particular, when $k = 1$, the above decomposition gives $\alt^1(\mathbb R^n)= \alt^1(F) \oplus \alt^1(F^{\perp})$, indicating that a dual vector can be split into two parts: (dual) tangential part and (dual) normal part. The decomposition of $k$-forms essentially comes from the tensor product of the results of $1$-forms.


 A consequence of \Cref{lem:tan-nor} is that we can define the algebraic projection of a $k$-form to the components with $q$ components tangent to $F$ and $k-q$ components normal to $F$:
\begin{equation}\label{def:vartheta}\vartheta_{F,q}^*: C^{\infty}(\Omega) \otimes \alt^{k}(\mathbb R^n) \to C^{\infty}(F) \otimes \alt^q(F) \otimes \alt^{k-q}(F^{\perp}). \end{equation}
More precisely, for $i_{1}, \cdots, i_{k}\in [1:n]$, the map $\vartheta_{F,q}^*$ sends a monomial
$$
 f d\alpha^{i_1}\wedge \cdots \wedge d\alpha^{i_k} \mapsto
 \begin{cases}
  f|_F \bigwedge_{i_s \le m} d\alpha^{i_s} \otimes \bigwedge_{ i_s > m} d\alpha^{i_s},\quad \mbox{~if there are $q$ indices $i_s \le m$,} \\
  0, \quad \mbox{~otherwise~}.
  \end{cases}
 $$
 The extension of $\vartheta_{F,q}^*$ to a combination of monomials $\sum_{i_{1}, \cdots, i_{k}\in [1:n]} f d\alpha^{i_1}\wedge \cdots \wedge d\alpha^{i_k}$ is defined by linear combination. 
It is easy to see that $\vartheta_{F,k}^* = \iota_{F}^*$ on $k$-forms. Intuitively, $\vartheta_{F,q}^*$ preserves the $k$ forms that have $q$ tangential components and map others to zero.

%

  The general idea of the generalized trace operator is to use tangential vectors as much as possible. When $k \le  \dim (F)$, we  feed all the $\dim F$ tangent vectors of $F$ to the $k$-form, and in addition, we use $k-\dim F$ normal vectors. 
 This leads to the following definition of a \emph{generalized trace} on lower dimensional simplices:
\begin{equation}\label{def:jstar}
\jmath_F^* := \vartheta_{F,\dim F}^*  : C^{\infty}(\Omega) \otimes \alt^{k}(\mathbb R^n) \to C^{\infty}(F) \otimes \alt^{\dim F}(F) \otimes \alt^{k-\dim F}(F^{\perp})\text{ if } \dim F \leq  k.
\end{equation}
Here $\alt^{\dim F}(F) $ is the volume form on $F$, which is unique up to a scalar multiple.  Therefore, if $\dim F \leq k$, the range of $\jmath_F^*$ can be identified with $C^{\infty}(F) \otimes \alt^{k-\dim F}(F^{\perp})$,  understood as an $\alt^{k-\dim F}(F^{\perp})$-valued smooth functions on $F$. For $\dim F > k$, $\jmath_F^*$  are defined to be zero maps (see \Cref{table:restriction}).  In other words, the generalized trace operator $j_{F}^*$ with $\dim F = m$ projects a form to all the $k$-hyperplanes that contain the $m$-simplex $F$. 
{
\begin{remark}
\label{rmk:volume}
    Hereafter, we always make the convention that the volume form of $F$ is identified as 1.
\end{remark}
}

\begin{figure}
\FIG{\includegraphics[width=0.4\linewidth]{./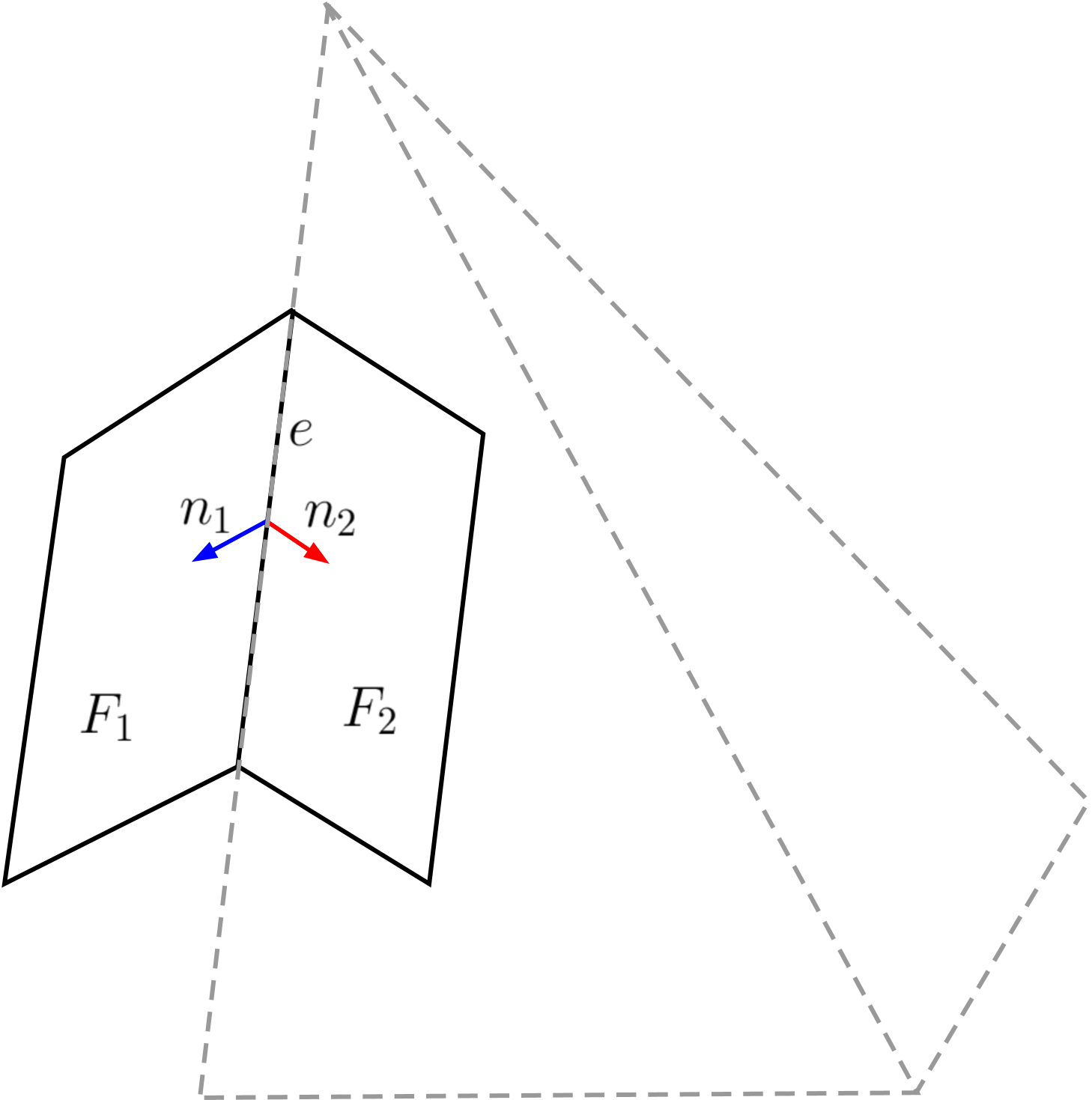}}
{\caption{Generalized trace of $k$-forms on $m$-cells when $k>m$. The figure demonstrates the trace of a 2-form 
$\omega$ on a 1-cell $e$ in $\mathbb{R}^{3}$ (shown in a tetrahedron). In this case, we feed two vectors to $\omega$. There are three possibilities: two normal vectors, and one tangent vector plus one normal (two choices). The generalized trace operator decomposes $\omega$ to these three components and remove the one corresponding to the normal-normal component, i.e., the generalized trace projects $\omega$ to the components containing the tangent vector. In general, in $\mathbb{R}^{n}$ there are $m$ tangent vectors to the $m$-cell and $n-m$ normal vectors. The generalized trace operator projects a $k$-form ($k>m$) to the components containing $m$ tangent vectors plus $k-m$ normal vectors. The number of components, i.e., the number of choices of $k-m$ normal vectors, is therefore ${n-m\choose k-m}$. The figure shows the case $n=3$, $m=1$, $k=2$.
}
 \label{fig:generalized-trace}
 }
\end{figure}

\Cref{table:pullback,table:restriction} below summarize the trace $\iota_{F}^*$ and the generalized trace $\jmath_{F}^*$ with the standard vector proxies in $\mathbb{R}^3$.
%
\begin{table}[htbp]
\TBL{
\begin{center}
\begin{tabular}{|c|ccc|}
\hline
\diagbox[width=\dimexpr \textwidth/8+2\tabcolsep\relax, height=1cm]{ $k$ }{$\dim (F)$}&  0 & 1 & 2 \\\hline
0 & vertex value &edge  value & face value \\ 
1 &0 & edge tangential & face tangential \\ 
2  & 0& 0 & face normal  \\\hline
\end{tabular}
\end{center}}
{\caption{Vector proxies of $\iota^*_{F}$  on $k$-forms in $\mathbb{R}^{3}$. Here 0 means zero maps.}
\label{table:pullback}}
\end{table}

 \begin{table}[htbp]
 \TBL{
 \begin{center}
\begin{tabular}{|c|ccc|}\hline
\diagbox[width=\dimexpr \textwidth/8+2\tabcolsep\relax, height=1cm]{ $k$ }{$\dim (F)$}&  0 & 1 & 2 \\\hline
0 &vertex value &0 & 0\\
1 & vertex value & edge tangential & 0 \\
2  & vertex value & edge normal & face normal  \\\hline
\end{tabular}
\end{center}}
{\caption{Vector proxies of  $\jmath^*_{F}$   on $k$-forms in $\mathbb{R}^{3}$. The diagonal blocks are identical to those in Table \ref{table:pullback}.}
\label{table:restriction}}
\end{table}

Recall that	the composition of trace operators is also the trace. That is, $\iota_{E}^* \circ \iota_{F}^* w = \iota_{E}^* w $ for $E \trianglelefteq F \trianglelefteq K $ and $w \in \alt^k(K)$. Hereafter, we write $F \trianglelefteq K$ to denote that $F$ is a subsimplex of $K$.  For the generalized trace,   the composition of the generalized trace operators and trace operators is a part of the generalized traces. See the following lemma.
\begin{lemma}[Composition of generalized traces]
\label{lem:j-circ-i}
For $E \trianglelefteq F \trianglelefteq K$, and $w \in \alt^k(K)$.  Let $E^{\perp}$ be the normal hyperplane of $E$ in the ambient space $K$, and $E^{\perp} \cap F$ be the normal hyperplane of $E$ in the ambient space $F$. Suppose that $\dim E \le k \le \dim F$, and $q \le k$. Let $\pi_{E, F}$ be the orthogonal projection from the space $\alt^{k - q}(E^{\perp}) \to \alt^{k-q}(E^{\perp} \cap F)$. It holds that $\vartheta_{E,q}^* \circ \iota_{F}^* = \Pi_{E, F} \circ \vartheta_{E,q}^*,$ where $$\Pi_{E, F} : C^{\infty}(E) \otimes \alt^q(E) \otimes \alt^{k-q}(E^{\perp})  \to  C^{\infty}(E) \otimes \alt^q(E) \otimes \alt^{k-q}(E^{\perp} \cap F)$$
is defined as $(id,id,\pi_{E, F})$.
\end{lemma}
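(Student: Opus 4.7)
The identity is most transparent after choosing a basis of $\RR^{n}$ adapted to the nested pair $E \subset F \subset \RR^{n}$. My plan is to pick an orthonormal basis $\bm\alpha_{1},\dots,\bm\alpha_{n}$ so that $\bm\alpha_{1},\dots,\bm\alpha_{\dim E}$ spans $E$, the vectors $\bm\alpha_{\dim E + 1},\dots,\bm\alpha_{\dim F}$ span $E^{\perp}\cap F$, and $\bm\alpha_{\dim F + 1},\dots,\bm\alpha_{n}$ spans $F^{\perp}$. Under the orthogonal splitting $E^{\perp} = (E^{\perp}\cap F) \oplus F^{\perp}$, the map $\pi_{E,F}$ in the statement is exactly the projection that deletes any factor of $d\alpha^{i}$ with $i > \dim F$. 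Since the definition of $\vartheta_{E,q}$ in \eqref{def:vartheta} was verified to be independent of the choice of basis of $E$ and $E^{\perp}$, such an adapted basis is admissible.

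Write $w = \sum_{I} f_{I}\, d\alpha^{I}$ with $I = (i_{1} < \cdots < i_{k})\subseteq [1:n]$. I will then compute both sides by inspecting a single monomial $f_{I}\, d\alpha^{I}$ and summing by linearity. On the right, $\vartheta_{E,q}(f_{I}\,d\alpha^{I})$ retains $f_{I}|_{E}\,d\alpha^{I}$ (split according to \eqref{decomp:altkRn}) precisely when $I$ contains exactly $q$ indices in $[1:\dim E]$ and $k-q$ indices in $(\dim E, n]$; otherwise it is zero. Applying $\Pi_{E,F} = (\mathrm{id},\mathrm{id},\pi_{E,F})$ further keeps only those monomials whose $k-q$ ``normal'' indices lie in $(\dim E, \dim F]$. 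On the left, $\iota_{F}^{\ast}$ acts as pullback by the inclusion $F \hookrightarrow K$, so any $d\alpha^{i_{s}}$ with $i_{s} > \dim F$ is annihilated, and the coefficient is restricted to $F$; applying $\vartheta_{E,q}$ then picks out the monomials with exactly $q$ indices in $[1:\dim E]$ and the remaining $k-q$ indices necessarily in $(\dim E, \dim F]$, restricting the coefficient further to $E$. Since $f_{I}|_{F}|_{E} = f_{I}|_{E}$, the two expressions coincide monomial by monomial, which proves the identity.

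The only bookkeeping point to verify is that the range assertion makes sense, namely that $\vartheta_{E,q}\circ \iota_{F}^{\ast}$ lands in $C^{\infty}(E)\otimes \alt^{q}(E)\otimes \alt^{k-q}(E^{\perp}\cap F)$. This is automatic from the argument above: the surviving monomials after $\iota_{F}^{\ast}$ involve only factors $d\alpha^{i_{s}}$ with $i_{s} \le \dim F$, so the ``normal'' indices lie in $(\dim E, \dim F]$, giving elements of $\alt^{k-q}(E^{\perp}\cap F)$. The hypothesis $\dim E \le k \le \dim F$ guarantees that the relevant index sets are nonempty; indeed, the need to place $k-q$ indices in $(\dim E,\dim F]$ requires $k-q \le \dim F - \dim E$, which is consistent with $k \le \dim F$ when $q \ge \dim E - (\dim F - k)$, and is otherwise vacuously satisfied since both sides vanish.

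The main obstacle is purely notational: keeping the three index ranges $[1:\dim E]$, $(\dim E,\dim F]$, and $(\dim F, n]$ straight, and verifying that the adapted-basis computation aligns with the coordinate-free $\Pi_{E,F}$ in the statement. Once the basis is fixed, the proof is a direct matching of monomials, with no further analytic or algebraic input beyond the naturality of $\iota_{F}^{\ast}$ on monomials and the explicit form of $\vartheta_{E,q}$ given in \eqref{def:vartheta}.
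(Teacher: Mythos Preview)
Your proposal is correct and follows the same idea as the paper, which gives only a one-line justification (``the above result holds since $\iota_{F}$ removes the components that are orthogonal to $F$''). You have simply made this explicit by choosing an adapted orthonormal basis and matching monomials, which is exactly the computation the paper's sentence summarizes.
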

\begin{proof} The above result holds since $\iota_{F}^*$ removes the components that are orthogonal to $F$.
\end{proof}

\begin{example}
Let us consider a simple example in three dimensions. Let the edge $E$ be parallel to $x_1$. The two forms in three dimension are therefore have the following basis:
$$f = f_3 dx^1 \wedge dx^2 + f_1 dx^2 \wedge dx^3 + f_2 dx^3 \wedge dx^1.$$ 
When introducing the $\jmath^*$ on the edge $E$, we should extract the  $dx^1 \wedge dx^2$ and $dx^3 \wedge dx^1$  component. That is, $\jmath_E^* f = f_3 dx^1 \otimes dx^2  - f_2 dx^1 \otimes dx^3.$
\end{example}

The above definitions for $\iota^*$ and $\jmath^*$ can be generalized to form-valued forms $C^{\infty}(\Omega)\otimes\alt^{k, \ell}$. For example, we use $\iota^* \iota^*$  to denote the trace operator for both indices. That is, for $\omega \in \alt^{k,\ell}$ and $F\subseteq \Omega$, $\iota_{F}^{*} \iota_{F}^{*}: C^{\infty}(\Omega) \otimes \alt^{k,\ell}(\Omega) \to C^{\infty}(F) \otimes \alt^{k,\ell}(F)$ is defined by
\begin{align}\label{double-trace}
\iota_{F}^{*}\iota_{F}^{*}\omega(v_{1}, \cdots, v_{k})(u_{1}, \cdots, u_{\ell}):=\omega(\iota_{F,\ast}v_{1}, \cdots, \iota_{F,\ast}v_{k})(\iota_{F,\ast}u_{1}, \cdots, \iota_{F,\ast}u_{\ell}),\\ \nonumber\quad\forall v_{1}, \cdots, v_{k}, u_{1}, \cdots, u_{\ell}\in C^{\infty}(F) \otimes V. 
\end{align}
We use $\iota^*\jmath^*$ to denote taking $\iota^*$ for the first index in $\alt^{k, \ell}$ and $\jmath^*$ for the second. Similar definitions are used for $\jmath^*\jmath^*$ and $\jmath^*\iota^*$. 
 In vector/matrix proxies in three dimensions, operators on the two indices correspond to row-wise and column-wise operators, e.g.  edge tangential-tangential, edge tangential-normal, face tangential-norm, face normal-normal, face normal, edge tangential.

 We can specify the range of the generalized traces on form-valued forms. For example, we have 
$$\iota_{F}^* \jmath_{F}^* : C^{\infty}(\Omega) \otimes \alt^{k,\ell}(\Omega) \to C^{\infty}(F) \otimes \alt^{k}(F) \otimes \alt^{\ell - \dim F}(F^{\perp}),$$
where we require that $k \le \dim F \le \ell$ to make sure that both $\iota_{F}^*$ and $\jmath_F^*$ are nontrivial, and use the convention that we indentify the volume form of $F$ as $1$ (\Cref{rmk:volume}). 

\subsection{Generalized trace for $p\geq 1$} 

In the previous subsection, we introduced the generalized trace operator $\jmath_{F}^{\ast}\omega$ for a high-order form $\omega$ on a lower-dimensional cell $F$. (Note that $\jmath^{\ast}$ for high-order forms on lower-dimensional cells is defined to be zero for technical reasons.) The notion involves using all tangent vectors of $F$ to evaluate $\omega$, with the remaining slots filled by vectors normal to $F$. {This is denoted as $\jmath_{F,[1]}^* = \jmath_{F}^*$.} To define finite elements for $p>1$, we need to extend this definition. Specifically, we introduce $\jmath_{F, [p]}^{\ast}$, which uses $p-1$ fewer tangent vectors, substituting them with normal vectors. For instance, when $p=2$, $\jmath_{F, [p]}^{\ast}$ comprises two terms: one is $\jmath_{F}^{\ast}$, which employs all tangent vectors of $F$, and the other is $\vartheta_{F,\dim F-1}^{\ast}$, which omits one tangent vector. In the remainder of this subsection, we provide precise definitions based on this approach.

We review the decomposition   \eqref{decomp:altkRn},
\begin{equation*}
\alt^k(\mathbb R^n) \cong \underbrace{\alt^k(F) \otimes \Alt^{0}(F^{\perp})}_{\text{feed } k \text{ tangential vectors}} \oplus \underbrace{\alt^{k-1}(F) \otimes \Alt^{1}(F^{\perp})}_{\text{feed } k-1 \text{ tangential vectors}} \oplus \cdots \oplus \underbrace{\alt^{0}(F) \otimes \Alt^{k}(F^{\perp})}_{\text{feed } 0 \text{ tangential vectors}}.
\end{equation*}

When both $\dim F$ and $\dim F^{\perp} \ge k$, all terms above are nontrivial. Therefore, we can introduce a family of trace operators to consider those components with at least $k - (p-1)$ tangential vectors, i.e., the beginning $p$ terms of the right-hand side. When $p \ge k + 1$, we actually incorporate all the terms, yielding a restriction operator $\rho^*$. 

Following the discussion on the generalized operator $\jmath^*$, we are interested in the case when $k>\dim F $, where the first several terms in \eqref{decomp:altkRn} are omitted, and the above decomposition becomes 
\begin{equation}\label{decomposition-expansion}
\alt^k(\mathbb R^n) \cong \underbrace{\alt^{\dim F}(F) \otimes \Alt^{k - \dim F }(F^{\perp})}_{\text{feed } \dim F \text{ tangential vectors}} \oplus \underbrace{\alt^{\dim F-1}(F) \otimes \Alt^{k - \dim F + 1}(F^{\perp})}_{\text{feed } \dim F-1 \text{ tangential vectors}} \oplus \cdots .
\end{equation}

 The generalized trace operator $\jmath^*$ is the projection to first term of the decomposition \eqref{decomposition-expansion}. We can continue writing the above sum until a term $\alt^{\dim F-s}(F) \otimes \Alt^{k - \dim F + s}(F^{\perp})$ becomes trivial. This can happen either $\alt^{\dim F-s}(F)$ becomes  trivial (when $s> \dim F$) or $\Alt^{k - \dim F + s}(F^{\perp})$ becomes trivial (when $k - \dim F + s >\dim (F^{\perp})= n - \dim F$). Now we generalize this definition to a projection to the the first $p$ terms in \eqref{decomposition-expansion}. This leads to a new family of generalized traces $\jmath^*_{[p]}$ ($p=1$ gives $\jmath^*$).

  Given any integer $p\geq 1$ and $\omega \in C^{\infty}(\Omega) \otimes \alt^k(\mathbb R^n)$, set 
\begin{equation} \label{def:jp}\jmath^*_{F,[p]}\omega  :=  \vartheta_{F,\dim F}^* \omega  + \vartheta_{F,\dim F - 1 }^* \omega +  \cdots + \vartheta_{F,\dim F- p+1}^* \omega.\end{equation}

The range of $\jmath_{F,[p]}^*$  is $$\bigoplus_{s= 0}^{p-1} C^{\infty}(F) \otimes \alt^{\dim F - s }(F)  \otimes  \alt^{k - \dim F + s }(F^{\perp}).$$

More precisely, for $i_{1}, \cdots, i_{k}\in [1:n]$, the map $\jmath_{F,[p]}^*$ sends a monomial
$$
 f d\alpha^{i_1}\wedge \cdots \wedge d\alpha^{i_k} \mapsto
 \begin{cases}
  f|_F \bigwedge_{i_s \le \dim F} d\alpha^{i_s} \otimes \bigwedge_{ i_s > \dim F} d\alpha^{i_s},\quad \mbox{~if at least $\dim F - p +1$ indices $i_s \le \dim F$} \\
  0, \quad \mbox{~otherwise~}.
  \end{cases}
$$

 The following lemma speicifies the cases where $\jmath_{[p]}^*$ equals to $\iota^*$ or $\rho^*$. 
%

\begin{lemma}
\label{lem:jp-properties}
Given a $k$-form $w$ and a cell $F$, the trace $\jmath_{F,[p]}^*$ has the following properties:
\begin{enumerate}
	\item $\jmath_{F,[p]}^* w = \iota_{F}^* w$ if $\dim F = k + p - 1$. 
	\item $\jmath_{F,[p]}^* w= 0$ if $\dim F \ge k + p$. 
	\item $\jmath_{F,[p]}^* w= \rho_{F}^*w$ if $p > \dim F$ or $k - (\dim F - p + 1) \ge \dim F^{\perp}$. Suppose that $w$ is defined in $\mathbb R^n$, then the latter condition boils down to $k + p - 1 \ge n$. 
\end{enumerate}
\end{lemma}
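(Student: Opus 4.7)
The plan is to unwind the definition \eqref{def:jp} of $\jmath^*_{F,[p]}$ as the finite sum $\sum_{q = \dim F - p + 1}^{\dim F} \vartheta_{F,q}^*$ and to determine, for each term, whether it acts trivially on a $k$-form. From the decomposition \eqref{decomp:altkRn} and the defining formula of $\vartheta^*_{F,q}$ right after \eqref{def:vartheta}, the projection $\vartheta^*_{F,q}$ picks out the monomials having exactly $q$ of the $k$ indices in the tangential set $[1{:}\dim F]$ and $k-q$ in the normal set $[\dim F + 1 {:} n]$. Since the indices of a nonzero wedge monomial are distinct, this forces $\max(0, k - \dim F^\perp) \le q \le \min(k, \dim F)$; outside this admissible window $\vartheta^*_{F,q}$ is identically the zero operator on $k$-forms. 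Each of the three claims then reduces to an index comparison against this window.

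For (1), with $\dim F = k + p - 1$ the summation range becomes $q \in [k, k + p - 1]$. Every term with $q > k$ dies by the observation above, so only $q = k$ survives, and the identification $\vartheta^*_{F,k} = \iota^*_F$ on $k$-forms (noted right after \eqref{def:vartheta}) yields $\jmath^*_{F,[p]} w = \iota^*_F w$. For (2), with $\dim F \ge k + p$ the lowest index in the sum satisfies $\dim F - p + 1 \ge k + 1$, so every summand has $q > k$ and the whole sum vanishes on $k$-forms.

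For (3), the operator $\rho^*_F$ retains all components of $w$ under \eqref{decomp:altkRn}, i.e.\ all $q \in [\max(0, k + \dim F - n), \min(k, \dim F)]$, so it suffices to verify that the summation range $[\dim F - p + 1, \dim F]$ of $\jmath^*_{F,[p]}$ contains this admissible window. The upper endpoint $\dim F \ge \min(k, \dim F)$ is automatic. For the lower endpoint one needs $\dim F - p + 1 \le \max(0, k + \dim F - n)$, and this is guaranteed as soon as either $\dim F - p + 1 \le 0$ (that is, $p > \dim F$) or $\dim F - p + 1 \le k + \dim F - n$ (that is, $k + p - 1 \ge n$, equivalently $k - (\dim F - p + 1) \ge \dim F^\perp$). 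Each of these is precisely one of the two hypotheses of (3), so in either case all admissible summands are retained and $\jmath^*_{F,[p]} w = \rho^*_F w$.

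The entire argument is bookkeeping on admissible index ranges, so I do not foresee any nontrivial obstacle; the only point worth flagging is the convention that $\vartheta^*_{F,q}$ is understood to be zero whenever $q$ falls outside $[0, \dim F]$, which is what makes the sum well-posed in the regime $p > \dim F$ where the nominal lower index $\dim F - p + 1$ becomes nonpositive.
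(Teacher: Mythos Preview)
Your proof is correct and is precisely the unwinding of the definition that the paper's one-line proof (``By definition'') leaves implicit; the key observation that $\vartheta^*_{F,q}$ vanishes on $k$-forms unless $\max(0,k-\dim F^\perp)\le q\le\min(k,\dim F)$, and the subsequent comparison of this admissible window with the summation range $[\dim F-p+1,\dim F]$, is exactly the intended argument.
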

\begin{proof}
By definition. 	
\end{proof}

 A direct corollary of \Cref{lem:jp-properties} is that there are actually no new operators other than $\iota^*$ and $\rho^*$ can be produced in three dimensions. In fact, the first non-trivial example of the family appears in four dimensions, see the example below. 
\begin{example}
\label{exa:2-forms-in-4D}
We demonstrate an example in four dimensions. Let $F$ be a 2-face lying in the plane spanned by the  $x_1$ and $x_2$ axes. Note that 2-forms in four dimensions can be expressed by the basis $dx^i \wedge dx^j$ (count 6):
$$f = f_{ij} dx^i \wedge dx^j, \{i,j\} \subseteq \{1,2,3,4\}.$$ 
The trace $\jmath_F^* = \jmath_{F, [1]}^*$ extracts the $dx^1 \wedge dx^2$ term, the trace $\jmath_{F, [2]}^*$ extracts $dx^1 \wedge dx^2, dx^1 \wedge dx^3, dx^1\wedge dx^4, dx^2\wedge dx^3, dx^2 \wedge dx^4$ terms. The last four terms come from $\vartheta_{F,1}$, which contains either $dx^1$ or $dx^2$. Finally, $\jmath_{F, [3]}^* = \rho_{F}^*$ is the restriction. 
\end{example}

The double trace operators $\iota_{F}^{*} \iota_{F}^{*}$ and $\iota_{F}^{*} \jmath_{F}^{*}$ defined in the previous subsection can be similarly defined here, leading to  $\iota_{F}^{*} \jmath_{F, [p]}^{*}$.



\section{Tensorial Whitney forms: construction of spaces}
\label{sec:whitney}
In this section, we present the lowest-order case of our construction, serving as a generalization of the Whitney forms for de~Rham complexes. We refer to this low-order construction as {\it tensorial Whitney forms}. 

The construction follows in three steps. We recall the outline in \Cref{sec:overview}. The first step is to construct finite element spaces for tensors $\alt^{k, \ell}$. The unisolvency of these finite elements with the $\iota^{\ast}\rho^{\ast}$-conformity follows from the results of the Whitney forms for differential forms $C^{\infty}\otimes\alt^{k}$. For later use, we also derive finite elements with the same shape functions but weaker $\iota^{\ast}\iota^{\ast}$ interelement continuity. This step works uniformally for any $p\geq 1$. The second step is to reduce the finite elements obtained from Step 1 by imposing the extra symmetries using the BGG diagrams, leading to the $\W^{k,\ell}_{[p]}$ spaces. This step is achieved by cancelling some degrees of freedom of the $\iota^{\ast}\iota^{\ast}$-conforming finite elements obtained in Step 1. The resulting finite elements thus also have an $\iota^{\ast}\iota^{\ast}$-conformity. The last step of the construction is to enhance the conformity of the finite elements by moving degress of freedom.

 In particular,  the last step leads to finite element discretizations of $C^{\infty} \otimes \mathbb W_{[p]}^{k,\ell}$. We impose the $\iota^*\jmath_{[p]}^*$-conformity on faces in $\mathcal T_{ < \ell + p}$ ($\jmath_{[p]}^*$ is defined on $\ell$-forms only on simplices of dimension less than   $\ell+p$)
   and the $\iota^*\iota^*$-conformity for $\mathcal T_{\ge \ell + p}$. For simplicity, we call such elements $\iota^* \jmath_{[p]}^*$-conforming, which will be discussed in \Cref{subsec:ijp}.

Now we take $r = 1$ in \eqref{def:P-}, yielding that 
\begin{equation}
\label{def:p1-}
\mathcal P^-\alt^{k,\ell} := \mathcal P_1^-\alt^{k,\ell} = \alt^{k,\ell} + \kappa \alt^{k+1,\ell},
\end{equation}
with the following dimension count
$$ \dim \mathcal P^-\Alt^{k,\ell} = \binom{n+1}{k+1} \binom{n}{\ell}.$$

For each $k$-simplex $\sigma$, we recall the Whitney form associated to $\sigma$:
\begin{equation}\label{def:whitney}{\phi}_{\sigma} := \sum_{j=0}^{k}(-1)^{j}\lambda_{\sigma_j}d\lambda_{\sigma_{0}}\wedge\cdots \wedge\widehat{d\lambda_{\sigma_{j}}}\wedge \cdots\wedge d\lambda_{\sigma_{k}}.\end{equation}
Running through all $\sigma$, the Whitney forms give a basis for $\cP^{-}\Alt^k := (\Alt^k + \kappa \Alt^{k+1})$.
We will also use the fact that the pullback of the Whitney form $\iota_F^* \phi_{\sigma}$ is again a Whitney form $\phi_{\sigma} \in \mathcal P^-\Alt^k(F)$, while $\iota^* {\phi}_{\sigma}$ vanishes at $\sigma' \in \mathcal T_k$ whenever $\sigma' \neq \sigma$. 

\subsection{Step 1: $\iota^*\iota^*$-conforming finite elements}

We first investigate spaces with the $\iota^*\iota^*$-conformity and show the unisolvency. 
Correspondingly, define the bubble function spaces: 
\begin{equation}\label{def:B-}
\mathcal B^-\alt^{k,\ell}(K) := \{ \omega \in \mathcal P^- \alt^{k,\ell}(\sigma) : \iota_F^{*} \iota_F^* K = 0, \,\, \forall F \trianglelefteq K, F \neq K \}.
\end{equation}


The bubbles can be characterized through the Whitney forms.
\begin{lemma}[Decomposition of the bubble forms]\label{lem:bubble-decomposition}
The following direct sum decomposition holds:
\begin{equation}\label{decomp:B-}
\mathcal B^- \alt^{k,\ell}(K) = \sum_{\sigma \in \mathcal T_k(K)}  \phi_{\sigma} \otimes N^{\ell}(\sigma,K).
\end{equation}
Here, 
\begin{equation}\label{def:Nell}N^{\ell}(\sigma, K) := \{ \omega \in \alt^{\ell}(K): \iota_{F}^*\omega = 0 \mbox{ for all } F \mbox{ such that } \sigma \trianglelefteq F \trianglelefteq_1 K \}\end{equation}
is the (constant) $\ell$-form that vanishes at all codimensional 1 face $F$ such that $\sigma \trianglelefteq F$. Hereafter, $F \trianglelefteq_1 K$ indicates that $F$ is a subsimplex of $K$, and $\dim K -\dim F=1$.
\end{lemma}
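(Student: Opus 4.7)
The plan is to work inside the tensor decomposition
\[
\mathcal P^- \alt^{k,\ell}(K) \;\cong\; \mathcal P^- \alt^k(K) \otimes \alt^\ell(K),
\]
expand in the Whitney basis on the first factor, and read off the bubble condition componentwise. Since $\{\phi_\sigma\}_{\sigma \in \mathcal T_k(K)}$ is a basis of $\mathcal P^- \alt^k(K)$, every $\omega \in \mathcal P^- \alt^{k,\ell}(K)$ admits a unique expansion
\[
\omega = \sum_{\sigma \in \mathcal T_k(K)} \phi_\sigma \otimes \omega_\sigma, \qquad \omega_\sigma \in \alt^\ell(K).
\]
Uniqueness of this expansion will directly give the directness of the sum in \eqref{decomp:B-}, so my task reduces to characterizing exactly which tuples $(\omega_\sigma)_\sigma$ correspond to $\omega \in \mathcal B^-\alt^{k,\ell}(K)$.

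For a proper subsimplex $F \triangleleft K$ I will invoke two standard restriction properties of Whitney forms: $\iota_F^*\phi_\sigma$ equals the Whitney form on $F$ associated to $\sigma$ whenever $\sigma \trianglelefteq F$ (because the $\lambda_v|_F$ are barycentric coordinates of $v$ on $F$), and vanishes otherwise (since some $\lambda_v$ with $v \notin F$, together with $d\lambda_v$, pulls back to zero on $F$). Substituting the expansion into $\iota_F^*\iota_F^*\omega$ and invoking the linear independence of the Whitney basis on $F$ yields the equivalence
\[
\iota_F^*\iota_F^*\omega = 0 \quad \Longleftrightarrow \quad \iota_F^*\omega_\sigma = 0 \text{ for every } \sigma \in \mathcal T_k(F).
\]
Running this across all proper $F \triangleleft K$ reformulates the defining bubble condition as: for each fixed $\sigma \in \mathcal T_k(K)$, one has $\iota_F^*\omega_\sigma = 0$ for every proper $F \triangleleft K$ with $\sigma \trianglelefteq F$.

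The remaining step is to collapse this family of constraints onto codimension-one faces. Using the functoriality $\iota_{F_1}^* = \iota_{F_1 \subseteq F_2}^* \circ \iota_{F_2}^*$ for nested $F_1 \trianglelefteq F_2$, the identity $\iota_{F_2}^*\omega_\sigma = 0$ already forces $\iota_{F_1}^*\omega_\sigma = 0$, so it suffices to impose the vanishing only at the maximal proper $F$ containing $\sigma$, namely the codimension-one faces $F \trianglelefteq_1 K$ with $\sigma \trianglelefteq F$. This is precisely the defining condition $\omega_\sigma \in N^\ell(\sigma, K)$, which gives \eqref{decomp:B-}. I do not anticipate a substantive obstacle here: the argument is essentially careful bookkeeping of the Whitney-form restriction pattern combined with the monotonicity of the trace under face inclusion. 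The only point worth double-checking is that the codimension-one reduction is sharp, i.e.\ that lower-dimensional faces impose no additional constraints on $\omega_\sigma$; this is immediate from the monotonicity above.
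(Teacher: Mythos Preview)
Your proposal is correct and follows essentially the same approach as the paper: expand in the Whitney basis $\{\phi_\sigma\}$, use that $\iota_F^*\phi_\sigma$ is the Whitney form on $F$ when $\sigma \trianglelefteq F$ and zero otherwise, and invoke linear independence of the restricted Whitney forms to read off $\iota_F^*\omega_\sigma = 0$ componentwise. The paper works directly at codimension-one faces for both inclusions (implicitly relying on the trace functoriality you spell out), whereas you first state the characterization over all proper faces and then collapse to codimension one; this is a purely cosmetic difference in ordering.
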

\begin{proof}

Since $\{ \phi_{\sigma}, \sigma \in \mathcal T_{k}(K) \} $ form a basis of $\mathcal P^- \Alt^k$, for $\omega \in \mathcal P^{-}\alt^{k,\ell}$ there exists a unique expression
$\omega = \sum_{\sigma} \phi_{\sigma} \otimes w_{\sigma}$, where $w_{\sigma} \in \alt^\ell$ is a constant $\ell$-form. Thus, the right-hand side of \eqref{decomp:B-} is a direct sum. It then suffices to show that for $\omega \in \mathcal B^-\alt^{k,\ell}(K)$, it holds that $w_{\sigma} \in N^{\ell}(\sigma,K)$ for all $\sigma \in \mathcal T_{\ell}(K)$. 
For each $F$ with $\codim(F)= 1$, we readily see that $\iota_F^{*} \iota_F^* \phi_{\sigma} \otimes w_{\sigma} = 0$ whenever $\iota_{F}^* w_{\sigma} = 0$ or $\iota_{F}^* \phi_{\sigma}{=0}$. The latter holds when $\sigma \not \trianglelefteq F$. Therefore, the right-hand side of \eqref{decomp:B-} is contained in the left-hand side.

Conversely, suppose that $\omega \in \mathcal B^-\alt^{k,\ell}$. Fix $F$ with $ \codim(F)= 1$ and $0 = \iota_F^{*}\iota_F^* \omega  = \sum_{\sigma \in \mathcal T_k(F)} \phi_{\sigma} \otimes  \iota_{F}^* w_{\sigma},$ where we shall not distinguish $\phi_{\sigma}$ and $\iota_{F}^* \phi_{\sigma}$. Again by the fact that $\phi_{\sigma}$ is basis of $\mathcal P^-\alt^{k,\ell}(F)$, it holds that $\iota_{F}^*  w_{\sigma} = 0$. 

Therefore, we conclude the desired result. 
\end{proof}

Next, we characterize $N^{\ell}(\sigma, K)$. We first consider some examples in three dimensions by vector proxies. Let $e$ be an edge of $K$, and $F_1^{e}$ and $F_2^{e}$ be two of its adjacent faces. For $\ell = 1$, it holds that $N^{1}(e,K) := \{ \bm w \in \mathbb R^3 : \Pi_{F_i^{e}} \bm w = 0, ~i=1, 2\} = 0$, where $ \Pi_{F_i^{e}}$ is the projection to the tangent space of $F_i^{e}$. Let $F$ be a two-dimensional face. Then $N^1(F,K) := \{  \bm w \in \mathbb R^3 : \Pi_F \bm w = 0  \} = \mathbb R\bm n$. For $\ell = 2$, it holds that $N^{2}(F,K) := \{ \bm w \in \mathbb R^3 : \bm w \cdot \bm n = 0\} = \mathbb R \bm t_1^{F} \oplus \mathbb R\bm t_2^{F}$, where $\bm t_1^{F}$ and $\bm t_2^{F}$ are two linearly independent tangent vectors on $F$.

In general, the following dimension count holds.
\begin{lemma}
\label{lem:N}
For $\dim \sigma = k$ and $\dim K = n$, 
$\dim N^{\ell}(\sigma,K) = \binom{k}{\ell+k-n} = \binom{k}{n - \ell}.$
\end{lemma}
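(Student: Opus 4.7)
The plan is a direct dimension count in well-chosen coordinates. First, fix the combinatorics of which codimension-one faces are relevant. Since $\sigma \trianglelefteq K$ with $\dim \sigma = k$ and $\dim K = n$, without loss of generality the vertices of $K$ are $v_0, \ldots, v_n$ and the vertices of $\sigma$ are $v_0, \ldots, v_k$. Each codimension-one face of $K$ is obtained by deleting exactly one vertex, and such a face contains $\sigma$ precisely when the deleted vertex lies in $\{v_{k+1}, \ldots, v_n\}$. Thus the set of codimension-one faces $F$ with $\sigma \trianglelefteq F \trianglelefteq_1 K$ is $\{F_i : i = k+1, \ldots, n\}$ with $F_i := K \setminus \{v_i\}$, giving $n-k$ constraints.

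Next, I would choose coordinates on the affine hull of $K$ that diagonalize all $n-k$ trace conditions simultaneously. Setting $e_i := v_i - v_0$ for $i = 1, \ldots, n$ yields a basis of the tangent space of $K$, with dual basis $dx^1, \ldots, dx^n$. For each $i \in \{k+1, \ldots, n\}$, the tangent space of $F_i$ is exactly $\operatorname{span}\{e_j : j \neq i\}$, i.e., the hyperplane $\{x_i = 0\}$. Writing a constant $\ell$-form on $K$ as $\omega = \sum_{|J|=\ell} c_J\, dx^J$ with $J \subseteq \{1, \ldots, n\}$, the pullback $\iota_{F_i}^*\omega$ kills exactly the monomials whose index set $J$ contains $i$ and leaves the others linearly independent. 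Hence $\iota_{F_i}^*\omega = 0$ if and only if $c_J = 0$ for every $J$ with $i \notin J$.

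Imposing this for all $i \in \{k+1, \ldots, n\}$ forces the surviving multi-indices to satisfy $J \supseteq \{k+1, \ldots, n\}$. Writing $J = \{k+1, \ldots, n\} \cup J'$ with $J' \subseteq \{1, \ldots, k\}$, the constraint $|J| = \ell$ gives $|J'| = \ell - (n-k) = \ell + k - n$. Counting the admissible $J'$ yields
\[
\dim N^{\ell}(\sigma, K) = \binom{k}{\ell + k - n},
\]
with the convention that the binomial coefficient is zero when $\ell + k - n < 0$ or $\ell + k - n > k$, consistent with the vacuous cases. There is no serious obstacle; the only mild care is verifying that the affine coordinates genuinely diagonalize the constraints (which is why one uses $v_0$ as the basepoint so that the $F_i$ for $i \geq k+1$ all acquire the form $\{x_i = 0\}$), and confirming that the claim is vacuously correct in the degenerate ranges of $\ell$.
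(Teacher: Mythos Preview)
Your proof is correct and follows essentially the same approach as the paper: both choose coordinates $dx^i$ dual to $e_i = v_i - v_0$, identify the relevant codimension-one faces as those obtained by deleting a vertex with index in $\{k+1,\ldots,n\}$, and observe that the trace condition $\iota_{F_i}^*\omega = 0$ forces all coefficients $c_J$ with $i \notin J$ to vanish, leaving exactly the $\binom{k}{\ell+k-n}$ multi-indices $J$ containing $\{k+1,\ldots,n\}$. Your presentation is slightly more explicit about why the chosen basepoint $v_0$ makes each $F_i$ correspond to the coordinate hyperplane $\{x_i=0\}$, which is a nice touch.
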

\begin{proof}
The lemma is proved by an explicit count. Suppose that the vertices of $K$ are $X_0, \cdots, X_n$ and the simplex formed by the first $k+1$ vertices $[X_0, X_1,\cdots, X_k] = \sigma$.   Let $dx^i$ be the dual basis of  $(X_i - X_0)$. Clearly, $\alt^\ell$ has a basis $dx^I = \wedge_{i \in I} dx^i$ for $I \subseteq [n] := \{1,2,\cdots,n\}$ and $|I| = \ell$. We can now rewrite $w \in \alt^\ell$ as $w = \sum_{|I| = \ell, I \subseteq [n]} w_I dx^I$. 

For $F$ such that $\codim (F)=1$ and $\sigma \trianglelefteq F$, suppose that $F = [X_0,\cdots, X_{n-1}]$. Then $\iota_F^* w = \sum_{I \subseteq [n-1], |I| = \ell} w_I dx^I = 0$. Therefore, $w_I = 0$ for any $n \not\in I$. 

Similarly, it holds that $w \in N^{\ell}(\sigma, K)$ if and only if $w_I = 0$ for all $I$ not containing at least one of $\{k+1,k+2,\cdots,n\}$. Therefore, the dimension of $N^{\ell}(\sigma, K)$ is equal to $\displaystyle \# \{ I \subseteq [n], \{k+1,\cdots, n\}\subseteq I, |I| = \ell\} = \binom{k}{\ell+k-n}$. 

\end{proof}
\begin{remark}
    Although the definition of $N^{\ell}(\sigma, K)$ depends on $K$ explicitly, it can be verified from the previous proof that $N^{\ell}(\sigma, K)$ does not depend on $K$ if we do not distinguish $\Alt^k(K)$ for different $K$.
\end{remark}

As a corollary, it holds that
\begin{equation}
\label{eq:bubble-dim-count}
    \dim \mathcal B^-\alt^{k,\ell} (K) = \binom{n+1}{k+1}\binom{k}{\ell+k-n}.
\end{equation}

\begin{corollary}
\label{cor:spanning-whitney}
For a given $n$-simplex $K$, we index its vertex set in $[n+1] = \{1,2,\cdots,n+1\}$.  Let $X(n+1,k)$ be the set of increasing $k$-tuples. 
	For $\sigma$, let $I := [\![ \sigma ]\!] \subseteq [n+1]$ be its corresponding  index set. We will use $\phi_{I}$ to represent $\phi_{\sigma}$.  Then $\phi_{I} \otimes d\lambda_J$ for all $I \in X(n+1,k)$ and $J \in X(n+1,\ell)$ such that $I \cup J = [n+1]$ is a spanning set of $\mathcal B^- \alt^{k,\ell}(K)$. It is also possible to write down a basis, but in general we cannot provide a canonical one due to the linear dependence of $d\lambda_{i}$, see \cite{arnold2009geometric}. 
\end{corollary}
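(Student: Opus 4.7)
The plan is to verify two things: (i) every candidate $\phi_I \otimes d\lambda_J$ with $I \cup J = [n+1]$ belongs to $\mathcal{B}^-\alt^{k,\ell}(K)$, and (ii) the collection of such elements spans the bubble space. Both rely on the direct sum decomposition $\mathcal{B}^-\alt^{k,\ell}(K) = \sum_{\sigma \in \mathcal{T}_k} \phi_\sigma \otimes N^\ell(\sigma, K)$ established in \Cref{lem:bubble-decomposition}, together with the locality of Whitney forms.

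First, for (i), I would fix a codimension-one subsimplex $F = K \setminus \{v\}$ and show $\iota_F^*\iota_F^*(\phi_I \otimes d\lambda_J) = 0$ by splitting on whether $v$ lies in $I$. If $v \in I$, each summand of $\phi_I = \sum_j (-1)^j \lambda_{\sigma_j}\, d\lambda_{\sigma_0}\wedge\cdots\widehat{d\lambda_{\sigma_j}}\cdots\wedge d\lambda_{\sigma_k}$ contains either the factor $\lambda_v$ or the factor $d\lambda_v$, both of which pull back to zero on $F$ (since $\lambda_v \equiv 0$ there), so $\iota_F^*\phi_I = 0$. If $v \notin I$, the hypothesis $I \cup J = [n+1]$ forces $v \in J$, so $d\lambda_v$ appears as a wedge factor of $d\lambda_J$ and its pullback is again zero.

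For (ii), by \Cref{lem:bubble-decomposition} it suffices to prove that for each $\sigma \in \mathcal{T}_k$ with vertex set $I$, the family $\{d\lambda_J : J \in X(n+1,\ell),\ I \cup J = [n+1]\}$ spans $N^\ell(\sigma, K)$. My plan is to pick $v_0 \in I$ and use the single linear relation $\sum_{i\in[n+1]} d\lambda_i = 0$: the forms $\{d\lambda_i\}_{i \in [n+1]\setminus\{v_0\}}$ are a basis of the constant $1$-forms on $K$, so $\{d\lambda_{J'} : v_0 \notin J',\ |J'| = \ell\}$ is a basis of $\alt^\ell(K)$. Intersecting this basis with the covering condition $I \cup J' = [n+1]$ produces a linearly independent family inside $N^\ell(\sigma, K)$; the element $v_0 \in I$ is free, and the remaining elements of $J'$ must include $[n+1]\setminus I$ plus $\ell+k-n$ further indices chosen from $I \setminus \{v_0\}$, giving cardinality $\binom{k}{\ell+k-n}$. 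This matches the dimension of $N^\ell(\sigma, K)$ computed earlier, so the family is a basis; being contained in the candidate spanning set of the corollary, the spanning assertion follows.

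The only mildly subtle point is confirming that the candidates with $v_0 \in J$ are genuine linear combinations of those with $v_0 \notin J$: substituting $d\lambda_{v_0} = -\sum_{i \neq v_0} d\lambda_i$ into $d\lambda_J$ produces wedge products indexed by $J' = (J\setminus\{v_0\}) \cup \{i\}$ for $i \notin J$, and since $v_0 \in I$ we still have $I \cup J' \supseteq I \cup J = [n+1]$, so the covering condition survives. I do not expect a real obstacle; the entire argument is short combinatorial bookkeeping once the bubble decomposition and the earlier dimension count of $N^\ell(\sigma,K)$ are in hand.
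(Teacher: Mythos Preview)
Your proof is correct and follows essentially the approach implicit in the paper: the paper states this result as an immediate corollary of \Cref{lem:bubble-decomposition} together with the explicit characterization of $N^\ell(\sigma,K)$ obtained in the proof of the dimension count, and your argument simply makes this explicit. The observation that fixing $v_0\in I$ and taking $\{d\lambda_{J'} : v_0\notin J',\ I\cup J'=[n+1]\}$ gives a linearly independent set in $N^\ell(\sigma,K)$ of cardinality $\binom{k}{\ell+k-n}$ is exactly the basis exhibited in the proof of the dimension lemma (in the $dx^i$ coordinates there), so nothing new is needed; your final paragraph on the substitution $d\lambda_{v_0}=-\sum_{i\neq v_0}d\lambda_i$ is sound but redundant once you have a subset of the candidates already forming a basis.
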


A straightforward corollary of \eqref{eq:bubble-dim-count} is the following.
\begin{corollary}
The bubble space $\mathcal B^-\alt^{k,\ell}(\sigma) = 0$ for an $n$-dimensional simplex $\sigma$, if $\ell + k < n$. 
\end{corollary}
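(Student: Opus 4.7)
The corollary is flagged in the text as a ``straightforward corollary'' of the dimension identity \eqref{eq:bubble-dim-count}, and that is the route I would take. My plan is simply to read off
\[
\dim \mathcal B^{-}\alt^{k,\ell}(K)=\binom{n+1}{k+1}\binom{k}{\ell+k-n}
\]
and check that in the stated regime the factor $\binom{k}{\ell+k-n}$ vanishes by the standard convention that $\binom{a}{b}=0$ whenever $b$ lies outside the interval $[0,a]$. Once the dimension is zero, the space itself is trivial and the corollary follows.

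For an intrinsic derivation that bypasses the explicit formula, I would argue directly from the direct sum decomposition \eqref{decomp:B-},
\[
\mathcal B^{-}\alt^{k,\ell}(K)=\bigoplus_{\sigma\in\mathcal T_{k}}\phi_{\sigma}\otimes N^{\ell}(\sigma,K),
\]
and verify that in fact each summand $N^{\ell}(\sigma,K)$ is already the zero subspace of $\alt^{\ell}$. Using the coordinate set-up from the proof of the preceding lemma (so that $\sigma=[X_{0},\dots,X_{k}]$ and the normal directions to $\sigma$ inside $K$ are indexed by $\{k+1,\dots,n\}$), recall that $w=\sum w_{I}\,dx^{I}$ lies in $N^{\ell}(\sigma,K)$ iff every $I$ with $w_{I}\neq 0$ contains the whole normal set $\{k+1,\dots,n\}$. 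In the regime of the corollary no multi-index $I$ of cardinality $\ell$ can meet this requirement, so $N^{\ell}(\sigma,K)=\{0\}$ for every $k$-simplex $\sigma\trianglelefteq K$, and the direct sum collapses.

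\textbf{Main obstacle.} There is essentially no obstacle: both routes reduce to elementary binomial-coefficient bookkeeping, and the lemma and decomposition that immediately precede the corollary have already done all the substantive work of identifying $N^{\ell}(\sigma,K)$ and showing the bubble space splits cleanly into a direct sum over $k$-subsimplices. The only care to exercise is in tracking edge cases uniformly (for instance separating the trivial regime where $\alt^{\ell}$ is already zero from the one where it is nontrivial but admits no bubbles); both are handled by the binomial-coefficient convention above with no extra argument.
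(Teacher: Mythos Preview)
Your argument does not actually establish the stated inequality. You claim that $\binom{k}{\ell+k-n}=0$ when $\ell+k>n$, but this is false: in that regime $\ell+k-n$ is a \emph{positive} integer, and as long as $\ell\le n$ (which is always the case for nontrivial $\alt^{\ell}$) it lies in $[1,k]$, so the binomial is nonzero. Concretely, take $n=3$, $k=\ell=2$: then $\ell+k=4>3$ but $\dim\mathcal B^{-}\alt^{2,2}(K)=\binom{4}{3}\binom{2}{1}=8$, matching the eight interior DoFs recorded in \Cref{tab:dim-count-22}. Your second argument has the same flaw: the normal set $\{k+1,\dots,n\}$ has $n-k$ elements, and when $\ell+k>n$ we have $\ell>n-k$, so there \emph{are} $\ell$-subsets containing it --- $N^{\ell}(\sigma,K)$ is not zero.

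Both of your arguments do work perfectly if the inequality is reversed to $\ell+k<n$: then $\ell+k-n<0$ and the binomial vanishes, and $\ell<n-k$ so no $\ell$-subset can contain the normal set. This is also what the spanning set in \Cref{cor:spanning-whitney} gives (the condition $I\cup J=[n+1]$ forces $(k+1)+\ell\ge n+1$), and it matches the higher-order analogue later in the paper, which reads $\dim K>\ell+k+r$. So the corollary as printed has the inequality reversed; your proof is correct for the intended statement but not for the one written.
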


The dimension count implies the unisolvency. 
\begin{proposition}[$ C_{\iota^*\iota^*} \mathcal P^- \alt^{k,\ell}$ finite element]
\label{prop:Pminus-ii}
The degrees of freedom
\begin{equation}
\label{eq:dof-ii}
 \langle \iota_{\sigma}^* \iota_{\sigma}^* \omega, b \rangle_{\sigma}, \qquad \forall~~ b \in \mathcal B^-\alt^{k,\ell}(\sigma)
\end{equation}	
for each $\sigma \in \mathcal T(K)$
are unisolvent with respect to the shape function space $\mathcal P^- \alt^{k,\ell}(K)$. The resulting finite element space is $\iota^{*}\iota^*$-conforming, denoted as $ C_{\iota^*\iota^*} \mathcal P^- \alt^{k,\ell}$.
\end{proposition}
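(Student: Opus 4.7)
The plan is to follow the standard finite-element recipe in three moves: match dimensions, establish unisolvency by induction on the dimension of subsimplices, and read off $\iota^*\iota^*$-conformity from the sharing of DOFs across adjacent simplices.

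For the dimension count, I would sum $\dim \mathcal{B}^-\alt^{k,\ell}(\sigma)$ over all $\sigma \in \mathcal{T}(K)$. On each $m$-subsimplex the corollary just before the statement gives $\dim \mathcal{B}^-\alt^{k,\ell}(\sigma) = \binom{m+1}{k+1}\binom{k}{\ell+k-m}$, and there are $\binom{n+1}{m+1}$ such faces. Writing $\binom{n+1}{m+1}\binom{m+1}{k+1} = \binom{n+1}{k+1}\binom{n-k}{m-k}$, substituting $j = m-k$, and applying the Vandermonde convolution $\sum_j \binom{n-k}{j}\binom{k}{\ell-j} = \binom{n}{\ell}$, the total number of DOFs reduces to $\binom{n+1}{k+1}\binom{n}{\ell} = \dim \mathcal{P}^-\alt^{k,\ell}(K)$.

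For unisolvency, I would assume every DOF of $\omega$ vanishes and prove by induction on $\dim \sigma$ that $\iota_\sigma^*\iota_\sigma^*\omega = 0$ for every subsimplex $\sigma$ of $K$. Pullback commutes with the tensor product and sends $\mathcal{P}^-\alt^{k,\ell}(K)$ into $\mathcal{P}^-\alt^{k,\ell}(\sigma)$, so $\iota_\sigma^*\iota_\sigma^*\omega$ is a polynomial $(k,\ell)$-form on $\sigma$. By transitivity of the pullback one has $\iota_\tau^*\iota_\tau^*(\iota_\sigma^*\iota_\sigma^*\omega) = \iota_\tau^*\iota_\tau^*\omega = 0$ for every proper $\tau \triangleleft \sigma$ by the inductive hypothesis, so $\iota_\sigma^*\iota_\sigma^*\omega$ lies in $\mathcal{B}^-\alt^{k,\ell}(\sigma)$. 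Feeding back the admissible test function $b = \iota_\sigma^*\iota_\sigma^*\omega$ into \eqref{eq:dof-ii} then yields $\|\iota_\sigma^*\iota_\sigma^*\omega\|_{L^2(\sigma)}^2 = 0$, closing the inductive step. Applying this once more with $\sigma = K$ itself gives $\omega \in \mathcal{B}^-\alt^{k,\ell}(K)$ and hence $\omega = 0$.

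The $\iota^*\iota^*$-conformity is then a direct consequence: for two $n$-simplices sharing a face $F$, the DOFs on $F$ and on every subsimplex of $F$ are shared, and applying the induction argument above to the jump $\jump{\iota_F^*\iota_F^*\omega}$ across $F$ forces the two traces to agree. I expect the main bookkeeping obstacle to be the combinatorial collapse in the dimension count and the verification that the pullback $\iota_\sigma^*\iota_\sigma^*$ interacts well with the Whitney-form decomposition of $\mathcal{B}^-\alt^{k,\ell}$ from the preceding lemma, so that at each $\sigma$ the trace really does land in the local bubble space; once these are in hand, the induction itself is standard.
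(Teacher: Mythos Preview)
Your proposal is correct and follows essentially the same approach as the paper: the dimension count via the identity $\binom{n+1}{m+1}\binom{m+1}{k+1}=\binom{n+1}{k+1}\binom{n-k}{m-k}$ and Vandermonde is exactly what the paper does, and the paper simply states that unisolvency and conformity follow by mathematical induction without spelling out the details you provide.
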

By definition, $\mathcal B^- \alt^{k,\ell}(\sigma) = \alt^{k,\ell}(\sigma)$ when $\dim \sigma < \max(k,\ell)$. 
\begin{proof}
It suffices to prove the dimension count. The conformity follows from mathematical induction. For $\sigma \in \mathcal T_m(K)$, by \eqref{eq:bubble-dim-count},  it holds that 
	$$\dim \mathcal B^-\alt^{k,\ell} (\sigma) = \binom{m+1}{k+1}\binom{k}{\ell+k-m}.$$
	Therefore,  
\begin{equation} 
\begin{split}	\sum_{\sigma \in \mathcal T(K)} \dim \mathcal B^-\alt^{k,\ell}(\sigma) & = \sum_{m=0}^{n} \binom{n+1}{m+1} \binom{m+1}{k+1}\binom{k}{\ell+k-m} \\ 
& = \sum_{m = 0}^n \frac{(n+1)!}{(n-m)!(m+1)!} \frac{(m+1)!}{(k+1)!(m-k)!} \binom{k}{\ell+k-m} \\ 
& = \sum_{m = 0}^n \binom{n+1}{k+1} \binom {n-k}{n-m} \binom{k}{\ell+k-m} \\ 
& =  \binom{n+1}{k+1}  \binom{n}{\ell} = \dim \mathcal P^- \alt^{k,\ell}(K),
\end{split}	
\end{equation}
where we have used the Vandermonde identity $\sum_{k=0}^{r}\binom{m}{k}\binom{n}{r-k}=\binom{m+n}{r}$.
This completes the proof.
\end{proof}

Now we give some examples to show the construction. 
\begin{example}
When $k = 0$, $C_{\iota^*\iota^*}\mathcal P^- \alt^{0,\ell}$ gives the standard FEEC space $\mathcal P^- \alt^{\ell}$; when $\ell = n$, $C_{\iota^*\iota^*} \mathcal P^-\alt^{k,n}$ gives the discontinuous space $C^{-1}\mathcal P^- \alt^k$.
\end{example}

\begin{example}[The \emph{full Regge space} $C_{\iota^*\iota^*} \mathcal P^-\alt^{1,1}$]
\label{ex:full-regge}
In this example, we demonstrate the construction of the $C_{\iota^*\iota^*} \mathcal{P}^- \alt^{1,1}$ element function space. In any spatial dimension, the $C_{\iota^*\iota^*} \mathcal{P}^- \alt^{1,1}$ space has one degree of freedom per edge and three degrees of freedom per 2-face. In three dimensions with proxies, the shape function space is $\mathbb{M} + \bm{x} \times \mathbb{M}$, where the degrees of freedom correspond to the edge tangential-tangential component and the moments against three face tangential-tangential bubble functions within each 2-face. The total number of degrees of freedom is $1 \times 6 + 3 \times 4 = 18$. The resulting space is tangential-tangential continuous. See \Cref{fig:regge-reduction} for details.
\end{example}

\begin{remark}
Recall that
 the lowest order Regge finite elements have piecewise constant symmetric matrices as the shape functions \cite{christiansen2011linearization,li2018regge}. One may expect that piecewise constant tensors are a natural candidate for shape functions of the finite element spaces for $\Alt^{k, \ell}$. However,  the discussions above show that this is not the case. For example, let $\mathcal B_0 \alt^{1,1}$ be the space of constant bubbles (matrices with vanishing tangential-tangential components on the boundary). In one dimension, $\dim \mathcal B_0 \alt^{1,1}(e) = 1$; in two dimensions $\mathcal B_0 \alt^{1,1}(f) = 4 -3 = 1$ (a constant matrix has four entries and there is one degree of freedom on each edge). However, continuing this pattern in three dimensions, one has one degree of freedom per edge (corresponding to $\mathcal B_0 \alt^{1,1}(e)$) and one degree of freedom per face (corresponding to $\mathcal B_0 \alt^{1,1}(f)$). This already gives   $4+6 = 10$ degrees of freedom, more than $\dim \alt^{1,1}(\mathbb R^3) = 9$. Therefore introducing additional shape functions, e.g., the above construction with the Koszul operators,  is necessary for constructing $\Alt^{k, \ell}$ finite element spaces.
\end{remark}

%
%
%
%

\subsection{Step 2: symmetry reduction}

{From the previous step, we have $\iota^{\ast}\iota^{\ast}$-conforming finite element spaces in hand with the shape function space $C_{\iota^*\iota^*}\mathcal P^- \alt^{k,\ell}$ and the degrees of freedom \eqref{eq:dof-ii}. In Step 2 presented below, we follow the BGG diagrams and construction to reduce $C_{\iota^*\iota^*}\mathcal P^- \alt^{k,\ell}$  to $C_{\iota^*\iota^*}\mathcal P^- \W^{k,\ell}$, the spaces with the symmetries encoded in $\ker(\mathcal S_{\dagger})$. To derive the shape functions of the new spaces, we characterize $\mathcal P^- \alt^{k,\ell}\cap \ker(\mathcal S_{\dagger})$. 

In this subsection, we assume $k \le \ell$. 
Recall that $\mathcal S_{\dagger}^{k,\ell}: \alt^{k,\ell}\to \alt^{k-1,\ell+1}$ is onto. The kernel space is defined as $\W^{k,\ell}$.  By \Cref{lem:S-poly}, $\mathcal S_{\dagger}^{k,\ell}$ is a surjective map from $\cP^- \alt^{k,\ell}$ to $\cP^- \alt^{k-1,\ell+1}$. By \Cref{charact-ker-Sdagger}, the kernel $\cP^-\W^{k,\ell}:=\ker(\mathcal S_{\dagger}^{k,\ell})$ is characterized as $\W^{k,\ell} + \kappa \W^{k+1,\ell}$ for $k < \ell$, and $\W^{k,\ell}$ for $k = \ell$. 

The reduction of the shape function spaces is straightforward. To carry out a similar reduction to the degrees of freedom,   it suffices to show that $\mathcal S_{\dagger}^{k,\ell}$ induces a mapping from $\B^-\alt^{k,\ell}(\sigma)$ to $\B^-\alt^{k-1,\ell+1}(\sigma)$, the spaces involved in \eqref{eq:dof-ii}. This can be verified by the following facts: (1) $\mathcal S_{\dagger}$ commutes with trace, and (2) $\mathcal S$ is injective and $\mathcal S_{\dagger}$ is surjective. We summarize these results in the following lemma.



\begin{lemma}
\label{lem:S-bubble}
For $k \le \ell$, it holds that 
\begin{enumerate}
    \item $\mathcal S_{\dagger}^{k, \ell} : \mathcal P^- \alt^{k,\ell}(\sigma) \to \mathcal P^- \alt^{k-1,\ell+1}(\sigma)$ is onto.  
    \item $\mathcal S_{\dagger}^{k, \ell} : \mathcal B^- \alt^{k,\ell}(\sigma) \to \mathcal B^- \alt^{k-1,\ell+1}(\sigma)$ is onto.  
\end{enumerate}
\end{lemma}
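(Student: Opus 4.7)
I will establish parts (1) and (2) separately.

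\medskip

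\noindent\textbf{Part (1).} This is the local-simplex version of \Cref{lem:S-poly}. Given $\tau \in \mathcal P^-\alt^{k-1,\ell+1}(\sigma)$, decompose $\tau = \tau_0 + \kappa \tau_1$ with constant forms $\tau_0 \in \alt^{k-1,\ell+1}$ and $\tau_1 \in \alt^{k,\ell+1}$. Since $k \le \ell$, both $\mathcal S_{\dagger}^{k,\ell}: \alt^{k,\ell} \to \alt^{k-1,\ell+1}$ and $\mathcal S_{\dagger}^{k+1,\ell}: \alt^{k+1,\ell} \to \alt^{k,\ell+1}$ are surjective by \Cref{lem:S-inj-suj}. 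Choose $\omega_0 \in \alt^{k,\ell}$ with $\mathcal S_{\dagger}\omega_0 = \tau_0$ and $\omega_1 \in \alt^{k+1,\ell}$ with $\mathcal S_{\dagger}\omega_1 = -\tau_1$. The anticommutativity \eqref{SdaggerP} then yields $\mathcal S_{\dagger}(\omega_0 + \kappa\omega_1) = \tau_0 + \kappa \tau_1 = \tau$, and $\omega_0 + \kappa\omega_1 \in \mathcal P^- \alt^{k,\ell}(\sigma)$.

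\medskip

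\noindent\textbf{Part (2).} The first ingredient is the commutativity
\[
\iota_F^*\iota_F^* \circ \mathcal S_{\dagger}^{k,\ell} \;=\; \mathcal S_{\dagger}^{k,\ell} \circ \iota_F^*\iota_F^*, \qquad F \trianglelefteq \sigma,
\]
which is a direct algebraic check from \eqref{eq:Sdagger}: $\mathcal S_{\dagger}$ only reshuffles vector slots between the two indices, while $\iota_F^*\iota_F^*$ restricts every slot to vectors tangent to $F$, so the two operations commute. An immediate consequence is that $\mathcal S_{\dagger}$ maps $\mathcal B^-\alt^{k,\ell}(\sigma)$ into $\mathcal B^-\alt^{k-1,\ell+1}(\sigma)$: if $\iota_F^*\iota_F^*\omega = 0$ on every proper face $F$, then $\iota_F^*\iota_F^* \mathcal S_{\dagger}\omega = \mathcal S_{\dagger}(\iota_F^*\iota_F^*\omega) = 0$ as well.

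\medskip

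For surjectivity onto bubbles, given $\tau \in \mathcal B^-\alt^{k-1,\ell+1}(\sigma)$, I would use Part (1) to pick an initial preimage $\omega_0 \in \mathcal P^-\alt^{k,\ell}(\sigma)$, and then correct it by an element of $\ker \mathcal S_{\dagger}$. By the commutativity above, $\iota_F^*\iota_F^* \omega_0$ automatically lies in $\mathcal P^-\W^{k,\ell}(F) = \ker \mathcal S_{\dagger}$ on every proper face $F$, so the problem reduces to finding $\eta \in \mathcal P^-\W^{k,\ell}(\sigma)$ whose $\iota^*\iota^*$-trace on each proper face agrees with that of $\omega_0$; then $\omega := \omega_0 - \eta$ is a bubble satisfying $\mathcal S_{\dagger}\omega = \tau$. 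I plan to build $\eta$ inductively on face dimension, using the explicit description of $\mathcal P^-\W^{k,\ell}$ from \Cref{charact-ker-Sdagger} at each step.

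\medskip

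The main obstacle is this face-by-face kernel extension, which amounts to a trace-surjectivity statement for $\mathcal P^-\W^{k,\ell}$. An alternative route that bypasses the extension issue is to construct the preimage directly: expand $\tau = \sum_{\sigma'' \in \mathcal T_{k-1}(\sigma)} \phi_{\sigma''} \otimes w_{\sigma''}$ via \Cref{lem:bubble-decomposition} with $w_{\sigma''} \in N^{\ell+1}(\sigma'',\sigma)$, and for each summand pick a vertex $v$ of $\sigma$ not in $\sigma''$ and lift, via the symbol-level right inverse of $\mathcal S_{\dagger}$ combined with the Koszul identity \eqref{SdaggerP}, to a candidate with polynomial coefficient supported on $\phi_{\sigma'' \cup \{v\}}$. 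The combinatorial heart of that approach is verifying that the lifts can be chosen so their assembled sum lies in $\bigoplus_{\sigma' \in \mathcal T_{k}(\sigma)} \phi_{\sigma'} \otimes N^{\ell}(\sigma',\sigma) = \mathcal B^-\alt^{k,\ell}(\sigma)$, i.e., that the cross-terms produced by $\mathcal S_{\dagger}$ on the Whitney expansion vanish on the codimension-one faces of $\sigma$.
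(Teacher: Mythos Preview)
Your Part (1) is correct and matches the paper's one-line justification via the anticommutativity of $\kappa$ and $\mathcal S_\dagger$.

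For Part (2), neither sketch is complete, and the first one risks circularity: the face-by-face kernel extension you need is a trace-surjectivity statement for $\mathcal P^-\W^{k,\ell}$, which is essentially the content of \Cref{prop:sym-reduction}---but that proposition is proved \emph{using} \Cref{lem:S-bubble}, so you cannot appeal to it here. Building the extension directly from \Cref{charact-ker-Sdagger} is not obviously easier, since that description is intrinsic to each simplex and says nothing about trace compatibility across faces.

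Your second route is much closer to the paper's, but you are missing the key organizing observation. The paper works not with the direct-sum decomposition of \Cref{lem:bubble-decomposition} but with the \emph{spanning set} of \Cref{cor:spanning-whitney}: $\mathcal B^-\alt^{k,\ell}(K)$ is spanned by $\phi_I \otimes d\lambda_J$ over pairs with $I \cup J = [n+1]$. A direct computation shows that $\mathcal S_\dagger(\phi_I \otimes d\lambda_J)$ is, up to explicit signs, a combination of terms $\phi_{I\setminus\{i\}} \otimes d\lambda_{J \cup \{i\}}$ with $i \in I \setminus J$; since $(I\setminus\{i\}) \cup (J\cup\{i\}) = I \cup J$, the condition $I \cup J = [n+1]$ is preserved. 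Thus $\mathcal S_\dagger$ automatically maps the bubble spanning set into the bubble spanning set, and your ``cross-term'' worry evaporates. Surjectivity then becomes a purely combinatorial statement about index-shuffling on pairs $(I,J)$ with fixed union, which the paper isolates and proves separately (\Cref{lem:fy} in the appendix). The moral: encode the bubble condition as $I\cup J = [n+1]$ rather than via $N^\ell(\sigma',K)$, and invariance under $\mathcal S_\dagger$ is manifest.
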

%

The first statement comes from the commuting properties of $\kappa$ and $\mathcal S_{\dagger}$. The second statement is actually far from trivial, and the proof is presented in the appendix, with the help of \Cref{cor:spanning-whitney}. We then define $\mathcal P^- \mathbb W^{k,\ell}(K) = \ker(\mathcal S_{\dagger} : \mathcal P^- \alt^{k,\ell}(K) \to  \mathcal P^- \alt^{k-1,\ell+1}(K)),$ and 
\begin{equation}\label{def:BW}
\mathcal B^- \mathbb W^{k,\ell}(\sigma) := \ker(\mathcal S_{\dagger} : \mathcal B^- \alt^{k,\ell}(\sigma) \to  \mathcal B^- \alt^{k-1,\ell+1}(\sigma)).
\end{equation}

We first show the symmetry element with respect to the $\iota^*\iota^*$-conformity.

\begin{proposition}[$C_{\iota^*\iota^*} \cP^-\W^{k,\ell}$ finite element]\label{prop:sym-reduction}
For the shape function space $\cP^-\W^{k,\ell}(K)$, the degrees of freedom 
\begin{equation}\label{dof:CiiW}
 \langle \iota^{*}_{\sigma} \iota^*_{\sigma} \omega, b \rangle_{\sigma},\,\,\forall \,\, b \in \mathcal B^- \mathbb W^{k,\ell}(\sigma) 
\end{equation}	
for all $\sigma \in \mathcal T(K)$ are unisolvent. The resulting space
is $\iota^{*}\iota^*$-conforming, denoted as $C_{\iota^*\iota^*} \cP^-\W^{k,\ell}$. 	
\end{proposition}
\begin{proof}
{It suffices to show the dimension count, i.e., the dimension of the reduced space is equal to the number of the new degrees of freedom. By the surjectivity, it holds that 
$$\dim \mathcal P^-\W^{k,\ell} = \dim \mathcal P^- \alt^{k,\ell} - \dim \mathcal P^- \alt^{k - 1,\ell + 1}.$$ Similarly, 
$$\dim \mathcal B^-\W^{k,\ell}(\sigma)  = \dim \mathcal B^-\alt^{k,\ell}(\sigma) - \dim \mathcal B^- \alt^{k - 1,\ell + 1}(\sigma).$$
The desired result holds by summing over all $\sigma$.
}
\end{proof}

%

\begin{example}[The Regge element $C_{\iota^*\iota^*} \mathcal P^- \W^{1,1}$]
\label{ex:regge}
We continue \Cref{ex:full-regge} to show how to obtain the symmetric Regge space in three dimensions. Recall that the local shape function space of $C_{\iota^*\iota^*} \mathcal P^- \alt^{1,1}$  is $\mathbb M + \bm x \times \mathbb M$, and we have 1 degree of freedom per edge and 3 degrees of freedom per face. 
For the reduction step, we need to remove the degrees of freedom from $\mathcal P^-\alt^{0,2}$, which is three copies of the Raviart--Thomas element. Thus, it has 3 degrees of freedom per 2-face.
The symmetry reduction thus completely removes the face degrees of freedom from $C_{\iota^*\iota^*} \mathcal P^- \alt^{1,1}$ (3-3=0), leading to the symmetric Regge element $C_{\iota^*\iota^*} \mathcal P^- \mathbb W^{1,1}$. 
The resulting local shape function space is $\mathcal P^- \W^{1,1} = \mathbb S$, and we have 1 degree of freedom per edge.
See \Cref{sec:kkform} for more details. 
\end{example}

Next, we consider the symmetry reduction introduced by the iterated operator $\mathcal S^{k,\ell}_{[p]}$. The shape function space is
$$\mathcal P^- \mathbb W^{k,\ell}_{[p]} := \ker(\mathcal S_{\dagger,[p]} : \mathcal P^-\alt^{k,\ell} \to \mathcal P^- \alt^{k-p,\ell+p}).$$

\begin{lemma}
\label{lem:Sp-bubble}
For $k \le \ell + p -1$, it holds that 
\begin{enumerate}
    \item $\mathcal S_{\dagger,[p]} : \mathcal P^- \alt^{k,\ell}(\sigma) \to \mathcal P^- \alt^{k-p,\ell+p}(\sigma)$ is onto.  
    \item $\mathcal S_{\dagger,[p]} : \mathcal B^- \alt^{k,\ell}(\sigma) \to \mathcal B^- \alt^{k-p,\ell+p}(\sigma)$ is onto.  
\end{enumerate}
\end{lemma}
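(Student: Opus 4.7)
The plan is to extend the strategy used for Lemma \ref{lem:S-bubble} (the $p = 1$ case) to general $p$, leveraging the polynomial-level surjectivity already established in Lemma \ref{lem:Pr-Wp}. Statement (1) is the $r = 1$ specialization of Lemma \ref{lem:Pr-Wp}(1), since $\mathcal P^- = \mathcal P_1^-$, so no new argument is required. A self-contained derivation would combine the algebraic surjectivity of $\mathcal S_{\dagger,[p]}^{k,\ell} : \alt^{k,\ell} \to \alt^{k-p,\ell+p}$ from Lemma \ref{lem:Sp-inj-sur}(2) (whose hypothesis $k \le \ell + p$ is implied by our assumption) with the iterated anticommutativity $\mathcal S_{\dagger,[p]} \kappa = (-1)^p \kappa \mathcal S_{\dagger,[p]}$, obtained by composing \eqref{SdaggerP}, in order to handle the Koszul summand $\kappa \alt^{k+1,\ell}$ of $\mathcal P^- \alt^{k,\ell}$.

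For statement (2), I would first verify the containment $\mathcal S_{\dagger,[p]}\bigl(\mathcal B^-\alt^{k,\ell}(\sigma)\bigr) \subseteq \mathcal B^-\alt^{k-p,\ell+p}(\sigma)$ via the commutation $\iota_F^*\iota_F^*\,\mathcal S_{\dagger,[p]} = \mathcal S_{\dagger,[p]}\,\iota_F^*\iota_F^*$ for every subface $F \trianglelefteq \sigma$. This commutation holds because $\mathcal S_{\dagger}$ and the double trace are both pointwise, purely algebraic operations that are natural in the underlying vector space; iterating yields the same commutation for $\mathcal S_{\dagger,[p]}$. Hence if $\omega$ has vanishing double traces on all proper subfaces of $\sigma$, so does $\mathcal S_{\dagger,[p]}\omega$.

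Surjectivity is the main obstacle, and I plan to address it along the lines of the proof of Lemma \ref{lem:S-bubble} given in the appendix. Both $\mathcal B^-\alt^{k,\ell}(\sigma)$ and $\mathcal B^-\alt^{k-p,\ell+p}(\sigma)$ admit explicit Whitney-tensor spanning sets $\{\phi_I \otimes d\lambda_J\}$ with prescribed index conditions (Corollary \ref{cor:spanning-whitney}), and the iterated action of $\mathcal S_{\dagger}$ on such tensors shifts $p$ indices from the Whitney factor $\phi_I$ into the constant factor $d\lambda_J$ through the interior contractions built into the definition of $\mathcal S_{\dagger}$. Tracking these index shifts and the resulting combinatorial coefficients against the codomain spanning set should show that every target generator has a preimage. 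The main subtlety over the $p = 1$ case is the boundary regime $k = \ell + p - 1$, where naive iteration of Lemma \ref{lem:S-bubble} fails because the intermediate hypotheses $k - j \le \ell + j$ are violated for small $j$; in that regime one must perform the combinatorial count on the whole spanning set simultaneously, rather than factoring the argument through single applications of $\mathcal S_{\dagger}$.
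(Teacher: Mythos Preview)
Your plan is correct and matches the paper's approach: part (1) is indeed just the $r=1$ case of Lemma~\ref{lem:Pr-Wp}, and for part (2) the paper likewise uses the Whitney spanning set of Corollary~\ref{cor:spanning-whitney} and reduces surjectivity to a combinatorial statement on index pairs $(I,J)$ with $I\cup J=[n+1]$. The paper formalizes your ``tracking index shifts'' by passing to a free-Abelian-group model, decomposing into blocks indexed by $(I\cup J,\,I\cap J)$, and proving surjectivity of the iterated combinatorial map $s_{\dagger,[p]}$ on each block via an inner-product identity---this is precisely the device that handles the boundary regime $k=\ell+p-1$ you correctly flagged, where factoring through single applications of $\mathcal S_{\dagger}$ fails.
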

\begin{proof}
    See \Cref{subsec:inj/sur}.
\end{proof}

We first show the symmetric element with the $\iota^*\iota^*$-conformity.

\begin{proposition}[$C_{\iota^*\iota^*} \cP^-\W^{k,\ell}_{[p]}$ finite element]\label{iiWp}
For the shape function space $$\mathcal P^- \mathbb W^{k,\ell}_{[p]}(K) := \ker(\mathcal S^{k, \ell}_{\dagger,[p]} : \mathcal P^- \alt^{k,\ell}(K) \to  \mathcal P^- \alt^{k-p,\ell+p}(K)),$$ the degrees of freedom 
$$ \langle \iota^{*}_{\sigma} \iota^*_{\sigma} \omega, b \rangle_{\sigma},\,\,\forall \,\, b \in \mathcal B^- \mathbb W^{k,\ell}_{[p]}(\sigma) := \ker(\mathcal S^{k, \ell}_{\dagger,[p]} : \mathcal B^- \alt^{k,\ell}(\sigma) \to  \mathcal B^- \alt^{k-p,\ell+p}(\sigma))
$$
for all $\sigma \in \mathcal T(K)$ are unisolvent. The resulting space
is $\iota^{*}\iota^*$-conforming.	
\end{proposition}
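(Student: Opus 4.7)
The plan is to mirror the proof of \Cref{prop:sym-reduction}, replacing \Cref{lem:S-bubble} by its iterated counterpart \Cref{lem:Sp-bubble}. The argument has two ingredients: a dimension count and an injectivity argument based on a direct-sum decomposition of the bubble space. The $\iota^*\iota^*$-conformity is inherited from the unreduced element.

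\emph{Step 1: dimension count.} By \Cref{lem:Sp-bubble}(1), the operator $\mathcal S_{\dagger,[p]}^{k,\ell}:\mathcal P^-\alt^{k,\ell}(K)\to\mathcal P^-\alt^{k-p,\ell+p}(K)$ is surjective, so
\begin{equation*}
\dim \mathcal P^-\W^{k,\ell}_{[p]}(K)=\dim \mathcal P^-\alt^{k,\ell}(K)-\dim \mathcal P^-\alt^{k-p,\ell+p}(K).
\end{equation*}
Analogously, by \Cref{lem:Sp-bubble}(2),
$\dim \mathcal B^-\W^{k,\ell}_{[p]}(\sigma)=\dim \mathcal B^-\alt^{k,\ell}(\sigma)-\dim \mathcal B^-\alt^{k-p,\ell+p}(\sigma)$ for every $\sigma\in\mathcal T(K)$. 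Summing over $\sigma$ and applying the non-symmetric unisolvency of \Cref{prop:Pminus-ii} to both $\mathcal P^-\alt^{k,\ell}$ and $\mathcal P^-\alt^{k-p,\ell+p}$, the total count of reduced DoFs equals $\dim\mathcal P^-\W^{k,\ell}_{[p]}(K)$.

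\emph{Step 2: injectivity.} With the dimensions matched, it suffices to show that the reduced DoFs are injective on $\mathcal P^-\W^{k,\ell}_{[p]}(K)$. Suppose $\omega$ lies in this space and $\langle \iota^*_\sigma\iota^*_\sigma\omega,b\rangle_\sigma=0$ for every $\sigma$ and every $b\in\mathcal B^-\W^{k,\ell}_{[p]}(\sigma)$. The key observation is the pointwise direct-sum decomposition
\begin{equation*}
\mathcal B^-\alt^{k,\ell}(\sigma)=\mathcal B^-\W^{k,\ell}_{[p]}(\sigma)\oplus \mathcal S_{[p]}\bigl(\mathcal B^-\alt^{k-p,\ell+p}(\sigma)\bigr).
\end{equation*}
The sum is direct because $\mathcal S_{\dagger,[p]}\mathcal S_{[p]}$ is pointwise positive definite (by \Cref{lem:Sp-inj-sur}(3), $\mathcal S_{[p]}$ is injective), and the image lies in the bubble space since $\mathcal S_{[p]}$ is algebraic and commutes with $\iota^*\iota^*$. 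Equality then follows from the dimension count of Step~1. For any $b=\mathcal S_{[p]}b'$ in the complement, the pointwise Frobenius adjointness of $\mathcal S_{[p]}$ and $\mathcal S_{\dagger,[p]}$ together with the commutation $\mathcal S_{\dagger,[p]}\circ\iota^*\iota^*=\iota^*\iota^*\circ\mathcal S_{\dagger,[p]}$ (both are algebraic operations on the tensor indices, cf.\ \eqref{eq:Sdagger} and \eqref{def:Sdaggerp}) give
\begin{equation*}
\langle \iota^*_\sigma\iota^*_\sigma\omega,\mathcal S_{[p]} b'\rangle_\sigma=\langle \iota^*_\sigma\iota^*_\sigma \mathcal S_{\dagger,[p]}\omega,b'\rangle_\sigma=0,
\end{equation*}
using $\mathcal S_{\dagger,[p]}\omega=0$. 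Hence all DoFs of $\omega$ against the full bubble space $\mathcal B^-\alt^{k,\ell}(\sigma)$ vanish for every $\sigma$, and \Cref{prop:Pminus-ii} forces $\omega=0$.

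\emph{Step 3: conformity.} Since $\mathcal P^-\W^{k,\ell}_{[p]}(K)\subset \mathcal P^-\alt^{k,\ell}(K)$ and the reduced DoFs are located on the same subsimplices, the $\iota^*\iota^*$-conformity is obtained by the same inductive trace-matching argument used in \Cref{prop:Pminus-ii}: vanishing reduced DoFs on the subsimplices of a codim-1 face $F$ determine $\iota^*_F\iota^*_F\omega$ by unisolvency of the reduced element restricted to $F$. The main obstacle is justifying the decomposition in Step~2 at the \emph{bubble} level, which rests squarely on \Cref{lem:Sp-bubble}(2); without onto-ness at the bubble level (rather than just on the full polynomial spaces), the dimensional balance between $\mathcal B^-\W^{k,\ell}_{[p]}(\sigma)$ and $\mathcal S_{[p]}(\mathcal B^-\alt^{k-p,\ell+p}(\sigma))$ would fail. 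All other ingredients — the Frobenius adjointness and the commutation of $\mathcal S_{\dagger,[p]}$ with traces — are purely algebraic and immediate from the definitions.
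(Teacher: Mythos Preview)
Your proof is correct and follows the paper's approach: the paper's own argument is exactly your Step~1 (the dimension count via surjectivity of $\mathcal S_{\dagger,[p]}$ on both $\mathcal P^-\alt^{k,\ell}$ and the bubble spaces, i.e.\ \Cref{lem:Sp-bubble}), with injectivity and conformity left implicit by reference to \Cref{prop:sym-reduction}. Your Steps~2--3 correctly spell out what the paper glosses over; one minor point: the injectivity of $\mathcal S_{[p]}$ that you invoke is item~(2) of \Cref{lem:Sp-inj-sur}, not~(3).
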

\begin{proof}
The proof is similar to \Cref{prop:sym-reduction}. 
{It suffices to show the dimension count. By the surjectivity results, we have
$$\dim \mathcal P^-\W_{[p]}^{k,\ell} = \dim \mathcal P^- \alt^{k,\ell} - \dim \mathcal P^- \alt^{k - p,\ell + p},$$  
$$\dim \mathcal B^-\W_{[p]}^{k,\ell}(\sigma)  = \dim \mathcal B^-\alt^{k,\ell}(\sigma) - \dim \mathcal B^- \alt^{k - p,\ell + p}(\sigma).$$
The desired result follows by summing over all  $\sigma$. 
}
\end{proof}

\begin{remark}\label{rmk:sym-reduction-not-happen}
The reduction does not happen in the degrees of freedom on $\mathcal T_{\ell}, \mathcal T_{\ell + 1}, \cdots , \mathcal T_{\ell + p -1} $. This allows us to move the degrees of freedom to lower dimensions in the same way for $\Alt^{k, \ell}$ and $\W^{k, \ell}_{[p]}$; see \Cref{subsec:ijp}. 
\end{remark}

\begin{example}[$C_{\iota^*\iota^*} \mathcal P^- \W^{2,1}_{[2]}$: the MCS element] 
We now demonstrate the example of $C_{\iota^*\iota^*} \mathcal P^- \W^{2,1}_{[2]}$. Again, we use vector proxies to simplify the notation. 
For $\mathcal P^-\alt^{2,1} = \mathbb M + \bm x \otimes\mathbb V$, the construction of ${\iota^*\iota^*}$-conforming elements gives the degrees of freedom of face tangential-normal moments (2 per face) plus 4 degrees of freedom inside the tetrahedron. Since $\mathcal P^-\alt^{0,3} = \mathcal P_1$ has 4 degrees of freedom, all the interior degrees of freedom are removed in the reduction. Therefore, the resulting space is the MCS element $C_{\iota^*\iota^*} \mathcal P^- \mathbb W^{2,1}_{[2]}$ \cite{gopalakrishnan2020mass}, where the shape function space is $\mathbb T$, and the degrees of freedom involve face tangential-normal components.  
\end{example}

\subsubsection{Finite element spaces with constant shape functions.} 


 In the rest of this subsection, we provide some piecewise constant constructions within the framework in this paper, answering Question \eqref{question:constant} raised in the Introduction.

First, based on the spaces we have already obtained, we have the following result. 
\begin{proposition}\label{prop:constant}
  In our construction, the shape function space $C_{\iota^{\ast}\iota^{\ast}}\mathcal P^- \mathbb W^{k,\ell}_{[p]}$ is piecewise constant $\mathbb W^{k,\ell}_{[p]}$ if and only if $k = \ell + p - 1$, provided $p\leq k, \ell\leq n-p$ to ensure that the symmetry reduction is nontrivial\footnote{The symmetry reduction uses $\Alt^{k-p, \ell+p}$ to eliminate part of  $\Alt^{k, \ell}$. Therefore the condition for the reduction being nontrivial is $k-p\geq 0$ and $\ell+p\leq n$.}.
\end{proposition}
\Cref{prop:constant} indicates that only the spaces where the BGG zig-zag occurs are constant. A special case shows that when $p = 1$, only $\mathbb W^{k,k}$ admits a constant local shape function space in our existing construction. Note that \Cref{prop:constant} considers the ``general cases'' under the assumption $p\leq k, \ell\leq n-p$. When we consider some boundary cases of the indices, there are other spaces admitting constant constructions. For example, for $\ell=n$, $\W^{k, n}=\Alt^{k, n}$ since the Koszul operator mapping to this space degenerates. These spaces automatically admit a constant shape function space. 

{
\begin{proof}[Proof of \Cref{prop:constant}]
We have the following diagram:
\begin{equation}
    \begin{tikzcd}
        ~ & \alt^{k,\ell}
         \ar[dl,"{\mathcal S_{\dagger, [p]}^{k, \ell}}",swap] 
         & \alt^{k+1,\ell} \ar[l,"{\kappa}",swap] \ar[dl,"{\mathcal S_{\dagger, [p]}^{k+1, \ell}}"]  \\
         \alt^{k-p,\ell+p} & \alt^{k-p+1,\ell+p }\ar[l,"{\kappa}",swap]  &   
    \end{tikzcd}
\end{equation}
Recall that, to construct the shape function for $(k, \ell)$-forms, we start with the Koszul construction $\alt^{k,\ell}\oplus \kappa \alt^{k+1,\ell}$. Then we eliminate the Koszul part $ \kappa \alt^{k+1,\ell}$ by  $\alt^{k-p+1,\ell+p }$. Under the condition $k = \ell + p - 1$, $\mathcal S_{\dagger, [p]}^{k+1, \ell}$ is bijective. Therefore the reduction eliminates all the Koszul part. For $k \neq \ell + p - 1$, $\mathcal S_{\dagger, [p]}^{k+1, \ell}$  is surjective but not injective. The reduction only eliminates part of the Koszul part. Therefore some non-constant components remain after the reduction. 
\end{proof}
}

Recall that in our current construction, we begin with Koszul-type spaces (between constants and linear polynomials) and perform symmetry reduction. Part of the motivation for this construction is to obtain finite element spaces that fit in BGG complexes. \Cref{prop:constant} establishes that this approach yields constant local spaces if and only if $k = \ell + p - 1$ under the assumption $p\leq k, \ell\leq n-p$. 
  However, we do not necessarily have to start with Koszul-type spaces. Instead, we could begin with constants and explore whether we can achieve $\iota^{\ast}\iota^{\ast}$-conformity and symmetry reduction. Rather than developing this from scratch, we use the results we have so far to derive such spaces.

\begin{example}
We use the (1,2)-form in dimension $n \ge 3$ as an example.  The idea is to first use the identification between (1,2)-forms and (2,1)-forms, noting that (2,1)-forms admit a constant construction with $p=2$, i.e., $\W^{2,1}_{[2]}$ has a constant construction (the MCS element). The three spaces $\Alt^{1,2}$, $\Alt^{2,1}$, and $\Alt^{0,3}$ can be connected via the $\mathcal S$ and $\mathcal S_{\dagger}$ operators. Specifically, we have $\W^{2,1}_{[2]}=\Alt^{2,1}\ominus \mathcal{S}^{0,3}_{[2]}\Alt^{0,3}$. Here, $\ominus$ stands for the symmetry reduction, which takes the orthogonal complement of $\mathcal{S}^{0,3}_{[2]}\Alt^{0,3}$ in $\Alt^{2,1}$ with respect to the Frobenius norm, see \Cref{fig:mirror-space} for an illustration. Applying $\mathcal S_{\dagger}^{2,1}$ to this identity, we obtain $\mathcal S_{\dagger}^{2,1}\W^{2,1}_{[2]}=\Alt^{1,2} \ominus \mathcal S_{\dagger}^{2,1}\mathcal{S}^{0,3}_{[2]}\Alt^{0,3}=\Alt^{1,2} \ominus \mathcal{S}^{0,3}\Alt^{0,3}$. The right-hand side matches the definition of $\W^{1,2}$ from symmetry reduction. This suggests that we can construct $\W^{1,2}$ by pulling back $\W^{2,1}_{[2]}$ (which admits a constant construction) using $\mathcal S_{\dagger}^{2,1}$. The same reasoning applies to the degrees of freedom. This leads to the $\mathrm{MCS}^{\top}$ element, namely, the transpose of the MCS element in the matrix representation. Note again that the piecewise constant construction cannot be directly read from the Koszul-type construction, as in three dimensions the shape function space after the reduction is $\mathbb T + \bm x\times \mathbb S$, rather than constants. 
\end{example}

\begin{figure}[htbp]

\FIG{\includegraphics[scale = 0.15]{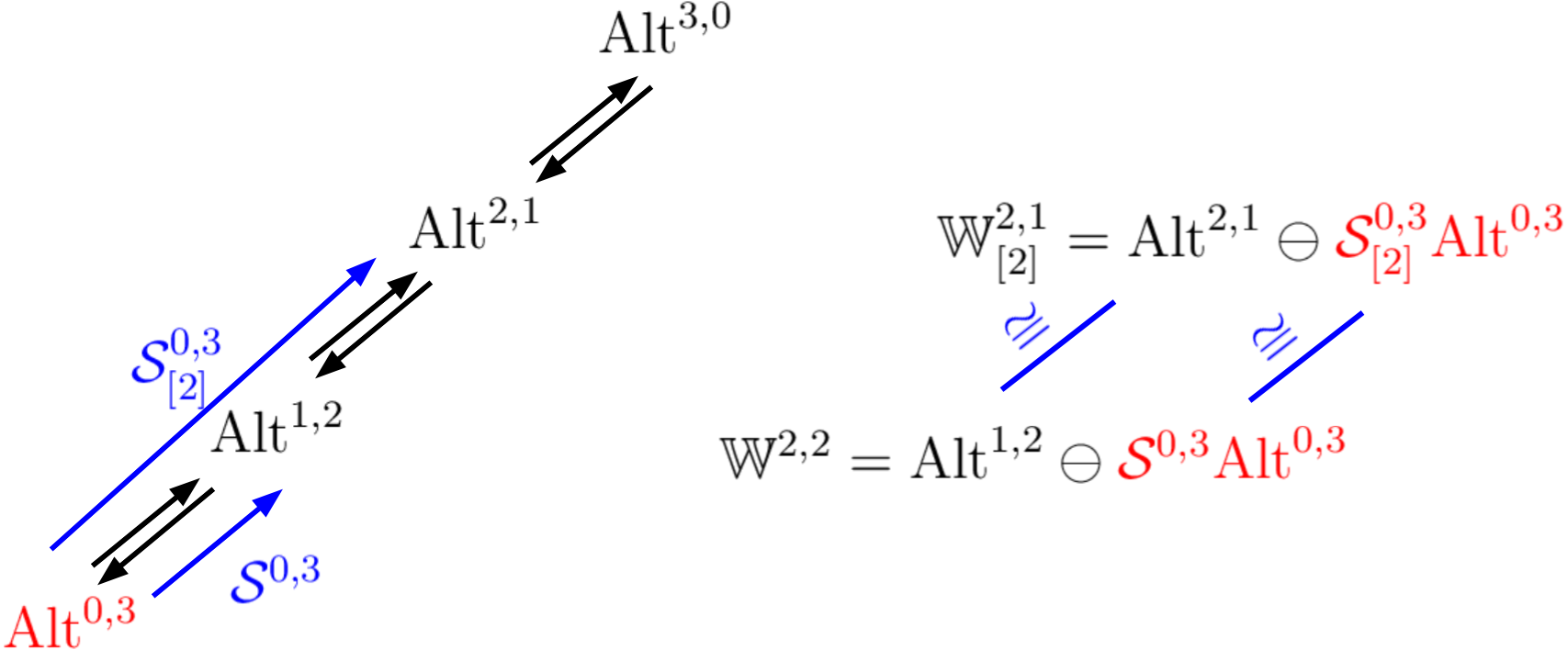}}
{\caption{The construction of a finite element with the constant shape space $\W^{1, 2}$. The figure on the left shows the diagonal in the BGG diagram that contains $\W^{1, 2}$. On this diagonal, we already have a constant construction for $\W^{2, 1}_{[2]}$ following \Cref{prop:constant} ($k=2, \ell=1, p=2$ satisfies the condition therein). Moreover, $\W^{2, 1}_{[2]}$ is the mirror space of $\W^{1, 2}$ in the sense that they are isomorphic to each other (the right part of the figure). Therefore we can pull the degrees of freedom of $\W^{2, 1}_{[2]}$ back, giving those of $\W^{1, 2}$. In general, we can derive constant constructions of $\W^{k, \ell}$ in this way: its mirror space in the diagonal admits a constant construction by \Cref{prop:constant}. For general $p>1$, we can still find a space which already has a constant construction. However, it is not the mirror of $\W^{k, \ell}_{[p]}$. To fix this issue, we decompose $\W^{k, \ell}_{[p]}$ into a sum of $\W$ spaces with $p=1$ \eqref{eq:decomp-Wp}, and then apply the above construction to each of the factors.}
	\label{fig:mirror-space}}
\end{figure}

In summary, the spaces like MCS$^{\top}$ can be obtained directly from our framework, although not following the three steps. 
In general, we have an isomorphism (duality) between $\mathbb W^{k,\ell}$ and $\mathbb W^{\ell,k}_{[1 + \ell - k]}$.
\begin{lemma}\label{lem:W-filp}
For $k \le \ell$, the iterated operators $\mathcal S_{\dagger,[\ell - k]}^{\ell,k}$ and $\mathcal S_{[\ell - k]}^{k,\ell}$ induce an isomorphism between $\mathbb W^{k,\ell}$ and $\mathbb W^{\ell,k}_{[1+\ell - k]}$.
\end{lemma}
The space $\mathbb W^{\ell,k}_{[1+\ell - k]}$ admits a constant construction. We then pull it back to define degrees of freedom for $\mathbb W^{k,\ell}$.
\begin{theorem}\label{thm:constant-p=1}
    Suppose $0 < k \le \ell< n$. For the shape function space $\mathbb W^{k,\ell}(K)$, the following degrees of freedom 
    \begin{equation} \langle \iota^{*}_{\sigma} \iota^*_{\sigma} \omega, b \rangle_{\sigma},\,\,\forall \,\, b \in \mathcal S_{\dagger,[\ell-k]}^{\ell,k} \mathcal B^- \mathbb W^{\ell,k}_{[1+\ell - k]}(\sigma) 
\end{equation}	
for all $\sigma \in \mathcal T(K)$ are unisolvent. The resulting space is $ {\iota^*\iota^*}$-conforming.
\end{theorem}

Next, we show that, in general, $\mathbb W^{k,\ell}_{[p]}$ admits an $ {\iota^*\iota^*}$-conforming construction, generalizing \Cref{thm:constant-p=1} with $p=1$. {The key step for deriving the spaces in the case of $p=1$ (\Cref{thm:constant-p=1}) is \Cref{lem:W-filp}: for the space $\W^{k, \ell}=\Alt^{k, \ell}\ominus \mathcal S^{k-1, \ell+1}\Alt^{k-1, \ell+1}$ that we work on, we find another space $\mathbb W^{\ell,k}_{[1+\ell - k]}=\Alt^{\ell, k}\ominus \mathcal S^{k-1, \ell+1}_{[\ell-k+1]}\Alt^{k-1, \ell+1}$ along the ``diagonal'' in the BGG diagram (the spaces connected to $\W^{k, \ell}$ by $\mathcal{S}$ and $\mathcal S_{\dagger}$ operators). Note that $\Alt^{k, \ell}\cong \Alt^{\ell, k}$. Therefore $\W^{k, \ell}$ is isomorphic to $\mathbb W^{\ell,k}_{[1+\ell - k]}$. 

We can repeat this argument for $\mathbb W^{k,\ell}_{[p]}$ with a general $p\geq 1$. However, the specialty of $p=1$ is that the ``mirror space'' $\mathbb W^{\ell,k}_{[1+\ell - k]}$ happens to admit a constant construction (\Cref{prop:constant}). This property is lost for $p>1$. The idea to obtain piecewise constant constructions in the general case is to recall that $\mathbb W^{k,\ell}_{[p]}$ has an orthogonal decomposition into $\W$ spaces:}
\begin{equation}
\label{eq:decomp-Wp}
\mathbb W^{k,\ell}_{[p]} = \mathbb W^{k,\ell} \oplus \mathcal S^{k-1,\ell + 1} \W^{k-1,\ell+1} \oplus \cdots \oplus \mathcal S_{[p-1]}^{k-p+1,\ell + p -1} \mathbb W^{k-p+1,\ell + p -1} .
\end{equation}

Each factor on the right-hand side has $p=1$. Then we can use the previous argument for $p=1$ to find a ``mirror space'' for each term. 
We require $k - p \ge 0$ and $\ell + p \leq n$ to ensure that each term $\mathbb W^{k-s ,\ell+s}$ for $ s= 0,1,\cdots, p-1$ in the right-hand side satisfies the condition in \Cref{thm:constant-p=1}. Note that this condition also guarantees that $\mathbb W_{[p]}^{k,\ell} \neq \Alt^{k,\ell}$, namely, the symmetric reduction is nontrivial.
{Gluing them together, we obtain the degrees of freedom for $\mathbb W^{k,\ell}_{[p]} $ that are unisolvent.}
 For example, for $\mathbb W^{2,2}_{[2]}$ in four and higher dimensions, we can combine the finite elements provided by \Cref{thm:constant-p=1} for $\mathbb W^{2,2}$ and $\mathbb W^{1,3}$ together. 

To show the details, we can use \Cref{lem:W-filp} to rewrite the above decomposition as 
\begin{equation} \label{eq:Wp-decomp-new}
\begin{split}
\mathbb W^{k,\ell}_{[p]} & = \mathcal S_{\dagger,[\ell - k]}^{\ell,k} \mathbb W^{\ell,k}_{[\ell - k+1]} \oplus \mathcal S^{k-1,\ell + 1} \mathcal S_{\dagger,[\ell - k + 2]}^{\ell+1,k-1}\W^{k-1,\ell+1}_{[\ell - k + 3]} \oplus \cdots \\ &\quad \quad \oplus \mathcal S_{[p-1]}^{k-p+1,\ell + p -1} \mathcal S_{\dagger,[\ell - k + 2(p-1)]}^{\ell+p-1,k-p+1} \mathbb W^{\ell + p -1,k-p+1}_{[\ell -k + 2p - 1]} \\ 
& = \bigoplus_{s = 0}^{p-1} \mathcal S_{[s]}^{k-s,\ell+s} \mathcal S_{\dagger,[\ell - k + 2s]}^{\ell + s,\ell - s} \mathbb W_{[\ell - k + 2s + 1]}^{\ell + s,k - s}.
\end{split}
\end{equation}
Since each term is isomorphic to the corresponding term in \eqref{eq:decomp-Wp}, \eqref{eq:Wp-decomp-new} is still an orthogonal direct sum. Now we have found the mirror spaces in the sense that each term $\mathbb W_{[\ell - k + 2s + 1]}^{\ell + s,k - s}$ on the right-hand side admits a constant construction.
Therefore, we have the following theorem.
\begin{theorem}\label{thm:constant-construction}
    Suppose $p \le k \le \ell \le n - p$, and $K$ be an $n$-simplex. For the shape function space $\mathbb W_{[p]}^{k,\ell}(K)$, the following degrees of freedom 
    \begin{equation}\label{dofs:constant-p}
     \langle \iota^{*}_{\sigma} \iota^*_{\sigma} \omega, b \rangle_{\sigma},\,\,\forall ~ b \in \bigoplus_{s = 0}^{p-1} \mathcal S_{[s]}^{k-s,\ell+s} \mathcal S_{\dagger,[\ell - k + 2s]}^{\ell + s,\ell - s} \mathcal B^-\mathbb W_{[\ell - k + 2s + 1]}^{\ell + s,k - s}(\sigma).
\end{equation}	
for all $\sigma \in \mathcal T(K)$ are unisolvent. The resulting space is ${\iota^*\iota^*}$-conforming.
\end{theorem}

{
\begin{proof}
    Note that the local shape function space of $C_{\iota^{\ast}\iota^{\ast}}\mathcal P^- \W_{[\ell - k + 2s + 1]}^{\ell + s,k - s}$ is $\W_{[\ell - k + 2s + 1]}^{\ell + s,k - s}$, satisfying the conditions in \Cref{prop:constant}. This means that for $\omega \in \mathbb W_{[\ell - k + 2s + 1]}^{\ell + s,k - s}$, the following degrees of freedom are unisolvent:
        \begin{equation} \langle \iota^{*}_{\sigma} \iota^*_{\sigma} \omega, b \rangle_{\sigma},\,\,\forall \,\, b \in \mathcal B^-\mathbb W_{[\ell - k + 2s + 1]}^{\ell + s,k - s}(\sigma).
\end{equation}	
Since $\mathcal S_{[s]}^{k-s,\ell+s} \mathcal S_{\dagger,[\ell - k + 2s]}^{\ell + s,\ell - s}$ is injective and commutes with $\iota^*\iota^*$, it follows that 
$$\eta \in \mathcal S_{[s]}^{k-s,\ell+s} \mathcal S_{\dagger,[\ell - k + 2s]}^{\ell + s,\ell - s} \mathbb W_{[\ell - k + 2s + 1]}^{\ell + s,k - s} = \mathcal S_{[s]}^{k-s,\ell+s} \mathbb W^{k-s,\ell+s}
$$
 is uniquely determined by the degrees of freedom 
        \begin{equation}\label{eq:dof-Ws} \langle \iota^{*}_{\sigma} \iota^*_{\sigma} \omega, b \rangle_{\sigma},\,\,\forall \,\, b \in \mathcal S_{[s]}^{k-s,\ell+s} \mathcal S_{\dagger,[\ell - k + 2s]}^{\ell + s,\ell - s} \mathcal B^-\mathbb W_{[\ell - k + 2s + 1]}^{\ell + s,k - s}(\sigma).
\end{equation}	

Next, we put back \eqref{eq:dof-Ws} to the decomposition \eqref{eq:Wp-decomp-new} and show that the degrees of freedom \eqref{dofs:constant-p} are unisolvent for $\mathbb W^{k,\ell}_{[p]}$. Since each factor $\mathbb W^{k-s,\ell +s}$ is uniquely determined by \eqref{eq:dof-Ws}, the total number of degrees of freedom is equal to the dimension of $\mathbb W^{k,\ell}_{[p]}$. Therefore, it suffices to check if $\omega \in \mathbb W^{k,\ell}_{[p]}$, and all the items in \eqref{dofs:constant-p} vanish on $\omega$, then   $\omega = 0$. 

By the decomposition \eqref{eq:decomp-Wp}, we have 
$$\omega = \omega_0 + \omega_1 +\cdots + \omega_{p-1},$$
where $\omega_{t} \in \mathcal S_{[t]}^{k-t,\ell+t} \mathbb W^{k-t,\ell+t}. $ We want to show that \eqref{dofs:constant-p} vanishes on $\omega$ implies that \eqref{eq:dof-Ws} vanishes on $\omega_{s}$, and thus $\omega_{s}=0$. In fact, this follows from the fact that the decomposition  \eqref{eq:decomp-Wp} is orthogonal.



This establishes the unisolvency. The conformity follows from the commuting properties of the operators.
\end{proof}
}

\begin{example}
    Let us now see $\mathbb W^{2,2}_{[2]}$ as an example. The degrees of freedom now become 
            \begin{equation} \langle \iota^{*}_{\sigma} \iota^*_{\sigma} \omega, b \rangle_{\sigma},\,\,\forall \,\, b \in  \mathcal B^-\mathbb W^{2,2}(\sigma),
\end{equation}
and
        \begin{equation} \langle \iota^{*}_{\sigma} \iota^*_{\sigma} \omega, b \rangle_{\sigma},\,\,\forall \,\, b \in  \mathcal S^{1,3} \mathcal S_{\dagger, [2]}^{3,1} \mathcal B^-\mathbb W_{[3]}^{3,1}(\sigma) .
\end{equation}

In four dimensions, there are 1 degree of freedom on each 2-face, and (2+3) = 5 degrees of freedom on each 3-face, and the total dimension of $\mathbb W^{2,2}_{[2]}$ is 35. 
\end{example}
}


 \subsection{Step 3: $\iota^*\jmath^*$-conforming finite elements}

To achieve $\iota^* \jmath^*$-conformity for $(k, \ell)$-forms when $k \le \ell$, we adjust the degrees of freedom of both $\mathcal{P}^{-}\Alt^{k, \ell}$ and $\mathcal{P}^{-}\W^{k, \ell}$. Recall that a finite element is $\iota^* \jmath^*$-conforming if, for a simplex $\sigma \in \mathcal{T}_{\le \ell}$, the generalized double trace $\iota^* \jmath^*$ is single-valued, and for $\sigma \in \mathcal{T}_{\ge \ell}$, the standard double trace $\iota^* \iota^*$ is single-valued. Note again that at the index $\ell$, these definitions align.

 Before presenting the details, we show some examples to demonstrate the ideas. 
For $(0,1)$-forms in three dimensions, the shape function space is $\mathcal P^- \alt^{0,1} \cong \mathcal{P}_1 \otimes \mathbb R^{3}$. The $\iota^{*} \iota^*$-conformity translates to the tangential continuity of the vector. Therefore, the global finite element space is exactly the N\'ed\'elec element of the second kind \cite{nedelec1980mixed}. On the other hand, we note that the $\iota^*\jmath^*$-conformity means the continuity of every component. Therefore the $\iota^*\jmath^*$-conforming finite element space will be the vector Lagrange element. Similarly, for $(1,2)$-forms in three dimensions, the $\iota^{*}\iota^{*}$-conformity leads to an MCS$^\top$ element \cite{gopalakrishnan2020mass} (traceless matrices with tangential-normal continuity on faces), while the $\iota^*\jmath^*$-conformity corresponds to the Hu-Lin-Zhang element with tangential-normal continuity on edges \cite{hu2025distributional}. 

In Step 1, we constructed $\iota^*\iota^*$-conforming finite element spaces. Now, we modify these spaces to achieve the $\iota^* \jmath^*$-conformity. To illustrate, consider $(0,1)$-forms in three dimensions. For each edge, we move the two tangential degrees of freedom from the edge to its two vertices, yielding the vector Lagrange element. Similarly, for $(1,2)$-forms in three dimensions, we redistribute the three degrees of freedom on each face to its three edges, as shown in \Cref{fig:hlz}. The idea of moving degrees of freedom is not new. Recent applications of this idea in the context of complexes can be found in \cite{gillette2020nonstandard,chen2024tangential,chen2024h}. As we see above,  the success of this process relies on matching the number of degrees of freedom to the number of subsimplices. Below we generalize this approach to any $(k, \ell)$-forms. Specifically, we redistribute the degrees of freedom from $\ell$-dimensional simplices to $k$-dimensional ones. In general, moving the degrees of freedom from higher dimensions to lower dimensions would enhance the continuity of the finite elements. In particular, $\iota^*\jmath^*$-continuity ensures $\iota^*\iota^*$-continuity.

  \begin{figure}[htbp]
\FIG{\includegraphics[width=0.5\linewidth]{./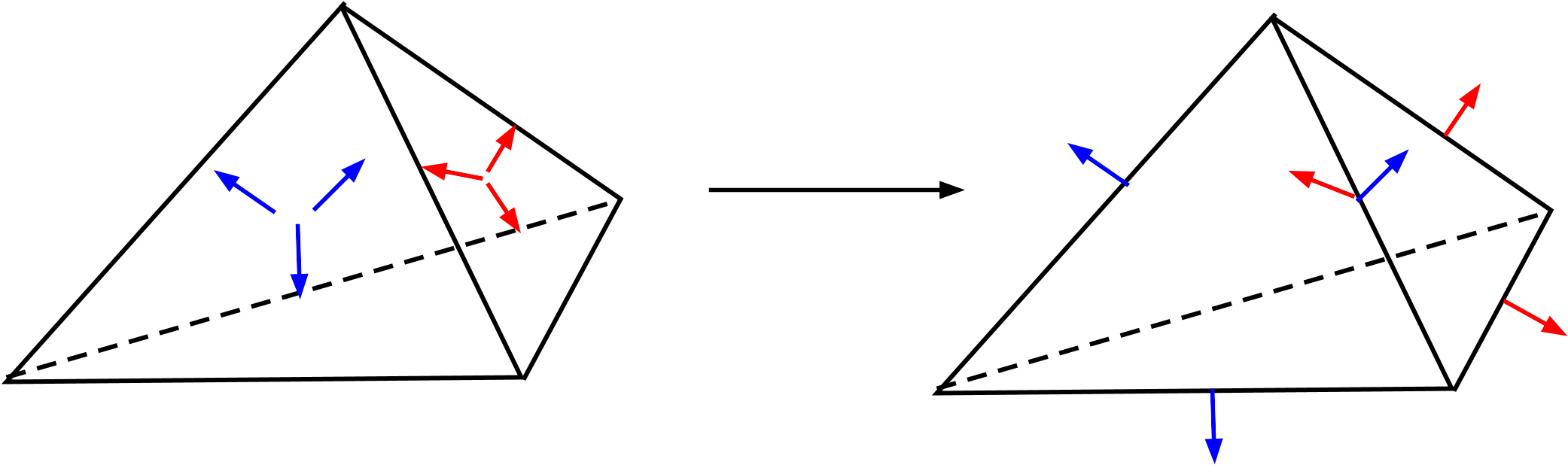}}
{\caption{An illustration for the construction of the Hu-Lin-Zhang traceless element. Here, we move the face degrees of freedom to each edge.}
\label{fig:hlz}}
\end{figure}

Now we provide the precise definition of the degrees of freedom. 
Recall that $\iota_{\sigma}^* \jmath_{\sigma}^*$ maps $C^{\infty} \otimes \alt^{k,\ell}(\mathbb R^n)$ to $C^{\infty}(\sigma) \otimes \alt^{k}(\sigma) \otimes \alt^{\ell - k}(\sigma^{\perp})$, which is a vector bundle on $\sigma$. Correspondingly, we use $\langle \cdot, \cdot \rangle_{\sigma}$ to denote an inner product on the vector bundles. That is, for the inner product $\langle \iota_{\sigma}^* \jmath_{\sigma}^* w, b\rangle_{\sigma}$, for $w\in C^{\infty} \otimes \alt^{k,\ell}(\mathbb R^n)$ and $b \in \alt^k(\sigma) \otimes \alt^{\ell - k}(\sigma^{\perp})$, we first take a pointwise inner product of $ \iota_{\sigma}^* \jmath_{\sigma}^*  w$ and $b$, and integrate on the $k$-dimensional cell $\sigma$ (rather than in the $n$-dimensional space). The degrees of freedom for $\iota^*\jmath^*$-conforming finite element spaces are given by this bundle inner product.

\begin{proposition}[$C_{\iota^*\jmath^*} \cP^- \alt^{k,\ell}$ finite element] \label{prop:moving-dofs-alt}
The degrees of freedom
\begin{equation}
\label{eq:dof-ij}
\begin{cases} \langle \iota^{*}_{\sigma}\jmath^{*}_{\sigma}  \omega , b \rangle_{\sigma}, & \forall~~ b \in \alt^k(\sigma) \otimes \alt^{\ell - k}(\sigma^{\perp}), \quad \dim \sigma = k, \\ 
 \langle \iota^{*}_{\sigma} \iota^*_{\sigma} \omega, b \rangle_{\sigma}, & \forall~~ b \in \mathcal B^-\alt^{k,\ell}(\sigma), \quad \dim \sigma > \ell,
\end{cases}	
\end{equation}
for each $\sigma \in \mathcal T(K)$
are unisolvent with respect to the shape function space $\mathcal P^- \alt^{k,\ell}(K)$.
 The resulting finite element space is $\iota^{*}\jmath^*$-conforming, and denoted as $C_{\iota^*\jmath^*} \cP^- \alt^{k,\ell}$. 
\end{proposition}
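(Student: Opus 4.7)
The plan has three pieces: a dimension count showing the two DoF sets have the same cardinality, an explicit change-of-basis relating the Step~1 $\ell$-cell DoFs to the Step~3 $k$-cell DoFs, and a verification of the $\iota^*\jmath^*$-conformity.

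For the dimension count, the DoFs on cells of dimension $>\ell$ are identical in the two schemes, so the comparison reduces to Step~1's $\ell$-cell DoFs versus Step~3's $k$-cell DoFs. On each $\ell$-face $F$, $\dim \mathcal B^-\alt^{k,\ell}(F) = \binom{\ell+1}{k+1}$, while on each $k$-face $\sigma$, $\dim(\alt^k(\sigma) \otimes \alt^{\ell-k}(\sigma^\perp)) = \binom{n-k}{\ell-k}$. Summing over simplices and using the identity
\begin{equation*}
\binom{n+1}{\ell+1}\binom{\ell+1}{k+1} \;=\; \frac{(n+1)!}{(k+1)!\,(\ell-k)!\,(n-\ell)!} \;=\; \binom{n+1}{k+1}\binom{n-k}{\ell-k},
\end{equation*}
the two totals agree, so the Step~3 DoFs still have total cardinality $\dim \mathcal P^-\alt^{k,\ell}(K)$ by \Cref{prop:Pminus-ii}.

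For the equivalence of DoF sets, I fix an $\ell$-cell $F$ and use \Cref{lem:bubble-decomposition} together with \Cref{cor:spanning-whitney} to write $\mathcal B^-\alt^{k,\ell}(F) = \bigoplus_{\sigma \in \mathcal T_k(F)} \phi_\sigma \otimes \alt^\ell(F)$. Each Step~1 DoF on $F$ is then indexed by a $k$-subsimplex $\sigma \subset F$ and pairs $\iota_F^*\iota_F^*\omega$ against $\phi_\sigma \otimes \mathrm{vol}_F$. By \Cref{lem:j-circ-i} with $q = k$, one has $\vartheta_{\sigma,k}\circ \iota_F^* = \Pi_{\sigma,F}\circ \vartheta_{\sigma,k}$, so this DoF can be extracted from $\iota_\sigma^*\jmath_\sigma^*\omega$ by testing against a specific element of $\alt^{\ell-k}(\sigma^\perp \cap F) \subset \alt^{\ell-k}(\sigma^\perp)$. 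Indexing the vertices of $K$ by $[n+1]$ and identifying each $\ell$-face $F \supset \sigma$ with the $(\ell-k)$-subset $F\setminus \sigma \subset [n+1]\setminus \sigma$, the volume forms $\{\mathrm{vol}_{F\cap \sigma^\perp}\}_{F\supset \sigma,\,\dim F = \ell}$ correspond (up to sign) to the standard basis of $\alt^{\ell-k}(\sigma^\perp)$ indexed by $(\ell-k)$-subsets. Consequently the DoFs attached to $\sigma$ in Step~3 are a linear bijection with the $\sigma$-components of the Step~1 DoFs on all $\ell$-faces containing $\sigma$.

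The main obstacle is showing that this correspondence is block-diagonally invertible, rather than only a spanning relation: I need to verify that the testing elements $b \in \alt^k(\sigma)\otimes\alt^{\ell-k}(\sigma^\perp)$ can be chosen dual to the family $\{\mathrm{vol}_{F\cap\sigma^\perp}\}_{F\supset\sigma}$ under the projections $\Pi_{\sigma,F}$. Once this is settled, any $\omega\in\mathcal P^-\alt^{k,\ell}(K)$ annihilating the Step~3 DoFs also annihilates every Step~1 DoF, and \Cref{prop:Pminus-ii} yields $\omega = 0$. The $\iota^*\jmath^*$-conformity on $\mathcal T_{\le \ell}$ then follows inductively: the $k$-cell DoFs pin down $\iota^*\jmath^*\omega$ on each $k$-simplex, which by the bijection above propagates to the $\iota^*\iota^*$-trace on each $\ell$-simplex, while conformity on $\mathcal T_{>\ell}$ is inherited unchanged from Step~1.
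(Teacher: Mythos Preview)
Your dimension count is correct and identical to the paper's. For unisolvency, however, you are working harder than necessary, and the obstacle you flag can be sidestepped. The paper does not build a DoF-by-DoF correspondence; it argues directly that if all the new $k$-cell DoFs vanish, then $\iota_\sigma^*\jmath_\sigma^*\omega = 0$ for every $\sigma\in\mathcal T_k$, and by \Cref{lem:j-circ-i} this forces $\iota_\sigma^*\jmath_\sigma^*(\iota_F^*\iota_F^*\omega)=0$ for each $\ell$-face $F\supset\sigma$. But $w_F:=\iota_F^*\iota_F^*\omega \in \mathcal P^-\alt^{k,\ell}(F)\cong \mathcal P^-\alt^k(F)$ (the factor $\alt^\ell(F)$ being one-dimensional), and the vanishing of $\iota_\sigma^*\jmath_\sigma^* w_F$ for all $k$-subsimplices $\sigma\subset F$ is precisely the vanishing of all Whitney-form DoFs of $w_F$ on $F$; hence $w_F=0$. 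This kills the Step-1 $\ell$-cell DoFs wholesale, and \Cref{prop:Pminus-ii} finishes. No basis of $\alt^{\ell-k}(\sigma^\perp)$ needs to be verified.

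Your explicit bijection also has a small slip: the Step-1 DoFs on an $\ell$-face $F$ are \emph{moments} $\int_F\langle\,\cdot\,,\phi_\tau\otimes\mathrm{vol}_F\rangle$, not integrals over $\tau$, so the relation to the Step-3 data at a single $\tau$ passes through the Whitney mass matrix on $F$ rather than being a one-to-one pairing of individual functionals. The obstacle you name---that $\{\mathrm{vol}_{F\cap\sigma^\perp}\}_{F\supset\sigma}$ is a basis of $\alt^{\ell-k}(\sigma^\perp)$---is in fact true (it follows from the linear independence of the directions from $\sigma$ to the remaining vertices of $K$), so your route can be completed; but the paper's argument is shorter and avoids both this step and the mass-matrix bookkeeping.
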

Note that the degrees of freedom at $\mathcal T_{>\ell}$ are identical to those of $\iota^*\iota^*$-conforming finite elements. 

\begin{proof}

The proof follows from carrying over the unisolvency of \eqref{eq:dof-ii} to that of \eqref{eq:dof-ij} by counting the number of degrees of freedom. First note that we use the same shape function space $\mathcal P^- \alt^{k,\ell}$ as in the case of $C_{\iota^*\iota^*} \cP^-\alt^{k,\ell}$. For the degrees of freedom, the only difference between \eqref{eq:dof-ij} and \eqref{eq:dof-ii} is those on the simplices of dimension $k$ and $\ell$. The dimension count is done once we show that  \eqref{eq:dof-ii} and \eqref{eq:dof-ij} have the same numbers. 

\begin{enumerate}
\item  For the $\iota^*\iota^*$-conforming space $C_{\iota^*\iota^*} \cP^-\alt^{k,\ell}$ , the degrees of freedom on each $\ell$ simplex $\sigma_{\ell}$ has the dimension of $$\dim \mathcal B^- \alt^{k,\ell}(\sigma_{\ell}) = \dim \mathcal P^- \alt^{k,\ell}(\sigma_{\ell}) = \binom{\ell + 1}{\ell + 1} \binom{\ell + 1}{k + 1} = \binom{\ell + 1}{k+1}.$$
Therefore, the total number of degrees of freedom of \eqref{eq:dof-ii} associated with all $\ell$-faces of $K$ 
is $\displaystyle \binom{n+1}{\ell+1} \binom{\ell+1}{k+1}$. 

\item While for \eqref{eq:dof-ij}, the degrees of freedom on each $k$ simplex $\sigma_k$ have the dimension $\displaystyle \dim \alt^{\ell -k}(\sigma_k^{\perp}) = \binom{n - k}{\ell - k} $. Therefore, the total degrees of freedom of \eqref{eq:dof-ij} at $\mathcal T_k(K)$ is 
$\displaystyle  \binom{n-k}{\ell-k} \binom{n+1}{k+1}.$
\end{enumerate} 
By combinatoric identities, the two numbers are identical.

Now it suffices to verify the unisolvency and conformity. Since the second set of degrees of freedom in \eqref{eq:dof-ij} also exists in \eqref{eq:dof-ii}, it suffices to verify the following: for $w \in \cP^- \alt^{k,\ell}$, if $\langle \iota^{*}_{\sigma}\jmath^{*}_{\sigma}  \omega , b \rangle_{\sigma} = 0$ for all constant $b$ and $\sigma \in \mathcal T_{k}(K)$, then 
for each $F \in \mathcal T_\ell(K)$, $w_F := \iota_F^* \iota_F^* w$ vanishes. 

In fact, by the above vanishing conditions, it holds that $\iota^*_{\sigma} \jmath^*_{\sigma} w = 0$. By \Cref{lem:j-circ-i}, $\iota^*_{\sigma} \jmath^*_{\sigma} w_F = 0$. Note that $w_F$ is in  $\cP^-\alt^{k,\ell}(F) \cong \cP^-\alt^k(F)$. Therefore we have $\iota_{\tau}^* w_F = 0$ for all $\tau \in \mathcal T_{k}(F)$. By the unisolvency of the Whitney form, it then holds that $w_{F} = 0$. The remaining proof is implied in that of \Cref{prop:Pminus-ii}.
\end{proof}

\begin{example}
 For the $C_{\iota^*\iota^*}\mathcal{P}^-\alt^{1,2}$ finite elements, the degrees of freedom (DoFs) consist of face moments against the Raviart-Thomas space (3 per face) and 6 interior degrees of freedom within each tetrahedron. To construct the $C_{\iota^*\jmath^*}\mathcal{P}^-\alt^{1,2}$ element, we redistribute the DoFs from faces (2-simplices) to edges (1-simplices). Each face has 3 degrees of freedom and 3 edges, so we assign 1 degree of freedom to each edge. Consequently, each edge receives 2 degrees of freedom in total from its adjacent faces. This results in the $C_{\iota^*\jmath^*}\mathcal{P}^-\alt^{1,2}$ element, with degrees of freedom defined as moments of edge tangential-normal components (2 per edge) plus 6 interior degrees of freedom per tetrahedron.
\end{example}

\begin{remark}[Skeletal and bubble degrees of freedom]\label{rmk:skeletal}
 Recall that for finite element differential forms, the lowest order moments on $k$-cells (those degrees of freedom against the lowest-order Whitney forms) are skeletal degrees of freedom, and other degrees of freedom are bubbles, in the sense that the skeletal basis (Whitney forms) gives a complex which carries the cohomology; while the bubbles on each cell lead to exact sequences independent of the topology of the domain or the global triangulation \cite{schoberl2005high,christiansen2010finite}. 
Similarly, we call the first set of degrees of freedom in \eqref{eq:dof-ij} (those on dimension $k$) the skeletal part and the second set (those on dimensions $> \ell$) the bubble part. The notion of \emph{skeletal degrees of freedom} and \emph{bubble degrees of freedom} will be used for all $\iota^* \jmath^*$-conforming and $\iota^*\jmath_{[p]}^*$-conforming elements discussed later.

\begin{equation*}
\begin{aligned} \langle \iota^{*}_{\sigma}\jmath^{*}_{\sigma}  \omega , b \rangle_{\sigma}, & \forall~~ b \in \alt^k(\sigma) \otimes \alt^{\ell - k}(\sigma^{\perp}), \quad \dim \sigma = k,  & \text{ skeletal} \\ 
 \langle \iota^{*}_{\sigma} \iota^*_{\sigma} \omega, b \rangle_{\sigma}, & \forall~~ b \in \mathcal B^-\alt^{k,\ell}(\sigma), \quad \dim \sigma > \ell,  & \text{ bubble}
\end{aligned}	
\end{equation*}
\end{remark}

Next, we consider how to modify the degrees of freedom of $C_{\iota^*\iota^*} \mathcal P^- \W^{k,\ell}$ to obtain $C_{\iota^*\jmath^*} \mathcal P^- \W^{k,\ell}$.
For $\sigma \in \mathcal{T}_{\ell}$, it holds that $\mathcal{B}^- \W^{k,\ell}(\sigma) = \mathcal{B}^- \alt^{k,\ell}(\sigma) = \mathcal{P}^- \alt^{k,\ell}(\sigma)$, where the first identity comes from the fact that $\mathcal S_{\dagger}^{k,\ell}: \Alt^{k,\ell} \to \Alt^{k-1,\ell+1}$ is zero on $\ell$-simplices, and the second equation comes from the fact that the trace operator of an $\ell$-form always vanishes on $(\ell-1)$-dimensional faces. Thus, the degrees of freedom of $\mathcal{P}^- \alt^{k,\ell}$ \eqref{eq:dof-ii} and $\mathcal{P}^- \W^{k,\ell}$ \eqref{dof:CiiW} on $\ell$-dimensional cells are identical. Consequently, the procedure used to transfer degrees of freedom from $\ell$-dimensional to $k$-dimensional cells for $C_{\iota^{\ast}\iota^{\ast}}\mathcal{P}^- \alt^{k,\ell}$ can be applied to $C_{\iota^{\ast}\iota^{\ast}}\mathcal{P}^- \W^{k,\ell}$, yielding the $\iota^\ast\jmath^\ast$-conforming symmetric space $C_{\iota^{\ast}\jmath^{\ast}}\mathcal{P}^- \W^{k,\ell}$. See the following proposition for a precise statement.

\begin{proposition}[$C_{\iota^*\jmath^*} \cP^- \W^{k,\ell}$ finite element] \label{prop:moving-dofs-W}
If $ k \le \ell$, then
the degrees of freedom
$$
\begin{cases} \langle \iota^{*}_{\sigma} \jmath^{*}_{\sigma} \omega, b \rangle_{\sigma}, & \forall~~ b \in \alt^{k}\otimes\alt^{\ell - k}(\sigma^{\perp}), \quad \dim \sigma = k, \\ 
 \langle \iota^{*}_{\sigma} \iota^*_{\sigma} \omega, b \rangle_{\sigma}, & \forall~~ b \in \mathcal B^-\W^{k,\ell}(\sigma) , \quad \dim \sigma > \ell,
\end{cases}	
$$
are unisolvent for $\cP^- \W^{k,\ell}$. The resulting finite element space is $\iota^{*}\jmath^*$-conforming, denoted as $C_{\iota^*\jmath^*} \cP^- \W^{k,\ell}$. 
\end{proposition}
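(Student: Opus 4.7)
The plan is to mirror the proof of \Cref{prop:moving-dofs-alt}, reducing the claim to the already established unisolvency of $C_{\iota^*\iota^*}\cP^-\W^{k,\ell}$ (\Cref{prop:sym-reduction}) by checking two things: (i) relocating the degrees of freedom from $\mathcal{T}_\ell$ down to $\mathcal{T}_k$ preserves the total count, and (ii) vanishing of the new skeletal degrees of freedom can be propagated to vanishing of the $\iota^*\iota^*$-traces on $\ell$-faces.

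First I would verify the dimension count. The degrees of freedom on $\sigma \in \mathcal T(K)$ with $\dim \sigma \notin\{k,\ell\}$ agree in the two formulations, so it suffices to show that the total contribution from $\mathcal T_\ell$ in the $\iota^*\iota^*$-formulation equals the total contribution from $\mathcal T_k$ in the proposed new one. The key observation is that on $F \in \mathcal T_\ell$ the $\mathcal S_\dagger^{k,\ell}$-reduction is vacuous: the bubble formula gives $\dim \mathcal B^-\alt^{k-1,\ell+1}(F) = \binom{\ell+1}{k}\binom{k-1}{k} = 0$, hence $\mathcal B^-\W^{k,\ell}(F) = \mathcal B^-\alt^{k,\ell}(F)$ (cf.\ \Cref{rmk:sym-reduction-not-happen}). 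The required equality of counts therefore reduces to the Vandermonde identity already used in \Cref{prop:moving-dofs-alt}, namely $\binom{n+1}{\ell+1}\binom{\ell+1}{k+1} = \binom{n+1}{k+1}\binom{n-k}{\ell-k}$.

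For unisolvency I would take $\omega \in \cP^-\W^{k,\ell}$ whose proposed degrees of freedom vanish and deduce vanishing of the full $C_{\iota^*\iota^*}\cP^-\W^{k,\ell}$ set, then invoke \Cref{prop:sym-reduction}. The bubble part on $\mathcal T_{>\ell}$ is common to both sets. For $F \in \mathcal T_\ell$ I want $\omega_F := \iota_F^*\iota_F^*\omega = 0$; note $\omega_F \in \cP^-\alt^{k,\ell}(F) \cong \cP^-\alt^k(F)$ since $\alt^\ell(F)$ is one-dimensional. For $\tau \in \mathcal T_k(F)$, \Cref{lem:j-circ-i} transfers the vanishing of $\iota_\tau^*\jmath_\tau^*\omega$ to $\iota_\tau^*\jmath_\tau^*\omega_F = 0$, which under the above identification reads $\iota_\tau^*\omega_F = 0$. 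Unisolvency of Whitney $k$-forms on $F$ then forces $\omega_F = 0$. The $\iota^*\jmath^*$-conformity finally follows by the standard inductive argument: single-valuedness of $\iota^*\jmath^*\omega$ on $\mathcal T_k$ is built into the skeletal degrees of freedom, and single-valuedness of $\iota^*\iota^*\omega$ on $\mathcal T_{>\ell}$ propagates upward from the bubble degrees of freedom exactly as in the $C_{\iota^*\iota^*}$ construction.

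The only genuine obstacle is step (i): ensuring that the imposition of the $\mathcal S_\dagger$-symmetry does not obstruct the relocation of $\ell$-face degrees of freedom to the $k$-skeleton. The calculation $\dim \mathcal B^-\alt^{k-1,\ell+1}(F)=0$ on $F \in \mathcal T_\ell$ exactly removes this concern, after which the arithmetic and the \Cref{lem:j-circ-i} step are inherited verbatim from the $\alt^{k,\ell}$ case, requiring no new input beyond \Cref{prop:moving-dofs-alt} and \Cref{prop:sym-reduction}.
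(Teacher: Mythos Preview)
Your proposal is correct and follows essentially the same approach as the paper. The paper does not spell out a separate proof for this proposition; it simply records the key observation that on $\sigma\in\mathcal T_\ell$ one has $\mathcal B^-\W^{k,\ell}(\sigma)=\mathcal P^-\alt^{k,\ell}(\sigma)$ (equivalently, your computation $\dim\mathcal B^-\alt^{k-1,\ell+1}(\sigma)=0$), so that the relocation of degrees of freedom from $\mathcal T_\ell$ to $\mathcal T_k$ goes through exactly as in \Cref{prop:moving-dofs-alt}, after which the unisolvency and conformity arguments are inherited verbatim.
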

The proof is analogous to that for $\alt^{k,\ell}$ and is therefore omitted. This follows because symmetry reduction occurs only on simplices in $\mathcal{T}_{> \ell}$, whereas redistribution occurs solely on simplices in $\mathcal{T}_{\ell}$.
\begin{example}
As a special case, $C_{\iota^*\jmath^*}\mathcal P^- \alt^{0,\ell}$ gives  $\alt^{\ell}$-valued Lagrange space, which is $\binom{n}{\ell}$ copies of the scalar Lagrange finite element spaces. Moreover,  $C_{\iota^*\iota^*} \mathcal P^-\alt^{k,n}$ gives the discontinuous space $C^{-1}\mathcal P^- \alt^k$.
\end{example}

\begin{example}
	We discuss some nontrivial examples involving symmetries. Again, we consider the symmetric (1,2)-form in three dimensions. The shape function space is then $\mathbb W^{1,1} + \kappa \mathbb W^{2,2} = \mathbb T + \bm x \times \mathbb S$, which is traceless. For the $C_{\iota^*\iota^*} \mathcal P^- \mathbb W^{1,2}$ element with $\iota^{*}\iota^*$-conformity, the degrees of freedom are moments against face Raviart-Thomas spaces (3 per face) and 2 inside the cell. Note that the reduction only occurs for the interior degrees of freedom. Next, we move the degrees of freedom from faces to edges. For the $C_{\iota^* \jmath^*} \mathcal P^- \W^{1,2}$ element, the degrees of freedom are evaluation of the edge tangential-normal components (2 per edge) and 2 inside the tetrahedron. 
This gives the Hu-Lin-Zhang traceless element in \cite{hu2025distributional}. See \Cref{fig:hlz} for the moving the degrees of freedom step, and \Cref{fig:hlz-reduction} for the whole procedure.
\end{example}


\subsection{$\iota^*\jmath_{[p]}^*$-conforming finite elements}\label{subsec:ijp}

    In this section, we discuss the $\iota^*\jmath_{[p]}^*$-conforming finite elements. All the results developed in this subsection will reduce to the $\iota^*\jmath^*$-conforming finite elements when  $p = 1$. Here, the conformity $\iota ^*\jmath_{[p]}^*$ calls for explanation: 
    \begin{itemize}
        \item On $\mathcal T_{\le \ell + p - 1}$, we require that $\iota^* \jmath^*_{[p]}$ is single-valued,
        \item while on $\mathcal T_{\ge \ell + p}$, we require that the double trace $\iota^*\iota^*$ is single-valued.
    \end{itemize} 
    Note that $\jmath^*_{F,[p]}$ is a direct sum of $p$ terms: $\vartheta_{F, \dim F}^{\ast}$, $\vartheta_{F, \dim F - 1}^{\ast}$, $\cdots$, $\vartheta_{F, \dim F - p +1}^{\ast}$. When $\dim F = k$, the range of $\vartheta_{F,\dim F  - s}^{\ast}$ is in $C^{\infty}(F) \otimes \alt^{k-s}(F) \otimes \alt^{\ell - k + s}(F^{\perp}).$ 
 Note that for large $s$, terms $\vartheta_{F, \dim F-s}^{\ast}$ may be trivial (vanish), leading to some ``degenerate'' case (\Cref{lem:jp-properties}). However, this does not affect the validity of the arguments presented below.
 
We discuss two cases below: $k < \ell$ and $\ell \le k \le \ell + p - 1$.

\subsubsection{Case 1: $k < \ell$.} We first assume that $k < \ell$. The degrees of freedom of the finite element space $C_{\iota^*\iota^*} \mathcal P^- \alt^{k,\ell}$ are located on the simplices of dimensions greater than or equal to $\ell$, i.e., $\mathcal T_{\ell}, \mathcal T_{\ell +1}, \cdots.$ 
To impose the $\iota^*\jmath_{[p]}^*$-conformity, we move the degrees of freedom on simplices of dimension $\ell, \ell + 1,\cdots, \ell + p - 1$ to $k$-simplices $\sigma$. The idea for this step of construction is: \emph{The new degrees of freedom on $\sigma$ gained from $(\ell + s)$-dimensional simplices will ensure the conformity of the generalized trace $\iota^{\ast}\vartheta_{\sigma,k-s}^*$.} 

To see that it is possible, we first check that the following two sets of degrees of freedom have the same cardinality:
\begin{enumerate}
    \item the number of degrees of freedom on $\mathcal T_{\ell +s}(K)$ before the reassignment, and
    \item the degrees of freedom on $\sigma$ ensuring the $\iota^{\ast}\vartheta_{\sigma,k-s}^*$-conformity on $\mathcal T_{k}(K)$ after the reassignment.
\end{enumerate}

We first consider (1). In fact, for each $(\ell + s)$-simplex $F$, the degrees of freedom before the redistribution are moments against the space $\mathcal B^{-}\alt^{k,\ell}(F)$. By \Cref{lem:bubble-decomposition}, we have  the following decomposition: 
$\mathcal B^{-}\alt^{k,\ell}(F) = \sum \phi_{\sigma} \otimes N^{\ell}(\sigma , F)$. Recall that the dimension of $N^{\ell}(\sigma, F)$ is $\binom{k}{\ell + k - \ell -s } = \binom{k}{k - s}.$ Therefore, it holds that 

\begin{equation*} \begin{split} \sum_{F \in \mathcal T_{\ell + s }(K)} \dim \mathcal B^- \alt^{k,\ell}(F) = & \sum_{F \in \mathcal T_{\ell + s}(K)} \sum_{\substack{\sigma: \sigma\trianglelefteq F\\ \dim \sigma =k }}\dim   \phi_{\sigma} \otimes N^{\ell}(\sigma , F) \\  = & \binom{k}{k-s} \sum_{F \in \mathcal T_{\ell + s}(K)} \sum_{\substack{\sigma: \sigma\trianglelefteq F\\ \dim \sigma =k }} 1.\end{split} \end{equation*}

By swapping the summation of $F$ and $\sigma$, it suffices to consider the number of $F$ that contains $\sigma$ for any $\sigma \in \mathcal T_{k}(K)$. The number of $F$ such that $\sigma \trianglelefteq F$ is then $\binom{n - k}{\ell - k + s} = \dim \alt^{\ell - k + s}(\sigma^{\perp})$. Therefore  
$$
\sum_{\substack{F: \sigma\trianglelefteq F\\ \dim \sigma =k }}\dim   \phi_{\sigma} \otimes N^{\ell}(\sigma , F) =\binom{n - k}{\ell - k + s} \binom{k}{k - s}=\dim (\alt^{k-s}(\sigma)\otimes  \alt^{\ell - k + s}(\sigma^{\perp})). 
$$
Summing over all $\sigma$, we have 
$$
\sum_{F \in \mathcal T_{\ell +s} (K) }\dim  \mathcal B^{-}\alt^{k,\ell}(F)=\sum_{\sigma \in \mathcal T_{k}(K)}\dim (\alt^{k-s}(\sigma)\otimes  \alt^{\ell - k + s}(\sigma^{\perp}))
$$
for each $s$, where the left-hand side is the number of degrees of freedom moved out in this construction, and the right-hand side is the number of new degrees of freedom added on $k$-simplices. The equality therefore shows that the operation does not change the total number of degrees of freedom in an $n$-simplex. See \Cref{fig:moving-dofs} for an illustration.

\begin{figure}[htbp]
\FIG{\includegraphics[scale = 0.1]{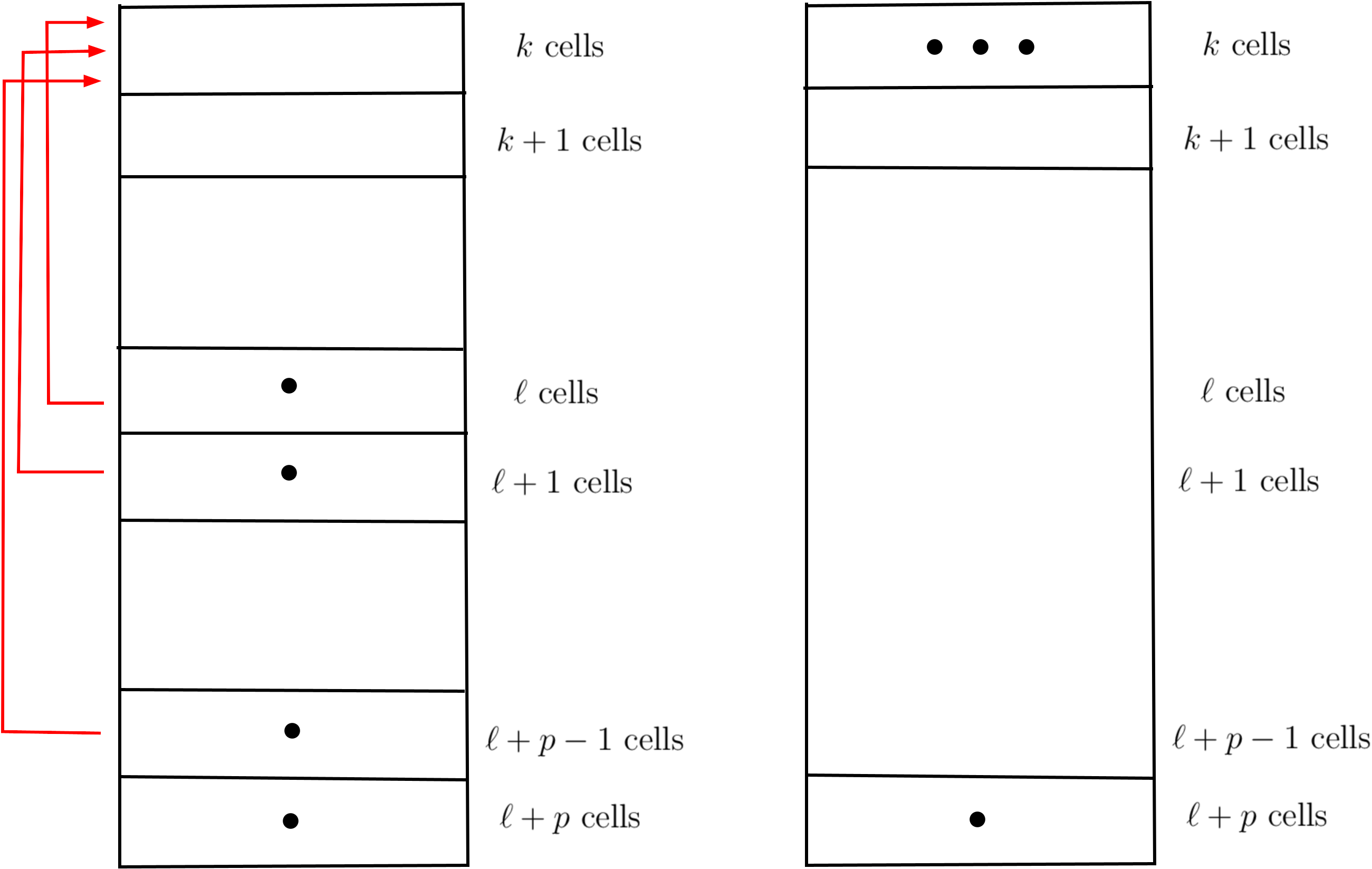}}
{\caption{An illustration for moving the degrees of freedom for the $\jmath_{[p]}^*$ case. Here, we move the degrees of freedom from $\ell$-, $(\ell+1)$-, .... $(\ell + p -1)$- faces to $k$-faces.}
	\label{fig:moving-dofs}}
\end{figure}



 Therefore, the redistribution essentially transfers $p$ sets of degrees of freedom from $\mathcal T_{\ell}(K), \mathcal T_{\ell+1}(K), \cdots, \mathcal T_{\ell+(p-1)}(K)$, and keeps the remaining degrees of freedom (those located in $\mathcal T_{\ge \ell + p}$).  Precisely, we have the following proposition.
\begin{proposition}[$C_{\iota^{*}\jmath^*_{[p]}}\mathcal P^- \alt^{k,\ell}$ finite element] \label{prop:Wp}
The degrees of freedom

\begin{equation}
\label{eq:dof-ijp-alt-kl}
\begin{cases} \langle \iota^{*}_{\sigma}\vartheta^{*}_{\sigma,k} \omega, b \rangle_{\sigma}, & \forall~~ b \in \alt^{k}(\sigma) \otimes \alt^{\ell - k }(\sigma^{\perp}), \quad \dim \sigma = k, \\ 
\langle \iota^{*}_{\sigma}\vartheta^{*}_{\sigma,k - 1} \omega, b \rangle_{\sigma}, & \forall~~ b \in \alt^{k-1}(\sigma) \otimes \alt^{\ell - k + 1}(\sigma^{\perp}), \quad \dim \sigma = k,\\
\langle \iota^{*}_{\sigma}\vartheta^{*}_{\sigma,k - 2} \omega, b \rangle_{\sigma}, & \forall~~ b \in \alt^{k-2}(\sigma) \otimes \alt^{\ell - k + 2}(\sigma^{\perp}), \quad \dim \sigma = k, \\
\cdots \\
\langle \iota^{*}_{\sigma}\vartheta^{*}_{\sigma,k - p + 1 } \omega, b \rangle_{\sigma}, & \forall~~ b \in \alt^{k-p + 1 }(\sigma) \otimes \alt^{\ell - k + p - 1}(\sigma^{\perp}), \quad \dim \sigma = k, \\
 \langle \iota^{*}_{\sigma} \iota^*_{\sigma} \omega, b \rangle_{\sigma}, & \forall~~ b \in \mathcal B^-\alt^{k,\ell}(\sigma), \quad \dim \sigma \ge \ell + p,
\end{cases}	
\end{equation}
or written in a compact form:
$$
\begin{cases} \langle \iota^{*}_{\sigma}\jmath^{*}_{\sigma,[p]} \omega, b \rangle_{\sigma}, & \forall~~ b \in \bigoplus_{s = 0}^{p-1} \alt^{k-s}(\sigma) \otimes \alt^{\ell - k + s}(\sigma^{\perp}), \quad \dim \sigma = k, \\ 
 \langle \iota^{*}_{\sigma} \iota^*_{\sigma} \omega, b \rangle_{\sigma}, & \forall~~ b \in \mathcal B^-\alt^{k,\ell}(\sigma), \quad \dim \sigma \ge \ell + p,
\end{cases}	
$$
are unisolvent with respect to the shape function space $\mathcal P^- \alt^{k,\ell}(K)$. The resulting finite element space is $\iota^{*}\jmath^*_{[p]}$-conforming.	
\end{proposition}
\begin{proof}
We have verified that moving degrees of freedom as described above does not change the total number of degrees of freedom. Therefore, it suffices to show the unisolvency and conformity. 

Suppose that all the degrees of freedom in $\mathcal T_k$ vanish on $w \in \mathcal P^-\alt^{k,\ell}$.  
It suffices to show that $\iota_{F}^*\iota_F^*w = 0$ for any $F \in \mathcal T_{\ell + p -1}$. 
Then, by the last set of degrees of freedom, we can conclude with the unisolvency.
 Fix $F \in \mathcal T_{\ell + p -1}$ and let $w_F = \iota_F^* \iota_F^* w \in \mathcal P^-\alt^{k,\ell}(F)$. By \Cref{lem:j-circ-i}, it holds that $\iota_{E}^{*} \jmath_{E,[p]}^* w_F = 0$ for all $E \in \mathcal T_{k}(F)$. Note that $w_F$ is in $\mathbb R^{\ell + p - 1}$, therefore, by \Cref{lem:jp-properties}, it holds that $\jmath_{E,[p]}^*= \rho_E^*$. Therefore, it then concludes that $w_F = 0$, leading to the unisolvency and conformity.
\end{proof}

\subsubsection{Case 2: $\ell \le k \le \ell + p - 1$} For the case $\ell \le k \le \ell + p - 1$, the finite element $C_{\iota^*\iota^*}\mathcal{P}^-\alt^{k,\ell}(K)$ behaves differently from the previous cases. Here, the degrees of freedom are located only on simplices of dimension $k$ and higher, i.e., $\mathcal{T}_k(K), \mathcal{T}_{k+1}(K), \ldots$, while no degrees of freedom are on $\mathcal T_{[\ell, k-1](K)}$ due to the $\iota^*\iota^*$-conformity. To construct the $C_{\iota^*\jmath^*_{[p]}}\mathcal{P}^-\alt^{k,\ell}(K)$ element, we redistribute the degrees of freedom from $\mathcal{T}_{k}(K), \mathcal{T}_{k+1}(K), \ldots, \mathcal{T}_{\ell + p - 1}(K)$ to those defined by the generalized traces $\vartheta_{\sigma, \ell}^{\ast}, \vartheta_{\sigma, \ell - 1}^{\ast}, \ldots, \vartheta_{\sigma, k - p + 1}^{\ast}$ on $\mathcal{T}_k$. This redistribution is reflected in the DoFs specified by \eqref{eq:dof-ijp-alt-kl}, where the traces $\vartheta_{\sigma, k}^{\ast}, \ldots, \vartheta_{\sigma, \ell + 1}^{\ast}$ vanish when $\ell \le k$.


\begin{example}
A trivial case is when $\jmath_{[p]}^* = \rho^*$ (which holds for sufficiently large $p$; see \Cref{lem:jp-properties}). In this case, the construction gives $ C_{\iota^*} \mathcal P^{-} \alt^k\otimes \alt^{\ell}$, i.e., alternating $\ell$-forms-valued finite element $k$-forms $ C_{\iota^*} \mathcal P^{-} \alt^k$ \cite{arnold2018finite,arnold2006finite}. 
\end{example}

It follows from \Cref{rmk:sym-reduction-not-happen} that the degrees of freedom in $\mathcal T_{< \ell + p}$ are not changed in the symmetry reduction; while the process of moving degrees of freedom only involves $\mathcal T_{< \ell + p}$. Therefore, we can first construct the $\iota^*\iota^*$-conforming space, namely, $C_{\iota^*\iota^*} \mathbb W_{[p]}^{k,\ell}$, and then obtain $C_{\iota^*\jmath_{[p]}^*} \mathbb W_{[p]}^{k,\ell}$ by moving degrees of freedom in the same way as obtaining $C_{\iota^*\jmath_{[p]}^*}   \Alt^{k,\ell}$. Formally speaking, the process of symmetry reduction and moving degrees of freedom commutes.

\begin{proposition} [$C_{\iota^{\ast}\jmath^{\ast}_{[p]}}\mathcal{P}^{-}\W_{[p]}^{k, \ell}$ finite element]\label{ijpW}
If $ k \le \ell + p - 1$, then
the degrees of freedom
\begin{equation}
\label{eq:dof-ijp-W-kl}
\begin{cases} \langle \iota^{*}_{\sigma}\vartheta^{*}_{\sigma,k} \omega, b \rangle_{\sigma}, & \forall~~ b \in \alt^{k}(\sigma) \otimes \alt^{\ell - k }(\sigma^{\perp}), \quad \dim \sigma = k,\\ 
\langle \iota^{*}_{\sigma}\vartheta^{*}_{\sigma,k - 1} \omega, b \rangle_{\sigma}, & \forall~~ b \in \alt^{k-1}(\sigma) \otimes \alt^{\ell - k + 1}(\sigma^{\perp}), \quad \dim \sigma = k, \\
\langle \iota^{*}_{\sigma}\vartheta^{*}_{\sigma,k - 2} \omega, b \rangle_{\sigma}, & \forall~~ b \in \alt^{k-2}(\sigma) \otimes \alt^{\ell - k + 2}(\sigma^{\perp}), \quad \dim \sigma = k, \\
\cdots \\
\langle \iota^{*}_{\sigma}\vartheta^{*}_{\sigma,k - p + 1 } \omega, b \rangle_{\sigma}, & \forall~~ b \in \alt^{k-p + 1 }(\sigma) \otimes \alt^{\ell - k + p - 1}(\sigma^{\perp}), \quad \dim \sigma = k, \\
 \langle \iota^{*}_{\sigma} \iota^*_{\sigma} \omega, b \rangle_{\sigma}, & \forall~~ b \in \mathcal B^-\W_{[p]}^{k,\ell}(\sigma), \quad \dim \sigma \ge \ell + p,
\end{cases}	
\end{equation}
or written compactly,
$$
\begin{cases} \langle \iota^{*}_{\sigma}\jmath^{*}_{\sigma,[p]} \omega, b \rangle_{\sigma}, & \forall~~ b \in \bigoplus_{s = 0}^{p-1} \alt^{k-s}(\sigma) \otimes \alt^{\ell - k + s}(\sigma^{\perp}),\quad \dim \sigma = k, \\ 
 \langle  \iota^{*}_{\sigma}\iota^{*}_{\sigma} \omega, b \rangle_{\sigma}, & \forall~~ b \in \mathcal B^-\W^{k,\ell}_{[p]}(\sigma) , \quad \dim \sigma \ge \ell + p,
\end{cases}	
$$
are unisolvent for {$\mathcal{P}^{-}\W_{[p]}^{k, \ell}$}. The resulting finite element space is $\iota^{*}\jmath_{[p]}^*$-conforming.	

\end{proposition}
\begin{remark}
It is also possible to construct the finite element space $C_{\iota^*\jmath_{[q]}^*} \mathcal P^- \W_{[p]}^{k,\ell}	$ whenever $ q \le p$. The degrees of freedom are
$$
\begin{cases} \langle \iota^{*}_{\sigma}\jmath^{*}_{\sigma,[p]} \omega, b \rangle_{\sigma}, & \forall~~ b \in \bigoplus_{s = 0}^{q-1} \alt^{k-s}(\sigma) \otimes \alt^{\ell - k + s}(\sigma^{\perp}),\quad \dim \sigma = k, \\ 
 \langle  \iota^{*}_{\sigma}\iota^{*}_{\sigma} \omega, b \rangle_{\sigma}, & \forall~~ b \in \mathcal B^-\Alt^{k,\ell}(\sigma) , \quad  \ell + q  \le \dim \sigma \le \ell + p - 1,\\ 
 \langle  \iota^{*}_{\sigma}\iota^{*}_{\sigma} \omega, b \rangle_{\sigma}, & \forall~~ b \in \mathcal B^-\W^{k,\ell}_{[p]}(\sigma) , \quad \dim \sigma \ge \ell + p.
\end{cases}	
$$
\end{remark}
\begin{example}\label{example:3D-allcases}
The resulting finite elements in three-dimensional space all exist in the literature. For $p \ge 2$, one of the cases discussed in \Cref{lem:jp-properties} holds. See the following examples:
\begin{enumerate}
\item For $\alt^{1,1}$, it holds that $\jmath_{e,[p]}^* = \rho^*$ for $p \ge 2$. In particular, the construction gives three copies of the N\'ed\'elec elements.
\item For $\alt^{2,2}$, it holds that $\jmath_{f,[p]}^* = \rho^*$ for $p \ge 2$. In particular, the construction gives three copies of the Raviart-Thomas elements.
\item For $\alt^{1,2}$, it holds that $\jmath_{e,[p]}^* = \rho^*$ for $p \ge 2$. In particular, the construction gives three copies of the N\'ed\'elec element. 
\item For $\alt^{2,1}$, we can consider the construction when $p \ge 2$. For $p = 2$, $\jmath_{f,[2]}^* = \iota^*$, while for the other cases, $\jmath_{f,[p]}^* = \rho^*$. The former gives the normal-tangential continuous MCS element, and the latter gives three copies of the Raviart-Thomas element.
\end{enumerate}
	
\end{example}

We will see some nontrivial $\iota^*\jmath_{[p]}^*$-conforming finite elements in four dimensions in \Cref{sec:kkform}. 

\section{Tensorial Whitney forms: examples and complexes}

In this section, we present examples of tensorial Whitney forms constructed in the preceding section. We begin by summarizing examples of individual finite elements and complexes in three dimensions. Most constructions in two and three dimensions already exist in the literature. However, extending these patterns to higher dimensions is nontrivial due to increased complexity in continuity conditions and degrees of freedom (DoFs) assignments. In addition to translating the general results from the previous section, we highlight challenges in generalizing to higher dimensions and illustrate key patterns through specific examples. First, we examine the $(k,k)$-forms. Then, moving beyond individual finite elements, we investigate possibilities of fitting these individual finite elements and distributions into discrete BGG complexes. We verify a dimension count condition as an indication of the cohomology being correct.

\subsection{Recap in three dimensions}
In this subsection, we summarize the finite elements in three dimensions 
For simplicity, here we mainly focus on the symmetric version  $C_{\iota^*\iota^*} \mathcal P^- \W_{[p]}^{k,\ell}$ and $C_{\iota^*\jmath_{[p]}^*} \mathcal P^- \W_{[p]}^{k,\ell}$, see \Cref{tab:ii-alt,tab:ii-W,tab:ii-W2,tab:ij-W,tab:ij2-W2}.

\begin{table}[h]
\small
\setlength{\tabcolsep}{3pt}
\TBL{
\begin{center}
\begin{tabular}{|c|cccc|}\hline
\diagbox[width=1cm, height = 1cm]{$\ell$}{$k$}&0&1&2&3\\\hline
0&Lagrange  &first type N\'ed\'elec  &RT &DG
\\ 
& $C_{\rho}~\mathcal P_1$ &  $C_{t}~ (\mathbb V + \bm x \times \mathbb V)$ & $C_{n} ~(\mathbb V + \bm x \mathbb R)$ & $C^{-1} ~\mathbb R$ \\
& & & &
\\ 
1&second type N\'ed\'elec &full Regge, \Cref{fig:regge-reduction} (I)& full MCS, \Cref{fig:mcs-reduction} (I)  &vector DG\\
 & $C_{t}~\mathcal P_1\mathbb V$& $C_{tt}$ $(\mathbb{M}+\bm{x}\times\mathbb{M})$ & $C_{nt}$ $(\mathbb{M}+\bm{x}\mathbb{V})$  & $C^{-1}~\mathbb V$ 
 \\ & &  &  & \\ 
2&BDM, \Cref{fig:regge-reduction} (II)& $\mathrm{full~MCS}^{\top}$, \Cref{fig:hlz-reduction} (I)  &full HHJ, \Cref{fig:hhj-reduction} (I)&vector DG\\
& $C_n~\mathcal P_1\mathbb V$ & $C_{tn}~ (\mathbb{M}+\bm{x}\times\mathbb{M})$& $C_{nn} ~(\mathbb{M}+\bm{x}\mathbb{V})$  & $C^{-1}~\mathbb V$  \\ & & &  & \\ 
3&$\text{DG}_1$, \Cref{fig:hlz-reduction,fig:mcs-reduction} (II) &$C^{-1}\mathrm{Ned}$, \Cref{fig:hhj-reduction} (II) &$C^{-1}\mathrm{RT}$&DG\\
& $C^{-1}~\mathcal P_1$ & $C^{-1}~(\mathbb V + \bm x \times \mathbb V) $  & $C^{-1}~(\mathbb V + \bm x  \mathbb R) $ 
 & $C^{-1}~\mathbb R $ \\
\hline
\end{tabular}
\end{center}}
{\caption{$C_{\iota^*\iota^*} \mathcal{P}^-\alt^{k,\ell}$ finite elements.}
\label{tab:ii-alt}}
\end{table}


\begin{table}[h]
\small
\setlength{\tabcolsep}{3pt}
\TBL{
\begin{center}
\begin{tabular}{|c|cccc|}\hline
\diagbox[width=1cm, height = 1cm]{$\ell$}{$k$}&0&1&2&3\\\hline
0&Lagrange& - & - & - \\
&$C_{\rho} ~\mathcal P_1$ & & & \\
& & & &
\\ 
1&second type N\'ed\'elec& Regge, \Cref{fig:regge-reduction} (I-II) & - & - \\
& $C_{t} ~\mathcal P_1\mathbb V$ & $C_{tt}~ \mathbb S$ & & \\ & & &  & \\ 
2&BDM& traceless $\mathrm{MCS}^{\top}$, \Cref{fig:hlz-reduction} (I-II)  & HHJ, \Cref{fig:hhj-reduction} (I-II) & - \\
& $C_n~\mathcal P_1\mathbb V$ &$C_{tn}~ (\mathbb{T}+\bm{x}\times\mathbb{S})$ & $C_{nn}~\mathbb S$ & \\ & & &  & \\ 
3&$\text{DG}_1$ &$C^{-1}\mathrm{Ned}$&$C^{-1}\mathrm{RT}$&DG\\
& $C^{-1}~\mathcal P_1$ & $C^{-1}~(\mathbb V + \bm x \times \mathbb V) $  & $C^{-1}~(\mathbb V + \bm x  \mathbb R) $ 
 & $C^{-1}~\mathbb R $ \\
\hline
\end{tabular}
\end{center}
}
{\caption{ $C_{\iota^*\iota^*} \mathcal{P}^-\W^{k,\ell}$ finite elements.}
\label{tab:ii-W}}
\end{table}

\begin{table}[h]
\small
\setlength{\tabcolsep}{3pt}
\TBL{
\begin{center}
\begin{tabular}{|c|cccc|}\hline
\diagbox[width=1cm, height = 1cm]{$\ell$}{$k$}&0&1&2&3\\\hline
0&Lagrange&first type N\'ed\'elec& - & -\\ 
& $C_{\rho}~\mathcal P_1$& $C_{t}~(\mathbb V + \bm x \times \mathbb V)$ & & \\ 
& &  & & \\ 
1&second type N\'ed\'elec&full Regge & MCS, \Cref{fig:mcs-reduction} (I-II)& - \\
& $C_{t}~\mathcal P_1\mathbb V$ & $C_{tt}~ (\mathbb{M}+\bm{x}\times\mathbb{M})$& $C_{nt}~\mathbb T$ & \\
& & & & \\

2&BDM& $\mathrm{MCS}^{\top}$  &full HHJ  &vector DG\\
& $C_n~\mathcal P_1 \mathbb V$ &$C_{tn}~(\mathbb{M}+\bm{x}\times\mathbb{M})$ &$C_{nn}~(\mathbb{M}+\bm{x}\mathbb{V})$ & $C^{-1} ~\mathbb V$  \\
& & & & \\
3&$\text{DG}_1$ &$C^{-1}\mathrm{Ned}$&$C^{-1}\mathrm{RT}$&DG\\
& $C^{-1}~\mathcal P_1$ & $C^{-1}~(\mathbb V + \bm x \times \mathbb V) $  & $C^{-1}~(\mathbb V + \bm x  \mathbb R) $ 
 & $C^{-1}~\mathbb R $ \\
\hline
\end{tabular}
\end{center}}
{\caption{ $C_{\iota^*\iota^*} \mathcal{P}^-\W_{[2]}^{k,\ell}$ finite elements.}
\label{tab:ii-W2}}
\end{table}

\begin{table}[h]
\TBL{\begin{center}
\begin{tabular}{|c|cccc|}\hline
\diagbox[width=\dimexpr \textwidth/8 + 2\tabcolsep\relax, height = 1cm]{$\ell$}{$k$}&0&1&2&3\\\hline
0&Lagrange& - & - & - \\
& $C_{\rho} \mathcal P_1$ & & & \\
& & & & \\
1&vector Lagrange & Regge  & - & - \\
& $C_{\rho} \mathcal P_1 \otimes \mathbb V$ & $C_{tt} ~\mathbb S$ & & \\
& & & & \\
2&vector Lagrange & HLZ, \Cref{fig:hlz-reduction}, rightmost& HHJ & - \\
& $C_{\rho} \mathcal P_1 \otimes \mathbb V$ & edge $C_{tn}~ (\mathbb{T}+\bm{x}\times\mathbb{S})$ & $C_{nn}~\mathbb S$ & \\
& & & & \\
3&Lagrange&N\'ed\'elec & RT&DG\\
& $C_{\rho} \mathcal P_1$& $C_{t}~(\mathbb V + \bm x \times \mathbb V)$ & $C_{n}~(\mathbb V+ \bm x \mathbb R)$& $C^{-1}~\mathbb R$ \\
\hline
\end{tabular}\end{center}}
{\caption{ $C_{\iota^*\jmath^*} \mathcal{P}^-\W^{k,\ell}$ finite elements.}
\label{tab:ij-W}}
\end{table}

\begin{table}[h]
\TBL{
\begin{center}
\begin{tabular}{|c|cccc|}\hline
\diagbox[width=\dimexpr \textwidth/8 + 2\tabcolsep\relax, height = 1cm]{$\ell$}{$k$}&0&1&2&3\\\hline
0&Lagrange&first type N\'ed\'elec& - & -\\
& $C_{\rho}~\mathcal P_1$ & $C_{t}~(\mathbb V + \bm x \times \mathbb V)$& & \\ 
& & & & \\ 
1&vector Lagrange&vector N\'ed\'elec & MCS $\mathbb{T}$& - \\
& $C_{\rho}~\mathcal P_1\otimes \mathbb V$  & $C_{t {\color{blue}\rho}}~ (\mathbb M + \bm x \times \mathbb M)$ & $C_{n{\color{red}t}}~\mathbb T$& \\
& & & & \\
2&vector Lagrange &vector N\'ed\'elec &vector RT &vector DG\\
& $C_{\rho}~\mathcal P_1\otimes \mathbb V$ & $C_{t{\color{blue}\rho}}~ (\mathbb M + \bm x \times \mathbb M)$ & $C_{n{\color{blue}\rho}}~ (\mathbb M + \bm x \times \mathbb M)$ & $C^{-1}~\mathbb V$ \\
& & & & \\
3&Lagrange&N\'ed\'elec & RT&DG\\
& $C_{\rho} \mathcal P_1$& $C_{t}~(\mathbb V + \bm x \times \mathbb V)$ & $C_{n}~(\mathbb V+ \bm x \mathbb R)$& $C^{-1}~\mathbb R$ \\
\hline
\end{tabular}
\end{center}
}
{\caption{ $C_{\iota^*\jmath_{[2]}^*} \mathcal{P}^-\W_{[2]}^{k,\ell}$ finite elements. Here, $\jmath_{[2]}^*$ is either $\rho^*$ (in blue) or $\iota^* = t$ (in red).}
\label{tab:ij2-W2}}
\end{table}

\subsection{$(k,k)$-forms in general dimensions}
\label{sec:kkform}

In this section, we consider the $(k,k)$-forms in general dimensions, especially for $k = 1,2,3.$ Specifically, we consider 
\begin{enumerate}
\item $C_{\iota^*\iota^*} \mathcal P^- \alt^{k,k}$	and $C_{\iota^*\iota^*} \mathcal P^- \W_{[p]}^{k,k}$. The most interesting case is $p = 1$, where the local shape function space is $\mathcal P^- \W^{k,k} = \W^{k,k}$. 
\item $C_{\iota^* \jmath_{[p]}^*} \mathcal P^- \alt^{k,k}$ and $C_{\iota^* \jmath_{[p]}^*} \mathcal P^- \W^{k,k}$.
\end{enumerate}

  By \eqref{eq:bubble-dim-count}, we know that no degrees of freedom are put on any  $\sigma \in \mathcal T_{ >2k}$ for $C_{\iota^*\iota^*}\mathcal P^{-}\Alt^{k, \ell}$, while for $\sigma \in \mathcal T_{2k}$, the numbers of the degrees of freedom are 
$\binom{2k+1}{k+1}$. 
In the symmetric case, the shape function space of $C_{\iota^*\iota^*} \mathcal P^- \mathbb W^{k,k}$ is constant $\mathcal P^- \mathbb W^{k,k} = \mathbb W^{k,k}$. The symmetry reduction removes the degrees of freedom of $C_{\iota^*\iota^*}\mathcal P^- \alt^{k-1,k+1}$ from those of $C_{\iota^*\iota^*}\mathcal P^- \alt^{k,k}$. For $C_{\iota^*\iota^*}\mathcal P^- \alt^{k-1,k+1}$, no degrees of freedom are located on $\sigma \in \mathcal T_{> 2k}$, while for $\sigma \in \mathcal T_{2k}$, the numbers of degrees of freedom are $\binom{2k+1}{k}$. Removing these numbers from the dimension count of $\mathcal P^- \alt^{k,k}$, we get the following corollary.
\begin{corollary}
 The degrees of freedom of $C_{\iota^*\iota^*}\mathcal P^- \mathbb W^{k,k}$ only appear on $\mathcal T_{[k,2k-1]}.$
\end{corollary}
In particular, for $k = 1$ (the Regge element), this implies that all the degrees of freedom of $C_{\iota^*\iota^*}\mathcal P^- \mathbb W^{1,1}$ are on 1-faces (edges).

Consequently,   the degrees of freedom for ${C_{\iota^{\ast}\iota^{\ast}}}\mathcal P^-\alt^{k,k}$ are 
\begin{equation} \langle \iota^{*}_{\sigma}\iota^{*}_{\sigma} \omega, b \rangle_{\sigma}, \qquad \forall~~ b \in \mathcal B^-\alt^{k,k}(\sigma) ,\,\, \forall \sigma \in  \mathcal T_{[k,2k]},
\end{equation}
while for ${C_{\iota^{\ast}\iota^{\ast}}}\mathcal P^- \mathbb W^{k,k}$, the degrees of freedom are
\begin{equation}
    \langle \iota^{*}_{\sigma}\iota^{*}_{\sigma} \omega, b \rangle_{\sigma}, \qquad \forall~~ b \in \mathcal B^-\mathbb W^{k,k} (\sigma) ,  \,\, \forall \sigma \in  \mathcal T_{[k,2k-1]}.
\end{equation}
Here, $
 \mathcal B^-\mathbb W^{k,k}(\sigma) = \mathcal B^- \alt^{k,k}(\sigma) \cap \ker(\mathcal S_{\dagger}^{k,k})=\mathcal{P}^{-}\W^{k, k}(\sigma)\cap \ker (\iota^{\ast}\iota^{\ast}). $
 See \eqref{def:B-} and \eqref{def:BW}.

\subsubsection*{(1,1)-forms}
For $k = 1$, this result covers the Regge element. In any space dimension, $C_{\iota^{\ast}\iota^{\ast}}\mathcal P^- \alt^{1,1}$ has one degree of freedom per edge and three degrees of freedom per 2-face. 

%
%
For $p=1$, the symmetry reduction completely removes the face degrees of freedom (3-3=0), and thus we obtain the symmetric Regge element.  Informally, the reduction can be read as $$\mbox{Regge} = \mbox{full Regge} \ominus \mathcal{S} (\mathrm{RT}).$$ For $p > 1$, note that $\Alt^{1-p,1+p}$ is trivial. Thus $\W_{[p]}^{1,1} = \Alt^{1,1}$.
The dimension count is summarized in Table \ref{tab:dim-count-11}.
\begin{table}[htbp]
    \centering
    \TBL{
    \begin{tabular}{c|ccc}
        $n$ &  1 & 2 & $\ge 3$ \\ \hline 
     DoFs on $n$-face of $\mathcal P^- \alt^{1,1}$ & 1 & 3 &0\\ \hline
     DoFs on $n$-face of $\mathcal P^- \alt^{0,2}$ & 0 & 3 &  0 \\ 
          DoFs on $n$-face of $\mathcal P^- \mathbb W^{1,1}$ & 1 & {\color{blue} 0} & 0 \\ 
    \end{tabular}}
       {\caption{The dimension count involved in the construction of $C_{\iota^*\iota^*}\mathcal P^- \alt^{1,1}$ and $C_{\iota^*\iota^*}\mathcal P^- \W^{1,1}$. Here, we highlight the dimension (in blue) that the degrees of freedom are modified by the symmetric reduction. The numbers in the last row are obtained from the second row minus the first row.}
    \label{tab:dim-count-11}}
\end{table}

\subsubsection*{(2,2)-forms and (3,3)-forms}For $k=2$, we obtain the shape functions and degrees of freedom of $C_{\iota^*\iota^*}\mathcal P^- \alt^{2,2}$ and $C_{\iota^*\iota^*}\mathcal P^-\mathbb W^{2,2}$ by a similar argument. Note that the local shape function space of $C_{\iota^*\iota^*}\mathcal P^-\mathbb W^{2,2} $ is constant $\mathbb W^{2,2}$. 
Moreover, we can consider the kernel of $\mathcal S_{\dagger,[2]}^{2,2}$, yielding $C_{\iota^*\iota^*}\mathcal P^- \mathbb W^{2,2}_{[2]}.$ 
The dimension count is summarized in Table \ref{tab:dim-count-22}, where we list the symmetric space $\mathbb W$ with $p = 1$ and $2$. The case with $k=3$ is summarized in Table \ref{tab:dim-count-33}, where $p$ ranges from 1 to 3. We highlight in blue the number of DoFs after the symmetry reduction. 

\begin{table}[h]
    \centering
    \TBL{
    \begin{tabular}{c|ccccc}
        $n$ &  1 & 2 & 3 & 4 & $\ge 5$ \\ \hline 
     DoFs on $n$-face of $C_{\iota^*\iota^*}\mathcal P^- \alt^{2,2}$ & 0 & 1 & 8 & 10 & 0\\ \hline
     DoFs on $n$-face of $C_{\iota^*\iota^*}\mathcal P^- \alt^{1,3}$ & 0 & 0 & 6 & 10 & 0 \\ 
          DoFs on $n$-face of $C_{\iota^*\iota^*}\mathcal P^- \mathbb W^{2,2}$ & 0 & 1 & {\color{blue} 2} & {\color{blue} 0} & 0 \\ \hline
          DoFs on $n$-face of $C_{\iota^*\iota^*}\mathcal P^- \alt^{0,4}$ & 0 & 0 & 0 & 5& 0 \\ 
            DoFs on $n$-face of $C_{\iota^*\iota^*}\mathcal P^- \W^{2,2}_{[2]}$ & 0 & 1 & 8 & {\color{blue}5} & 0\\ 
    \end{tabular}}
 {\caption{The dimension count in $C_{\iota^*\iota^*}\mathcal P^- \alt^{2,2}$ and $C_{\iota^*\iota^*} \mathcal P^- \W^{2,2}_{[p]}$. The DoFs of $C_{\iota^*\iota^*}\mathcal P^- \mathbb W^{2,2}$ are obtained by $C_{\iota^*\iota^*}\mathcal P^- \mathbb \Alt^{2,2} \ominus C_{\iota^*\iota^*}\mathcal P^- \mathbb \Alt^{1,3}$; the DoFs of $C_{\iota^*\iota^*}\mathcal P^- \mathbb W^{2,2}_{[2]}$ are obtained as $C_{\iota^*\iota^*}\mathcal P^- \mathbb \Alt^{2,2} \ominus C_{\iota^*\iota^*}\mathcal P^- \mathbb \Alt^{0,4}$. }
    \label{tab:dim-count-22}}
\end{table}

\begin{table}[htbp]
    \centering
    \TBL{
    \begin{tabular}{c|ccccccc}
        $n$ &  1 & 2 & 3 & 4 & 5 & 6 & $\ge 7$ \\ \hline 
     DoFs on $n$-face of $\mathcal P^- \alt^{3,3}$ & 0 & 0 & 1 & 15 & 45 & 35 & 0\\ \hline
     DoFs on $n$-face of $\mathcal P^- \alt^{2,4}$ & 0 & 0 & 0 & 10 & 40  & 35 & 0 \\ 
          DoFs on $n$-face of $\mathcal P^- \mathbb W^{3,3}$ & 0 & 0 & 1 & {\color{blue}5} & {\color{blue}5} & {\color{blue}0} & 0 \\ \hline
           DoFs on $n$-face of $\mathcal P^- \alt^{1,5}$ & 0 & 0 & 0 & 0 & 15  & 21 & 0\\ 
            DoFs on $n$-face of $\mathcal P^- \W^{3,3}_{[2]}$ & 0 & 0 & 1 & 15 & {\color{blue}30}  & {\color{blue}14} & 0\\ \hline 
             DoFs on $n$-face of $\mathcal P^- \alt^{0,6}$ & 0 & 0 & 0 & 0 & 0  & 7 & 0\\ 
              DoFs on $n$-face of $\mathcal P^- \W^{3,3}_{[3]}$ & 0 & 0 & 1 & 15 & 45  & {\color{blue}28} & 0\\  
    \end{tabular}}
{\caption{The dimension count  involved in the construction of $C_{\iota^*\iota^*}\mathcal P^- \alt^{3,3}$ and $C_{\iota^*\iota^*}\mathcal P^- \W^{3,3}_{[p]}$.}
    \label{tab:dim-count-33}}
\end{table}

Next, we show how to move the degrees of freedom to obtain $\iota^* \jmath_{[p]}^*$-conforming space for $p \ge 2$. We begin by (1,1)-forms in any dimension. 

 For $\alt^{1,1}$ and $p=1$, we can move the degree of freedom from 2-faces to 1-faces. Each 2-face has 3 degrees of freedom, and 3 edges. Therefore, each 2-face sends 1 degree of freedom to each of its edges, and each edge receives $(n-1)$ degrees of freedom in total.  The cases with $p\geq 2$ are trivial.
	\begin{table}[h]
    \centering
    \TBL{
    \begin{tabular}{c|ccc}
        $n$ &  1 & 2 & $\ge 3$ \\ \hline 
     DoFs on $n$-face for $p = 1$ & 1 & 3 &0\\ \hline
DoFs on $n$-face for $p = 2$ & $n$ & 0 &0
    \end{tabular}}
 {\caption{The dimension count involved in the construction of $\mathcal P^- \alt^{1,1}$.}
    \label{tab:dim-count-11}}
\end{table}

 For $\Alt^{2,2}$ and $p = 2$, we move the degrees of freedom from 3-faces to 2-faces. Each 3-face has 8 degrees of freedom and four 2-faces. Therefore, each 3-face sends 2 of its degrees of freedom to each of the 2-faces, and each 2-face receives 4 in total. Therefore, $C_{\iota^* \jmath_{[2]}^*} \alt^{2,2}$ has 5 degrees of freedom on each 2-face. 

For $\Alt^{2,2}$ and $p = 3$, we continue moving the degrees of freedom from 4-faces to 2-faces. Each 4-face has 10 degrees of freedom and 10 2-faces. Therefore, each 2-face receives 1 degree of freedom. The construction then has 6 degrees of freedom in each 2-face. The result is $\iota^* \rho^*$-conforming since the 6 degrees of freedom on each face correspond to the degrees of freedom of a planar Raviart--Thomas element of degree one. 

\Cref{tab:dim-count-22-j} shows the dimension count. Blue numbers indicate degrees of freedom after symmetry reduction, red numbers denote those redistributed to different cells, and green numbers highlight those that simplices receive from higher-dimensional cells. The blue and red sets of degrees of freedom are disjoint, indicating that the symmetry reduction and redistribution affect sets of degrees of freedom on different dimensions. For instance, comparing the first and second rows, The redistribution occurs on 2- and 3-cells, while comparing the second and fourth rows shows that the symmetry reduction affects degrees of freedom on 4-cells. These processes do not overlap. See also \Cref{tab:dim-count-33-j} for (3,3)-forms.

\begin{table}[htbp]
    \centering
    \TBL{
    \begin{tabular}{c|ccccc}
        $n$ &  1 & 2 & 3 & 4 & $\ge 5$ \\ \hline 
     DoF on $n$-face of  $C_{\iota^*\iota^*} \mathcal P^- \alt^{2,2}$& 0 & 1 & 8 & 10 & 0\\ 
     DoF on $n$-face of $C_{\iota^*\jmath^*_{[2]}} \mathcal P^- \alt^{2,2}$  & 0 & {\color{green!50!black}5} & {\color{red} 0} & 10 & 0\\
     DoF on $n$-face of $C_{\iota^*\jmath^*_{[3]}} \mathcal P^- \alt^{2,2}$  & 0 & {\color{green!50!black}6} & {0} & {\color{red}0} & 0\\
      \hline
            DoF on $n$-face of $C_{\iota^*\jmath^*_{[2]}} \mathcal P^- \W^{2,2}_{[2]}$ & 0 & 5 & 0 & {\color{blue} 5} & 0\\ 
    \end{tabular}}
         {\caption{
The dimension count involved in the construction of ${C_{\iota^* \jmath_{[p]}^*}}\mathcal{P}^- \alt^{2,2}$ and ${C_{\iota^* \jmath_{[p]}^*}}\mathcal{P}^- \W^{2,2}_{[p]}$.}
            
    \label{tab:dim-count-22-j}}
\end{table}

\begin{table}[htbp]
    \centering
    \TBL{
    \begin{tabular}{c|ccccccc}
        $n$ &  1 & 2 & 3 & 4 & 5 & 6 & $\ge 7$ \\ \hline 
     DoFs on $n$-face for $C_{\iota^*\iota^*} \mathcal P^- \alt^{3,3}$ & 0 & 0 & 1 & 15 & 45 & 35 & 0\\ 
     DoFs on $n$-face for $C_{\iota^*\jmath^*_{[2]}} \mathcal P^- \alt^{3,3}$  & 0 & 0 & {\color{green!50!black}10} & {\color{red}0} & 45 & 35 & 0\\ 
     DoFs on $n$-face for $C_{\iota^*\jmath^*_{[3]}} \mathcal P^- \alt^{3,3}$  & 0 & 0 & {\color{green!50!black}19} & 0 & {\color{red}0} & 35 & 0\\ 
     DoFs on $n$-face for $C_{\iota^*\jmath^*_{[4]}} \mathcal P^- \alt^{3,3}$  & 0 & 0 & {\color{green!50!black}20} & 0 & 0 & {\color{red}0} & 0\\ 
     \hline
            DoFs on $n$-face of $C_{\iota^*\iota^*} \mathcal P^- \W^{3,3}_{[2]}$ & 0 & 0 & 1 & 15 & {\color{blue}30}  & {\color{blue}14} & 0\\
                    DoFs on $n$-face of $C_{\iota^*\jmath_{[2]}^*} \mathcal P^- \W^{3,3}_{[2]}$ & 0 & 0 & {\color{green!50!black}10} & {\color{red}0} & {\color{blue}30}  & {\color{blue}14} & 0\\
             \hline 
              DoFs on $n$-face of $C_{\iota^*\iota^*}  \mathcal P^- \W^{3,3}_{[3]}$ & 0 & 0 & 1 & 15 & 45  & {\color{blue}28} & 0\\ 
                       DoFs on $n$-face of $C_{\iota^*\jmath_{[2]}^*}  \mathcal P^- \W^{3,3}_{[3]}$ & 0 & 0 & {\color{green!50!black}10} & {\color{red}0} & 45  & {\color{blue}28} & 0\\  
                                DoFs on $n$-face of $C_{\iota^*\jmath_{[3]}^*}  \mathcal P^- \W^{3,3}_{[3]}$ & 0 & 0 & {\color{green!50!black}19} & 0 & {\color{red}0}  & {\color{blue}28} & 0\\   
    \end{tabular}}
 {\caption{The dimension count involved in the construction of ${C_{\iota^{\ast}\jmath^{\ast}_{[p]}}}\mathcal P^- \alt^{3,3}$ and ${C_{\iota^{\ast}\jmath^{\ast}_{[p]}}}\mathcal P^- \W^{3,3}_{[p]}$.}
    \label{tab:dim-count-33-j}.}
\end{table}

\subsection{Tensor-valued distributions and complexes}
From the pattern in Figure \ref{fig:table}, we observe that in three dimensions, the first part of each complex consists of finite elements in the classical sense (piecewise polynomials), while the second part consists of distributions (Dirac deltas). So far we have constructed finite elements which potentially fit in the first part of complexes (the lower triangular part of Figure \ref{fig:table}). In this subsection, we introduce the distributional spaces and verify the dimension count in any space dimension, which is a necessary condition and strong indication for the discrete complexes to have the correct cohomology.


For $k \ge \ell$, we introduce the following $\iota^*\jmath^*$ distributional spaces:
\begin{equation}\label{def:Dij}
D_{\iota^*\jmath^*} \widetilde{\mathbb W}^{k,\ell} : = \operatorname{span} \{ \omega \mapsto \langle \iota^{*} \jmath^{*} (\star\star) \omega, b \rangle_{\sigma}, \forall~~ b \in \alt^{n-k}(\sigma)\otimes (\alt^{n-k}(\sigma) \otimes \alt^{k - \ell}(\sigma^{\perp})), \sigma \in \mathcal T_{n-k}^{\circ} \},
\end{equation}
where $\star\star : \alt^{k,\ell} \to \alt^{n-k,n-\ell}$ is the two-sided Hodge star operator, and $\mathcal T_{n-k}^{\circ}$ is the set of internal $(n-k)$-simplices. Note that $\sigma$ has dimension $n-k$. Therefore $\alt^{n-k}(\sigma)$ is the volume form on $\sigma$. Therefore the space of $b$, $\alt^{n-k}(\sigma)\otimes (\alt^{n-k}(\sigma) \otimes \alt^{k - \ell}(\sigma^{\perp}))$, is isomorphic to $\alt^{k - \ell}(\sigma^{\perp})$.
The introduction of the Hodge star in the above definition \eqref{def:Dij} is based on the intuition that we obtained in the two and three dimensional cases (see \Cref{fig:table} and \cite{hu2025distributional}): the distributional discretization of $k$-forms is associated with $(n-k)$-cells (dual $k$-cells). 
For example, the standard de~Rham complex in three dimensions has DoFs at vertices, on edges, faces and cells; the distributional de~Rham complex has DoFs on cells, faces, edges, and vertices, respectively. This is consistent with the spirit of codimensional geometry \cite{Wang:2023:ECIG}.

The distributions above coincide with the skeletal DoFs (see \Cref{rmk:skeletal}) of $C_{\iota^*\jmath^*}\mathcal P^-\W^{n-k,n - \ell}$: 
\begin{equation*}
 \langle \iota^{*}_{\sigma} \jmath^{*}_{\sigma} \omega, b \rangle_{\sigma},  \forall~~ b \in \alt^{n-k}(\sigma)\otimes (\alt^{n-k}(\sigma) \otimes \alt^{k - \ell}(\sigma^{\perp})), \quad \dim \sigma = n - k,
\end{equation*}
while the other (bubble) degrees of freedom of the $\iota^*\jmath^*$-conforming elements do not appear in these distributional spaces, see \Cref{prop:moving-dofs-W}.

We can also use the delta notation to denote the distribution. That is, we use $b\delta_{\iota^*\jmath^*}(\sigma)$ to denote the distribution $\omega \mapsto \langle \iota^{*} \jmath^{*} (\star\star) \omega, b \rangle_{\sigma}$.

The complex now reads 
\begin{equation} \label{BGG-discrete}
\begin{tikzcd}[column sep = small]
0\ar[r] & C_{\iota^*\jmath^*} \mathcal P^- \W^{0,\ell} \ar[r,""] & C_{\iota^*\jmath^*} \mathcal P^- \W^{1,\ell} \ar[r,""] & \cdots   \ar[r,""] & C_{\iota^*\jmath^*}\mathcal P^- \W^{\ell,\ell} \ar[r] &  \ar[lllld,""] \\
~& ~ \ar[r]  & D_{\iota^*\jmath^*}  \widetilde{\W}^{\ell+1,\ell+1} \ar[r,""] & \cdots \ar[r,""] &  D_{\iota^*\jmath^*} \widetilde{\W}^{n-1,\ell+1}\ar[r,""]  & D_{\iota^*\jmath^*}  \widetilde{\W}^{n,\ell+1} \ar[r,""] & 0.
\end{tikzcd}
\end{equation}

More generally, for $k \ge \ell + p - 1$, we introduce the following distribution spaces:
\begin{equation}\label{def:DijpW}
D_{\iota^*\jmath^*_{[p]}} \widetilde\W^{k,\ell}_{[p]} : = \operatorname{span} \{ \omega \mapsto \langle \iota^{*} \jmath^{*}_{[p]} (\star\star) \omega, b \rangle_{\sigma}, \forall~~ b \in  \bigoplus_{s = 0}^{p-1} \alt^{n - k-s}(\sigma) \otimes \alt^{k - \ell + s}(\sigma^{\perp}), \sigma \in \mathcal T_{n-k}^{\circ} \}. 
\end{equation}

Again, the distribution comes from the skeletal part of $C_{\iota^*\jmath_{[p]}^*}\mathcal P^-\W^{n-k,n-\ell}_{[p]}$, see \Cref{prop:Wp}. The distribution in \eqref{def:DijpW} associated with $b$ is also denoted as $b\delta_{\iota^*\jmath_{[p]}^*}(\sigma)$.

Now we can formally write down the BGG complex linking line $\ell$ and $\ell + p$ in \eqref{diagram-nD-algebraic}. 
\begin{equation}
\label{BGG-discrete-p}
\begin{tikzcd}[column sep = small]
0\ar[r] & C_{\iota^*\jmath^*_{[p]}} \mathcal P^- \W^{0,\ell}_{[p]} \ar[r,""] & C_{\iota^*\jmath^*_{[p]}} \mathcal P^- \W^{1,\ell}_{[p]} \ar[r,""] & \cdots   \ar[r,""] & C_{\iota^*\jmath^*_{p}}\mathcal P^- \W^{\ell + p - 1,\ell}_{[p]} \ar[r] &  \ar[lllld,""] \\
~& ~ \ar[r] & D_{\iota^*\jmath_{[p]}^*}  \widetilde{\W}^{\ell+1,\ell+p}_{[p]} \ar[r,""] & \cdots \ar[r,""] &  D_{\iota^*\jmath_{[p]}^*} \widetilde{\W}^{n-1,\ell+p}_{[p]} \ar[r,""]  & D_{\iota^*\jmath_{[p]}^*}  \widetilde{\W}^{n,\ell+p}_{[p]} \ar[r,""] & 0 ,
\end{tikzcd}	
\end{equation}
We leave the details of the definition of the differential operator to a subsequent paper, but the result of the Euler characteristic (dimension count) is given below.

\begin{theorem}
\label{thm:euler}
    Given any triangulation $\mathcal T$ of a contractible domain $\Omega$, 
    the Euler characteristic of \eqref{BGG-discrete-p} is equal to that of the smooth BGG complex \eqref{BGG-seq-p}. 
That is,
    \begin{equation}
    \label{eq:euler}
    \sum_{\theta = 0}^{\ell +p - 1 } (-1)^{\theta} \dim C_{\iota^* \jmath^*_{[p]}} \mathcal P^- \W^{\theta,\ell}_{[p]}  + \sum_{\theta = 0}^{n-\ell-1} (-1)^{n+p-1} \dim D_{\iota^* \jmath^*_{[p]}} \widetilde{\W}^{n - \theta,\ell  + p}_{[p]} =  \binom{n}{\ell + p} + (-1)^{p-1} \binom{n}{\ell}. 	
    \end{equation}

In particular, when $\ell = k + 1, p = 1$, we obtain that the Euler characteristic of \eqref{BGG-discrete} is equal to that of \eqref{BGG-seq}. That is, 
    \begin{equation}
    \sum_{\theta = 0}^{\ell} (-1)^{\theta} C_{\iota^* \jmath^*} \mathcal P^- \W^{\theta,\ell}  + \sum_{\theta = 0}^{n-\ell-1} (-1)^{n} \dim D_{\iota^* \jmath^*} \widetilde{\W}^{n - \theta,\ell + 1}  =  \binom{n+1}{\ell + 1}. 	
    \end{equation}
\end{theorem}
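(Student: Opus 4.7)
The plan is to expand each global dimension on the left-hand side of \eqref{eq:euler} as a sum over the simplices of $\mathcal T$, interchange the order of summation, and collapse the result using standard binomial identities together with the Euler characteristic identity for contractible $\Omega$.

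First, by the unisolvency results (Propositions~\ref{prop:Wp} and \ref{prop:moving-dofs-W}), each finite element dimension decomposes into skeletal and bubble contributions:
$$\dim C_{\iota^*\jmath^*_{[p]}}\mathcal P^-\W^{\theta,\ell}_{[p]} = |\mathcal T_\theta|\sum_{s=0}^{p-1}\binom{\theta}{s}\binom{n-\theta}{\ell-\theta+s} + \sum_{m\ge \ell+p}|\mathcal T_m|\,\dim \mathcal B^-\W^{\theta,\ell}_{[p]}(\sigma_m),$$
and the local bubble dimension, by the short exact sequence in Lemma~\ref{lem:Sp-bubble}, equals $\dim\mathcal B^-\alt^{\theta,\ell}(\sigma_m)-\dim\mathcal B^-\alt^{\theta-p,\ell+p}(\sigma_m)$, which is explicit from \eqref{eq:bubble-dim-count}. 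The distributional dimension is similarly a sum of skeletal counts over interior $(n-\theta)$-simplices.

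Second, I would substitute these expansions into the alternating sum and swap the order of summation, writing the left-hand side in the form
$$\sum_m A_m(\ell,p)\,|\mathcal T_m| + \sum_m B_m(\ell,p)\,|\mathcal T_m^\circ|,$$
where $A_m, B_m$ are alternating sums over the complex index $\theta$ of products of binomial coefficients. Vandermonde's identity collapses the inner sums in $s$, and a Pascal-type telescoping over $\theta$ should then yield a dimensional stability $A_m = (-1)^m A$ and $B_m = (-1)^m B$ for constants $A, B$ independent of $m$. This stability is the combinatorial shadow of exactness of the polynomial BGG complex from Lemma~\ref{lem:Pr-Wp}, analogous to how the Euler characteristic of an exact complex vanishes.

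Third, I would apply the identities $\sum_m (-1)^m|\mathcal T_m|=\chi(\Omega)=1$ and $\sum_m (-1)^m|\mathcal T_m^\circ|=\chi(\Omega)-\chi(\partial\Omega)=(-1)^n$ for contractible $\Omega$ (using $\partial\Omega\simeq S^{n-1}$, whence $\chi(\partial\Omega)=1+(-1)^{n-1}$). The left-hand side then collapses to $A+(-1)^n B$, which after direct evaluation of the constants $A$ and $B$ matches the right-hand side $\binom{n}{\ell+p}+(-1)^{p-1}\binom{n}{\ell}$.

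The main obstacle will be the dimensional stability in step two: the coefficient $A_m$ mixes bubble contributions from $\W^{\theta,\ell}_{[p]}$ with the distributional skeletal contributions under the index shift $(\theta,\ell)\mapsto(\theta-p,\ell+p)$, and careful bookkeeping is required to verify that the $\theta$-alternating sum is both independent of $m$ up to sign and has the correct value. A cleaner alternative is to assemble, on each simplex $\sigma$, a local polynomial BGG-type sequence whose Euler characteristic equals the local contribution to the left-hand side; invoking the Euler characteristic of the smooth polynomial BGG complex (which is known from the abstract BGG theory) then reduces the global computation to the simplicial Euler characteristic of $\mathcal T$, bypassing the explicit binomial manipulation.
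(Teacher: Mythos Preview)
Your overall strategy---split into skeletal and bubble parts, reduce the bubble part to a local polynomial identity, and handle the skeletal part via a topological identity on face numbers---is exactly the architecture of the paper's proof. The bubble part indeed vanishes by a local argument (Proposition~B.2 in the appendix), and your ``cleaner alternative'' is precisely how that piece is handled.

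The genuine gap is in the skeletal part: your claimed dimensional stability $A_m=(-1)^m A$, $B_m=(-1)^m B$ is \emph{false}. Take the Regge case $n=3$, $\ell=1$, $p=1$. The finite element side contributes $3f_0-f_1$ (three DoFs per vertex from the vector Lagrange space, one per edge from Regge, and no bubbles), so $A_0=3$, $A_1=-1$, $A_2=A_3=0$; these are not $(-1)^m A$ for any constant $A$. Likewise the distributional side gives $B_0=-3$, $B_1=1$, $B_2=B_3=0$. The basic identities $\sum_m(-1)^m f_m=1$ and $\sum_m(-1)^m f_m^\circ=(-1)^n$ are therefore insufficient: the expression $3f_0-f_1+f_1^\circ-3f_0^\circ$ is not a linear combination of $\chi(\mathcal T)$ and $\chi(\mathcal T^\circ)$ alone.

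What the paper actually uses here are the \emph{Dehn--Sommerville equations} for simplicial polytopes, applied to the cone of $\mathcal T$ (an $(n{+}1)$-polytope) and to $\partial\mathcal T$ (an $(n{-}1)$-sphere). These give a full family of linear relations among the $f_m$ and $f_m^\partial$, of which the Euler identity is only the simplest. After rewriting the skeletal coefficients via a binomial product identity (Lemma~B.5), Dehn--Sommerville converts the two groups of terms into a common form that cancels, leaving exactly $\binom{n}{\ell+p}+(-1)^{p-1}\binom{n}{\ell}$. Your proposal is missing this ingredient, and neither Vandermonde nor Pascal telescoping will produce it, because the underlying identity on face numbers is genuinely stronger than $\chi=1$.
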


The theorem is proved through a two-part counting argument; see the propositions presented below. Recall that the degrees of freedom of $C_{\iota^*\jmath^*}\mathcal P^-\W^{k,\theta}$ are categorized into two parts: the skeletal part and the bubble part. We will show that the skeletal parts contribute to the right-hand side and the bubble sets on each simplex $\sigma$ contribute zero. This is a reminiscence of the property of finite element de~Rham complexes that the (lowest-order) Whitney forms carry the cohomology, while the bubbles fit within exact sequences. 

We first recall the definition of $C_{\iota^*\jmath_{[p]}^*} \W_{[p]}^{k,\ell}$. For each $k$-simplex $\sigma$, the number of skeletal DoFs on $\sigma$ is 
$$\dim \bigoplus_{s = 0}^{p-1} \alt^{k-s}(\sigma) \otimes \alt^{\ell - k + s}(\sigma^{\perp}) = \sum_{s = 0}^{p-1} \binom{k}{s} \binom{n - k}{n - \ell - s}.$$

\begin{proposition}[Dimension count on skeletal degrees of freedom]
\label{prop:skeletal-counting}
Let $k = \ell + p$, the following identity holds:
\begin{equation}
\begin{split} 
\sum_{s = 0}^{p-1} \sum_{\theta = 0}^{k} (-1)^{\theta} \binom{\theta}{s} \binom{n-\theta}{n - \ell -s} f_{\theta} & + (-1)^{n+p-1}  \sum_{s = 0}^{p-1} \sum_{\theta = 0}^{n - \ell}  (-1)^{\theta} \binom{\theta}{s} \binom{n - \theta}{k-s} f_{\theta}^{\circ} = \\ & \binom{n}{k} + (-1)^{p-1} \binom{n}{\ell}.
\end{split}
\end{equation}
Here, $f_{\theta}$ is the number of $\theta$-simplex of $\mathcal T$, and $f^{\circ}_{\theta}$ is the number of internal $\theta$-simplex of $\mathcal T$. 

\end{proposition}

\begin{proof}
   See \Cref{sec:dof-counting}.
\end{proof}

\begin{proposition}[Dimension count of bubble degrees of freedom]
It holds that 
\begin{equation}
\Psi_{\sigma} :=  \sum_{\theta = 0}^{\ell+p-1} (-1)^{\theta} \dim \mathcal B^-\alt^{\theta,\ell}(\sigma) - \dim \mathcal B^- \alt^{\theta - p,\ell + p}(\sigma)  = 0.
\end{equation}
\end{proposition}

\begin{proof}
Denote the left-hand side as $\Psi_{\sigma}$.
This directly comes from the fact 
\begin{equation}
\begin{split} 
& \sum_{\theta = 0}^{\ell+p-1} (-1)^{\theta} \dim \mathcal P^-\alt^{\theta,\ell}(\sigma) - \dim \mathcal P^- \alt^{\theta - p,\ell+ p}(\sigma) \\ 
= & \sum_{\theta = 0}^{\ell+p-1} (-1)^{\theta} \left[ \binom{n+1}{\theta+1}\binom{n}{\ell} - \binom{n+1}{\theta-p+1}\binom{n}{\ell + p}\right] \\ 
= & \sum_{\theta = 0}^{\ell+p-1} (-1)^{\theta}  \binom{n+1}{\theta+1}\binom{n}{\ell} - (-1)^p \sum_{\theta = 0}^{\ell-1} (-1)^{\theta} \binom{n+1}{\theta+1}\binom{n}{\ell+p} \\ 
= & [1 + (-1)^{\ell+p} \binom{n}{\ell+p}] \binom{n}{\ell}] - (-1)^p [1 + (-1)^{\ell} \binom{n}{\ell}] \binom{n}{\ell+p}\\ 
= & \binom{n}{\ell + p} + (-1)^{p-1} \binom{n}{\ell}.
\end{split} 
\end{equation}
Thus by \Cref{prop:skeletal-counting}, for all simplex $K$, it holds that 
$$\sum_{\sigma \trianglelefteq K} \Psi_{\sigma} = 0.$$
By induction, all $\Psi_{\sigma} = 0$.
\end{proof}


Now we show the examples in three and four dimensions. 
\subsubsection{Three-dimensional complexes}
In three dimensions, we follow the standard vector proxy notations to reformulate the complexes. In particular, $n$ stands for (face or edge) normal vectors, and $t$ stands for (face of edge) tangential vectors. The distribution space is spanned by vector and matrix deltas. For example, $\delta_{t}(e)$ stands for a tangential delta, which is a vector-valued distribution; and $\delta_{tt}(e)$ stands for a tangential-tangential delta, which is a matrix-valued distribution.

The distribution space $D_{\iota^*\jmath^*}\widetilde \W^{k,\ell}$ and $D_{\iota^*\jmath_{[2]}^*}\widetilde \W^{k,\ell}_{[2]}$ are shown in \Cref{tab:Dij-W,tab:Dij2-W2}.  Here $\delta_{nn}$ and $\delta_{tt}$ automatically encode symmetries, and $\delta_{nt}$ and $\delta_{tn}$ encode tracelessness. 
\begin{table}[h]
\centering
\TBL{
\begin{tabular}{|c|cccc|}\hline
\diagbox[width=\dimexpr \textwidth/8 + 2\tabcolsep\relax, height = 1cm]{$\ell$}{$k$}&0&1&2&3\\\hline
0& cell $\delta$ & face $\delta_{n}$ & edge $\delta_t$ & vertex $\delta_{\rho} $  \\
1& -  & face $\delta_{nn}$ & edge $\delta_{nt}$  & vertex vector $\delta_{\rho}$ \\
2& -  & - & edge $\delta_{tt}$  & vertex vector $\delta_{\rho}$ \\
3&-&-& -&vertex $\delta_{\rho}$ \\
\hline
\end{tabular}}
{\caption{The delta representations of $D_{\iota^*\jmath^*} \widetilde\W^{k,\ell}$.}
\label{tab:Dij-W}}
\end{table}

\begin{table}[h]
\centering 
\TBL{
\begin{tabular}{|c|cccc|}\hline
\diagbox[width=\dimexpr \textwidth/8 + 2\tabcolsep\relax, height = 1cm]{$\ell$}{$k$}&0&1&2&3\\\hline
0& cell $\delta$ & face $\delta_n$ & edge $\delta_t$ & vertex $\delta_{\rho}$ \\
1& cell vector $\delta$  & face tensor $\delta_{\rho n}$ & edge tensor $\delta_{\rho t}$  & vertex vector $\delta_{\rho}$ \\
2& -  & face $\delta_{tn}$ & edge tensor $\delta_{t\rho}$  & vertex vector $\delta_{\rho}$ \\
3&-&-  &  edge $\delta_t$  &vertex $\delta_{\rho}$\\
\hline
\end{tabular}}
{\caption{The delta representations of $D_{\iota^*\jmath^*_{[2]}} \widetilde\W^{k,\ell}_{[2]}$.}
\label{tab:Dij2-W2}}
\end{table}

We have the following complexes;  unless otherwise specified, the differential operators are defined either in the classical or the distributional sense. 
\begin{enumerate}
\item The discrete Hessian complex (linking rows 0 and 1 of \eqref{diagram-nD}) is 
\begin{equation}
\begin{tikzcd}
0 \ar[r] & \textbf{Lag} \ar[r,"\hess"] & \bigoplus\limits_{f \in \mathcal T_2^{\circ}} \delta_{nn}(f) \ar[r,"\curl"] & \bigoplus\limits_{e \in \mathcal T_1^{\circ}} \delta_{nt}(e) \ar[r,"\div"] & \bigoplus\limits_{v \in \mathcal T_0^{\circ}} \delta(v) \otimes \mathbb V \ar[r] & 0.
\end{tikzcd}
\end{equation}
 Here, $\textbf{Lag} = C_{\iota^*\jmath^*} \mathcal P^- \alt^{0,0}$, see row 0 of \Cref{tab:ij-W} for finite element spaces and row 1 of \Cref{tab:Dij-W} for distributions.
\item The Christiansen--Regge elasticity complex (linking rows 1 and 2 of \eqref{diagram-nD}) is 
\begin{equation}
\begin{tikzcd}
0 \ar[r] & \textbf{Lag} \otimes \mathbb V \ar[r,"\sym\grad"] & \textbf{Reg} \ar[r,"\inc"] & \bigoplus\limits_{e \in \mathcal T_1^{\circ}} \delta_{tt}(e) \ar[r,"\div"] & \bigoplus\limits_{v \in \mathcal T_0^{\circ}} \delta(v) \otimes \mathbb V \ar[r] & 0.
\end{tikzcd}
\end{equation}
Here, $\textbf{Reg} = C_{\iota^*\iota^*}\W^{1, 1}$, see row 1 of \Cref{tab:ij-W} for the finite element spaces and row 2 of \Cref{tab:Dij-W} for the distributions.

\item The discrete divdiv complex (linking rows 2 and 3 of \eqref{diagram-nD}) is 
\begin{equation}
\begin{tikzcd}
0 \ar[r] & \textbf{Lag} \otimes \mathbb V \ar[r,"\dev\grad"] & \textbf{HLZ} \ar[r,"\sym\curl"] & \textbf{HHJ} \ar[r,"\widehat{\div\div}"] & \bigoplus\limits_{v \in \mathcal T_0^{\circ}} \delta(v)  \ar[r] & 0.
\end{tikzcd}
\end{equation}
 Here, $\textbf{HLZ} = C_{\iota^*\jmath^*} \mathcal P^- \W^{1, 2}$ and $\textbf{HHJ} = C_{\iota^*\iota^*} \mathcal P^- \W^{2, 2}$, see row 3 of \Cref{tab:ij-W} for finite element spaces and row 4 of \Cref{tab:Dij-W} for distributions.  
Moreover, the operator $\sym\curl$ is piecewise, and $\widehat{\div\div}$ is a modified distributional $\div\div$ operator (projection of the result of $\div\div$, as Dirac measures on edges, to Dirac measures at vertices). See \cite{hu2025distributional} for details. 
\item The discrete $\grad\curl$ complex (linking rows 0 and 2) is 
\begin{equation}
\begin{tikzcd}
0 \ar[r] & \textbf{Lag} \ar[r,"\grad"] & \textbf{Ned} \ar[r,"\grad\curl"] & \bigoplus\limits_{f \in \mathcal T_2^{\circ}} \delta_{tn}(f)  \ar[r,"\curl"] & \bigoplus\limits_{e \in \mathcal T_1^{\circ}} \delta_{t}(e) \otimes \mathbb V \ar[r,"\div"] & \bigoplus\limits_{v \in \mathcal T_0^{\circ}} \delta(v) \otimes \mathbb V \ar[r] & 0.
\end{tikzcd}
\end{equation}
 See row 0 of \Cref{tab:ij2-W2} for the finite element spaces and row 2 of \Cref{tab:Dij2-W2} for the distributions.

\item The discrete $\curl\div$ complex (linking rows 1 and 3) is 
 \begin{equation}
\begin{tikzcd}
0 \ar[r] & \textbf{Lag} \otimes \mathbb V \ar[r,"\grad"] & \textbf{Ned} \otimes \mathbb V \ar[r,"\dev\curl"] & \textbf{MCS}  \ar[r,"\curl\div"] & \bigoplus\limits_{e \in \mathcal T_1^{\circ}} \delta_{t}(e)  \ar[r,"\div"] & \bigoplus\limits_{v \in \mathcal T_0^{\circ}} \delta(v)\ar[r] & 0. 
\end{tikzcd}
\end{equation}
See row 1 of \Cref{tab:ij2-W2} for finite element spaces and row 3 of \Cref{tab:Dij2-W2} for distributions. 

\item The discrete $\grad\div$ complex (linking rows 0 and 3) is 
\begin{equation}
\begin{tikzcd}[column sep = small]
0 \ar[r] & \textbf{Lag} \ar[r,"\grad"] & \textbf{Ned} \ar[r,"\curl"] & \textbf{RT} \ar[r,"\grad\div"] &\bigoplus\limits_{f \in \mathcal T_2^{\circ}} \delta_{n}(f)  \ar[r,"\curl"] & \bigoplus\limits_{e \in \mathcal T_1^{\circ}} \delta_{t}(e) \ar[r,"\div"] & \bigoplus\limits_{v \in \mathcal T_0^{\circ}} \delta(v)\ar[r] & 0. 
\end{tikzcd}
\end{equation}
\end{enumerate}
The complexes (1)-(3) have been discussed in \cite{hu2025distributional,christiansen2023extended} (the Christiansen--Regge complex (2) already discussed in \cite{christiansen2011linearization}), while
the complexes (4)-(6) are new. 

\subsubsection{Four-dimensional complexes}
\label{sec:4D-results}

In four dimensions, we also show the construction of the Hessian complex that links rows 0 and 1 in the diagram \eqref{diagram-nD} in four dimensions, and the Regge complex that links rows 1 and 2 in the diagram \eqref{diagram-nD} in four dimensions. The dual of them leads to a discrete ``dual elasticity complex'' that links rows 2 and 3 of \eqref{diagram-nD}, and a divdiv complex that links rows 3 and 4 of \eqref{diagram-nD}. 

Following \cite{NIGAM2024198}, we introduce vector proxies for differential forms in four dimensions. We represent 0-forms and 4-forms using $\mathbb{R}$, 1-forms and 3-forms using $\mathbb{V} \cong \mathbb{R}^4$, and 2-forms using the skew-symmetric matrix space $\mathbb{K}$. 

\begin{remark}
There are two possibilities to identify a 2-form $\omega=\omega_{ij}dx^{i}\wedge dx^{j}$ (with the Einstein summation notation) where $\omega_{ij}=-\omega_{ji}$: one may obtain a skew-symmetric matrix whose $(i, j)$-entry is $\omega_{ij}$; alternatively, one may use the skew-symmetric matrix whose $(k, \ell)$-entry is $\omega_{ij}$.  Here, $i, j, k, \ell$ is any even permutation of 1 to 4. 

\end{remark}

The vector proxies for the traces $\iota^*$ of 1-, 2-, and 3-forms are $t$ (tangential components), $\gamma$ (cross-tangential components), and $n$ (normal components), respectively.

 The cross tangential component $\gamma$ for a skew-symmetric matrix calls for further explanations. For an oriented 2-face $f$ in $\mathbb{R}^4$ spanned by vectors $t_1$ and $t_2$, we define $\gamma_f = [t_1^i t_2^j]_{ij}$, {i.e., $\gamma_f =t_1\otimes t_2 $}. Given a $\mathbb{K}$-valued function $W = [w_{ij}]$, the trace is computed as $\sum t_1^i w_{ij}|_f t_2^j$. This trace is independent of the choice of $t_1$ and $t_2$, provided they form an orthogonal basis of $f$ with the correct orientation. 

 Similar to those in three dimensions, these proxies apply to trace operators to faces of varying dimensions; see \Cref{tab:trace-proxy} for a summary.
  For instance, the trace of a 1-form involves extracting its tangential component. On an edge (a one-dimensional cell), this is achieved by computing $u \cdot t$, i.e., the dot product of the vector proxy $u$ of the 1-form with the tangent vector $t$ of the edge. On a 2-dimensional face, the trace corresponds to the tangential component of $u$, which can be obtained by projecting $u$ onto the tangent plane. This projection can be expressed as $\sum_{j=1}^2 (u \cdot t_j) t_j$, where $t_1$ and $t_2$ form an orthonormal basis of the tangent plane. The proxy of $u$ can be expressed by two numbers $u \cdot t_1$ and $u \cdot t_2$. However, in this case, we would view the trace of $u$ as a vector $\sum_{j=1}^2 (u \cdot t_j) t_j$, rather than two numbers (the coefficients). This approach naturally extends to higher dimensions.
 {For example, 
 we can extend the concept of the cross tangential component to 3-faces by contracting the proxy of a 2-form with all possible pairs of tangential vectors. For instance, consider a 3-face \( f \) equipped with an orthonormal basis \( \{t_1, t_2, t_3\} \). The trace operator maps a 2-form \( \sigma \in \K \) (represented as a skew-symmetric matrix) to:
\begin{align*}
\sum_{1 \leq i < j \leq 3} (t_i \cdot \sigma \cdot t_j) \, t_i \wedge t_j
\end{align*}
Here, the wedge product is defined as \( t_i \wedge t_j = \frac{1}{2} (t_i \otimes t_j - t_j \otimes t_i) \), and the coefficient \( t_i \cdot \sigma \cdot t_j \) arises from the Frobenius inner product \( \sigma : (t_i \wedge t_j) \), which simplifies to \( t_i \cdot \sigma \cdot t_j \) due to the skew-symmetry of \( \sigma \).
In this context, the range of the trace operator consists of linear combinations of \( t_i \wedge t_j \) for \( 1 \leq i < j \leq 3 \), forming a three-dimensional space of skew-symmetric matrices spanned by these basis elements.
}
\begin{table}[h]
\centering
\TBL{
    \begin{tabular}{c|ccccc}
        $k$ &  0 & 1 & 2 & 3 & 4\\
        vector proxies & $\mathbb R$ & $\mathbb V$ & $\mathbb K$ & $\mathbb V$ & $\mathbb R$ \\ 
        traces on $k$-cells &  value & $t$ & $\gamma$ & $n$ & integral 
    \end{tabular}}
    {\caption{Proxies of trace operators of $k$-forms in 4D}
    \label{tab:trace-proxy}.}
\end{table}
\begin{table}[htbp]
\centering
\TBL{
\begin{tabular}{cccccc}
\diagbox[width=\dimexpr \textwidth/20+\tabcolsep\relax, height=0.7cm]{ $k$ }{$\ell$}&  0 & 1 & 2 & 3 & 4 \\
0 & $\RR$ & $\VV$ & $\mathbb K$ & $\VV$ & $\RR$ \\
1  & $\VV$ & $\MM$ & $\mathbb V \otimes \mathbb K$ & $\MM$ & $\VV$ \\
2  & $\mathbb K$ & $\mathbb K \otimes \mathbb V$ & $\mathbb K \otimes \mathbb K$ & $\mathbb K \otimes \mathbb V$ & $\mathbb V$ \\
3  & $\VV$ & $\MM$ & $\mathbb V \otimes \mathbb K$ & $\MM$ & $\VV$ \\
4& $\RR$ & $\VV$ & $\mathbb K$ & $\VV$ & $\RR$ \\
\end{tabular}
\hspace{2em}
\begin{tabular}{cccccc}
\diagbox[width=\dimexpr \textwidth/20+\tabcolsep\relax, height=0.7cm]{ $k$ }{$\ell$}&  0 & 1 & 2 & 3 & 4 \\
0 & $\RR$ & $\mathbb V$ & $\mathbb K$ & $\mathbb V$ & $\mathbb R$ \\
1  & $\VV$ & $\mathbb S$ & $\mathbb B$ & $\mathbb T$ & $\mathbb V$ \\
2  & $\mathbb K$ & $\mathbb{B}$ & $\mathbb{AC}$ & $\mathbb N$ & $\mathbb K$ \\
3  & $\VV$ & $\mathbb T$ & $\mathbb N$ & $\mathbb S$ & $\mathbb V$ \\
4& $\RR$ & $\mathbb V$ & $\mathbb K$ & $\VV$ & $\RR$ \\
\end{tabular}\\
}
{\caption{Proxies of $\alt^{k,\ell}$ and $\W^{k,\ell}$ in 4D.  Here, $\mathbb B := \ker(\mathcal B)$ is the kernel of the algebraic Bianchi symmetrization; $\mathbb N := \ker(\mathcal M)$ satisfies certain trace-free property (see \eqref{def:M} below); $\mathbb{T}$ is the space of 4-by-4 trace-free matrices; $\mathbb{AC}$ is the algebraic curvature (see \eqref{def:AC}).}
\label{tab:proxy4d}}
\end{table}

Next, we consider proxies of the $\W^{k, \ell}$ spaces with symmetries. For $\mathbb K \otimes \mathbb V$, we index its components as $A_{ij,k}$. We then define the \emph{algebraic Bianchi symmetrization operator} 
$$ \mathcal B: \mathbb K \otimes \mathbb V \to \mathbb V, \quad [\mathcal BA]_{q} = [A]_{ij,p} + [A]_{jp,i} + [A]_{pi,j}.$$ Here $\text{sgn}(i,j,p,q) = 1$. One can check that $\mathcal B$ is the proxy of $\mathcal S_{\dagger}^{2,1}$. We define
$$
\mathbb B := \ker(\mathcal B).
$$

The proxy of the Hodge star operator that maps 2-forms to 2-forms is denoted by $\varepsilon$, i.e., $[\varepsilon A]_{pq} = [A]_{ij}$ for $\text{sgn} (i,j,p,q) = 1$. In three dimensions, $\star$ corresponds to either $\vskw$ or $\mskw$.
One can check that $\mathcal B = \mathcal M \circ (\varepsilon \otimes id)$, where 
\begin{equation}\label{def:M}
\mathcal M : \mathbb K \otimes \mathbb V \to \mathbb V : [\mathcal MA]_{q} = [A]_{iq,i} + [A]_{jq,j} + [A]_{pq,p}
\end{equation}
 is the trace operator. This can be seen with the index notation and Einstein summation: $\mathcal M \circ (\varepsilon \otimes id)$ maps $A_{ij, k}\mapsto \epsilon^{pqij}A_{ij, k}\mapsto \epsilon^{pqij}A_{ij, p}$, while $\mathcal B$ maps $A_{ij, k}\mapsto   \epsilon^{qijk}A_{ij, k}$. We further define
$$
\mathbb N := \ker(\mathcal M).
$$
 The operator $\mathcal M$ is the proxy of $\mathcal S_{\dagger}^{2,3}$ up to a sign.  
For simplicity, we do not distinguish the tensor products $\mathbb K \otimes \mathbb V$ and $\mathbb V \otimes \mathbb K$. Therefore, we can still use $\mathcal M$ to represent $\mathcal S^{3,2}$ and use $\mathcal B$ to represent $\mathcal S^{2,1}$ or $\mathcal S^{1,2}_{\dagger}$, and the kernel is also defined as $\mathbb B$ and $\mathbb N$ respectively. Note that the Hodge star sends $\mathbb B$ to $\mathbb N$, and vice versa.
 The algebraic curvature space $\mathbb{AC}$ is the space of symmetric $\mathbb K \otimes \mathbb K$ with Bianchi's identity. That is, $R = [R]_{ij,k\ell} \in \mathbb{AC}$ if and only if for any $i,j,k,\ell$ it holds
 \begin{equation}\label{def:AC}
 [R]_{ij,k\ell} = [R]_{k\ell,ij}, \quad\mbox{and}\quad [R]_{ij,k\ell} + [R]_{jk,i\ell} + [R]_{ki,j\ell} = 0.
 \end{equation}
In four dimensions, the dimension of $\mathbb{AC}$ is 20.

With the above proxies, the de Rham complex in four dimensions reads
\begin{equation}
\begin{tikzcd}
0 \ar[r] & C^{\infty} \otimes \mathbb R \ar[r,"\grad"] & 	 C^{\infty}\otimes \mathbb V\ar[r,"\text{skwgrad}"] & C^{\infty}\otimes \mathbb K \ar[r,"\curl"] & C^{\infty} \otimes \mathbb V \ar[r,"\div"] & C^{\infty} \otimes \mathbb R \ar[r] & 0.
\end{tikzcd}
\end{equation}

\begin{example}
The smooth Hessian complex (linking rows 0 and 1 of \eqref{diagram-nD}) in four dimensions reads
\begin{equation}
\begin{tikzcd}
0 \ar[r] & C^{\infty} \otimes \mathbb R \ar[r,"\hess"] & 	 C^{\infty}\otimes \mathbb S \ar[r,"\text{skwgrad}"] & C^{\infty}\otimes \mathbb{B} \ar[r,"\curl"] & C^{\infty} \otimes \mathbb T \ar[r,"\div"] & C^{\infty} \otimes \mathbb V \ar[r] & 0.
\end{tikzcd}
\end{equation}

The discrete Hessian complex in four dimensions reads:
\begin{equation}
\label{eq:hessian-4d}
\begin{tikzcd}[column sep = small]
0 \ar[r] & \textbf{Lag} \ar[r,"\hess"] & \bigoplus\limits_{T \in \mathcal T_3^{\circ}} \delta_{nn}(T) \ar[r,"\text{skwgrad}"] & \bigoplus\limits_{f \in \mathcal T_2^{\circ}} \delta_{n\gamma}(f)  \ar[r,"\curl"] & \bigoplus\limits_{e \in \mathcal T_1^{\circ}} \delta_{nt}(e) \ar[r,"\div"] & \bigoplus\limits_{v \in \mathcal T_0^{\circ}} \delta(v) \otimes \mathbb V \ar[r] & 0.
\end{tikzcd}
\end{equation}
Here, we elaborate on the nature of the Dirac deltas involved:
\begin{itemize}
    \item \(\delta_{nn}(T)\) represents the Dirac delta associated with the normal-normal component on the 3-face \( T \). Since each 3-face has one normal vector, it contributes one delta function.
    \item \(\delta_{n\gamma}(f)\) denotes the Dirac delta corresponding to the normal-cross-tangential component on the 2-face \( f \). This is a \(\mathbb{V} \otimes \mathbb{K}\)-valued distribution. For each 2-face, there is one pair of cross tangential vectors and two orthogonal normal vectors, resulting in two delta functions.
    \item \(\delta_{nt}(e)\) consists of the Dirac deltas associated with the normal-tangential components on the edge \( e \). Each edge contributes three such deltas.
    \item \(\delta(v) \otimes \mathbb{V}\) comprises Dirac deltas at the vertex \( v \), each associated with a vector in \( \mathbb{V} \).
\end{itemize}
Regarding \eqref{eq:hessian-4d}, the Lagrange finite element space is defined without boundary conditions. In contrast, the spaces of Dirac deltas are restricted to non-boundary entities, effectively imposing zero boundary conditions for functions or forms. This pattern is consistent with examples in two and three dimensions and is supported by functional analytic arguments, as detailed in \cite{hu2025distributional}.

\end{example}

\begin{example}
The smooth elasticity complex (linking rows 1 and 2 of \eqref{diagram-nD}) in four dimensions reads:
\begin{equation}\label{smooth-elasticity-4D}
\begin{tikzcd} 
0 \ar[r] & C^{\infty} \otimes \mathbb V \ar[r,"\sym\grad"] & 	 C^{\infty}\otimes \mathbb S \ar[r,"\text{riem}"] & C^{\infty}\otimes \mathbb{AC} \ar[r,"\curl"] & C^{\infty} \otimes \mathbb N \ar[r,"\div"] & C^{\infty} \otimes \mathbb K \ar[r] & 0.
\end{tikzcd}
\end{equation}
Here, the linearized Riemann operator is defined as $\text{riem} = \skw\grad \top \skw \grad$ (exerting $ \skw\grad$ for both rows and columns of a symmetric matrix).

The discrete elasticity complex in four dimensions is given by:
\begin{equation}
\label{eq:elasticity-4d}
\begin{tikzcd}[column sep = small]
0 \ar[r] & \textbf{Lag} \otimes \mathbb{V} \ar[r, "\sym\grad"] & \textbf{Reg} \ar[r, "\text{riem}"] & \bigoplus_{f \in \mathcal{T}_2^{\circ}} \delta_{\gamma\gamma}(f) \ar[r, "\curl"] & \bigoplus_{e \in \mathcal{T}_1^{\circ}} \delta_{\gamma t}(e) \ar[r, "\div"] & \bigoplus_{v \in \mathcal{T}_0^{\circ}} \delta(v) \otimes \mathbb{K} \ar[r] & 0.
\end{tikzcd}
\end{equation}

The spaces involving Dirac deltas are defined as follows:
\begin{itemize}
    \item \(\delta_{\gamma\gamma}(f)\) represents the Dirac delta function associated with the cross-tangential-cross-tangential component on the internal 2-face \( f \). Each such 2-face contributes one delta function.
    
    \item \(\delta_{\gamma t}(e)\) denotes the Dirac delta function corresponding to the cross-tangential-tangential component on the internal edge \( e \). For each edge \( e \), there are $\binom{3}{2}=3$ distinct cross-tangential vector pairs, resulting in three delta functions per edge.
\end{itemize}
\end{example}


\begin{example}

The dual elasticity complex (linking rows 2 and 3 of \eqref{diagram-nD}) in four dimensions
\begin{equation}
\begin{tikzcd}
0 \ar[r] & C^{\infty} \otimes \mathbb K \ar[r,"\pi_{\mathbb B} \circ \grad"] & 	 C^{\infty}\otimes \mathbb{B} \ar[r,"\pi_{\mathbb{AC}} \circ \text{skwgrad} "] & C^{\infty}\otimes \mathbb{AC} \ar[r,"\curl \circ \top \curl"] & C^{\infty} \otimes \mathbb{S} \ar[r,"\div"] & C^{\infty} \otimes \mathbb V \ar[r] & 0,
\end{tikzcd}
\end{equation}
 can be regarded as the dual of elasticity complex \eqref{smooth-elasticity-4D}, 
where $\pi_{\mathbb B}$ and $\pi_{\mathbb{AC}}$ denote the algebraic projections to $\mathbb B$ and $\mathbb{AC}$, respectively.

On the discrete level, we can fit in the spaces constructed in this paper to obtain a discrete dual elasticity complex:
\begin{equation}\label{discrete-dual-elasticity}
\begin{tikzcd}[column sep = small]
0 \ar[r] & \textbf{Lag} \otimes \mathbb K \ar[r] & C_{\iota^*\jmath^*} \mathcal P^- \W^{1,2} \ar[r] & 	C_{\iota^*\iota^*} \mathcal P^- \W^{2,2}  \ar[r] & \bigoplus\limits_{e \in \mathcal T_1^{\circ}} \delta_{tt}(e) \ar[r,""] & \bigoplus\limits_{v \in \mathcal T_0^{\circ}} \delta(v) \otimes \mathbb V\ar[r] & 0.
\end{tikzcd}
\end{equation}
The operators in \eqref{discrete-dual-elasticity} can be regarded as the dual of the operators in \eqref{eq:elasticity-4d}. 
Here, $\bigoplus\limits_{e \in \mathcal T_1^{\circ}} \delta_{tt}(e) $ is the dual of the lowest order Regge space in 4D \cite{christiansen2011linearization,li2018regge}. The shape function space of $C_{\iota^*\jmath^*} \mathcal P^- \mathbb W^{1,2}$ has the form of $\mathbb B + \bm x \times \mathbb{AC}$, whose dimension is 40. The degrees of freedom are
\begin{align} 
\int_e A_{t \gamma},   &  &\text{ count = 3 on  each edge}\\
\int_T A_{t \gamma} : b,\quad \forall\,\, b \in \mathcal B^-\W^{1,2}(T). & &  \text{ count = 2 on each 3-face}
\end{align}	

Here, the shape function space of $C_{\iota^*\iota^*} \mathcal P^-\mathbb W^{2,2}$ is $\mathbb {AC}$, whose dimension is 40. The degrees of freedom are 
\begin{align} 
\int_f A_{\gamma \gamma},   &  &\text{ count = 1 on  each 2-face}\\
\int_T A_{\gamma \gamma} : b, \quad \forall\,\, b \in \mathcal B^-\W^{2,2}(T). & &  \text{ count = 2 on each 3-face}
\end{align}	

\end{example}

\begin{example}
We also consider the divdiv complex, which is the formal adjoint of the Hessian complex: 
\begin{equation}
\begin{tikzcd}
0 \ar[r] & C^{\infty} \otimes \mathbb V \ar[r,"\dev \grad"] & 	 C^{\infty}\otimes \mathbb{T} \ar[r,"\pi_{\mathbb N} \circ \text{skwgrad}"] & C^{\infty}\otimes \mathbb{N} \ar[r,"\sym \curl"] & C^{\infty} \otimes \mathbb{S} \ar[r,"\div\div"] & C^{\infty} \otimes \mathbb R \ar[r] & 0.
\end{tikzcd}
\end{equation}

Similarly, we fit finite element spaces and Dirac deltas in a discrete version with the dual of the operators in \eqref{eq:hessian-4d}: 
\begin{equation}
\begin{tikzcd}[column sep = small]
0 \ar[r] & \textbf{Lag} \otimes \mathbb V \ar[r] & C_{\iota^*\jmath^*} \mathcal P^- \W^{1,3} \ar[r] & 	C_{\iota^*\jmath^*} \mathcal P^- \W^{2,3}  \ar[r] & 	C_{\iota^*\iota^*} \mathcal P^- \W^{3,3}   \ar[r,""] & \bigoplus\limits_{v \in \mathcal T_0^{\circ}} \delta(v)\ar[r] & 0.
\end{tikzcd}
\end{equation}

Here, the shape function space of $C_{\iota^*\jmath^*} \mathcal P^- \mathbb W^{1,3}$ has the form of $\mathbb T + \bm x \times \mathbb N$, whose dimension is 35. The degrees of freedom are
\begin{align} 
\int_e A_{tn},   &  &\text{ count = 3 on  each edge}\\
\text{interior bubbles} & &  \text{ count = 5}
\end{align}	

The shape function space of $C_{\iota^*\jmath^*} \mathcal P^- \mathbb W^{2,3}$ is $\mathbb N + \skw(\bm x \otimes \mathbb S)$, whose dimension is 30. The degrees of freedom are
\begin{align} 
\int_f A_{ \gamma n},   &  &\text{ count = 2 on  each 2-face}\\
\text{interior bubbles} & &  \text{ count = 10}
\end{align}	

The shape function space of $C_{\iota^*\jmath^*} \mathcal P^- \mathbb W^{3,3}$ is $\mathbb S$, whose dimension is 10.  The degrees of freedom are
\begin{align} 
\int_T A_{n n},   &  &\text{ count = 1 on  each 3-face}\\
\text{interior bubbles} & &  \text{ count = 5}
\end{align}	
This is the HHJ element in four dimensions.

\end{example}

\begin{remark}
The preceding argument establishes that the differential operators in the discrete Hessian complex \eqref{eq:hessian-4d} and the elasticity complex \eqref{eq:elasticity-4d} can be precisely defined. The remaining complexes can then be derived as their duals. However, in higher dimensions, we have more complexes, and this approach fails to fully specify all differential operators, particularly those associated with zig-zag patterns, such as the Hessian, \(\inc\), and \(\div\div\) operators, except for those in the Hessian and elasticity complexes. To define these operators, we need to investigate integration by parts and calculate the operators in the sense of distributions. This is left as a future work. 
\end{remark}

\section{High-order cases}
\label{sec:high-order}
This section generalizes the idea of the lowest-order case to general degrees $r \ge 1$. Recall that standard finite element exterior calculus covers two types of finite element $k$-forms:
\begin{enumerate}
    \item $C_{\iota^*}\mathcal P_r^-\alt^k$. The dimension is $\displaystyle \binom{r+n}{r+k}\binom{r+k-1}{k}$. The degrees of freedom are represented by a dual basis,
    $$u \mapsto \int_{F} (\iota_F^* u) \wedge g, \quad \forall g \in \mathcal P_{r+k-m-1}\Alt^{m-k}(F),$$ 
    for any $F$ with $\dim F := m \ge k$. 
    \item $C_{\iota^*}\mathcal P_r\alt^k$.  The dimension is $\displaystyle \binom{r+n}{r+k}\binom{r+k}{k}$.  The degrees of freedom are represented by a dual basis,
    $$u \mapsto \int_{F} (\iota_F^* u) \wedge g, \quad \forall g \in \mathcal P_{r+k-m}^-\Alt^{m-k}(F),$$ 
    for any $F$ with $ \dim F:= m \ge k$.
\end{enumerate}

 
The bubble spaces of $\mathcal P_r^- \alt^k$ and $\mathcal P_r \alt^k$ are denoted as $\mathcal B_r^- \alt^k$ and $\mathcal B_r \alt^k$, respectively. By the above degrees of freedom, it holds that for $\dim K = m$,  
\begin{equation}\label{eq:dim-Br-}
\dim \mathcal B_r^-\alt^k(K) = \dim \mathcal P_{r+k-m-1}\alt^{m-k}(K) = \binom{r+k-1}{r-1} \binom{r-1}{m-k},
\end{equation}

and 
\begin{equation}\label{eq:dim-Br}
\dim \mathcal B_r\alt^k(K) = \dim \mathcal P_{r+k-m}^-\alt^{m-k}(K) = \binom{r+k}{r} \binom{r - 1}{m - k}.
\end{equation}
As a consequence, $\mathcal B_r^-\Alt^k(K)$ and $\mathcal B_r\Alt^k(K)$ are zero if $m < k$ or $m \ge k + r$. 
In this section, we introduce finite element form-valued forms based on the two spaces above $C_{\iota^*}\mathcal P_r^-\alt^k$ and $C_{\iota^*}\mathcal P_r\alt^k$. 


A technical challenge is that, unlike the lowest-order case, a canonical representation for high-order Whitney forms is not available due to the linear dependence of the barycentric coordinates.
We now recall the definitions provided in \Cref{cor:spanning-whitney}. In \cite{arnold2009geometric}, high-order Whitney forms were introduced as a basis for the $\mathcal{P}_r\alt^k$ and $\mathcal{P}_r^-\alt^k$ families. For a multi-index $\alpha \in \mathbb{N}_0^{n+1}$, let $\lambda^{\alpha} := \lambda_1^{\alpha_1} \cdots \lambda_{n+1}^{\alpha_{n+1}}$. The support of $\alpha$ is defined as $\supp \alpha := \{ i : \alpha_i \neq 0 \}$.

	From \cite{arnold2009geometric}, we have the following two lemmas, indicating that we can obtain a basis if we break the index symmetry.
    
    \begin{lemma}
    \label{lem:basis-Pr-}
    The basis of $\mathcal P_{r}^- \alt^k$ dual to the degrees of freedom on $\sigma$ can be given as \begin{equation}\label{eq:basis-pr-} \lambda^{\alpha} \phi_I : |\alpha| = r-1,  \supp \alpha \cup I = [\![\sigma]\!], \alpha_i = 0 \text{ if } i < \min I,
    \end{equation}
    for all $I$ satisfying $|I|=k+1$.
    \end{lemma}
    \begin{lemma}
     \label{lem:basis-Pr}
	The basis of $\mathcal P_r \alt^k$ dual to the degrees of freedom on $\sigma$ can be given as \begin{equation}\label{eq:basis-pr} \lambda^{\alpha} d\lambda_I : |\alpha| = r, \supp \alpha \cup I = [\![\sigma]\!], \alpha_i = 0 \text{ if } i < \min ~[\![\sigma]\!] \setminus I,
	\end{equation}
    for all $I$ satisfying $|I|=k$.
    \end{lemma}

\subsection{The ${\mathcal P_r^-} \alt^{k,\ell}$ family}

For $r \ge 1$, we define
\begin{equation}
\begin{split}
    \mathcal P_r^{-} \Alt^{k,\ell} := &  \mathcal P_r^- \alt^{k} \otimes \alt^{\ell} \\ = & \mathcal P_{r-1}\Alt^{k,\ell} \oplus \kappa\mathcal H_{r-1} \Alt^{k+1,\ell} \\  = & (\mathcal P_{r-1}\Alt^k \oplus \kappa \mathcal H_{r-1 }\Alt^{k+1}) \otimes \Alt^\ell.
    \end{split}
\end{equation}


It follows that 
$$ \dim \mathcal P_r^-\alt^{k,\ell} = \binom{n+r}{k+r} \binom{r+k-1}{k} \binom{n}{\ell}.$$

We introduce the bubble space of $\mathcal P_r^- \alt^{k,\ell}(K)$ with respect to the double trace $\iota^*\iota^*$:
\begin{equation}
\label{eq:Br-}
\mathcal B_r^-\alt^{k,\ell}(K) := \{ \omega \in \mathcal P_r^- \alt^{k,\ell}(\sigma) : \iota_F^{*} \iota_F^* \omega = 0, \,\, \forall F \trianglelefteq_1 K, F \neq K \}.
\end{equation}

Thanks to the explicit form of the basis in \Cref{lem:basis-Pr-}, we can characterize the high-order bubble spaces in a similar way as \Cref{lem:bubble-decomposition}.
\begin{lemma}
\label{lem:bubble-decomposition-Pr-}
Let $\psi_{\sigma,i}$ be a basis of $\mathcal P_r^-\alt^k$ dual to the degrees of freedom on $\sigma$. Then,
$$\mathcal B_r^- \alt^{k,\ell}(K) = \sum_{m = k}^n \sum_{\sigma \in \mathcal T_{m}(K)} \operatorname{span} \{ \psi_{\sigma, i}, i = 1,2,\cdots \} \otimes N^{\ell}(\sigma, K).$$
\end{lemma}

Therefore, the dimension of the bubbles in $n$ dimensions is 
\begin{equation}
\begin{split} 
\dim \mathcal B_{r}^- \alt^{k,\ell}(K) = \sum_{m =k}^n \binom{n+1}{m+1} \Big[ \binom{r+k-1}{r-1} \binom{r-1}{m-k} \Big] \cdot \binom{m}{\ell+m-n},
\end{split}
\end{equation}
since $$
\dim (\mathcal P_{r+k-m-1} \Alt^{m-k})(\mathbb R^m) = \binom{r+k-1}{r-1} \binom{r-1}{m-k},$$
and 
$$ \dim N^{\ell}(\sigma, K) = \binom{m}{\ell + m - n}.$$
For simplicity of notation, we can assume that the summation includes all integers $m$, as the terms with $m < k$ or $m > n$ contribute zero.
\begin{corollary}
$\dim \mathcal B_r^- \alt^{k,\ell}(K) = 0$ if $\dim K > \ell + k + r$.
\end{corollary}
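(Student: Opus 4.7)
The plan is simply to read off the vanishing from the explicit dimension formula just established,
$$\dim \mathcal B_{r}^- \alt^{k,\ell}(K) = \sum_{m =k}^n \binom{n+1}{m+1}\binom{r+k-1}{r-1}\binom{r-1}{m-k}\binom{m}{\ell+m-n},$$
by checking that, under the hypothesis $n := \dim K > \ell + k + r$, every summand is forced to be zero. No manipulation of the sum is needed; the argument is purely about which binomial coefficients have arguments out of range.

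First I would isolate the conditions for the two non-trivial binomials to survive, as $m$ ranges over $\{k,\ldots,n\}$. The factor $\binom{r-1}{m-k}$ is nonzero exactly when $m - k \le r - 1$, i.e.\ $m \le k + r - 1$, and the factor $\binom{m}{\ell+m-n}$ is nonzero exactly when $\ell + m - n \ge 0$, i.e.\ $m \ge n - \ell$. A nonvanishing summand therefore requires
$$n - \ell \;\le\; m \;\le\; k + r - 1,$$
which in particular forces $n \le k + \ell + r - 1$. Under the hypothesis $n > \ell + k + r$ this is impossible, so no admissible $m$ exists and every summand is zero.

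Conceptually (and this is the way I would actually present it), by \Cref{lem:bubble-decomposition-Pr-} every bubble is a sum of pieces of the form $\psi_{\sigma,i}\otimes w$ with $\sigma$ an $m$-dimensional subsimplex of $K$, $\psi_{\sigma,i}\in \mathcal B_r^-\alt^k(\sigma)$, and $w\in N^\ell(\sigma,K)$. The first factor forces $m \le k + r - 1$ (the polynomial degree budget of $\mathcal P_r^-\alt^k$ on an $m$-simplex), while the second forces $m \ge n - \ell$ (so enough indices transverse to $\sigma$ in $K$ remain to accommodate an $\ell$-covector vanishing on all codimension-one faces containing $\sigma$). These two requirements are compatible only when $n \le k + \ell + r - 1$, which fails under the hypothesis.

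There is no real obstacle here: once the decomposition in \Cref{lem:bubble-decomposition-Pr-} and the two dimension counts are in hand, the corollary is an immediate binomial vanishing, and it is the same mechanism (at $r=1$) that gave the earlier lowest-order corollary following equation \eqref{eq:bubble-dim-count}.
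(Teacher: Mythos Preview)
Your proposal is correct and is exactly the approach the paper intends: the corollary is stated without proof immediately after the dimension formula, so reading off the binomial vanishing is the expected argument. Your observation that a nonzero summand requires $n-\ell \le m \le k+r-1$, hence $n \le k+\ell+r-1$, in fact proves the slightly sharper statement that the bubble space vanishes already when $\dim K \ge \ell+k+r$ (matching the paper's later corollary for the $\mathcal P_r$ family), of which the stated $>$ version is an immediate consequence.
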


\begin{corollary}
\label{cor:spanning-Pr-}
The set
$$
\lambda^{\alpha} \phi_I \otimes d\lambda_J : |\alpha| = r-1,\supp \alpha \cup I \cup J = [n+1], \quad |I|=k+1, |J|=\ell
$$
is a spanning set of $\mathcal B_r^{-}\alt^{k,\ell}(K).$ 
\end{corollary}

The following theorem focuses on the construction of high-order elements, including $\iota^*\iota^*$-conforming, $\iota^*\jmath^*$-conforming, and $\iota^* \jmath_{[p]}^*$-conforming finite elements. The local shape function space are $\mathcal P_r^- \alt^{k,\ell}(K)$. The main difference between the high-order construction and the lowest-order case (i.e., $\mathcal P^-\alt^{k,\ell}$) is that the degrees of freedom of $C_{\iota^*}\mathcal P^-\alt^{k}$ are only located on $k$-simplices, while $\mathcal P_r^-\alt^k(K)$ will have degrees of freedom on $k$-, $(k + 1)$-, ..., $(k + r)$-simplices. 

 
\begin{theorem}
\label{thm:prminus}
 Each of the following groups of degrees of freedom is unisolvent with respect to the shape function space $\mathcal P^-_r \alt^{k,\ell}(K)$:
\begin{enumerate}
\item 
\begin{equation} \langle \iota^{*}_{\sigma} \iota^*_{\sigma} \omega, b \rangle_{\sigma}, \qquad \forall~~ b \in \mathcal B_r^-\alt^{k,\ell}(\sigma), \quad \forall \sigma \in \mathcal T(K).
\end{equation}	
The resulting space is $\iota^{*}\iota^*$-conforming, denoted as $C_{\iota^*\iota^*} \mathcal P_r^- \alt^{k,\ell}$.

\item For $k \le \ell$, 
$$
\begin{cases} \langle \iota^{*}_{\sigma} \jmath^{*}_{\sigma} \omega, b \rangle_{\sigma}, & \forall~~ b \in \mathcal B_r^-\alt^k(\sigma) \otimes \alt^{\ell - m}(\sigma^{\perp}), \quad \dim \sigma = m \in [k,\ell],  \\ 
\langle \iota^{*}_{\sigma} \iota^*_{\sigma} \omega, b \rangle_{\sigma}, & \forall~~ b \in \mathcal B_r^-\alt^{k,\ell}(\sigma), \quad \dim \sigma > \ell.
\end{cases}	
$$
 The resulting space is $\iota^{*}\jmath^*$-conforming, denoted as $C_{\iota^*\jmath^*} \mathcal P_r^- \alt^{k,\ell}$.
\item
If $ k \le \ell + p - 1$, then
we have the degrees of freedom
\begin{equation}
\begin{cases} \langle \iota^{*}_{\sigma}\vartheta^{*}_{\sigma,m} \omega, b \rangle_{\sigma}, & \forall~~ b \in \mathcal B_r^-\alt^k(\sigma) \otimes (\alt^{m}(\sigma) \otimes \alt^{\ell - m }(\sigma^{\perp})), \quad \dim \sigma = m \in [k,\ell] ,\\ 
\langle \iota^{*}_{\sigma}\vartheta^{*}_{\sigma,m - 1} \omega, b \rangle_{\sigma}, & \forall~~ b \in \mathcal B_r^-\alt^k(\sigma) \otimes (\alt^{m-1}(\sigma) \otimes \alt^{\ell - m+1}(\sigma^{\perp})), \quad \dim \sigma = m \in  [k,\ell+1] ,\\
\langle \iota^{*}_{\sigma}\vartheta^{*}_{\sigma,m - 2} \omega, b \rangle_{\sigma}, & \forall~~ b \in \mathcal B_r^-\alt^k(\sigma) \otimes (\alt^{m-2}(\sigma) \otimes \alt^{\ell - m + 2}(\sigma^{\perp})), 
\quad \dim \sigma = m \in [k,\ell + 2], \\
\cdots \\
\langle \iota^{*}_{\sigma}\vartheta^{*}_{\sigma,m - p + 1 } \omega, b \rangle_{\sigma}, & \forall~~ b \in \mathcal B_r^-\alt^k(\sigma) \otimes (\alt^{m-p + 1 }(\sigma) \otimes \alt^{\ell - m + p - 1}(\sigma^{\perp})), 
\\ & ~~~~~~~~~~~~~~~~~ ~~~~~~ \quad \dim \sigma = [k,\ell + p - 1], \\
 \langle \iota^{*}_{\sigma} \iota^*_{\sigma} \omega, b \rangle_{\sigma}, & \forall~~ b \in \mathcal B_r^-\alt^{k,\ell}(\sigma), \quad \dim \sigma \ge \ell + p,
\end{cases}	
\label{eq:dof-ijp-alt-kl-r}
\end{equation}
or written compactly,
$$
\begin{cases} 
\langle \iota^{*}_{\sigma}
\jmath^{*}_{\sigma,[p]} \omega, b \rangle_{\sigma}, & \forall~~ b \in \bigoplus_{s = 0}^{p-1} \mathcal B_r^-\alt^{k}(\sigma) \otimes ( \alt^{m - s}(\sigma) \otimes \alt^{\ell - m + s}(\sigma^{\perp})), \\ &\quad \dim \sigma = m \in [k,\ell + p -1],\\ 
 \langle  \iota^{*}_{\sigma}\iota^{*}_{\sigma} \omega, b \rangle_{\sigma}, & \forall~~ b \in \mathcal B^-_r\alt^{k,\ell}(\sigma) , \quad \dim \sigma \ge \ell + p.
\end{cases}	
$$
 The resulting finite element space is $\iota^{*}\jmath^*_{[p]}$-conforming,  denoted as $C_{\iota^*\jmath_{[p]}^*} \mathcal P_r^- \alt^{k,\ell}$.
  \end{enumerate}
\end{theorem}
Note that the first several items in \eqref{eq:dof-ijp-alt-kl-r} disappear if $k>\ell$. For example, the first item 
$$
 \langle \iota^{*}_{\sigma}\vartheta^{*}_{\sigma,m} \omega, b \rangle_{\sigma},  \forall~~ b \in \mathcal B_r^-\alt^k(\sigma) \otimes (\alt^{m}(\sigma) \otimes \alt^{\ell - m }(\sigma^{\perp}))
$$
is trivial when $k> \ell$ (because $\alt^k(\sigma)$ is  empty). Thus, in the case of $k>\ell$, $m \in [k,\ell]$ should be considered as an empty condition. Similar remarks hold for other items. 
 
\begin{remark}
    The case  $k > \ell$ is not the only case that will lead to a degenerate degree of freedom. Other cases that lead to degeneration include:
    \begin{enumerate}
        \item If $m \le k$ or $m > k + r$, then $\mathcal B_r^-\alt^k(\sigma)$ is zero \eqref{eq:dim-Br-}. 
        \item If $k < p - 1$, then under the requirements $m \ge k$ (such that the first term $\mathcal B_r^-\alt^k(\sigma)$ does not degenerate) and $s \le p-1$,  we can still find $m$ and $s$ such that $m-s<0$, i.e., $\alt^{m-s}(\sigma)$ degenerates.  
        \item Under the requirement $s \leq p-1$, there still exists $s$ satisfying $\ell +s > n$ for large $p$. Then the term $\alt^{\ell - m + s}(\sigma^{\perp})$ degenerates.     
    \end{enumerate}
\end{remark}

\begin{proof}[Proof of \Cref{thm:prminus}, (1)]

For the $\iota^*\iota^*$-conforming elements, we only need to show the dimension count:
\begin{equation} 
\begin{split}	\sum_{\sigma \in \mathcal T(K)} \dim \mathcal B_r^-\alt^{k,\ell}(\sigma) & = \sum_{n'} \underbrace{\binom{n+1}{n'+1}}_{\text{number of $n'$ simplices}} \sum_{m} \underbrace{ \binom{n'+1}{m+1} \Big[ \binom{r+k-1}{r-1} \binom{r-1}{m-k}\Big] \cdot \binom{m}{\ell+m-n'}}_{\text{dimension of } \mathcal B_r^-\alt^{k,\ell}(\sigma)}\\ \text{\footnotesize (switch $m$ and $n'$)~~} 
& = \sum_{m} \binom{r+k-1}{r-1} \binom{r-1}{m-k}    \sum_{n'}  \binom{n+1}{n'+1}\binom{n'+1}{m+1} \binom{m}{\ell+m-n'}.
\end{split}	
\end{equation}
 We rewrite the inner summand as 
\begin{equation}
\begin{split}
   \sum_{n'}  \binom{n+1}{n'+1}\binom{n'+1}{m+1} \binom{m}{\ell+m-n'} = & \sum_{n'} \binom{n+1}{m+1} \binom{n-m}{n-n'} \binom{m}{n' - \ell} \\
   = & \binom{n+1}{m+1} \binom{n}{n-\ell}.
\end{split}
    \end{equation}
 Here, we used $\binom{a}{b}\binom{b}{c} = \binom{a}{c} \binom{a - c}{b - c}$ twice. 

Therefore, $$\sum_{\sigma \in \mathcal T(K)} \dim \mathcal B_r^-\alt^{k,\ell}(\sigma)  = \sum_m \binom{r+k-1}{r-1} \binom{r-1}{m-k} \binom{n+1}{m+1} \binom{n}{n-l}.$$

It then suffices to show 

$$ \sum_m \binom{r+k-1}{r-1} \binom{r-1}{m-k} \binom{n+1}{m+1}  = \binom{n+r}{k+r} \binom{r+k-1}{k}.$$

Note that this identity is exactly the dimension count of the $\mathcal P_r^-\alt^k$ family, and can be proven by the Vandermone identity. Therefore, we show the unisolvency, which also implies the conformity.
\end{proof}

\begin{proof}[Proof of \Cref{thm:prminus}, (2)]
The construction relies on redistributing the degrees of freedom, as illustrated in \Cref{fig:moving-high}. Building on the proof of part (1), we start with an \(\iota^*\iota^*\)-conforming finite element space. The goal of this construction is to reassign the degrees of freedom from \(\ell\)-dimensional simplices to lower-dimensional simplices in \(\mathcal{T}_{[k,\ell]}\).

We now detail the redistribution process. For each \(\ell\)-face \(F\), the degrees of freedom are defined as inner products with elements of \(\mathcal{P}_r^- \alt^k(F)\). Unlike the lowest-order case discussed in \Cref{sec:whitney}, the degrees of freedom for \(\mathcal{P}_r^- \alt^k(F)\) are not confined to \(k\)-simplices, resulting in a more intricate scenario. However, they share a similar idea. 

Consider a fixed \(\ell\)-face \(F\). We have \(\mathcal{B}_r^- \alt^{k,\ell}(F) \cong \mathcal{P}_r^- \alt^k(F)\). Recalling the basis from \Cref{lem:basis-Pr-}, we note that for each \(\sigma \in \mathcal{T}_m\) with \(m \in [k,\ell]\), there are \(\dim \mathcal{B}_r^- \alt^k(\sigma)\) degrees of freedom associated with \(\sigma\). We express this informally as:
\begin{equation}\label{eq:informal-dof}
\#\text{DoFs}(\sigma, F) = \dim \mathcal{B}_r^- \alt^k(\sigma).
\end{equation}

Next, we redistribute these degrees of freedom to the simplex \(\sigma\). Algebraically, the degrees of freedom assigned to \(\sigma\) are the sum of contributions from all \(\ell\)-faces \(F\) containing \(\sigma\), as given by \eqref{eq:informal-dof}. The number of such \(\ell\)-faces is \(\binom{n - m}{\ell - m}\), which corresponds exactly to the dimension of \(\alt^{\ell - m}(\sigma^{\perp})\). This facilitates the dimension count.

Unisolvency and conformity follow from the above reasoning and \Cref{lem:j-circ-i}. Specifically, conformity is verified by checking that \(\iota_F^* \iota_F^* \omega = 0\) on each \(\ell\)-face \(F\) whenever \(\omega\) vanishes at all degrees of freedom.
\end{proof}

\begin{proof}[Proof of \Cref{thm:prminus}, (3)]	
As in the previous proof, we achieve the desired construction by redistributing the degrees of freedom, as illustrated in \Cref{fig:moving-high-p}. We emphasize two key aspects of the high-order \(\iota^*\jmath_{[p]}^*\)-conforming finite elements. First, since we are dealing with the high-order case, degrees of freedom are not solely associated with \(k\)-faces. Second, we may deal with $p>1$, meaning that not only \(\ell\)-faces contribute their degrees of freedom. In essence, the redistribution involves moving degrees of freedom from higher-dimensional simplices to lower-dimensional simplices, both of which are multidimensional.

We begin by assuming \(k \leq \ell\). In this case, the degrees of freedom for the finite element space \(C_{\iota^*\iota^*} \mathcal{P}_r^- \alt^{k,\ell}\) are associated with simplices of dimensions \(\ell\) and higher, specifically those in \(\mathcal{T}_{\ell}, \mathcal{T}_{\ell +1}, \ldots, \mathcal{T}_{\ell + k + r}\). To achieve \(\iota^*\jmath_{[p]}^*\)-conformity, we redistribute the degrees of freedom from simplices of dimensions \(\ell\) through \(\ell + p - 1\) to \(m\)-simplices \(\sigma\) where \(m\) ranges from \(k\) to \(\ell + p - 1\). For the case \(k > \ell\), the degrees of freedom start from \(\mathcal{T}_{k}\), but the redistribution process remains analogous.

\begin{example}[High-order Hu--Lin--Zhang element]
Before presenting the general construction, we show an example of the high-order \((1, 2)\)-forms in three dimensions to show how to achieve $\iota^*\jmath^*$-conformity. The \(\iota^{\ast}\iota^{\ast}\)-conforming finite element has degrees of freedom on faces defined as moments against high-order N\'ed\'elec shape functions. To obtain \(\iota^{\ast}\jmath^{\ast}_{[p]}\)-conforming spaces, we redistribute some of these face degrees of freedom to edges. Specifically:
\begin{itemize}
    \item The degrees of freedom on faces are moments against the planar N\'ed\'elec shape functions, which can be decomposed into components associated with the edges of the face and bubble functions interior to the face.
    \item The edge-associated components are reassigned to the corresponding edges, while the bubble functions remain on the face.
    \item Each face sends some of its degrees of freedom to the edges it contains.
    \item Each edge receives degrees of freedom from all the faces that contain it.
\end{itemize}

More illustrations of the degrees of freedom can be found in \Cref{sec:Prminus-12}. 

\end{example}

This example illustrates the general pattern we will follow below. In the general case, for each \(s \in [0, p-1]\), degrees of freedom are sent from \((\ell + s)\)-simplices and received by \(m\)-simplices \(\sigma\) with \(m \in [k, \ell + p -1]\), specifically those \(\sigma\) contained within the \((\ell + s)\)-simplices. To determine which degrees of freedom are sent from each \((\ell + s)\)-simplex \(F\), we use the geometric decomposition of the bubble space on \(F\).

Fix an \(m\)-simplex \(\sigma\) with \(m \in [k, \ell + p -1]\). For each \((\ell + s)\)-simplex \(F \in \mathcal{T}_{\ell + s}(K)\) that contains \(\sigma\), the original degrees of freedom for \(C_{\iota^*\iota^*} \mathcal{P}_r^- \alt^{k,\ell}\) on \(F\) are inner products with respect to the bubble space \(\mathcal{B}_r^- \alt^{k, \ell}(F)\). This bubble space admits the decomposition (\Cref{lem:bubble-decomposition-Pr-}):
\begin{equation}\label{decomp:Br}
   \mathcal{B}_r^- \alt^{k,\ell}(F) = \sum_{m = k}^{\ell + s} \sum_{\tau \in \mathcal{T}_{m}(F)} [\operatorname{span}_i \psi_{\tau, i}] \otimes N^{\ell}(\tau, F).
\end{equation}
We identify the degrees of freedom on \(F\) associated with \(\sigma\), specifically those corresponding to the terms in the decomposition where \(\tau = \sigma\), and reassign them to \(\sigma\). Concurrently, we introduce new degrees of freedom on \(\sigma\) corresponding to the generalized trace \(\iota_{\sigma}^{\ast}\vartheta_{\sigma, m - s}^*\), as specified in \eqref{eq:dof-ijp-alt-kl-r}.

We first verify that this redistribution preserves the total number of degrees of freedom. Specifically, for each \(s \in [0, p-1]\), we show that the following two quantities are equal:
\begin{enumerate}
    \item The number of degrees of freedom in \eqref{eq:dof-ijp-alt-kl-r} involving the generalized trace \(\iota_{\sigma}^{\ast}\vartheta_{\sigma, m - s}^*\).
    \item The number of degrees of freedom on \(\sigma\) gained from those on all \((\ell + s)\)-faces \(F\) such that \(\sigma \trianglelefteq F \trianglelefteq K\).
\end{enumerate}
For (1), this number is equal to the dimension of \(\mathcal{B}_r^- \alt^k(\sigma) \otimes (\alt^{m - s}(\sigma) \otimes \alt^{\ell - m + s}(\sigma^{\perp}))\). We now show that (2) matches this dimension.

Consider a fixed \((\ell + s)\)-face \(F\) containing \(\sigma\). The number of degrees of freedom on \(F\) associated with \(\sigma\) is:
\[
\# \text{DoFs}(\sigma, F) = \dim \left( \underbrace{\text{span of bases of } \mathcal{P}_r^- \alt^k(F) \text{ on } \sigma}_{\dim \mathcal{B}_r^- \alt^k(\sigma)} \otimes N^{\ell}(\sigma, F) \right).
\]
By \Cref{lem:N}, for a fixed \(m\)-face \(\sigma\) and \((\ell + s)\)-face \(F\), the dimension of \(N^{\ell}(\sigma, F)\) is:
\[
\dim N^{\ell}(\sigma, F) = \binom{m}{\ell + m - (\ell + s)} = \binom{m}{m - s} = \dim \alt^{m - s}(\sigma).
\]
The number of such \((\ell + s)\)-faces \(F\) containing \(\sigma\) is:
\[
\binom{n - m}{(\ell + s) - m} = \binom{n - m}{n - \ell - s} = \dim \alt^{\ell - m + s}(\sigma^{\perp}),
\]
where the equality \(\binom{n - m}{(\ell + s) - m} = \binom{n - m}{n - \ell - s}\) holds since \(\binom{a}{b} = \binom{a}{a - b}\). Therefore, the total number of degrees of freedom that \(\sigma\) receives from all relevant \(F\) is:
\[
\dim \mathcal{B}_r^- \alt^k(\sigma) \times \dim \alt^{m - s}(\sigma) \times \dim \alt^{\ell - m + s}(\sigma^{\perp}),
\]
which matches the dimension in (1). This confirms that the redistribution preserves the total number of degrees of freedom.

Thus, by redistributing the degrees of freedom as described, we obtain the desired finite element space. The proofs of unisolvency and conformity follow arguments similar to those in \Cref{prop:Wp}.
\end{proof}

\begin{figure}[htbp]
\FIG{
\includegraphics[scale=0.1]{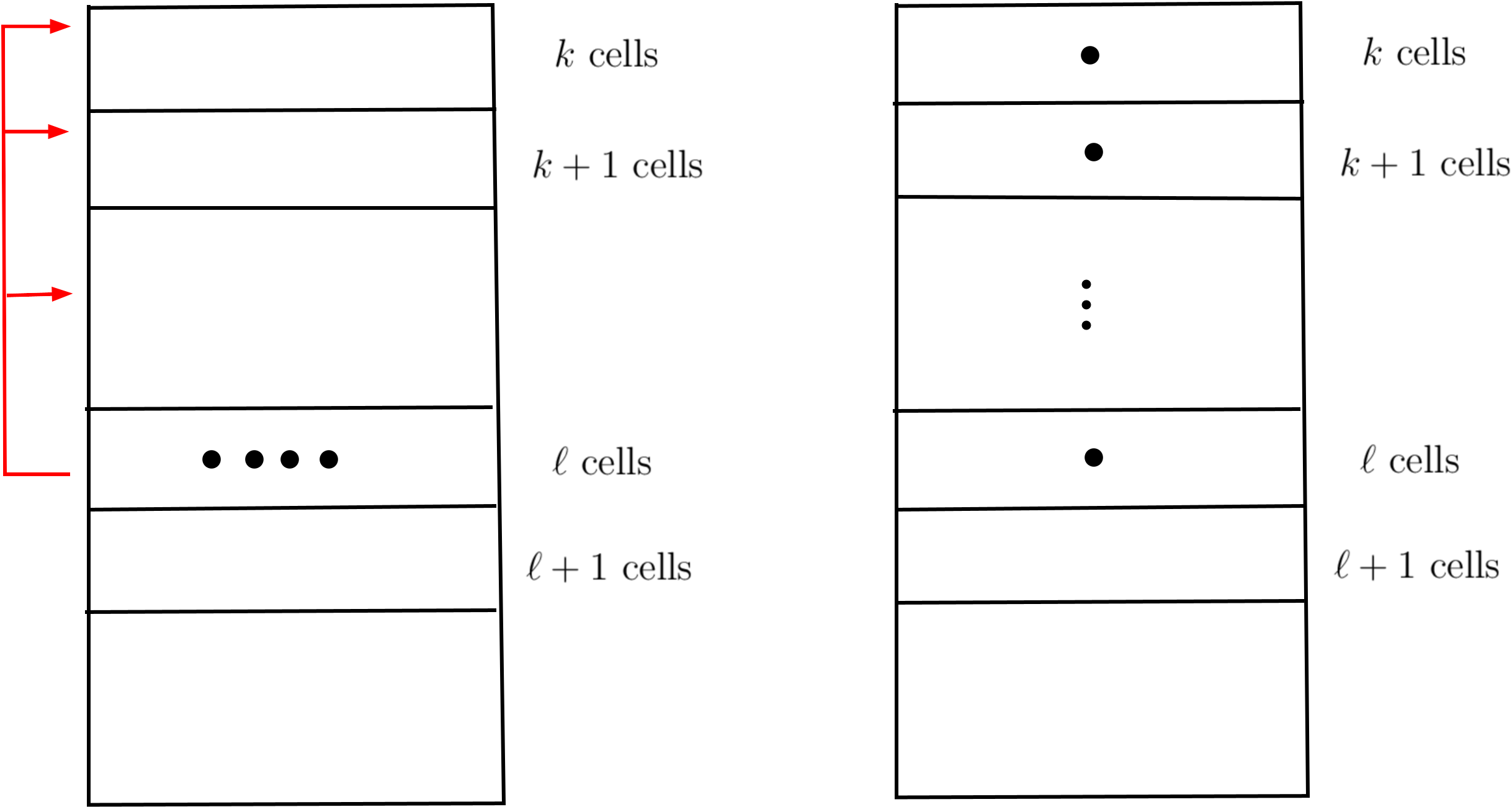}	}
{\caption{An illustration for moving the degrees of freedom in the high-order case for $\iota^*\jmath^*$-conformity. Specifically, for each $\ell$-face $F$, we move all of its degrees of freedom to the lower-dimensional faces. In this case, $m$-faces with $m$ ranging from $k$ to $\ell$ can receive the degrees of freedom. If we take a closer look at the basis function of $\mathcal P_r^- \alt^k(F)$ (where $\mathcal P_r^- \alt^k(F)$ represents a certain function space related to our finite element construction), we can find that the possible dimensions $m$ of the faces that receive the degrees of freedom are in the range $m \in [k,\min(k + r,\ell)]$. Note that when $k + r\geq\ell$, some degrees of freedom will in fact stay in the face $F$.}
\label{fig:moving-high}}
\end{figure}

\begin{figure}[htbp]
\FIG{\includegraphics[scale=0.1]{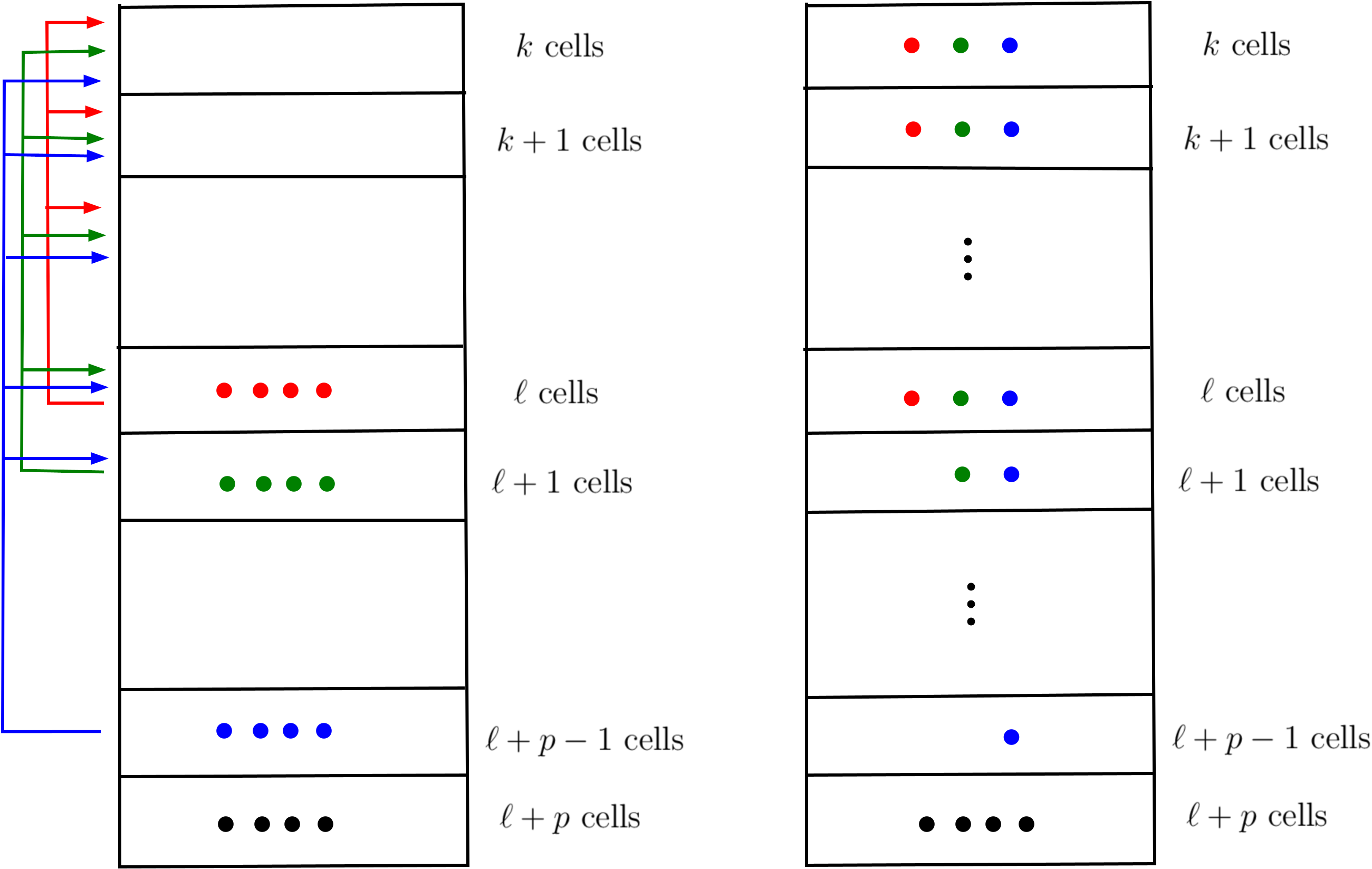}}
{\caption{An illustration for moving the degrees of freedom in the high-order case for $\iota^*\jmath_{[p]}^*$-conformity. For each $\ell + s$ with $s = 0,1,\cdots, p - 1$, we transfer the degrees of freedom to lower-dimensional faces. Note that it is possible that $k > \ell$. When this situation occurs, the $\iota^*\iota^*$-conforming construction will indicate that there are no degrees of freedom for $\ell, \ell + 1,\cdots, k - 1$ faces. Thus, the procedure of moving the degrees of freedom will start from $k$ faces.}
\label{fig:moving-high-p}}
\end{figure}

The previous theorem states all the possible cases for $\alt^{k,\ell}$-valued finite elements. Now, we focus on the BGG symmetry reduction. First, we need to show that the $\mathcal S_{\dagger}$ operator and the iterated version $\mathcal S_{\dagger, [p]}$ are onto. 
\begin{lemma}
\label{lem:S-bubble-Pr-}
If $k \le \ell + p - 1$, then $\mathcal S_{\dagger,[p]} : \mathcal B_r^- \alt^{k,\ell}(\sigma) \to \mathcal B_r^- \alt^{k-p,\ell+p}(\sigma)$ is onto.
\end{lemma}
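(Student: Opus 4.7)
The plan is to reduce the higher-order statement to the lowest-order case \Cref{lem:Sp-bubble} by exploiting the $C^\infty$-linearity of $\mathcal{S}_{\dagger,[p]}$. Since $\mathcal{S}_{\dagger,[p]}$ is a purely algebraic pointwise operator, it commutes with multiplication by smooth scalar functions and with pullback along inclusions. The latter already gives that $\mathcal{S}_{\dagger,[p]}$ sends $\mathcal{B}_r^-\alt^{k,\ell}(\sigma)$ into $\mathcal{B}_r^-\alt^{k-p,\ell+p}(\sigma)$, so only surjectivity requires argument.

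I would expand any target bubble $b' \in \mathcal{B}_r^-\alt^{k-p,\ell+p}(\sigma)$ in the spanning set of \Cref{cor:spanning-Pr-}:
\[ b' = \sum c_{\alpha,I,J}\,\lambda^\alpha\,\phi_I \otimes d\lambda_J, \quad |\alpha|=r-1,\quad \supp\alpha\cup I \cup J = [\![\sigma]\!]. \]
By $C^\infty$-linearity of $\mathcal{S}_{\dagger,[p]}$, it suffices to construct a bubble preimage for each term. For fixed $(\alpha, I, J)$, let $\tau \trianglelefteq \sigma$ be the subsimplex with vertex set $I \cup J$; applying \Cref{lem:Sp-bubble} on $\tau$ produces $\eta_{I,J} \in \mathcal{B}^-\alt^{k,\ell}(\tau)$ with $\mathcal{S}_{\dagger,[p]} \eta_{I,J} = \phi_I \otimes d\lambda_J$. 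Setting $\omega_{\alpha,I,J} := \lambda^\alpha \eta_{I,J}$, extended polynomially to $\sigma$, one has $\omega_{\alpha,I,J} \in \mathcal{P}_r^-\alt^{k,\ell}(\sigma)$ (since $\lambda^\alpha \cdot \mathcal{P}^-\alt^{k,\ell} \subset \mathcal{P}_r^-\alt^{k,\ell}$ using the Koszul decomposition of $\mathcal{P}_r^-$), and $\mathcal{S}_{\dagger,[p]}\omega_{\alpha,I,J} = \lambda^\alpha\phi_I \otimes d\lambda_J$.

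The real content is then to verify that $\omega_{\alpha,I,J}$ lies in $\mathcal{B}_r^-\alt^{k,\ell}(\sigma)$. Since $\iota_E^* = \iota_E^*\iota_F^*$ whenever $E \trianglelefteq F \triangleleft \sigma$, it is enough to check vanishing of $\iota_F^*\iota_F^*\omega_{\alpha,I,J}$ on all codimension-one $F$. Let $i$ be the vertex of $\sigma$ opposite such an $F$; the condition $\supp\alpha\cup I \cup J = [\![\sigma]\!]$ gives two (possibly overlapping) cases. If $i \in \supp\alpha$, then $\lambda_i|_F = 0$ yields $\lambda^\alpha|_F = 0$. If $i \in I \cup J$, then $F \cap \tau$ is a proper subface of $\tau$, and the bubble property of $\eta_{I,J}$ on $\tau$ (together with $\iota_F^* d\lambda_i = d\iota_F^*\lambda_i = 0$ whenever $i \notin F$) gives $\iota_F^*\iota_F^*\eta_{I,J} = 0$. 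Either case supplies the required vanishing.

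The main obstacle is the index-bookkeeping in this last step: confirming that the relaxation of the $r=1$ bubble condition $I \cup J = [\![\sigma]\!]$ to $\supp\alpha\cup I \cup J = [\![\sigma]\!]$ is exactly compensated by the vanishing of $\lambda^\alpha$ on faces missing a vertex of $\supp\alpha$. As an independent sanity check one can sum $\dim \mathcal{B}_r^-\W_{[p]}^{k,\ell}(\sigma) = \dim\mathcal{B}_r^-\alt^{k,\ell}(\sigma) - \dim\mathcal{B}_r^-\alt^{k-p,\ell+p}(\sigma)$ over all $\sigma \in \mathcal{T}(K)$ and match it to the global dimension of the reduced polynomial space $\mathcal{P}_r^-\W_{[p]}^{k,\ell}(K)$, in analogy with the argument used in the proof of \Cref{prop:sym-reduction}.
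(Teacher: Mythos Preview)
Your overall strategy matches the paper's: expand a target bubble in the spanning set of \Cref{cor:spanning-Pr-} and lift each term $\lambda^{\alpha}\phi_{I}\otimes d\lambda_{J}$ by peeling off the scalar factor $\lambda^{\alpha}$. The gap is in how you produce a preimage of the constant piece $\phi_{I}\otimes d\lambda_{J}$. Invoking \Cref{lem:Sp-bubble} on the subsimplex $\tau$ with vertex set $I\cup J$ does not work as a black box. Whenever $\ell+p>\dim\tau$ (for instance when $I\subset J$, which the spanning set certainly allows) the target $\phi_{I}\otimes d\lambda_{J}$ vanishes in $\alt^{k-p,\ell+p}(\tau)$ while remaining nonzero on $\sigma$, so the lowest-order lemma returns only the zero preimage. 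Even when the target survives on $\tau$, there is no canonical extension of an abstract element of $\mathcal B^{-}\alt^{k,\ell}(\tau)$ back to $\sigma$: the $d\lambda_{j}$ for $j\in[\![\tau]\!]$ satisfy one linear relation on $\tau$ but are independent on $\sigma$, so a single bubble on $\tau$ has many lifts, and your verification of the bubble property on $\sigma$ already presupposes a particular one.

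What is actually needed is the refinement sitting inside the proof of \Cref{lem:Sp-bubble}: by \Cref{lem:fy} the combinatorial map $s_{\dagger,[p]}$ is onto on each block $FY_{F,G}$ with $F=I\cup J$ fixed, so one obtains a preimage directly on $\sigma$ whose terms $\phi_{I'}\otimes d\lambda_{J'}$ all satisfy $I'\cup J'=I\cup J$. Then $\supp\alpha\cup I'\cup J'=[\![\sigma]\!]$, and membership in $\mathcal B_{r}^{-}\alt^{k,\ell}(\sigma)$ is immediate from \Cref{cor:spanning-Pr-} without any face-by-face check. The paper's proof is exactly this one-line reduction to \Cref{lem:fy}; the packaged $r=1$ statement is not quite strong enough on its own.
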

\begin{proof}
The proof is based on \Cref{cor:spanning-Pr-}, and be shown in \Cref{subsec:inj/sur}.
\end{proof}

Using this lemma, we can repeat the symmetric reduction procedure in \Cref{sec:whitney}. Let $$\mathcal P_r^- \W_{[p]}^{k,\ell} := \ker(\mathcal S_{\dagger,[p]}^{k,\ell} : \mathcal P_r^- \alt^{k,\ell} \to \mathcal P_r^- \alt^{k-p,\ell + p}).$$
 	Define 
$$\mathcal B_r^- \mathbb W^{k,\ell}_{[p]}(\sigma) := \ker(\mathcal S^{k,\ell}_{[p]} : \mathcal B_r^- \alt^{k,\ell}(\sigma) \to \mathcal B_r^- \alt^{k-p,\ell+p}(\sigma)),$$
whose dimension is 
$$\dim \mathcal B_r^- \mathbb W^{k,\ell}_{[p]}(\sigma) = \dim \mathcal B_r^- \alt^{k,\ell}(\sigma) - \dim \mathcal B_r^- \alt^{k-p,\ell+p}(\sigma)).$$

\begin{theorem}

\begin{enumerate}
\item For $p = 1$, the following degrees of freedom are unisolvent for $\mathcal P^-_r \W^{k,\ell}(K)$:
$$
\begin{cases} \langle \iota^{*}_{\sigma}\jmath^{*}_{\sigma} \omega, b \rangle_{\sigma}, & \forall~~ b \in \mathcal B_r^-\alt^k(\sigma) \otimes \alt^{\ell - m}(\sigma^{\perp}), \quad \dim \sigma = m \in [k,\ell],  \\ 
\langle \iota^{*}_{\sigma} \iota^*_{\sigma} \omega, b \rangle_{\sigma}, & \forall~~ b \in \mathcal B_r^-\W^{k,\ell}(\sigma), \quad \dim \sigma > \ell.
\end{cases}	
$$
 The resulting finite element space is $\iota^{*}\jmath^*$-conforming, and can be denoted as $C_{\iota^*\jmath^*} \mathcal P_r^-\W^{k,\ell}$. 
\item The following degrees of freedom are unisolvent for $\mathcal P_r^- \W_{[p]}^{k,\ell}(K)$: 
$$
\begin{cases} \langle \iota^{*}_{\sigma}\jmath^{*}_{\sigma,[p]} \omega, b \rangle_{\sigma}, & \forall~~ b \in \bigoplus_{s = 0}^{p-1} \mathcal B_r^-\alt^{k}(\sigma) \otimes ( \alt^{m - s}(\sigma) \otimes \alt^{\ell - m + s}(\sigma^{\perp})), \\ &\quad \dim \sigma = m \in [k,\ell + p -1],\\ 
 \langle  \iota^{*}_{\sigma}\iota^{*}_{\sigma} \omega, b \rangle_{\sigma}, & \forall~~ b \in \mathcal B^-_r\W^{k,\ell}_{[p]}(\sigma) , \quad \dim \sigma \ge \ell + p.
\end{cases}	
$$
 The resulting finite element space is $\iota^{*}\jmath^*_{[p]}$-conforming, and can be denoted as $C_{\iota^*\jmath_{[p]}^*} \mathcal P_r^-\W_{[p]}^{k,\ell}$. 
  \end{enumerate}

\end{theorem}
Using \Cref{thm:prminus} and \Cref{lem:S-bubble-Pr-}, we can finish the proof.

\subsection{The $\mathcal P_r \alt^{k,\ell}$ family}

We have
$$ \dim \mathcal P_r\alt^{k,\ell} = \binom{n+r}{n} \binom{n}{k} \binom{n}{\ell}.$$

We introduce
\begin{equation}
\label{eq:Br}
\mathcal B_r\alt^{k,\ell}(K) := \{ \omega \in \mathcal P_r\alt^{k,\ell}(\sigma) : \iota_F^{**} K = 0, \,\, \forall F \triangleleft K, F \neq K \}.
\end{equation}

Similar to \Cref{lem:bubble-decomposition-Pr-} and \Cref{lem:bubble-decomposition}, the following lemma holds.

\begin{lemma}
 Let $\psi_{\sigma,i}$ be a basis of $C_{\iota^*}\mathcal P_r\alt^k$ dual to the degrees of freedom on $\sigma$. Then it holds that,
$$\mathcal B_r \alt^{k,\ell}(K) = \sum_{m = k}^n \sum_{\sigma \in \mathcal T_{\ge k}(K)} [\operatorname{span}_i \psi_{\sigma, i} ] \otimes N^{\ell}(\sigma,K).$$
\end{lemma}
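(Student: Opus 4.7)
The plan is to mirror the argument of \Cref{lem:bubble-decomposition} and \Cref{lem:bubble-decomposition-Pr-}, exploiting that $\{\psi_{\sigma,i}\}_{\sigma\in \mathcal{T}(K),\,i}$ is a basis of $\mathcal{P}_r\alt^k(K)$ partitioned by the hosting subsimplex $\sigma$ of its degree of freedom. First I would fix any $\omega \in \mathcal{P}_r\alt^{k,\ell}(K)$ and use this basis of $\mathcal{P}_r\alt^k$, tensored against a basis of $\alt^\ell$, to obtain a unique expansion
\[
\omega = \sum_{\sigma\in \mathcal{T}_{\ge k}(K)} \sum_{i} \psi_{\sigma,i} \otimes w_{\sigma,i}, \qquad w_{\sigma,i}\in \alt^\ell.
\]
Uniqueness of this expansion immediately gives that the sum on the right-hand side of the statement is in fact a direct sum, so only the set equality remains.

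For the inclusion ``$\supseteq$'', I would fix a codimension-one face $F \triangleleft_1 K$ and compute $\iota_F^\ast \iota_F^\ast (\psi_{\sigma,i}\otimes w_{\sigma,i}) = (\iota_F^\ast \psi_{\sigma,i}) \otimes (\iota_F^\ast w_{\sigma,i})$. For the first factor, the standard property of the $C_{\iota^*}\mathcal{P}_r\alt^k$ basis (inherited from the extension-by-zero construction of the shape functions attached to $\sigma$, see \eqref{eq:basis-pr}) gives $\iota_F^\ast \psi_{\sigma,i} = 0$ whenever $\sigma \not\trianglelefteq F$. For the second factor, the defining condition of $N^\ell(\sigma,K)$ in \eqref{def:Nell} gives $\iota_F^\ast w_{\sigma,i} = 0$ whenever $\sigma \trianglelefteq F$. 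Since one of the two situations always occurs, every summand on the right lies in $\mathcal{B}_r\alt^{k,\ell}(K)$.

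For the reverse inclusion, I would assume $\omega \in \mathcal{B}_r\alt^{k,\ell}(K)$, apply $\iota_F^\ast\iota_F^\ast$ for a given codimension-one face $F$, and group the sum according to whether $\sigma\trianglelefteq F$ or not. The terms with $\sigma \not\trianglelefteq F$ drop out by the triviality of $\iota_F^\ast \psi_{\sigma,i}$, so that
\[
0 = \iota_F^\ast \iota_F^\ast \omega = \sum_{\sigma \trianglelefteq F} \sum_{i} (\iota_F^\ast \psi_{\sigma,i}) \otimes (\iota_F^\ast w_{\sigma,i}).
\]
Using that $\{\iota_F^\ast \psi_{\sigma,i}\}_{\sigma\trianglelefteq F,\,i}$ is linearly independent in $\mathcal{P}_r\alt^k(F)$ (it is the subset of the basis on $F$ corresponding to the degrees of freedom hosted on subsimplices of $F$), we conclude $\iota_F^\ast w_{\sigma,i}=0$ for every $\sigma\trianglelefteq F$. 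Varying $F$ over all codimension-one faces of $K$ containing $\sigma$ yields $w_{\sigma,i}\in N^\ell(\sigma,K)$, completing the decomposition.

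The main (mild) obstacle I anticipate is the fact that $\iota_F^\ast\psi_{\sigma,i}$ need not simply be one of the basis functions on $F$; for the $\mathcal{P}_r$ family the traces can produce nonzero, but linearly independent, elements of $\mathcal{P}_r\alt^k(F)$ attached to $\sigma\trianglelefteq F$. This is slightly more delicate than the $\mathcal{P}^-$ case (where the Whitney basis trace cleanly restricts), but since only the linear independence of $\{\iota_F^\ast \psi_{\sigma,i}\}_{\sigma\trianglelefteq F,\,i}$ inside $\mathcal{P}_r\alt^k(F)$ is needed, and this follows from the standard unisolvency of $C_{\iota^*}\mathcal{P}_r\alt^k(F)$, the argument goes through without modification.
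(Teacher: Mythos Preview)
Your proposal is correct and follows exactly the approach the paper intends: the paper provides no standalone proof of this lemma, instead pointing to \Cref{lem:bubble-decomposition} and \Cref{lem:bubble-decomposition-Pr-} as the template, and your argument is precisely that template. One small remark: your ``mild obstacle'' is in fact no obstacle at all for the explicit AFW geometric-decomposition basis \eqref{eq:basis-pr}, since when $\sigma\trianglelefteq F$ the trace $\iota_F^\ast(\lambda^\alpha d\lambda_I)$ is literally the same monomial read in the barycentric coordinates of $F$, hence exactly a basis element on $F$; but as you note, only linear independence is needed, so your argument is complete either way.
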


Therefore, the dimension of the bubble in $n$ dimension is 
\begin{equation}
\begin{split} 
\dim \mathcal B_{r} \alt^{k,\ell}(K) = \sum_{m =k}^n \binom{n+1}{m+1} \Big[ \binom{r+k}{r} \binom{r - 1}{m - k} \Big] \cdot \binom{m}{\ell+m-n},
\end{split}
\end{equation}
since 
$$\dim \mathcal P^-_{r+k-m} \alt^{m-k} (\mathbb R^m) = \binom{r+k}{r} \binom{r - 1}{m - k}.$$
\begin{corollary}
$\dim \mathcal B_r \alt^{k,\ell}(K) = 0$ if $\dim K \ge \ell+ k +r$.
\end{corollary}

\begin{corollary}
\label{cor:spanning-Pr}
The set
$$
\lambda^{\alpha} d\lambda_I \otimes d\lambda_J: |\alpha| = r,\supp \alpha \cup I \cup J = [n+1]$$
is a spanning set of $\mathcal B_r\alt^{k,\ell}(K).$ 
\end{corollary}

\begin{lemma}
\label{lem:S-bubble-Pr}
If $k \le \ell + p $, then $\mathcal S_{\dagger,[p]} : \mathcal B_r \alt^{k,\ell}(\sigma) \to \mathcal B_r \alt^{k-p,\ell+p}(\sigma)$ is onto.
\end{lemma}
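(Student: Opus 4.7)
The plan is to adapt the appendix proof of Lemma \ref{lem:S-bubble-Pr-} to the $\mathcal P_r$ setting, substituting the spanning set from Corollary \ref{cor:spanning-Pr-} by that from Corollary \ref{cor:spanning-Pr}. Two features make this transfer possible. First, $\mathcal S_{\dagger,[p]}$ acts purely on the form-indices, so it commutes with multiplication by the scalar $\lambda^\alpha$; the scalar coefficient of a spanning element can therefore be kept untouched when constructing a preimage. Second, the bubble condition $\supp\alpha \cup I \cup J = [n+1]$ in Corollary \ref{cor:spanning-Pr} depends only on the union of indices, so preserving it amounts to choosing a preimage whose supporting indices cover $[n+1] \setminus \supp\alpha$.

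Given a spanning element $\omega = \lambda^{\alpha}\, d\lambda_{I'} \otimes d\lambda_{J'}$ of $\mathcal B_r \alt^{k-p,\ell+p}(\sigma)$ (with $|\alpha|=r$, $|I'|=k-p$, $|J'|=\ell+p$, and $\supp\alpha\cup I'\cup J' = [\![\sigma]\!]$), I would set $T := I' \cup J'$ and work inside the subspace $\alt^{k,\ell}_T$ of constant forms whose indices lie in $T$. Because $\mathcal S_{\dagger}$ only moves indices between the two slots, $\alt^{k,\ell}_T$ is preserved under $\mathcal S_{\dagger,[p]}$. Applying the surjectivity result of Lemma \ref{lem:Sp-inj-sur} within this restricted ambient space (of dimension $|T| \ge \max(k,\ell)$), which is valid when $k \le \ell + p$, I obtain a constant form $\eta \in \alt^{k,\ell}_T$ with $\mathcal S_{\dagger,[p]} \eta = d\lambda_{I'} \otimes d\lambda_{J'}$. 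Then $\tilde\omega := \lambda^\alpha \eta \in \mathcal P_r \alt^{k,\ell}$ is a preimage of $\omega$ under $\mathcal S_{\dagger,[p]}$, and every contributing basis element $\lambda^\alpha d\lambda_I \otimes d\lambda_J$ of $\tilde\omega$ satisfies $I \cup J \subseteq T = I' \cup J'$, hence $\supp\alpha \cup I \cup J \subseteq [\![\sigma]\!]$.

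The main obstacle is showing that the preimage can be refined so each surviving basis element actually satisfies $\supp\alpha \cup I \cup J = [\![\sigma]\!]$, not merely $\subseteq [\![\sigma]\!]$; otherwise the preimage might fail the bubble condition on faces opposite the missing vertices. I would handle this by induction on $p$. For the base case $p = 1$, a direct expansion gives
\[
\mathcal S_{\dagger}\bigl(d\lambda_{I' \cup \{j\}} \otimes d\lambda_{J' \setminus \{j\}}\bigr) = \pm\, d\lambda_{I'} \otimes d\lambda_{J'} \;+\; \text{(terms with } I \cup J = I' \cup J')
\]
for every $j \in J' \setminus I'$, so the candidate preimage already covers $I' \cup J'$ and the bubble condition is preserved. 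For the inductive step, I would factor $\mathcal S_{\dagger,[p]} = \mathcal S_{\dagger,[p-1]} \circ \mathcal S_\dagger$ and lift one index at a time, invoking the inductive hypothesis twice to keep the union of supporting indices equal to $I' \cup J'$ throughout. Since the formal structure is identical to the appendix proof of Lemma \ref{lem:S-bubble-Pr-}, I expect no new difficulty beyond careful sign and combinatorial bookkeeping.
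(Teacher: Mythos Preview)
Your overall strategy matches the paper's: fix the scalar $\lambda^\alpha$, lift only the constant form factor, and ensure the lifted indices still cover $[\![\sigma]\!]\setminus\supp\alpha$. The gap is in how you establish that last covering condition.

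Your base case $p=1$ does not prove what you need. The expansion shows that $\mathcal S_\dagger$ maps the span of $\{d\lambda_I\otimes d\lambda_J: I\cup J = T\}$ \emph{into} the corresponding target block; it does not show the restricted map is \emph{onto}. The element $d\lambda_{I'\cup\{j\}}\otimes d\lambda_{J'\setminus\{j\}}$ is not a preimage of $d\lambda_{I'}\otimes d\lambda_{J'}$ --- it hits that target plus other block elements --- so you still have to solve a linear system within the block, and nothing in your argument guarantees the system is consistent. (Try $k=2$, $\ell=1$, $T=\{1,2,3\}$: no single candidate works, and producing the right combination is precisely the content of block surjectivity.) Even granting the base case, the induction on $p$ breaks at the boundary $k=\ell+p$: the factorization $\mathcal S_{\dagger,[p]}=\mathcal S_{\dagger,[p-1]}\circ\mathcal S_\dagger$ needs block surjectivity of $\mathcal S_\dagger$ at level $(k,\ell)$, which requires $k\le\ell+1$; the other factorization needs $k\le\ell+p-1$ for the $\mathcal S_{\dagger,[p-1]}$ step. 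Neither follows from $k\le\ell+p$.

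The paper avoids both issues by working in the formal Abelian group $FX(n+1,k,\ell)$ and invoking Lemma~\ref{lem:fy} directly: $s_{\dagger,[p]}$ is surjective on each block $FY_{F,G}$ (with $F=I'\cup J'$) whenever $k\le\ell+p$, proved for all $p$ simultaneously via the inner-product comparison of Lemma~\ref{lem:fxp}. Every element of the block has $I\cup J=F\supseteq[\![\sigma]\!]\setminus\supp\alpha$, so its image under $\lambda^\alpha\,\omega(\cdot)$ lies in the spanning set of Corollary~\ref{cor:spanning-Pr}; no refinement or induction on $p$ is needed.
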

\begin{proof}
The proof is based on \Cref{cor:spanning-Pr},  shown in \Cref{subsec:inj/sur}.
\end{proof}

\begin{theorem}
\label{thm:pr}
The following degrees of freedom are unisolvent with respect to the shape function space $\mathcal P_{r} \alt^{k,\ell}(K)$:
\begin{enumerate}
\item
\begin{equation} \langle \iota^{*}_{\sigma} \iota^*_{\sigma} \omega, b \rangle_{\sigma}, \qquad \forall~~ b \in \mathcal B_r\alt^{k,\ell}(\sigma), \quad \forall \sigma \in \mathcal T(K).
\end{equation}	
 The resulting finite element space is $\iota^{*}\iota^*$-conforming, denoted as $C_{\iota^*\iota^*} \mathcal P_r \alt^{k,\ell}$. 

\item
$$
\begin{cases} \langle \iota^{*}_{\sigma}\jmath^{*}_{\sigma}\omega, b \rangle_{\sigma}, & \forall~~ b \in \mathcal B_r\alt^k(\sigma) \otimes \alt^{\ell - m}(\sigma^{\perp}), \quad \dim \sigma = m \in [k,\ell],  \\ 
\langle\iota^{*}_{\sigma} \iota^*_{\sigma} \omega, b \rangle_{\sigma}, & \forall~~ b \in \mathcal B_r\alt^{k,\ell}(\sigma), \quad \dim \sigma > \ell.
\end{cases}	
$$
 The resulting finite element space is $\iota^{*}\jmath^*$-conforming, denoted as $C_{\iota^*\jmath^*} \mathcal P_r \alt^{k,\ell}$. 
\item

$$
\begin{cases} \langle \iota^{*}_{\sigma}\jmath^{*}_{\sigma,[p]} \omega, b \rangle_{\sigma}, & \forall~~ b \in \bigoplus_{s = 0}^{p-1} \mathcal B_r\alt^{k}(\sigma) \otimes ( \alt^{m - s}(\sigma) \otimes \alt^{\ell - m + s}(\sigma^{\perp})), \\ &\quad \dim \sigma = m \in [k,\ell + p -1],\\ 
 \langle  \iota^{*}_{\sigma}\iota^{*}_{\sigma} \omega, b \rangle_{\sigma}, & \forall~~ b \in \mathcal B_r\alt^{k,\ell}(\sigma) , \quad \dim \sigma \ge \ell + p.
\end{cases}	
$$
 The resulting finite element space is $\iota^{*}\jmath^*_{[p]}$-conforming,  denoted as $C_{\iota^*\jmath_{[p]}^*} \mathcal P_r \alt^{k,\ell}$.
 \end{enumerate}
\end{theorem}
\begin{proof}
It suffices to show the dimension count in (1), and the remaining proofs are similar.
The dimension count reads  
\begin{equation}
\begin{split} 
\sum_{\sigma \in \mathcal T(K)} \dim \mathcal B_{r} \alt^{k,\ell}(\sigma) = &  \sum_{n'} \binom{n+1}{n'+1} \sum_{m =k}^n \binom{n'+1}{m+1} \Big[ \binom{r+k}{r} \binom{r - 1}{m - k} \Big] \cdot \binom{m}{\ell+m-n'} \\
 = & \sum_{m} \Big[ \binom{r+k}{r} \binom{r - 1}{m - k} \Big]  \sum_{n'} \binom{n+1}{n'+1} \binom{n'+1}{m+1}  \cdot \binom{m}{\ell+m-n'} \\ = & \sum_m \binom{r+k}{r} \binom{r - 1}{m - k} \binom{n+1}{m+1} \binom{n}{n-\ell} \\  = & \binom{r+k}{r} \binom{r + n}{r+k} \binom{n}{\ell}. 
\end{split}
\end{equation}
Here, we use the fact $\displaystyle \binom{r - 1}{m - k} = \binom{r-1}{r-1-m+k}$ and the Vandermonde identity. 
\end{proof}

Thus, we can perform the symmetry reduction similarly to the previous subsection, leading to the theorem below.  Let $$\mathcal P_r \W_{[p]}^{k,\ell} : \ker(\mathcal S_{\dagger,[p]}^{k,\ell} : \mathcal P_r \alt^{k,\ell} \to \mathcal P_r \alt^{k-p,\ell + p}).$$
 	Define 
$$\mathcal B_r \mathbb W^{k,\ell}_{[p]}(\sigma) = \ker(\mathcal S^{k,\ell}_{[p]} : \mathcal B_r \alt^{k,\ell}(\sigma) \to \mathcal B_r \alt^{k-p,\ell+p}(\sigma)),$$
whose dimension is 
$$\dim \mathcal B_r \mathbb W^{k,\ell}_{[p]}(\sigma) = \dim \mathcal B_r \alt^{k,\ell}(\sigma) - \dim \mathcal B_r \alt^{k-p,\ell+p}(\sigma)).$$
\begin{theorem}

\begin{enumerate}
\item The following degrees of freedom are unisolvent with respect to the shape function space $\mathcal P_r \W^{k,\ell}(K)$:
$$
\begin{cases} \langle \iota^{*}_{\sigma}\jmath^{*}_{\sigma} \omega, b \rangle_{\sigma}, & \forall~~ b \in \mathcal B_r\alt^k(\sigma) \otimes \alt^{\ell - m}(\sigma^{\perp}), \quad \dim \sigma = m \in [k,\ell],  \\ 
\langle \iota^{*}_{\sigma} \iota^*_{\sigma} \omega, b \rangle_{\sigma}, & \forall~~ b \in \mathcal B_r\W^{k,\ell }(\sigma), \quad \dim \sigma > \ell.
\end{cases}	
$$
 The resulting finite element space is $\iota^{*}\jmath^*$-conforming,  denoted as $C_{\iota^*\jmath^*} \mathcal P_r \W^{k,\ell}$.
\item The following degrees of freedom are unisolvent with respect to the shape function space $\mathcal P_r \W^{k,\ell}_{[p]}(K)$:
$$
\begin{cases} \langle \iota^{*}_{\sigma}\jmath^{*}_{\sigma,[p]} \omega, b \rangle_{\sigma}, & \forall~~ b \in \bigoplus_{s = 0}^{p-1} \mathcal B_r\alt^{k}(\sigma) \otimes ( \alt^{m - s}(\sigma) \otimes \alt^{\ell - m + s}(\sigma^{\perp})), \\ &\quad \dim \sigma = m \in [k,\ell + p -1],\\ 
 \langle  \iota^{*}_{\sigma}\iota^{*}_{\sigma} \omega, b \rangle_{\sigma}, & \forall~~ b \in \mathcal B_r\W^{k,\ell }_{[p]}(\sigma) , \quad \dim \sigma \ge \ell + p.
\end{cases}	
$$
 The resulting finite element space is $\iota^{*}\jmath^*_{[p]}$-conforming, denoted as $C_{\iota^*\jmath_{[p]}^*} \mathcal P_r \W_{[p]}^{k,\ell}$.
  \end{enumerate}

\end{theorem}

\subsection{Examples of high-order constructions}
To close this section, we show examples in two and three dimensions.

\subsubsection{The $\mathcal P_r\mathbb W^{1,1}$ element}

For $\mathcal P_r\mathbb W^{1,1}$ family, the shape function is $\mathcal P_r \otimes \mathbb S$.
In two dimensions, the dimension of the local shape function space is $\frac{3}{2}(r+2)(r+1)$. The degrees of freedom are 
\begin{align*}
	\int_{e} \bm \sigma_{tt} \cdot q, \quad \forall q \in \mathcal P_{r}(e), & & \text{count} = (r+1)	
	\\
	\int_{f} \bm \sigma_{tt} \cdot q ,\quad \forall q \in \mathcal B_r \mathbb W^{1,1}(f). & & \text{count} = \frac{3}{2}(r+1)r
\end{align*}

In three dimensions, the dimension of the local shape function space is $(r+3)(r+2)(r+1)$. The degrees of freedom are 
\begin{align*}
	\int_{e} \bm \sigma_{tt} \cdot q, \quad \forall q \in \mathcal P_{r}(e), & & \text{count} = (r+1)	
	\\
	\int_{f} \bm \sigma_{tt} \cdot q ,\quad \forall q \in \mathcal B_r \mathbb W^{1,1}(f) , & & \text{count} = \frac{3}{2}(r+1)r\\
	\int_{K} \bm \sigma  \cdot q, \quad \forall q \in  \mathcal B_r \mathbb W^{1,1}(K). & & \text{count} = (r+1)r(r-1)
\end{align*}

This recovers the high-order Regge element, as discussed in \cite{li2018regge}. In fact, we can check that the general definition of high-order $\mathcal P_r$ Regge element is given by $C_{\iota^*\iota^*}\mathcal P_r \W^{1,1}$. See \Cref{sub:regge} for details.  
\begin{figure}[htbp]
\FIG{\includegraphics[scale = 0.12,trim=0 0 0 250pt, clip]{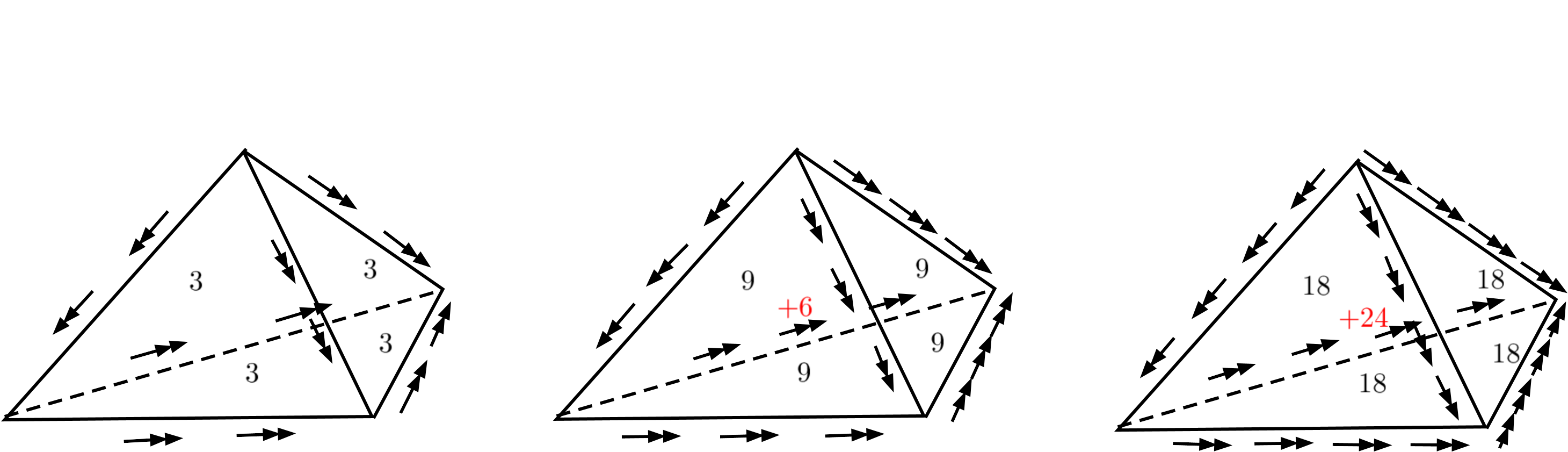}
\vspace{2em}}
{\caption{An illustration for $C_{\iota^*\iota^*}\mathcal P_r \W^{1,1}$ for $r = 1,2,3$. The local shape function is: ($r = 1$) $\mathcal P_1\otimes\mathbb S$, dim = 24; ($r = 2$) $\mathcal P_2 \otimes\mathbb S$, dim = 60; ($r = 3$) $\mathcal P_3 \otimes\mathbb S$, dim = 120. }}
\end{figure}

\subsubsection{The $\mathcal P_r^-\mathbb W^{1,1}$ element} 

In two dimensions, it follows from \Cref{charact-ker-Sdagger} that the shape function space is the kernel of 
$(\mathcal  P_{r-1} \otimes \mathbb S^2) \oplus (\bm x^{\perp} \otimes \bm x^{\perp}) \mathcal H_{r-2},$
where $\bm x^{\perp} = [y,-x].$ The dimension of the local shape function space is $\frac{3}{2}(r+1)r + (r-1) = \frac{1}{2}(r+2)(3r-1)$, and 
the degrees of freedom are 
\begin{align*}
	\int_{e} \bm \sigma_{tt} \cdot q ,\quad \forall q \in\mathcal P_{r-1}(e), & & \text{count} = r \\
	\int_{f} \bm \sigma_{tt} \cdot q ,\quad \forall q \in \mathcal B_r^- \mathbb W^{1,1}(f). & & \text{count} = \frac{1}{2}(3 r + 2) (r - 1)
\end{align*}



In three dimensions, the shape function space is $ (\mathcal  P_{r-1} \otimes \mathbb S) \oplus ( \bm x \times (\mathcal H_{r-2} \otimes \mathbb S )\times \bm x) $,
whose dimension is $\frac{1}{2} (r + 2) (r + 3) (2 r - 1).$
The degrees of freedom are 
\begin{align*}
	\int_{e} \bm \sigma_{tt} \cdot q ,\quad \forall q \in\mathcal P_{r-1}(e), & & \text{count} = r \\
	\int_{f} \bm \sigma_{tt} \cdot q ,\quad \forall q \in \mathcal B_r^- \mathbb W^{1,1}(f), & & \text{count} = \frac{1}{2}(3 r + 2) (r - 1) \\
	\int_{K} \bm \sigma  \cdot q, \quad \forall q \in  \mathcal B_r^- \mathbb W^{1,1}(K). & & \text{count} = \frac{1}{2}(r - 1) (2 r^2 - r - 2)
\end{align*}

%

\begin{figure}[htbp]
\FIG{\includegraphics[scale = 0.12,trim=0 0 0 250pt, clip]{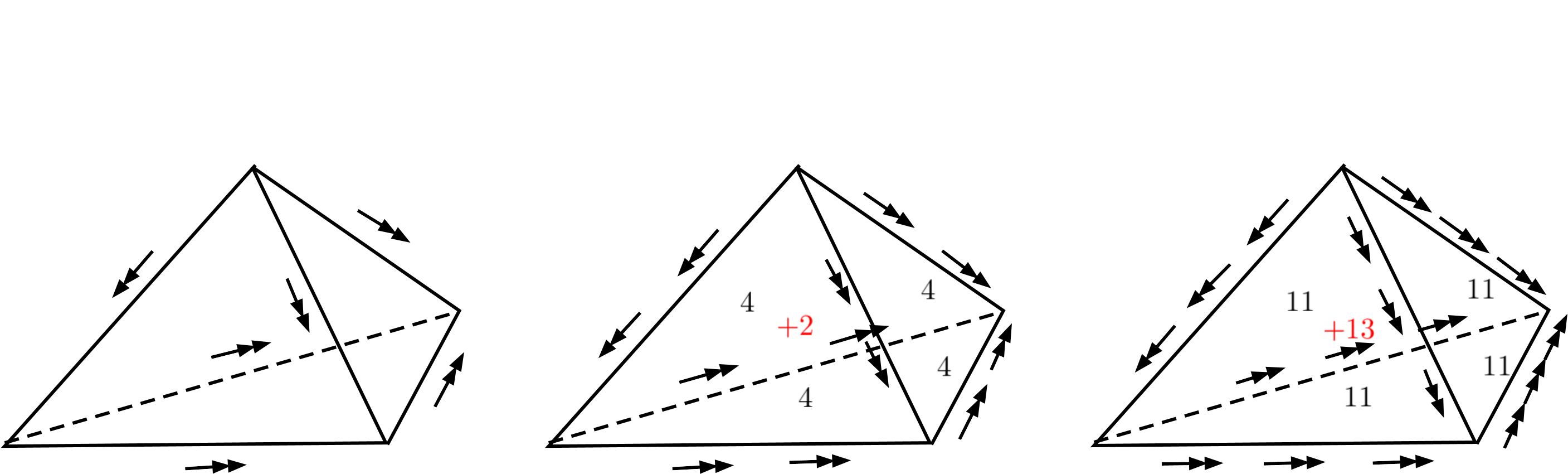}}
{\caption{An illustration for $C_{\iota^*\iota^*}\mathcal P_r^- \W^{1,1}$ for $r = 1,2,3$. The local shape function is: ($r = 1$) $\mathbb S$, dim = 6; ($r = 2$) $\mathcal P_1 \otimes  \mathbb S+ \bm x \times \mathbb S \times \bm x$, dim = 30; ($r = 3$) $\mathcal P_2 \otimes \mathbb S + (\bm x \times (\mathcal H_1 \otimes \mathbb S) \times \bm x)$, dim = 75.}}
\end{figure}

\subsubsection{The $\mathcal P_r\mathbb W^{1,2}$ element} 

For $\mathcal P_r \mathbb W^{1,2}$ family, the shape function space is $\mathcal P_r \otimes \mathbb T$, whose dimension is $\frac{4}{3}(r+3)(r+2)(r+1)$. The degrees of freedom are 
\begin{align*}
	\int_{e} \bm \sigma_{tn} \cdot q,\quad \forall q \in \mathcal P_r(e) \otimes \mathbb R^2, & & \text{count} = 2(r+1) \\ 
	\int_{f} \bm \sigma_{tn} \cdot q ,\quad \forall q \in \mathcal B_r\alt^1(f), & & \text{count} = (r^2-1) \\ 
	\int_{K} \bm \sigma  \cdot q, \quad \forall q \in \mathcal B_r\mathbb W^{1,2}(K). & & \text{count} = \frac{4}{3} (r + 1) (r + 2) r 
\end{align*}

\begin{figure}[htbp]
\FIG{\includegraphics[scale=0.12,trim=0 0 0 250pt, clip]{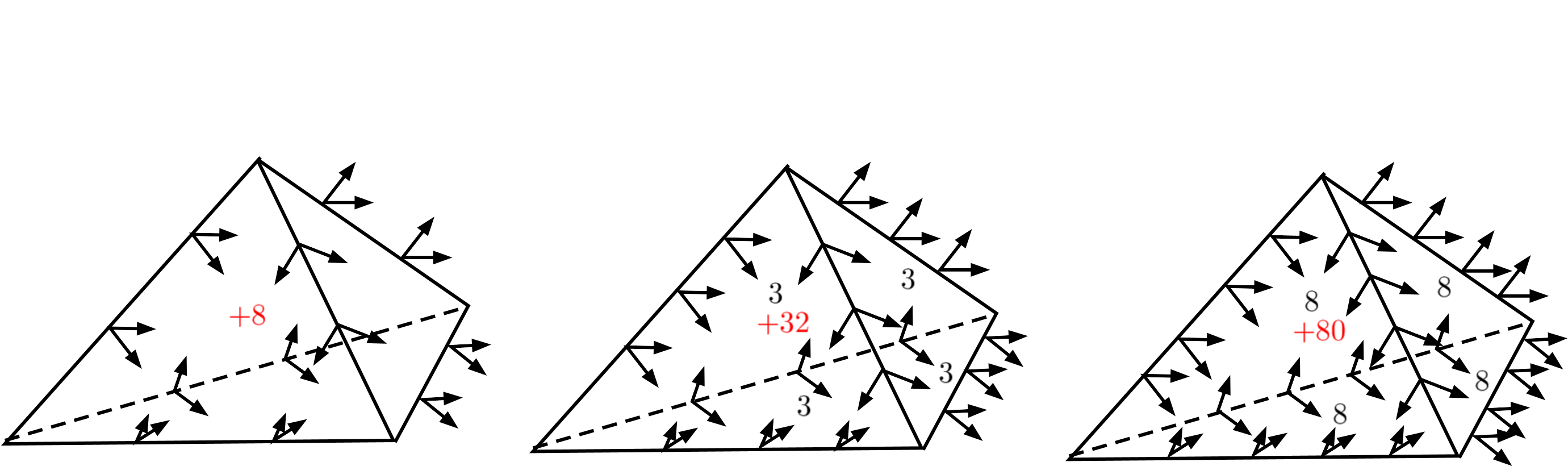}}
{\caption{An illustration for $C_{\iota^*\jmath^*}\mathcal P_r \W^{1,2}$ for $r = 1,2,3$. The local shape function is: ($r = 1$) $\mathcal P_1\otimes \mathbb T$, dim = 32; ($r = 2$) $\mathcal P_2 \otimes \mathbb T$, dim = 80; ($r = 3$) $\mathcal P_3 \otimes \mathbb T$, dim = 160.}}
\end{figure}

\subsubsection{The $\mathcal P_r^-\mathbb W^{1,2}$ element} 
\label{sec:Prminus-12}

For $\mathcal P_r^- \mathbb W^{1,2}$ family, the shape function space is $(\mathcal P_{r-1}  \otimes \mathbb T )\oplus  (\bm x \times (\mathcal H_{r-1} \otimes \mathbb S))$, whose dimension is $\frac{1}{6} (r + 2) (r + 3) (8 r - 1).$ 
The degrees of freedom are 
\begin{align*}
	\int_{e} \bm \sigma_{tn} \cdot q ,\quad \forall q \in \mathcal P_{r-1}(e) \otimes \mathbb R^2, & & \text{count} = 2r \\ 
	\int_{f} \bm \sigma_{tn} \cdot q ,\quad \forall q \in \mathcal B_r^-\alt^1(f), & & \text{count} = r(r-1) \\ 
	\int_{K} \bm \sigma  \cdot q, \quad \forall q \in \mathcal B_r^-\mathbb W^{1,2}(K). & & \text{count} = \frac{1}{6} (r + 2) (8 r^2 - r - 3)
\end{align*}
\begin{figure}[htbp]
\FIG{\includegraphics[scale = 0.12,trim=0 0 0 250pt, clip]{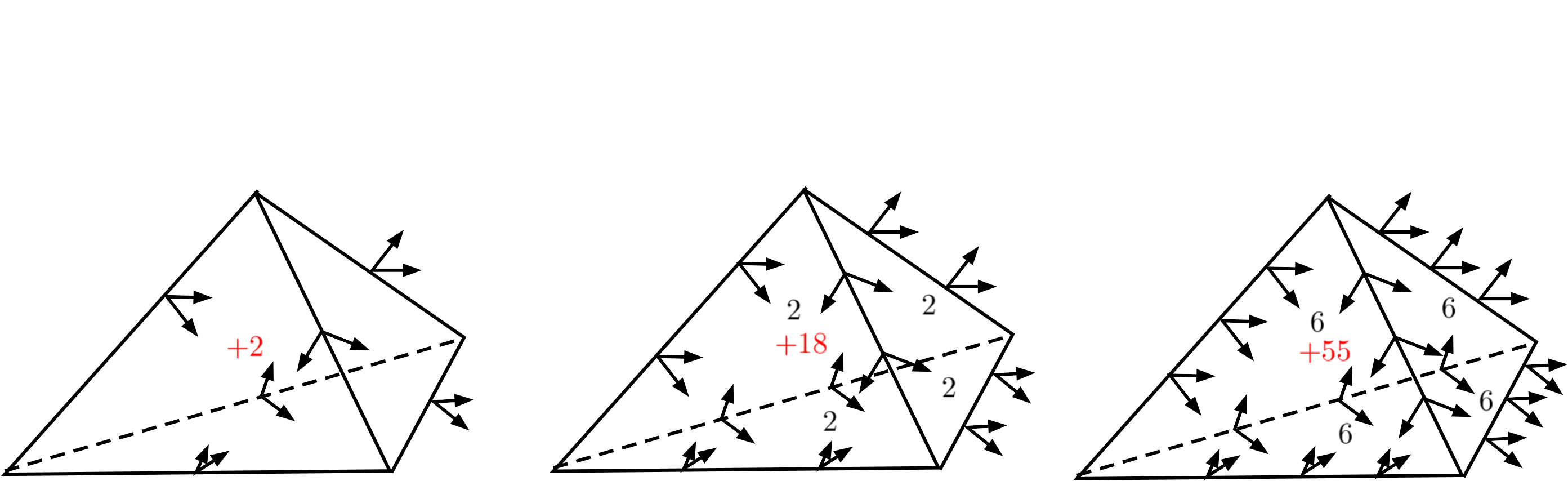}}
{\caption{An illustration for $C_{\iota^*\jmath^*}\mathcal P_r^- \W^{1,2}$ for $r = 1,2,3$. The local shape function is: ($r = 1$) $\mathbb T + \bm x \times \mathbb S$, dim = 14; ($r = 2$) $(\mathcal P_1 \otimes \mathbb T) + (\bm x \times (\mathcal H_1 \otimes \mathbb S))$, dim = 50; ($r = 3$) $(\mathcal P_2 \otimes \mathbb T )+ (\bm x \times (\mathcal H_2 \otimes \mathbb S))$, dim = 115.}}
\end{figure}

%

\subsubsection{The $\mathcal P_r\mathbb W^{2,2}$ element}  
For $\mathcal P_r \mathbb W^{2,2}$ family, the shape function space is $\mathcal P_r \otimes \mathbb S$, whose dimension is $(r+3)(r+2)(r+1)$. 
The degrees of freedom are
\begin{align*}
	\int_{f} \bm \sigma_{nn} \cdot q ,\quad \forall q \in \mathcal P_r(f), & & \text{count} = \frac{1}{2}(r+2)(r+1) \\ 
	\int_{K} \bm \sigma  \cdot q, \quad \forall q \in \mathcal B_r\mathbb W^{2,2}(K). & & \text{count} = (r + 1)^2 (r + 2) 
\end{align*}

\begin{figure}[htbp]
\FIG{\includegraphics[scale = 0.12,trim=0 0 0 0pt, clip]{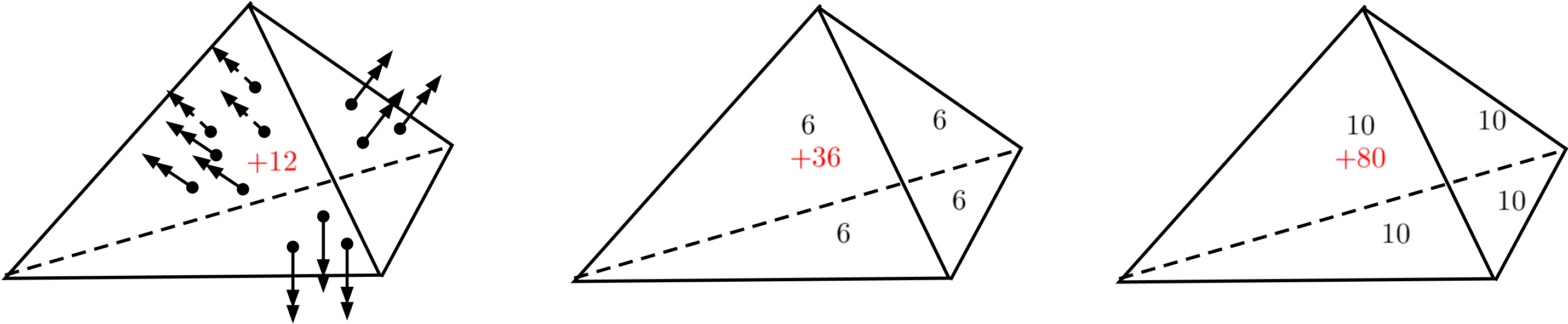}}
{\caption{An illustration for $C_{\iota^*\iota^*}\mathcal P_r\W^{2,2}$ for $r = 1,2,3$. The local shape function is: ($r = 1$) $\mathcal P_1 \otimes \mathbb S$, dim = 24; ($r = 2$) $\mathcal P_2 \otimes \mathbb S$, dim = 60; ($r = 3$) $\mathcal P_3 \otimes \mathbb S$, dim = 120.}}
\end{figure}
%


\subsubsection{The $\mathcal P_r^-\mathbb W^{2,2}$ element} 

For the $\mathcal P_r^- \mathbb W^{2,2}$ family, the shape function space is $\mathcal P_{r-1}\otimes \mathbb S + \bm x \otimes \bm x\mathcal P_{r-2}$, 
whose dimension is $\frac{1}{2} r (r + 3) (2 r + 1).$ 
The degrees of freedom are 
\begin{align*}
	\int_{f} \bm \sigma_{nn} \cdot q ,\quad \forall q \in \mathcal P_{r-1}(f), & & \text{count} = \frac{1}{2}(r+1)r \\
	\int_{K} \bm \sigma  \cdot q, \quad \forall q \in \mathcal B_r^-\mathbb W^{2,2}(K). & & \text{count} = \frac{1}{2} r (2 r^2 + 3 r - 1)
\end{align*}
\begin{figure}[htbp]
\FIG{\includegraphics[scale = 0.12,trim=0 0 0 0pt, clip]{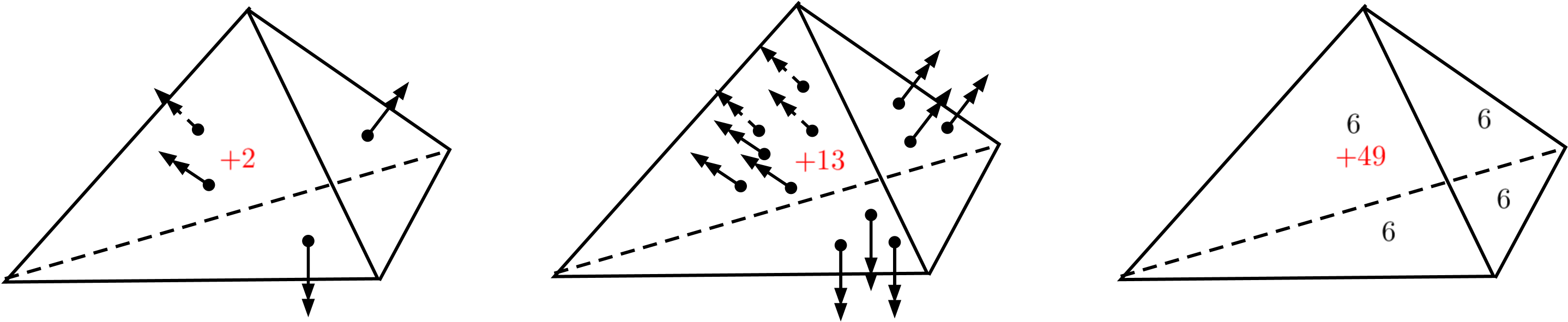}}
{\caption{An illustration for $C_{\iota^*\iota^*}\mathcal P_r^-\W^{2,2}$ for $r = 1,2,3$. The local shape function is: ($r = 1$) $\mathbb S$, dim = 6; ($r = 2$) $\mathcal P_1 \otimes \mathbb S + \bm x \otimes \bm x \mathcal H_0$, dim = 25; ($r = 3$) $\mathcal P_2 \otimes \mathbb S + \bm x \otimes \bm x \mathcal H_1$, dim = 63.}}
\end{figure}

\subsubsection{The $\mathcal P_r\mathbb W_{[2]}^{2,1}$ element}  

For the $\mathcal P_r \mathbb W^{2,1}_{[2]}$ 
family, the shape function space is $\mathcal P_r \otimes \mathbb T $, whose dimension is $\frac{4}{3}(r+3)(r+2)(r+1)$. 
the degrees of freedom are 
\begin{align*}
	\int_{f} \bm \sigma_{nt} \cdot q ,\quad \forall q \in \mathcal P_r(f) \otimes \mathbb R^2,  & & \text{count} = (r+2)(r+1) \\
	\int_{K} \bm \sigma  \cdot q, \quad \forall q \in \mathcal B_r\mathbb W^{2,1}(K). & & \text{count} = \frac{4}{3} (r + 1) (r + 2) r
\end{align*}

\begin{figure}[htbp]
\FIG{\includegraphics[scale = 0.12, trim=0 0 0 0pt, clip]{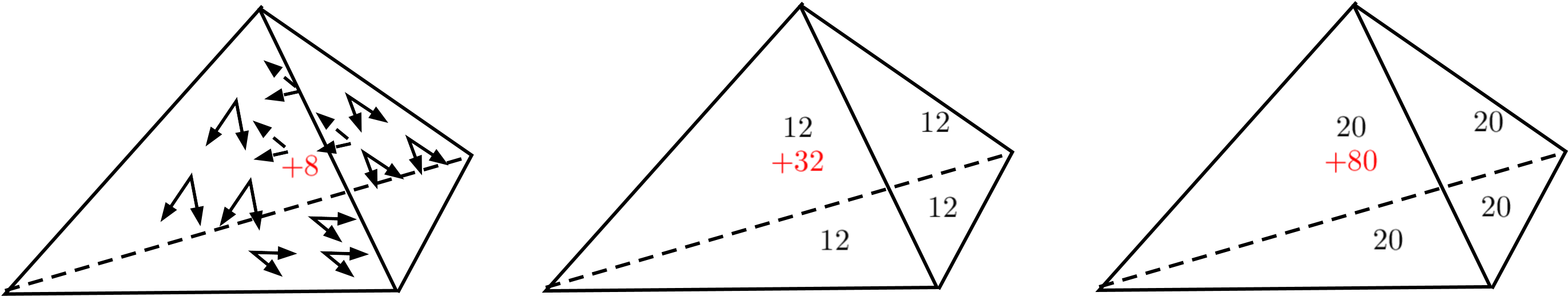}}
{\caption{An illustration for $C_{\iota^*\jmath_{[2]}^*}\mathcal P_r\W^{2,1}_{[2]}$ for $r = 1,2,3$. The local shape function is: ($r = 1$) $\mathcal P_1\otimes\mathbb T$, dim = 32; ($r = 2$) $\mathcal P_2\otimes\mathbb T$, dim = 80; ($r = 3$) $\mathcal P_3\otimes\mathbb T$, dim = 160.}}
\end{figure}


\subsubsection{The $\mathcal P_r^-\mathbb W_{[2]}^{2,1}$ element}  

For $\mathcal P_r^- \mathbb W^{2,1}$ 
family, the shape function space is $(\mathcal P_{r-1}\otimes \mathbb T )\oplus (\bm x \times (\mathcal H_{r-2}\otimes \mathbb V)\otimes \bm x),$ 
whose dimension is $\frac{1}{3} (4 r - 1) (r + 1) (r + 3)$.
The degrees of freedom are 
\begin{align*}
	\int_{f} \bm \sigma_{nt} \cdot q ,\quad \forall q  \in \mathcal P_{r-1} (f) \otimes \mathbb R^2, & & \text{count} = (r+1)r \\ 
	\int_{K} \bm \sigma  \cdot q, \quad \forall q \in \mathcal B_r^{-}\mathbb W_{[2]}^{2,1}(K). & & \text{count} = \frac{1}{3} (r + 1) (4 r + 3) (r - 1)
\end{align*}

\begin{figure}[htbp]
\FIG{\includegraphics[scale = 0.12,trim=0 0 0 0pt, clip]{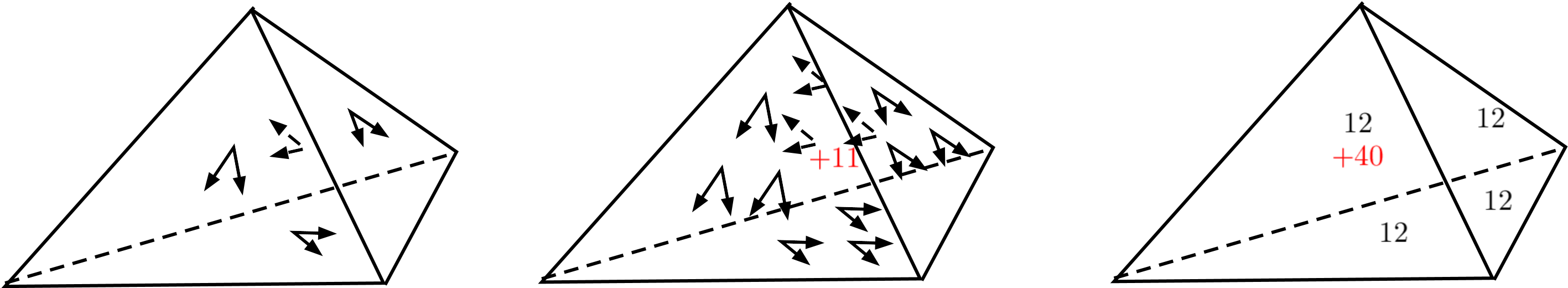}}
{\caption{An illustration for $C_{\iota^*\jmath_{[2]}^*}\mathcal P_r^-\W^{2,1}_{[2]}$ for $r = 1,2,3$. The local shape function is: ($r = 1$) $ \mathbb T$, dim = 8; ($r = 2$) $(\mathcal P_1 \otimes \mathbb T) \oplus (\bm x \times \mathbb V \otimes \bm x)$, dim = 35; ($r = 3$) $\mathcal (P_2 \otimes \mathbb T) \oplus (\bm x \times (\mathcal H_1 \otimes \mathbb V) \otimes \bm x)$, dim = 88.}}
\end{figure}
 \bibliographystyle{plain}
 \bibliography{ref}
 
\newpage
\appendix 

\section{Technical Details on the BGG Framework}

In this paper, we show the technical details in 
\Cref{sec:bgg,sec:whitney,sec:high-order}.  
\subsection{Adjointness and commutativity}
In this subsection, we show the adjointness of $\mathcal S$ and $\mathcal S_{\dagger}$, and the commutativity of $\mathcal S_{\dagger}$ and $\kappa$. 
We first show the proof of \Cref{lem:S-inj-suj}, (1).

\begin{lemma}
\label{lem:adjointness}
$\cS$ and $\cS_{\dagger}$ are adjoint with respect to the standard Frobenius norm, i.e., 
$$
(\mathcal S^{k, \ell}\omega, \mu)=(\omega, \mathcal S^{k+1, \ell-1}_{\dagger}\mu), \quad\forall \omega\in \alt^{k, \ell}, \mu\in \alt^{k+1, \ell-1}.
$$
\end{lemma}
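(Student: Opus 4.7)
The plan is to rewrite $\mathcal{S}$ and $\mathcal{S}_\dagger$ in the familiar language of creation (wedge) and annihilation (interior product) operators acting on the two tensor slots separately, and then invoke the standard fact that these are mutually adjoint on $\alt^\bullet$. Fix an orthonormal basis $e_1,\dots,e_n$ of $V$ with dual basis $e^1,\dots,e^n$. For $\omega = \alpha\otimes\beta \in \alt^k\otimes\alt^\ell$, let $(e^i\wedge_1)\omega := (e^i\wedge\alpha)\otimes\beta$ and $\iota_{e_i}^{(2)}\omega := \alpha\otimes\iota_{e_i}\beta$, and define $(e^i\wedge_2)$ and $\iota_{e_i}^{(1)}$ analogously on the other slot.

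The first step is to verify, directly from the definitions \eqref{def:S} and \eqref{eq:Sdagger}, the operator identities
\[
\mathcal{S}^{k,\ell} \;=\; \sum_{i=1}^n (e^i\wedge_1)\circ \iota_{e_i}^{(2)}, \qquad \mathcal{S}_\dagger^{k+1,\ell-1} \;=\; \sum_{i=1}^n \iota_{e_i}^{(1)}\circ (e^i\wedge_2).
\]
This is routine: expanding $(e^i\wedge\alpha)(v_1,\dots,v_{k+1})$ by the standard formula $\sum_j(-1)^{j+1}e^i(v_j)\alpha(v_1,\dots,\widehat{v_j},\dots,v_{k+1})$ and using $\sum_i e^i(v_j)\iota_{e_i}\beta = \iota_{v_j}\beta$ reproduces \eqref{def:S} exactly; an analogous computation handles $\mathcal{S}_\dagger$.

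The second step is to note that operations on distinct slots commute, and that $e^i\wedge$ is the Frobenius adjoint of $\iota_{e_i}$ on $\alt^\bullet$. Applying these facts slot by slot,
\[
(\mathcal{S}\omega,\mu) = \sum_i \bigl((e^i\wedge_1)\iota_{e_i}^{(2)}\omega,\,\mu\bigr) = \sum_i \bigl(\iota_{e_i}^{(2)}\omega,\,\iota_{e_i}^{(1)}\mu\bigr) = \sum_i \bigl(\omega,\,(e^i\wedge_2)\iota_{e_i}^{(1)}\mu\bigr) = (\omega,\mathcal{S}_\dagger\mu),
\]
where in the last equality we used that $(e^i\wedge_2)$ and $\iota_{e_i}^{(1)}$ commute. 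The corollary $\ker(\mathcal{S}_\dagger^{k,\ell}) = \operatorname{ran}(\mathcal{S}^{k-1,\ell+1})^\perp$ then follows from elementary linear algebra.

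The only real obstacle is bookkeeping in the first step: the signs $(-1)^{j+1}$ in \eqref{def:S} and \eqref{eq:Sdagger} must be matched with those coming from $e^i\wedge$ and $\iota_{e_i}$, and one must check that the resulting expression is independent of the orthonormal basis (it is, since both $\sum_i e^i\wedge\iota_{e_i}$ and its variant are basis-independent constructions). Once the creation/annihilation formulas are in hand, the adjointness is essentially a one-line consequence of the classical adjoint pair on $\alt^\bullet$.
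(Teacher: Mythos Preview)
Your argument is correct and takes a genuinely different route from the paper. The paper works directly with monomials $\omega = dx^{\sigma_1}\wedge\cdots\wedge dx^{\sigma_k}\otimes dx^{\tau_1}\wedge\cdots\wedge dx^{\tau_\ell}$ and $\mu = dx^{\alpha_1}\wedge\cdots\wedge dx^{\alpha_{k+1}}\otimes dx^{\beta_1}\wedge\cdots\wedge dx^{\beta_{\ell-1}}$, expands $\mathcal{S}\omega$ and $\mathcal{S}_\dagger\mu$ explicitly, observes that both inner products vanish unless $\{\sigma_1,\dots,\sigma_k\}\subset\{\alpha_1,\dots,\alpha_{k+1}\}$ and $\{\beta_1,\dots,\beta_{\ell-1}\}\subset\{\tau_1,\dots,\tau_\ell\}$, and then checks by hand that the two surviving terms agree. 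Your approach instead factors $\mathcal{S}$ and $\mathcal{S}_\dagger$ through the creation/annihilation pair $(e^i\wedge,\iota_{e_i})$ acting on separate tensor slots, reducing the adjointness to the classical fact that wedge and interior product are Frobenius adjoints on $\alt^\bullet$. This is cleaner and more structural: it avoids index-matching case analysis, makes the basis independence manifest, and would extend transparently to the iterated operators $\mathcal{S}_{[p]}$ and $\mathcal{S}_{\dagger,[p]}$ used later in the paper. The paper's computation, on the other hand, is entirely self-contained and requires no prior familiarity with the adjoint pair on $\alt^\bullet$.
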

\begin{proof}
Let $\omega:=dx^{\sigma_{1}}\wedge \cdots \wedge dx^{\sigma_{k}}\otimes dx^{\tau_{1}}\wedge\cdots \wedge dx^{\tau_{\ell}}$ (as a default convention of notation, we assume that $\sigma_{1}, \cdots, \sigma_{k}$ are different from each other, and similar for $\tau_{1}, \cdots, \tau_{\ell}$; otherwise $\omega=0$ and the theorem is trivial). Then
$$
\mathcal S^{k, \ell}\omega=\sum_{j=1}^{\ell}(-1)^{\ell+1}dx^{\tau_{j}}\wedge dx^{\sigma_{1}}\wedge \cdots \wedge dx^{\sigma_{k}}\otimes dx^{\tau_{1}}\wedge\cdots  \wedge \widehat{dx^{\tau_{j}}}\wedge \cdots\wedge dx^{\tau_{\ell}}.
$$
Let $\mu:=dx^{\alpha_{1}}\wedge \cdots \wedge dx^{\alpha_{k+1}}\otimes dx^{\beta_{1}}\wedge\cdots \wedge dx^{\beta_{\ell-1}}$. Consider the Frobenius inner product $(\mathcal S^{k, \ell}\omega, \mu)$. The inner product is nonzero only if 
\begin{equation}\label{nonzero-condition-1}
\{\sigma_{1}, \cdots,  \sigma_{k}\}\subseteq \{\alpha_{1}, \cdots, \alpha_{k+1}\},
\end{equation}
 and 
\begin{equation}\label{nonzero-condition-2}
\{\beta_{1}, \cdots,  \beta_{\ell-1}\}\subseteq \{\tau_{1}, \cdots, \tau_{\ell}\}.
\end{equation}
Similarly, 
$$
\mathcal S^{k+1, \ell-1}_{\dagger}\mu=\sum_{j=1}^{k}(-1)^{k+1}dx^{\alpha_{1}}\wedge \cdots \wedge \widehat{dx^{\alpha_{j}}}\wedge\cdots\wedge dx^{\alpha_{k+1}}\otimes dx^{\alpha_{j}}\wedge dx^{\beta_{1}}\wedge\cdots \wedge dx^{\beta_{\ell-1}}.
$$
We verify that if either \eqref{nonzero-condition-1} or \eqref{nonzero-condition-2} fails, then $(\mathcal S^{k, \ell}\omega, \mu)=0=(\omega, \mathcal S^{k+1, \ell-1}_{\dagger}\mu)$, which satisfies the theorem. Therefore hereafter we assume \eqref{nonzero-condition-1} and \eqref{nonzero-condition-2}. Without loss of generality, we further assume (with the order)
$$
\sigma_{1}, \cdots,  \sigma_{k}=\alpha_{2}, \cdots, \alpha_{k+1},
$$
and 
$$
\beta_{1}, \cdots,  \beta_{\ell-1}=\tau_{2}, \cdots, \tau_{\ell}.
$$
Then 
$$
(\mathcal S^{k, \ell}\omega, \mu)=-\delta({\tau_{1}, \alpha_{1}})=(\omega, \mathcal S^{k+1, \ell-1}_{\dagger}\mu).
$$
Here $\delta(\cdot, \cdot)$ is the Kronecker's delta. 
The desired result follows as any element in $\alt^{k, \ell}$ (or $\alt^{k+1, \ell-1}$) can be written as a linear combination of monomials of the form of $\omega$ (or $\mu$) above. 
\end{proof}
\begin{lemma}
The identity \eqref{SdaggerP} holds. That is, $\kappa$ and $\mathcal S_{\dagger}$ commute.
\end{lemma}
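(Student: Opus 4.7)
The plan is to verify the identity by unfolding both sides on arbitrary test vectors and comparing, the only nontrivial ingredient being the alternating property of $\omega$ in its first $k$ arguments.

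Fix $\omega \in \alt^{k,\ell}$, test vectors $v_1, \dots, v_{k-2} \in V$ for the first slot, and $u_1, \dots, u_{\ell+1} \in V$ for the second. First I would compute the left-hand side using \eqref{eq:Sdagger} to expand $\cS_{\dagger}^{k-1,\ell}$ and then \eqref{def:kappa} to expand $\kappa^{k,\ell}$, yielding
\[
\cS_{\dagger}^{k-1,\ell}\kappa^{k,\ell}\omega(v_1,\dots,v_{k-2})(u_1,\dots,u_{\ell+1})
= \sum_{j=1}^{\ell+1}(-1)^{j+1}\,\omega(x,u_j,v_1,\dots,v_{k-2})(u_1,\dots,\widehat{u_j},\dots,u_{\ell+1}).
\]
Next I would compute the right-hand side by expanding $\kappa^{k-1,\ell+1}$ first and then $\cS_{\dagger}^{k,\ell}$, obtaining
\[
-\kappa^{k-1,\ell+1}\cS_{\dagger}^{k,\ell}\omega(v_1,\dots,v_{k-2})(u_1,\dots,u_{\ell+1})
= -\sum_{j=1}^{\ell+1}(-1)^{j+1}\,\omega(u_j,x,v_1,\dots,v_{k-2})(u_1,\dots,\widehat{u_j},\dots,u_{\ell+1}).
\]

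The key step is then the observation that $\omega(\,\cdot\,)(\,\cdot\,)$ is alternating in its first $k$ arguments, so swapping $x$ and $u_j$ produces a sign: $\omega(u_j,x,v_1,\dots,v_{k-2}) = -\omega(x,u_j,v_1,\dots,v_{k-2})$. Substituting this into the right-hand side cancels the overall minus sign and reproduces the left-hand side term by term, establishing \eqref{SdaggerP}.

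I do not anticipate a genuine obstacle: the statement is dual to the analogous identity $\mathcal S^{k,\ell} = d^{k,\ell-1}K^{k,\ell} - K^{k+1,\ell}d^{k,\ell}$ proved in \cite{arnold2021complexes}, and the computation is purely algebraic once one keeps careful track of which slot $\kappa$ and $\cS_{\dagger}$ act on (first vs.\ second index) and of the alternating property of $\omega$. The only place one must be vigilant is the bookkeeping of signs: the Koszul insertion $\kappa$ prepends $x$ to the \emph{first} argument list, while $\cS_{\dagger}$ prepends $u_j$ also to the first argument list, so the two operations compete for the same leading slot, and a single transposition accounts for the minus sign.
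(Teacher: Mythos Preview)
Your proof is correct and takes essentially the same strategy as the paper---a direct verification of the identity---but with a cleaner execution. The paper works in coordinates: it takes $\omega$ to be a basis monomial $dx^{\sigma_1}\wedge\cdots\wedge dx^{\sigma_k}\otimes dx^{\tau_1}\wedge\cdots\wedge dx^{\tau_\ell}$, expands $\kappa\omega$, $\mathcal S_\dagger\kappa\omega$, $\mathcal S_\dagger\omega$, and $\kappa\mathcal S_\dagger\omega$ as explicit double sums over indices $i,j$, and then matches the two double sums by swapping dummy indices. Your argument achieves the same thing coordinate-free by evaluating on arbitrary test vectors and reducing everything to a single transposition $\omega(u_j,x,\dots)=-\omega(x,u_j,\dots)$. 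The underlying mechanism is identical (the alternating property in the first slot), but your version avoids the index bookkeeping and is shorter; the paper's version has the minor advantage of making the structure visible in a fixed basis, which aligns with how the adjacent lemmas in the appendix are proved.
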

\begin{proof}
We give a direct proof. Let $\omega=dx^{\sigma_{1}}\wedge \cdots \wedge dx^{\sigma_{k}}\otimes dx^{\tau_{1}}\wedge \cdots \wedge dx^{\tau_{\ell}}$.
Then 
$$
\kappa\omega=\sum_{i=1}^{k}(-1)^{i+1}x^{\sigma_{i}}dx^{\sigma_{1}}\wedge\cdots\wedge \widehat{dx^{\sigma_{i}}}\wedge \cdots \wedge dx^{\sigma_{k}}\otimes dx^{\sigma_{i}}\wedge dx^{\tau_{1}}\wedge \cdots \wedge dx^{\tau_{\ell}}.
$$
\begin{align*}
&\mathcal S_{\dagger}\kappa\omega=\\
&\sum_{i=1}^{k}(-1)^{i+1}(\sum_{j=1}^{i-1}(-1)^{j+1}x^{\sigma_{i}}dx^{\sigma_{1}}\wedge\cdots\wedge \widehat{dx^{\sigma_{j}}}\wedge \cdots\wedge \widehat{dx^{\sigma_{i}}}\wedge \cdots \wedge dx^{\sigma_{k}}\otimes dx^{\sigma_{j}}\wedge dx^{\tau_{1}}\wedge \cdots \wedge dx^{\tau_{\ell}}\\
& + \sum_{j=i+1}^{k}(-1)^{j}x^{\sigma_{i}}dx^{\sigma_{1}}\wedge\cdots\wedge \widehat{dx^{\sigma_{i}}}\wedge \cdots\wedge \widehat{dx^{\sigma_{j}}}\wedge \cdots \wedge dx^{\sigma_{k}}\otimes dx^{\sigma_{j}}\wedge dx^{\tau_{1}}\wedge \cdots \wedge dx^{\tau_{\ell}})\\
&=\sum_{i=1}^{k}\sum_{j=1}^{i-1}(-1)^{i+j}x^{\sigma_{i}}dx^{\sigma_{1}}\wedge\cdots\wedge \widehat{dx^{\sigma_{j}}}\wedge \cdots\wedge \widehat{dx^{\sigma_{i}}}\wedge \cdots \wedge dx^{\sigma_{k}}\otimes dx^{\sigma_{j}}\wedge dx^{\tau_{1}}\wedge \cdots \wedge dx^{\tau_{\ell}}\\&
 +\sum_{i=1}^{k}\sum_{j=i+1}^{k}(-1)^{i+j+1}x^{\sigma_{i}}dx^{\sigma_{1}}\wedge\cdots\wedge \widehat{dx^{\sigma_{i}}}\wedge \cdots\wedge \widehat{dx^{\sigma_{j}}}\wedge \cdots \wedge dx^{\sigma_{k}}\otimes dx^{\sigma_{j}}\wedge dx^{\tau_{1}}\wedge \cdots \wedge dx^{\tau_{\ell}}.
\end{align*}
Similarly, 
$$
\mathcal S_{\dagger}\omega=\sum_{j=1}^{k}(-1)^{j+1}dx^{\sigma_{1}}\wedge\cdots\wedge  \widehat{dx^{\sigma_{j}}}\wedge \cdots \wedge dx^{\sigma_{k}}\otimes dx^{\sigma_{j}}\wedge dx^{\tau_{1}}\wedge \cdots \wedge dx^{\tau_{\ell}},
$$
and 
\begin{align*}
&\kappa \mathcal S_{\dagger}\omega=\\
&\sum_{j=1}^{k}(-1)^{j+1}(\sum_{i=1}^{j-1}(-1)^{i+1}x^{\sigma_{i}}dx^{\sigma_{1}}\wedge\cdots\wedge \widehat{dx^{\sigma_{i}}}\wedge \cdots\wedge \widehat{dx^{\sigma_{j}}}\wedge \cdots \wedge dx^{\sigma_{k}}\otimes dx^{\sigma_{j}}\wedge dx^{\tau_{1}}\wedge \cdots \wedge dx^{\tau_{\ell}}\\
& + \sum_{i=j+1}^{k}(-1)^{i}x^{\sigma_{i}}dx^{\sigma_{1}}\wedge\cdots\wedge \widehat{dx^{\sigma_{j}}}\wedge \cdots\wedge \widehat{dx^{\sigma_{i}}}\wedge \cdots \wedge dx^{\sigma_{k}}\otimes dx^{\sigma_{j}}\wedge dx^{\tau_{1}}\wedge \cdots \wedge dx^{\tau_{\ell}})\\
&=\sum_{i=1}^{k}\sum_{j=1}^{i-1}(-1)^{i+j}x^{\sigma_{i}}dx^{\sigma_{1}}\wedge\cdots\wedge \widehat{dx^{\sigma_{j}}}\wedge \cdots\wedge \widehat{dx^{\sigma_{i}}}\wedge \cdots \wedge dx^{\sigma_{k}}\otimes dx^{\sigma_{j}}\wedge dx^{\tau_{1}}\wedge \cdots \wedge dx^{\tau_{\ell}}\\&
 +\sum_{i=1}^{k}\sum_{j=i+1}^{k}(-1)^{i+j+1}x^{\sigma_{i}}dx^{\sigma_{1}}\wedge\cdots\wedge \widehat{dx^{\sigma_{i}}}\wedge \cdots\wedge \widehat{dx^{\sigma_{j}}}\wedge \cdots \wedge dx^{\sigma_{k}}\otimes dx^{\sigma_{j}}\wedge dx^{\tau_{1}}\wedge \cdots \wedge dx^{\tau_{\ell}},
\end{align*}
where in the last step we swapped the dumb indices. 
\end{proof}

\subsection{Combinatorical Preparations}
\label{subsec:comb-preparation}
Next, we show the injectivity and the sujectivity result.
The proof we adopt here resembles that in the appendix of \cite{arnold2021complexes}. However, we generalize the original proof to the iterated case and the bubble case. 

To this end, we first introduce some notations on combinatorics. Let $X(n,k)$ be the increasing $k$-tuple in $[n] := \{1,2,\cdots, n\}$. That is, $X(n,k) := \{\sigma \in [n]^k : \sigma_1 < \sigma_2 < \cdots < \sigma_k\}.$ Let $\mathcal F X(n,k)$ be the free $\mathbb R$ Abelian group generated by $X(n,k)$, and we use $[I]$ to represent the element associated with $I$. 
Therefore, the element in $\mathcal FX(n,k)$ has the form $\sum_{I \in X(n,k)} a_I [I],$ where $a_{I} \in \mathbb R$. 
{It is easy to see that $\dim \mathcal F X(n,k) = \binom{n}{k}.$} We then define $s^k : \mathcal F X(n,k) \to \mathcal F X(n,k+1)$ such that 
$$ s^k([I]) = \sum_{j \in [n] \setminus I}  [I\cup\{j\}], \quad \forall I \in X(n,k).$$
We also define $s_{\dagger}^{k+1} : \mathcal F X(n,k+1) \to \mathcal F X(n,k)$ such that 
$$s^{k+1}_{\dagger}([J]) =  \sum_{j \in J} [J \setminus \{j\}], \quad \forall J \in X(n,k+1).$$

We introduce the inner product on $\mathcal F X(n,k)$ such that $\langle [I], [J] \rangle = \delta_{IJ}.$ Similar to \Cref{lem:adjointness}, we have the following lemma.

\begin{lemma}
\label{lem:adjointness-s}
    $s^k$ and $s_{\dagger}^{k+1}$ are adjoint to each other. 
\end{lemma}
The following lemmas are formulated from the appendix of \cite{arnold2021complexes}.

\begin{lemma}
\label{lem:inner-s}
For all $a, b \in \mathcal F X(n,k)$, it holds that 
\begin{equation}
\label{eq:inner-s} \langle s^ka,s^kb\rangle = \langle s_{\dagger}^ka, s_{\dagger}^kb \rangle + (n-2k) \langle a, b \rangle.
\end{equation}
\end{lemma}
\begin{proof}
For $I, J \in X(n,k)$ it holds that 
$$\langle s^k([I]), s^k([J]) \rangle = \begin{cases} n-k & \mbox{ if } I = J, \\ 1 & \mbox{ if } \# I\cap J = k - 1, \\ 0 & \mbox{ else, } \end{cases}$$
and 
$$\langle s^k_{\dagger}([I]), s^k_{\dagger}([J]) \rangle = \begin{cases} k & \mbox{ if } I = J,  \\ 1 & \mbox{ if } \# I\cap J = k - 1, \\ 0 & \mbox{ else. } \end{cases}$$

It holds that 
$$\langle s^k([I]),s^k([J])\rangle = \langle s^k_{\dagger}([I]), s_{\dagger}^k([J]) \rangle + (n-2k) \langle [I], [J] \rangle.$$
By bi-linearity, we complete the proof.
\end{proof}

Following the identity \eqref{eq:inner-s}, we have the following injectivity/surjectivity on the single index space. 
\begin{lemma}
\label{lem:fx}
The following results hold for $s^k$ and $s^{k}_{\dagger}$:
\begin{enumerate}
\item When $n \ge 2k+1$, $s^k : \mathcal F X(n,k) \to \mathcal F X(n,k+1)$ is injective, $s^{k+1}_{\dagger}: \mathcal F X(n,k+1) \to \mathcal F X(n,k)$ is surjective.
\item When $n \le 2k+1$, $s^k : \mathcal F X(n,k) \to \mathcal F X(n,k+1)$ is surjective, $s^{k+1}_{\dagger}: \mathcal F X(n,k+1) \to \mathcal F X(n,k)$ is injective.
\end{enumerate}
\end{lemma}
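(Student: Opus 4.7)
The plan is to mirror the proof of \Cref{lem:S-inj-suj} for the combinatorial shadows $s$ and $s_{\dagger}$, relying on three ingredients: adjointness, a graded commutator identity, and a short positivity argument. First, I would equip each $FX(n,k)$ with the inner product making $\{[I]\}$ orthonormal. Reading off the definitions, both $\langle s[I],[J]\rangle$ and $\langle [I], s_{\dagger}[J]\rangle$ equal $1$ exactly when $I\subset J$, so $s$ and $s_{\dagger}$ are mutually adjoint. Consequently $\ker(s\colon FX(n,k)\to FX(n,k+1))$ is the orthogonal complement of $\operatorname{im}(s_{\dagger}\colon FX(n,k+1)\to FX(n,k))$, and it suffices to prove injectivity of $s$ in case~(1) and injectivity of $s_{\dagger}$ in case~(2); the corresponding surjectivity statements then follow automatically.

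The second step is to establish the commutation identity
\[
s_{\dagger} s - s s_{\dagger} \;=\; (n-2k)\,\mathrm{id} \qquad \text{on } FX(n,k).
\]
Expanding $s_{\dagger} s[I]$, the diagonal contribution (removing the same index just inserted) gives $(n-k)[I]$, while the off-diagonal contribution is $\sum_{i\in I,\,j\notin I}[(I\setminus\{i\})\cup\{j\}]$. An entirely analogous expansion of $s s_{\dagger}[I]$ yields a diagonal term $k[I]$ together with the identical off-diagonal sum, so the off-diagonal parts cancel and only the scalar $(n-k)-k=n-2k$ survives.

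With this identity in hand, the proof concludes by positivity. For case~(1), suppose $n\ge 2k+1$ and $v\in FX(n,k)$ with $sv=0$. Applying the commutator yields $s s_{\dagger} v=(2k-n)v$; pairing with $v$ and invoking adjointness gives
\[
\|s_{\dagger} v\|^{2} \;=\; \langle s s_{\dagger} v, v\rangle \;=\; (2k-n)\|v\|^{2} \;\le\; -\|v\|^{2},
\]
which forces $v=0$. Case~(2) is the mirror argument: for $v\in FX(n,k+1)$ with $s_{\dagger} v=0$, the commutator on $FX(n,k+1)$ gives $s_{\dagger} s v=(n-2k-2)v$, so $\|sv\|^{2}=(n-2k-2)\|v\|^{2}\le -\|v\|^{2}$ whenever $n\le 2k+1$, again forcing $v=0$. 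The only real obstacle is the bookkeeping in the second step; once the commutator is confirmed, the adjointness and positivity pieces are routine, and the whole argument essentially replays the $\mathfrak{sl}_{2}$-flavour latent in \Cref{lem:S-inj-suj}.
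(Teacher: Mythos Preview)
Your proof is correct and follows essentially the same approach as the paper. The paper computes the Gram matrices $\langle s[I],s[J]\rangle$ and $\langle s_{\dagger}[I],s_{\dagger}[J]\rangle$ directly and observes that their difference is $(n-2k)\delta_{IJ}$; since $s$ and $s_{\dagger}$ are adjoint (as you note), this is exactly your commutator identity $s_{\dagger}s-ss_{\dagger}=(n-2k)\,\mathrm{id}$, and the positivity conclusion is the same.
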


\begin{proof}
By \eqref{eq:inner-s}, for $a \in \mathcal F X(n,k)$, we have
$$\|s^ka\|^2  = \|s^k_{\dagger} a\|^2 + (n-2k) \|a\|^2.$$
When $n \ge 2k + 1$, it holds that $\|s^ka\|^2 \ge \|a\|^2$. Therefore, $s^k$ must be injective. 

When $ n \le 2k + 1$, for $a \in \mathcal F X(n,k+1)$, we have $$\|s_{\dagger}^{k+1} a\|^2 = \|s^{k+1}a\|^2 + (2k+2 - n) \|a\|^2 \ge \|a\|^2. $$ Therefore, $s^{k+1}_{\dagger}$ is injective. 

The surjectivity comes from the duality argument, see \Cref{lem:adjointness-s}.
\end{proof}

For iterated case, we introduce 
$s_{[p]}^k: \mathcal F X(n,k) \to \mathcal F X(n,k+p)$ such that 
$$ s^k_{[p]}([I]) = \sum_{ P \subseteq [n] \setminus I, |P| = p} [I\cup P], \quad \forall I \in X(n,k)$$
We also define $s^{k+p}_{\dagger} : \mathcal F X(n,k+p) \to \mathcal F X(n,k)$ such that 
$$s_{\dagger,[p]}^{k+p}([J]) =  \sum_{ P \subseteq J, |P| = p} [J \setminus P], \quad \forall J \in X(n,k+p).$$
It can be checked that up to a sign, we have that $s_{[p]}^{\bs}$ is a composition of $s^{\bs}$. Namely, $$s^{k+p-1}\circ s^{k+p-2} \circ \cdots \circ s^{k+1} \circ s^k = p!s_{[p]}^k.$$ By adjointness, we have 
$$s_{\dagger}^{k+1} \circ s_{\dagger}^{k+2} \circ \cdots \circ s_{\dagger}^{k+p - 1} \circ s_{\dagger}^{k+p} = p! s_{\dagger,[p]}^k.$$

\begin{lemma}
\label{lem:fxp}
The following results hold for $s_{[p]}^k$ and $s^{k}_{\dagger,[p]}$:
\begin{enumerate}
\item When $n \ge 2k+p$, $s_{[p]}^k: \mathcal F X(n,k) \to \mathcal F X(n,k+p)$ is injective, and $s_{\dagger,[p]}^{k+p} : \mathcal F X(n,k+p) \to \mathcal F X(n,k)$ is surjective.
\item When $n \le 2k-p$, $s_{[p]}^{k-p}: \mathcal F X(n,k-p) \to \mathcal F X(n,k)$ is surjective, and $s_{\dagger,[p]}^k : \mathcal F X(n,k) \to \mathcal F X(n,k-p)$ is injective.
\end{enumerate}
\end{lemma}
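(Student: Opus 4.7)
The plan is to recognize that the operators $s$ and $s_\dagger$ assemble into a finite-dimensional representation of $\mathfrak{sl}_2$ on $V := \bigoplus_{k=0}^{n} FX(n,k)$, and then deduce injectivity/surjectivity of $s_{[p]}$ and $s_{\dagger,[p]}$ from standard weight-space arguments. Concretely, set $e := s$, $f := s_\dagger$, and let $h$ act on $FX(n,k)$ by the scalar $2k-n$. The identity
$$\langle sa, sb\rangle = \langle s_\dagger a, s_\dagger b\rangle + (n-2k)\langle a,b\rangle$$
proved in the preceding lemma is precisely the commutator relation $[e,f] = h$; the other two relations $[h,e]=2e$ and $[h,f]=-2f$ follow immediately from the grading. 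So $V$ is an $\mathfrak{sl}_2$-module with weight decomposition $V_w = FX(n,k)$ for $w = 2k-n$.

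Next I would relate $s_{[p]}$ to $e^p = s^p$. A direct count (for each $I \in X(n,k)$, expanding $s([I]) = \sum_{j\notin I}[I\cup\{j\}]$ $p$ times, each subset $P\subseteq[n]\setminus I$ of size $p$ arises $p!$ times) yields $s^p = \epsilon_p \cdot p! \cdot s_{[p]}$ for some sign $\epsilon_p = \pm 1$; similarly $s_\dagger^p = \epsilon_p \cdot p! \cdot s_{\dagger,[p]}$. Thus injectivity/surjectivity of $s_{[p]}$ (resp.\ $s_{\dagger,[p]}$) is equivalent to that of $e^p$ (resp.\ $f^p$).

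For the injectivity of $e^p: V_w \to V_{w+2p}$ in a finite-dimensional $\mathfrak{sl}_2$-module, I would invoke the standard fact that within each irreducible component of highest weight $\ell$, the weight spaces are one-dimensional and $e^p$ restricts to an isomorphism $V_w \to V_{w+2p}$ as long as both weight spaces lie in the irreducible, i.e., $\max(|w|,|w+2p|)\le\ell$. Every irreducible component containing $V_w$ satisfies $\ell \ge |w|$, so $e^p$ is injective on the whole of $V_w$ whenever $|w+2p|\le|w|$, equivalently $w \le -p$. Specializing to $w = 2k-n$ gives $n \ge 2k+p$, matching claim (1). Surjectivity of $s_{[p]}: FX(n,k-p)\to FX(n,k)$ follows by the same argument applied to $f^p$ (or by taking the adjoint), requiring $w \ge -p$ at the target, i.e., $n \le 2k-p$, which is claim (2). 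The remaining half of each statement (surjectivity of $s_{\dagger,[p]}$ in (1), injectivity in (2)) comes for free: $s_{\dagger,[p]}$ is the adjoint of $s_{[p]}$ up to a nonzero scalar, so injectivity of one is equivalent to surjectivity of the other on finite-dimensional spaces.

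The only technical point requiring care is the passage from $s^p$ to $s_{[p]}$ with the correct sign convention; the substance of the argument is entirely in the $\mathfrak{sl}_2$ weight-space analysis, which reduces to the scalar bounds $n \ge 2k+p$ and $n \le 2k-p$ exactly as stated. I expect the main obstacle will be the bookkeeping of the sign $(-1)^k$ appearing in the definitions of $s_{[p]}$ and $s_{\dagger,[p]}$ and verifying that it does not affect the relations $e^p = \epsilon_p\, p!\, s_{[p]}$; after this calibration, the conclusion is an immediate consequence of the representation theory.
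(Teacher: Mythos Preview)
Your approach is correct and takes a genuinely different route from the paper's. The paper proceeds by direct combinatorial computation: it evaluates $\langle s_{[p]}[I], s_{[p]}[J]\rangle = \binom{n-k-\theta}{p-\theta}$ and $\langle s_{\dagger,[p]}[I], s_{\dagger,[p]}[J]\rangle = \binom{k-\theta}{p-\theta}$ (where $\theta = k - \#(I\cap J)$), then uses a Vandermonde-type identity to expand the former as a nonnegative linear combination of $\langle s_{\dagger,[q]}a, s_{\dagger,[q]}b\rangle$ for $q=0,\ldots,p$, and concludes injectivity from positivity of the $q=0$ term $\binom{n-2k}{p}\langle a,a\rangle$. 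Your recognition of the $\mathfrak{sl}_2$-structure is more conceptual: it explains \emph{why} the thresholds $n = 2k \pm p$ appear (as the weight condition $|w+2p|\le |w|$ with $w = 2k-n$) and sidesteps the binomial identity entirely, at the cost of importing complete reducibility of finite-dimensional $\mathfrak{sl}_2$-modules. The paper's argument is more elementary and self-contained; yours is shorter and reveals the underlying mechanism. One small correction: the scalar relating $s^p$ to $s_{[p]}$ on $FX(n,k)$ is $(-1)^k\, p!$, so the sign depends on the domain degree $k$ rather than being a global constant $\epsilon_p$ --- but this is harmless, since you only need a nonzero scalar on each fixed weight space.
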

\begin{proof}
The proof is similar to \Cref{lem:fx}, with more combinatoric identities involved. First, we suppose that $ n \ge 2k + p$. For $I, J \in X(n,k)$ and $\# (I \cap J) = k - \theta$ for some non-negative integer $\theta$ such that $0 \le \theta \le k$,  we have

$$\langle s_{[p]}^k ([I]), s_{[p]}^k([J]) \rangle = \binom{n - k - \theta}{p - \theta},$$
and $$\langle s_{\dagger,[p]}^k ([I]), s_{\dagger,[p]}^k([J]) \rangle = \binom{ k - \theta}{p - \theta}.$$

It follows from the Vandermonde's identity that 
$$\binom{n - k - \theta}{p - \theta} = \sum_{q = 0}^{n-2k} \binom{n - 2k}{q} \binom{k - \theta}{p-q-\theta}.$$ 
For $ q > p$, $\binom{k - \theta}{p - q - \theta} = 0$ for all non-negative $\theta$. We then can truncate the summation as 
$$\binom{n - k - \theta}{p - \theta} = \sum_{q = 0}^{p} \binom{n - 2k}{q} \binom{k - \theta}{p-q-\theta} =  \sum_{q = 0}^{p} \binom{n - 2k}{p - q} \binom{k - \theta}{q-\theta} $$ 
This yields that 
$$\langle s_{[p]}^k( [I]), s_{[p]}^k ([J]) \rangle = \sum_{q = 0}^p \binom{n - 2k}{p - q} \langle s_{\dagger,[q]}^k ([I]), s_{\dagger, [q]}^k( [J]) \rangle,
$$
and therefore, 
\begin{equation}
\label{eq:fxp}
\langle s_{[p]}^k a, s_{[p]}^k b \rangle = \sum_{q = 0}^p \binom{n - 2k}{p - q} \langle s_{\dagger,[q]}^k a, s_{\dagger, [q]}^k b \rangle.
\end{equation}

The remaining proofs are similar. We first show that when $n \ge 2k + p$, $s_{[p]}^k: \mathcal F X(n,k) \to F(n,k+p)$ is injective. 

Suppose there exists $a$ such that $s_{[p]}^k a = 0$ but $ a \neq 0$, then taking $b = a$ in \eqref{eq:fxp} we obtain
$$
\| s_{[p]}^k a\|^2 = \sum_{q = 0}^p \binom{n - 2k}{p - q} \| s_{\dagger,[q]}^k a\|^2 \ge \binom{n-2k}{p} \|a\|^2.
$$

Therefore, $s_{[p]}^k: \mathcal F X(n,k) \to \mathcal F X(n,k+p)$ is injective. From the adjointness, we know that $s_{\dagger,[p]}^{k+p} : \mathcal F X(n,k+p) \to \mathcal F X(n,k)$ is surjective.

For (2), we switch the role of $s_{[p]}^{\bs}$ and $s_{\dagger,[p]}^{\bs}$. Since $n- k < k$, we have 
$$\binom{k-\theta}{p-\theta} = \sum_{q= 0}^{2k-n} \binom{2k-n}{q} \binom{n - k - \theta}{p - q - \theta} = \sum_{q= 0}^{2k-n} \binom{2k-n}{ p - q} \binom{n - k - \theta}{q- \theta}$$
Similarly, we obtain that $$\langle s_{\dagger,[p]}^k a, s_{\dagger,[p]}^k b \rangle = \sum_{q = 0}^p \binom{2k-n}{p - q} \langle s_{[q]}^k a, s_{[q]}^k b \rangle.
$$
Using this we can finish the proof.

\end{proof}

Next, we generalize the above combinatoric results with double indices, which mimic the $\mathcal S$ and $\mathcal S_{\dagger}$ operator in a combinatoric way. This calls for more notations.


Let $X(n,k,\ell) = X(n,k) \times X(n,\ell)$, and $\mathcal F X(n,k,\ell)$ be the free $\mathbb R$ Abelian group it generates. We define $s_{[p]}^{k,\ell}:\mathcal F X(n,k,\ell) \to \mathcal F X(n,k+ p ,\ell - p )$ such that 

\begin{equation}\label{eq:sp}s_{[p]}^{k,\ell}([I,J]) =   \sum_{P \in J\setminus I, |P| = p} [I \cup P, J \setminus P],\quad \forall I \in X(n,k), J \in X(n,\ell),\end{equation}
 and $s_{\dagger, [p]}^{k,\ell}:\mathcal F X(n,k,\ell) \to \mathcal F X(n,k- p ,\ell + p )$ such that
\begin{equation}\label{eq:sdaggerp} s_{\dagger,[p]}^{k,\ell} ([I,J]) = \sum_{P \in I \setminus J, |P| = p} [I \setminus P, J \cup P]\quad \forall I \in X(n,k), J \in X(n,\ell).\end{equation}

\begin{lemma}
\label{lem:fxx}
For double indices, we have the following lemma.
\begin{enumerate}

\item If $k + p \le \ell$, then $s_{[p]}^{k,\ell} : \mathcal F X(n,k,\ell) \to \mathcal F X(n,k+p,\ell - p)$ is injective, $s_{\dagger, [p]}^{k+p,\ell-p}: \mathcal F X(n,k+p,\ell - p) \to \mathcal F X(n,k,\ell)$ is surjective.
\item If $k + p \ge \ell$, then $s_{[p]}^{k,\ell} : \mathcal F X(n,k,\ell) \to X(n,k + \ell,\ell - p)$ is surjective, $s_{\dagger, [p]}^{k+p,\ell-p}: X(n,k+p,\ell - p) \to \mathcal F X(n,k,\ell)$ is injective.
\end{enumerate}
\end{lemma}
The proof of \Cref{lem:fxx} is based on a decomposition on $X(n,k,\ell)$, and the injectivity/surjectivity results of $s^{\bs}_{\bs}$ with a single index. 

To see this, we observe the invariants for $s_{[p]}$ and $s_{\dagger,[p]}$. Note that for the pair $I, J$ and the pair $I' := I\cup P, J' := J\setminus P$, it holds that $$I \cup J = I' \cup J' \text{ and } I' \cap J' = I \cap J.$$ Therefore, the double index sets appeared in  either the left-hand sides or the right-hand sides are having the same intersection sets and union sets. This motivates the following definition. For two sets $F$ and $G$, we define 
$$Y_{F,G}(n,k,\ell):= \{ (I,J) : I \in X(n,k), J \in X(n,\ell) : I \cup J = F, I \cap J = G\} \subseteq X(n,k,\ell).$$ When $F$ and $G$ are going through all the possibilities, the family of sets $Y_{F,G}(n,k,\ell)$ forms a disjoint union of $X(n,k,\ell)$. 
\begin{lemma}
    It holds that $s_{[p]}^{k,\ell}$ maps $\mathcal FY_{F,G}(n,k,\ell) \to \mathcal FY_{F,G}(n,k+p,\ell - p)$, and $s_{\dagger,[p]}^{k,\ell}$ maps $\mathcal FY_{F,G}(n,k,\ell) \to \mathcal FY_{F,G}(n,k-p,\ell + p)$. 
\end{lemma} 

\begin{proof}
    By the above observation.
\end{proof}

For the mapping for double indices, we have the following injectivity/surjectivity result.
\begin{lemma}
The following results hold.
\label{lem:fy}
\begin{enumerate}
\item If $k + p \le \ell$, then $s_{[p]}^{k,\ell}: \mathcal F Y_{F,G}(n,k,\ell) \to \mathcal F Y_{F,G}(n,k+p,\ell - p)$ is injective, and $s_{\dagger, [p]}^{k+p,\ell-p} : \mathcal F Y_{F,G}(n,k+p,\ell - p) \to \mathcal F Y_{F,G}(n,k,\ell)$ is surjective.
\item  If $k + p \ge \ell$, then $s_{[p]}^{k,\ell}: \mathcal F Y_{F,G}(n,k,\ell) \to \mathcal F Y_{F,G}(n,k+p,\ell - p)$ is surjective, and $s_{\dagger, [p]}^{k+p,\ell - p} : \mathcal F Y_{F,G}(n,k+p,\ell - p) \to \mathcal F Y_{F,G}(n,k,\ell)$ is injective.
\end{enumerate}	
\end{lemma}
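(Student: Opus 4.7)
The plan is to reduce both claims to the already-established Lemma \ref{lem:fxp} via the orbit decomposition by the invariants $F = I \cup J$ and $G = I \cap J$. The computation preceding the statement already verified that if $P \subset J \setminus I$ has $|P|=p$ and we set $I'=I\cup P$, $J'=J\setminus P$, then $I' \cup J' = F$ and $I' \cap J' = G$. Consequently $s_{[p]}$ maps $FY_{F,G}(n,k,\ell)$ into $FY_{F,G}(n,k+p,\ell-p)$, and by the symmetric calculation $s_{\dagger,[p]}$ maps in the opposite direction within the same $(F,G)$-block. So each of the two operators decomposes as a direct sum over $(F,G)$, and it suffices to analyze one block.

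Next I would exhibit an explicit bijection $Y_{F,G}(n,k,\ell) \cong X(|F|-|G|,\,k-|G|)$. Every pair $(I,J) \in Y_{F,G}$ satisfies $G \subseteq I \subseteq F$, and $J = (F\setminus I) \cup G$ is then forced, so $(I,J)$ is determined by the $(k-|G|)$-subset $I \setminus G$ of $F\setminus G$. Fixing the unique order-preserving identification $F\setminus G \cong [\,|F|-|G|\,]$, this becomes a bijection with $X(|F|-|G|,\,k-|G|)$. Under this identification, the operator $s_{[p]}$ on $Y_{F,G}$ — which transfers $p$ elements from $J\setminus I = F\setminus I$ into $I$ — corresponds exactly to the map $s_{[p]}$ of Lemma \ref{lem:fxp} on $FX(|F|-|G|,\,k-|G|)$, and similarly for $s_{\dagger,[p]}$.

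Applying Lemma \ref{lem:fxp} with $n' := |F|-|G| = k + \ell - 2|G|$ and $k' := k - |G|$, the injectivity hypothesis $n' \ge 2k'+p$ simplifies to $\ell \ge k+p$, while the surjectivity hypothesis $n' \le 2k'+p$ simplifies to $\ell \le k+p$; both conditions are independent of $|G|$. Hence when $k+p \le \ell$ we obtain injectivity of $s_{[p]}$ and surjectivity of $s_{\dagger,[p]}$ on every $(F,G)$-block simultaneously, and the symmetric case $k+p \ge \ell$ follows likewise.

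The main technical step is the intertwining claim in the second paragraph: verifying that the bijection $Y_{F,G}(n,k,\ell) \cong X(|F|-|G|,\,k-|G|)$ carries the combinatorial operator on $Y_{F,G}$ to the operator of Lemma \ref{lem:fxp}, and in particular that the signs entering the definitions of $s_{[p]}$ and $s_{\dagger,[p]}$ are transported consistently. Once this is checked, the lemma reduces entirely to the bound already proved for $FX$, which was the point of setting up the $Y_{F,G}$ blocks in the first place.
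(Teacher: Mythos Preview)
Your proposal is correct and follows exactly the approach the paper takes: the paper's argument appears in the paragraph immediately preceding the lemma, where it notes that $s_{[p]}$ preserves each $Y_{F,G}$ block, identifies the block with $X(|F\setminus G|,\,k-|G|)$, and then invokes Lemma~\ref{lem:fxp} together with $|F|+|G|=k+\ell$ to reduce the threshold to $k+p\lessgtr \ell$. Your write-up is in fact cleaner than the paper's paragraph, which contains several index typos (e.g.\ $G\setminus F$ for $F\setminus G$, and $m-2\theta$ where $m-\theta$ is meant); your explicit check that the condition $n'\gtrless 2k'+p$ collapses to $\ell\gtrless k+p$ independently of $|G|$ is the right way to record the computation.
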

\begin{proof}
Suppose that $\# F = \theta$ and $\# G = m$.
By removing elements in $F$ and mapping elements in $G \setminus F$ to $[m - \theta]$, the above mapping is isomorphic to $X(m - 2\theta, k - \theta) \to X(m - 2\theta, k - \theta + p)$. By \Cref{lem:fx}, the mapping is injective if $2(k - \theta) + p  \ge m - 2\theta$, and onto if $2(k - \theta) + p \le m - 2\theta$. Since $m + \theta = k + \ell$, it then holds that the above condition is equivalent to $k +p \ge \ell$ and $k + p \le \ell$, respectively. Therefore, by \Cref{lem:fx} we conclude the result.
\end{proof}

\Cref{lem:fxx} is then a direct consequence of \Cref{lem:fy}.

\subsection{Proof of Injectivity/Surectivity}
\label{subsec:inj/sur}

Now we are ready to show the results in the main paper, as shown in \Cref{lem:S-inj-suj} (2)(3), \Cref{lem:Sp-inj-sur} (2)(3). 
For the bubble result, we will prove \Cref{lem:S-bubble} (2) and \Cref{lem:Sp-bubble} (2). 

Note that \Cref{lem:Sp-inj-sur} covers \Cref{lem:S-inj-suj}, and we only need to show \Cref{lem:Sp-inj-sur}.

We first show the injectivity and surjectivity result on the constant form-valued forms.
\begin{proof}[Proof of \Cref{lem:Sp-inj-sur}(2)(3)]
We now link the forms in $\alt^{k,\ell}$ to the group $\mathcal F X(n,k,\ell)$. 	For $I \in X(n,k)$ and $J \in X(n,\ell)$, let the pair $[I,J]$ associate with the following form
$$\omega([I,J]) := \text{sgn}(I;J) dx^{I(1)} \wedge \cdots dx^{I(k)} \otimes dx^{J(1)} \wedge \cdots dx^{J(\ell)}.$$
Here $\text{sgn}(I;J) := \#\{(i,j), I(i) > J(j)\}.$ 
Note that $\omega([I,J])$ is a basis when $[I,J]$ goes through $X(n,k,\ell)$.  Therefore, $\omega(\cdot)$ actually induces an isomorphism from $\mathcal F X(n,k,\ell) \to \alt^{k,\ell}$.
  By the definition of $\cS_{[p]}$ and $\cS_{\dagger,[p]}$, it holds that 
$$\omega s^{k,\ell}_{[p]}([I,J])  = (-1)^{k(k+1)\cdots(k+p-1)} p! ~\mathcal S_{[p]}^{k,\ell} \omega([I,J]), 
$$
and 
$$\omega s^{k,\ell}_{\dagger, [p]}([I,J])  = (-1)^{k(k-1)\cdots(k-p+1)} p! ~\mathcal S_{\dagger,[p]}^{k,\ell} \omega([I,J]).
$$
This indicates, up to a constant, $\omega$ transforms $s_{[p]}$ and $s_{\dagger, [p]}$ to $\mathcal S_{[p]}$ and $\mathcal S_{\dagger, [p]}$, respectively. 
Therefore, the injectivity/surjectivity result can be obtained from those of $s_{[p]}$ and $s_{\dagger,[p]}$, by \Cref{lem:fxx}.
\end{proof}

Next, we show the injectivity and surjectivity result on the Whitney form-valued forms.
\begin{proof}[Proof of \Cref{lem:Sp-bubble}(2)]
By \Cref{cor:spanning-whitney}, we know that
$$\phi_{I} \otimes d\lambda^{J} \text{ where } I \cup J = [n+1]$$
is a spanning set of the bubble space $\mathcal B^- \alt^{k,\ell}(K)$. 

We now associate $(I,J) \in X(n+1,k+1,\ell)$ the form $\omega([I,J]) := \text{sgn}(I;J) \phi_{I} \otimes d\lambda^J$, then it is not difficult to show 
$$\mathcal S_{\dagger,[p]}(\omega([I,J])) = (-1)^{k(k-1)\cdots(k-p+1)} p! \omega(s_{\dagger,[p]}([I,J])).$$
Namely, 
Define
$$Y_{\bs, [n+1]}(n+1,k+1,\ell) := \bigcup_{F} Y_{F, [n+1]}(n+1,k+1,\ell).$$
Clearly, $\omega(\cdot)$ actually induces a surjection from $\mathcal F Y_{\bs, [n+1]}(n+1,k+1,\ell) \to \mathcal B^{-}\alt^{k,\ell}(K)$.  

Now suppose that $k + 1 \le \ell + p$, it then follows from \Cref{lem:fy} that 
$$s_{\dagger,[p]} : \bigcup_{F} Y_{F, [n+1]}(n+1,k+1,\ell) \to  \bigcup_{F} Y_{F, [n+1]}(n+1,k+1 - p,\ell+p).$$ 
 We need to show that $\mathcal S_{\dagger,[p]} : \B^-\alt^{k,\ell}(K) \to \B^-\alt^{k - p,\ell + p}(K)$ is onto. For each basis function 
$\phi_{I} \otimes d\lambda^J$ of $\mathcal B^-\alt^{k - p,\ell + p}$, we have $[I,J] \in \mathcal F Y_{\bs, [n+1]}(n+1, k - p + 1,\ell + p)$. By \Cref{lem:fy}, it holds that there exists $u \in \mathcal F Y_{\bs, [n+1]}(n+1,k+1,\ell)$ such that $s_{\dagger, [p]} u = [I,J]$. Therefore, up to a constant we have $\mathcal S_{\dagger} \omega(u) =  \phi_{I} \otimes d\lambda^J.$

\begin{equation}
    \begin{tikzcd}
        \mathcal F Y_{\cdot, [n+1]}(n+1,k+1,\ell)  \ar[r,->>] \ar[d,->>] &\mathcal F Y_{\cdot, [n+1]}(n+1,k- p +1,\ell + p) \ar[d,->>]  \\ 
        \mathcal B^- \alt^{k,\ell}(K) \ar[r] &  \mathcal B^- \alt^{k-p,\ell+ p }(K)
    \end{tikzcd}
\end{equation}
\end{proof}

The proof of \Cref{lem:S-bubble-Pr-} and \Cref{lem:S-bubble-Pr} are similar. Here we only show the proof of \Cref{lem:S-bubble-Pr-}.

\begin{proof}
	Note that 
	the following set
	$$\lambda^{\alpha} \phi_{I} \otimes d\lambda_J, \,\, \supp \alpha \cup I \cup J = [n+1],$$
	is a spanning set of $\mathcal B_{r}^- \alt^{k,\ell}(K)$. Again, for $(I,J) \in X(n+1,k+1,\ell)$ it can be checked that $\mathcal S_{\dagger,[p]}(\lambda^{\alpha} \omega([I,J]) = (-1)^{(k+1)(k)\cdots(k-p+2)} p!\lambda^{\alpha} \omega(s_{\dagger} ([I,J]))$. Here, the only requirement is $I \cup J$ covers $[n+1]\setminus \supp \alpha$. Therefore, all the proof in the previous lemma can be applied, if we  consider 
    $$Y_{\bs, \supset [n+1] \setminus\supp a}(n+1,k+1,\ell) := \bigcup_{F} \bigcup_{G \supset [n+1] \setminus \supp \alpha} Y_{F, G}(n+1,k+1,\ell).$$
\end{proof}

\section{Counting Skeletal DoFs}
\label{sec:dof-counting}

In this section, we prove \Cref{prop:skeletal-counting}: 
Let $k = \ell + p$, then we have
\begin{equation}
\begin{split} 
\Phi = \sum_{s = 0}^{p-1} \sum_{\theta = 0}^{k} (-1)^{\theta} \binom{\theta}{s} \binom{n-\theta}{n - \ell -s} f_{\theta} & + (-1)^{n+p-1}  \sum_{s = 0}^{p-1} \sum_{\theta = 0}^{n - \ell}  (-1)^{\theta} \binom{\theta}{s} \binom{n - \theta}{k-s} f_{\theta}^{\circ} = \\ & \binom{n}{\ell} + (-1)^{p-1} \binom{n}{k}.
\end{split}
\end{equation}

We first recall some basic identities about binomial coefficients with negative inputs. 
\begin{lemma}
\label{lem:binom-negative}
    For non-negative integers $a, b$, it holds that 
    \begin{equation}
        \binom{a}{b}=(-1)^{b} \binom{-a+b-1}{b} = (-1)^{b} \binom{-a-b+1}{-a-1} =
        (-1)^{a+b} \binom{-b-1}{-a-1}.
    \end{equation}
\end{lemma}
\begin{proof}
    By the definition via $\Gamma$ functions, or cf. \cite{sprugnoli2008negation}.
\end{proof}


We now recall the well-known Dehn–Sommerville equation \cite{sommerville1927relations}, see also \cite[Chapter 9.2.2]{grunbaum2003convex}. This equation extends the Euler equation for polytopes to the most general scenario.

\begin{lemma}[Dehn-Sommerville]
\label{lem:dehn-sommerville}
	For any $m$-dimensional simplicial polytope $P$, let $f_i(P)$ be the number of $i$-faces of $P$. Then it holds that for each $p$, $0\le p \le d $.
	$$\sum_{i=-1}^{p-1}(-1)^{d+i}\binom{d-i-1}{d-p} f_i(P) = \sum_{i=-1}^{d-p-1}(-1)^i \binom{d-i-1}{p} f_i(P).$$
    Here $f_{-1}(P) = 1$, $f_d(P) = 1$. 
\end{lemma}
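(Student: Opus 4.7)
The plan is to derive the identity from the Euler--Poincar\'e relation applied to the links of faces, a classical strategy for Dehn--Sommerville. Since $P$ is a simplicial $d$-polytope, the boundary complex $\partial P$ is a simplicial $(d-1)$-sphere. More importantly, for every face $F$ of $\partial P$ with $\dim F = i$ (including the empty face $F = \emptyset$, $\dim F = -1$), the link $\operatorname{lk}_{\partial P}(F)$ is a simplicial sphere of dimension $d - i - 2$. Its Euler--Poincar\'e relation reads
\begin{equation*}
\sum_{j = -1}^{d - i - 2}(-1)^j f_j(\operatorname{lk}(F)) \;=\; 1 + (-1)^{d-i-2}.
\end{equation*}

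First I would double-count to translate link face-numbers to global face-numbers. Each $j$-dimensional face of $\operatorname{lk}(F)$ corresponds bijectively to a face of $\partial P$ of dimension $i + j + 1$ containing $F$, and conversely each $m$-face of $\partial P$ contains exactly $\binom{m+1}{i+1}$ faces of dimension $i$. Summing the Euler relation for $\operatorname{lk}(F)$ over all $i$-faces $F$ (with the convention $f_{-1} = 1$) yields, after the change of index $m = i + j + 1$,
\begin{equation*}
\sum_{m = i}^{d - 1} (-1)^{m - i - 1}\binom{m+1}{i+1} f_m \;=\; f_i\bigl(1 + (-1)^{d - i - 2}\bigr), \qquad i = -1, 0, \ldots, d-1.
\end{equation*}
This family of linear relations among the $f_m$ is the source of all Dehn--Sommerville identities.

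Next I would pass to the $h$-vector. Define
\[
h_k \;:=\; \sum_{i = 0}^{k}(-1)^{k - i}\binom{d - i}{k - i} f_{i - 1}.
\]
A short binomial manipulation shows that the two sides of the stated identity are scalar multiples of $h_k$ and $h_{d-k}$ respectively (with a common factor $(-1)^{d-k}$), so the lemma is equivalent to the palindromic relation $h_k = h_{d-k}$. To prove this palindromy, I would feed the family of Euler relations above into the generating-function identity
\[
\sum_{i = 0}^{d} f_{i-1}(t - 1)^{d - i} \;=\; \sum_{i = 0}^{d} h_i\, t^{d - i},
\]
substitute $t \mapsto 1 - t$, and compare coefficients; the sphericity of $\partial P$ enters through the $i = -1$ case of the link formula (Euler's formula for $\partial P$ itself).

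The main obstacle will be the routine but error-prone bookkeeping of indices, signs, and Vandermonde-type binomial identities in the two conversions: first, from the Euler-on-links family to the palindromic $h$-vector identity, and then back from $h_k = h_{d-k}$ to the precise asymmetric form displayed in the lemma (note in particular the $(-1)^{d+i}$ on the left versus $(-1)^i$ on the right, which tracks exactly the factor $(-1)^{d-k}$ mentioned above). An alternative, more transparent but heavier route is a direct inclusion--exclusion on chains of faces of $\partial P$, as in Klee's original argument; either way the geometric content is fully encapsulated in one application of Euler--Poincar\'e, and the remainder of the proof is combinatorial.
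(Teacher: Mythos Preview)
The paper does not prove this lemma at all: it is simply quoted as the classical Dehn--Sommerville equations, with a citation to Gr\"unbaum's book and Sommerville's original paper, and then used as a black box in the Euler-characteristic count. So there is no ``paper's own proof'' to compare against.

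Your sketch is the standard textbook argument and is correct in outline. Two small checks you should make explicit if you flesh it out: first, your identification of the two sides with $(-1)^{d+k-1}h_k$ and $(-1)^{d-k-1}h_{d-k}$ is right (so the lemma is exactly $h_k=h_{d-k}$, not merely a scalar multiple of it), and second, the link Euler relation you wrote already contains the $i=-1$ case (Euler's formula for $\partial P$), so you do not need to invoke it separately. Given that the paper treats the result as known, your level of detail is already more than what is required here.
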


Note that by adding some zero terms, we can extend the superscript in the equation above as
$$\sum_{i=-1}^{d}(-1)^{d+i}\binom{d-i-1}{d-p} f_i(P) = \sum_{i=-1}^{d}(-1)^i \binom{d-i-1}{p} f_i(P).$$

To start the proof, we first reformulate \Cref{prop:skeletal-counting} to the form that the Dehn-Sommverville equation can be applied. 

{Let $f_i^{\partial} = f_i - f_i^{\circ}$ be the boundary $i$-faces of $\mathcal T$. We first apply \Cref{lem:dehn-sommerville} to obtain
\begin{equation}
\label{eq:DS-partial}
\begin{split}
\sum_{\theta = 0}^{p } (-1)^{\theta} \binom{n - 1 -(\theta-1)}{n-1 + 1 -(p-1) } f_{\theta-1}^{\partial}=   (-1)^{n} \sum_{\theta = 0}^{n-p} (-1)^{\theta}  \binom{n-\theta}{p - 1} f_{\theta-1}^{\partial},
\end{split}  
\end{equation} 
for $-1 \le \xi \le n-1$.
Moreover, we can consider the cone of $\mathcal T$. It holds that 
\begin{equation} \label{eq:cone-relation}f_i^C := f_i(\text{cone } \mathcal T) = f_i + f_{i-1}^{\partial},\end{equation}
and we have 
\begin{equation}
\begin{split}
\sum_{\theta = -1}^{p-1} (-1)^{\theta} \binom{n - \theta }{n + 1 - p} f_{\theta}^{C} +   (-1)^{n} \sum_{\theta = -1}^{n-p-1} (-1)^{\theta}  \binom{n-\theta}{p } f_{\theta}^{C}.
\end{split} 
\end{equation} 
For technical reasons, we write the term $\theta = -1$ separately as 
\begin{equation}\label{eq:DS-cone}
\begin{split}
\sum_{\theta = 0}^{n} (-1)^{\theta} \binom{n - \theta }{n + 1 - p} f_{\theta}^{C} +   (-1)^{n} \sum_{\theta = 0}^{n} (-1)^{\theta}  \binom{n-\theta}{p } f_{\theta}^C = [1+(-1)^n]\binom{n+1}{p}.
\end{split} 
\end{equation} 
}

The rest of this section is based on \eqref{eq:DS-cone} and \eqref{eq:DS-partial}, as in general these formulas exhaust all the linear constraints of these quantities. We now decouple $\Phi$ as two terms, based on variables $f_{i}^{\partial}$ and $f_i^C$, and we deal with two terms separately. 
\begin{lemma}
Let
\begin{equation}
\label{eq:Phi-C}
\Phi^{C} := \sum_{s = 0}^{p-1} \sum_{\theta = 0}^{k} (-1)^{\theta} \binom{\theta}{s} \binom{n-\theta}{n - \ell -s} f_{\theta}^{C} + (-1)^{n+p-1}  \sum_{s = 0}^{p-1} \sum_{\theta = 0}^{n-\ell}  \binom{\theta}{s} (-1)^{\theta}  \binom{n - \theta}{k-s} f_{\theta}^{C},
\end{equation}
and 
\begin{equation}
\label{eq:Phi-partial}
\Phi^{\partial} := \sum_{s = 0}^{p-1} \sum_{\theta = 0}^{k} \binom{\theta}{s} \binom{n-\theta}{n - \ell -s}  f_{\theta-1}^{\partial} + (-1)^{n+p-1}\sum_{s = 0}^{p-1} \sum_{\theta = 0}^{n - \ell}  \binom{\theta}{s} \binom{n - \theta}{k-s} (f_{\theta}^{\partial} + f_{\theta-1}^{\partial}).
\end{equation}
Then it holds that 
$$\Phi = \Phi^C - \Phi^{\partial}.$$
\end{lemma}
\begin{proof}
    It follows from substituting \eqref{eq:cone-relation} to $\Phi$.
\end{proof}

We first show the following result, stating how the term $\binom{\theta}{s} \binom{n-\theta}{n+1-\xi}$ can be expressed into a linear combination of $\binom{n-\theta}{n+1-\xi}$.

The following lemma illustrates how to transform the product term $\binom{m}{p} \binom{n}{q} $ for $m < n$, see also~\cite[Chapter 1.4]{riordan1979combinatorial}. 
\begin{lemma}
For $m < n$, it holds that 
\begin{equation}
\label{eq:product-identity}
\binom{m}{p}	\binom{n}{q} = \sum_{\xi} \binom{m-n+q}{p-\xi+q} \binom{\xi}{q} \binom{n}{\xi}.
\end{equation}

\end{lemma}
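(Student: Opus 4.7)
The plan is to reduce the claimed identity to Vandermonde's convolution by manipulating the right-hand side with two standard tools: the ``subset of a subset'' identity and a clean index shift. There are no bubble spaces, traces, or BGG ingredients involved here, just bookkeeping of binomial coefficients, so the route is essentially algebraic.

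First, I would rewrite the factor $\binom{\xi}{q}\binom{n}{\xi}$ using the ``subset of a subset'' identity
$$\binom{n}{\xi}\binom{\xi}{q} = \binom{n}{q}\binom{n-q}{\xi-q},$$
which follows from either the factorial definition or the bijective count of pairs $B\subseteq A\subseteq [n]$ with $|B|=q$ and $|A|=\xi$. Substituting this into the right-hand side pulls $\binom{n}{q}$ out of the sum, leaving
$$\binom{n}{q}\sum_{\xi}\binom{m-n+q}{p-\xi+q}\binom{n-q}{\xi-q}.$$

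Second, I would perform the change of summation index $j := \xi-q$, so that $p-\xi+q = p-j$, and the remaining sum becomes
$$\sum_{j}\binom{m-n+q}{p-j}\binom{n-q}{j}.$$
Vandermonde's convolution $\sum_{j}\binom{a}{p-j}\binom{b}{j}=\binom{a+b}{p}$ with $a=m-n+q$ and $b=n-q$ evaluates this to $\binom{m}{p}$, producing exactly the left-hand side $\binom{m}{p}\binom{n}{q}$.

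The only step requiring care is the interpretation of $\binom{m-n+q}{p-j}$ when $m-n+q$ is negative, which is precisely where the assumption $m<n$ in the hypothesis becomes relevant; I would handle this by adopting the convention $\binom{a}{k}=0$ whenever $a<0$ and $k\geq 0$ lies outside the natural range (or alternatively by extending to generalized binomials for which Vandermonde still holds). Under this convention the support of $\xi$ in the displayed sum is automatically finite and the identity holds term by term, so no convergence or range issues obstruct the argument. The main obstacle, if any, is keeping the index ranges consistent after the shift $j=\xi-q$; once that bookkeeping is fixed the proof is a one-line application of Vandermonde.
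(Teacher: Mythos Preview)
Your proof is correct and uses exactly the same two ingredients as the paper's proof---the subset-of-subset identity $\binom{n}{\xi}\binom{\xi}{q}=\binom{n}{q}\binom{n-q}{\xi-q}$ and Vandermonde's convolution---together with the same index shift $\xi\mapsto\xi-q$; the only difference is direction, as the paper starts from the left-hand side and expands, while you start from the right-hand side and collapse. One small caveat: the convention you should adopt is the generalized (polynomial) binomial coefficient, for which Vandermonde holds identically, not the convention that $\binom{a}{k}=0$ for $a<0$, since the latter would kill the identity precisely in the case $m-n+q<0$ that the hypothesis $m<n$ is meant to allow.
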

\begin{proof}
By the Vandermonde's identity, we have 
\[ 
		\binom{m}{p} \binom{n}{q} =  \binom{m-n+q + n -q}{p} \binom{n}{q} 
		= \sum_{\xi} \binom{m-n+q}{p-\xi} \binom{n-q}{\xi} \binom{n}{q}
\]
Since $\binom{q+\xi}{q} \binom{n}{q+\xi} = \binom{n-q}{\xi} \binom{n}{q}$, we have 
\begin{equation}
	\begin{split}
\sum_{\xi} \binom{m-n+q}{p-\xi} \binom{n-q}{\xi} \binom{n}{q} 
		= &\sum_{\xi} \binom{m-n+q}{p-\xi} \binom{q+\xi}{q} \binom{n}{q+\xi} \\
		= &\sum_{\xi} \binom{m-n+q}{p-\xi +q} \binom{\xi}{q} \binom{n}{\xi}.
	\end{split}
\end{equation}	
Here the last equation comes from replacing $\xi$ by $\xi + q$. 
\end{proof}

\begin{lemma}
\label{lem:theta-n-theta}
The following identities hold.
\begin{equation}
\label{eq:ell-prod}
\sum_{s = 0}^{p-1} \binom{\theta}{s} \binom{n-\theta}{n-\ell-s} = \sum_{\xi} (-1)^{k + \xi } \binom{\xi-1}{\ell}\binom{n-\xi}{n-k}\binom{n-\theta}{n+1-\xi},
\end{equation}
and
\begin{equation}
\label{eq:k-prod}
\sum_{s = 0}^{p-1} \binom{\theta}{s} \binom{n-\theta}{k-s} = \sum_{\xi} (-1)^{\ell + \xi + 1} \binom{\xi-1}{\ell}\binom{n-\xi}{n-k}\binom{n-\theta}{\xi}.
\end{equation}
\end{lemma}
\begin{proof}

We first prove \Cref{eq:ell-prod}. We first expand the term $\binom{\theta}{s} \binom{n-\theta}{n - \ell - s}$ as a linear combination of $\binom{n-\theta}{n+1-\xi}$, where the coefficients depend on $\xi$ but independent on $\theta$. 

By \eqref{eq:product-identity}, it holds that
\begin{equation}
\label{eq:ell-prod1}
\begin{split}
\binom{\theta}{s} \binom{n-\theta}{n - \ell - s} = & (-1)^{s} \binom{-\theta + s - 1}{s} \binom{n-\theta}{n - \ell - s} \\
= & (-1)^s \sum_{\xi} \binom{ (-\theta+s -1)+ (n - \ell -s) - (n - \theta) }{ s + n + \ell - s - \xi} \binom{\xi}{n-\ell -s}\binom{n-\theta}{\xi}  \\ 
= & (-1)^s \sum_{\xi} \binom{-1-\ell}{n - \ell - \xi}\binom{\xi}{n-\ell-s} \binom{n-\theta}{\xi}. \end{split}
\end{equation}

Replacing $\xi$ by $n + 1 - \xi$, we have 
$$\binom{\theta}{s} \binom{n-\theta}{n - \ell - s} = (-1)^s \sum_{\xi} \binom{-1-\ell}{-\ell - 1 + \xi} \binom{n + 1 - \xi}{n-\ell-s} \binom{n-\theta}{n + 1 - \xi}.$$

Here, we can assume $\xi \ge \ell + 1$, since when $\xi < l$ it then holds $\displaystyle \binom{-1-\ell}{-(\ell - \xi) - 1} = \binom{\ell - \xi}{\ell} = 0$. Therefore, we can assume that in the above identity, it holds that $\xi \ge \ell + 1$, and thus 
\[ \begin{split} \binom{-1-\ell}{-\ell - 1 + \xi} = & (-1)^{\ell + 1 + \xi } \binom{(1 + \ell)  + (- \ell- 1 + \xi) - 1}{- \ell - 1 + \xi} \\ =& (-1)^{\ell+ 1 + \xi } \binom{\xi - 1}{-\ell - 1 + \xi} \\=& (-1)^{\ell + 1 + \xi } \binom{\xi - 1}{\ell}. \end{split} \] 
The last equation comes from \Cref{lem:binom-negative}.

\begin{equation}
\label{eq:ell-prod2}
\begin{split}
\binom{\theta}{s} \binom{n-\theta}{n - \ell - s} = & (-1)^s \sum_{\xi} \binom{-1-\ell}{-\ell - 1 + \xi} \binom{n + 1 - \xi}{n-\ell-s} \binom{n-\theta}{n + 1 - \xi} \\ 
= & (-1)^s \sum_{\xi} (-1)^{\ell + 1 + \xi } \binom{\xi - 1}{\ell} \binom{n+1-\xi}{n-\ell-s} \binom{n-\theta}{n + 1 - \xi}.
 \end{split} 
\end{equation}

As a result, when expressing $\displaystyle\binom{\theta}{s} \binom{n-\theta}{n - \ell -s}$ as the linear combination of $\displaystyle\binom{n-\theta}{n+1-\xi}$, the coefficient is $\displaystyle (-1)^{\ell + 1 + \xi } (-1)^s \binom{\xi - 1}{\ell} \binom{n+1-\xi}{n-\ell-s}$. 

Therefore, summing over all $s$ about the coefficient of $\displaystyle\binom{n-\theta}{\xi}$ we obtain:
\begin{equation}
\label{eq:ell-prod2}
\begin{split}
	  & (-1)^{\ell + 1 + \xi} \sum_{s = 0}^{p-1} (-1)^s \binom{\xi - 1}{\ell} \binom{n+1-\xi}{n-\ell-s}  \\
 = &  (-1)^{\ell + 1 + \xi} \sum_{s = 0}^{p-1} (-1)^s \binom{\xi - 1}{\ell} \left[\binom{n-\xi}{n-\ell-s} + \binom{n-\xi}{n-\ell-s-1}\right]\\ 
 = &  (-1)^{\ell + 1 + \xi} (-1)^{p-1} \binom{\xi - 1}{\ell}\binom{n - \xi}{n - \ell - p} + (-1)^{\ell + 1 + \xi} \binom{\xi - 1}{\ell}\binom{n - \xi}{n - \ell}  \\ 
 = & (-1)^{\ell + 1 + \xi} (-1)^{p-1} \binom{\xi - 1}{\ell}\binom{n - \xi}{n - \ell - p} \\ = & (-1)^{k + \xi} 
 \binom{\xi - 1}{\ell}\binom{n - \xi}{n - k} 
\end{split}
\end{equation}
Combining both \eqref{eq:ell-prod1} and \eqref{eq:ell-prod2}, we obtain \eqref{eq:ell-prod}.

Similarly, we have 
\begin{equation}
\begin{split}
\binom{\theta}{s} \binom{n-\theta}{k- s} =  (-1)^s \sum_{\xi} (-1)^{k + \xi} \binom{\xi}{k-s} \binom{n-\xi}{n-k} \binom{n-\theta}{\xi}.
\end{split} 
\end{equation}

Summing over all $s$ about the coefficient of $\displaystyle \binom{n-\theta}{\xi}$ we obtain 
\begin{equation}
\begin{split}
	  & (-1)^{k + \xi} \sum_{s = 0}^{p-1} (-1)^s \binom{\xi}{k-s} \binom{n-\xi}{n-k}  \\
 = & (-1)^{k+ \xi } (-1)^{p-1} \binom{\xi - 1}{\ell}\binom{n - \xi}{n - k}.
\end{split}
\end{equation}
Therefore, we conclude the result. 
\end{proof}

Based on the above identities, now we can compute the concrete value of $\Phi^C$. 

\begin{proposition}
\label{prop:Phi-C}
    It holds that 
    $$\Phi^C := [1 + (-1)^n](\binom{n}{\ell} + (-1)^{\ell - k + 1} \binom{n}{k}). $$
\end{proposition}
\begin{proof}
It follows from the definition of $\Phi^C$ \eqref{eq:Phi-C} that

\begin{equation}
\label{eq:PhiC-0}
    \Phi^{C}= \sum_{\theta = 0}^{k} (-1)^{\theta} f_{\theta}^{C} \underbrace{\sum_{s = 0}^{p-1}\binom{\theta}{s} \binom{n-\theta}{n - \ell -s}}_{\eqref{eq:ell-prod}}  + (-1)^{n+p-1}  \sum_{\theta = 0}^{n-\ell} (-1)^{\theta}   f_{\theta}^{C}  \underbrace{\sum_{s = 0}^{p-1} \binom{\theta}{s}   \binom{n - \theta}{k-s}}_{\eqref{eq:k-prod}}.
\end{equation}

By applying \eqref{eq:ell-prod} and \eqref{eq:k-prod} to both terms, we then have 
\begin{equation}
\label{eq:PhiC-1}
\begin{split} \Phi^{C}
= & 
\sum_{\theta = 0}^{k} (-1)^{\theta} f_{\theta}^{C} 
\sum_{\xi} (-1)^{k + \xi } \binom{\xi-1}{\ell}\binom{n-\xi}{n-k}\binom{n-\theta}{n+1-\xi}  \\
& \quad\quad + (-1)^{n+p-1}  \sum_{\theta = 0}^{n-\ell} (-1)^{\theta}   f_{\theta}^{C}  \sum_{\xi} (-1)^{\ell + \xi + 1} \binom{\xi-1}{\ell}\binom{n-\xi}{n-k}\binom{n-\theta}{\xi}.
\end{split}
\end{equation}
Here, $\xi$ ranges from $\ell + 1$ to $k$.

We now reformulate \eqref{eq:PhiC-1} by interchanging the order of the summation, namely, we first sum on $\theta$, then sum on $\xi$. This leads to 
\begin{equation}
 \Phi^C=  (-1)^k \sum_{\xi} (-1)^{\xi} \binom{\xi - 1}{\ell} \binom{ n - \xi}{n - k}  \Big[ \sum_{\theta = 0}^{k} (-1)^{\theta} \binom{n-\theta}{n+1-\xi} f_{\theta}^{C}  + \sum_{\theta = 0}^{n - \ell} (-1)^{n+\theta} \binom{n - \theta} {\xi}  f_{\theta}^C \Big].
 \end{equation}

The inner summation involving $\theta$ can be calculated with the help of \Cref{eq:DS-cone}. In fact, we have 
$$\sum_{\theta = 0}^{n} (-1)^{\theta} \binom{n - \theta }{n + 1 - \xi} f_{\theta}^{C} +   (-1)^{n} \sum_{\theta = 0}^{n} (-1)^{\theta}  \binom{n-\theta}{\xi } f_{\theta}^C = [1+(-1)^n]\binom{n+1}{\xi}.$$

Therefore, we have 
\begin{equation}
\label{eq:simplified-PhiC}
\begin{split}
    \Phi^C := &  (-1)^k \sum_{\xi} (-1)^{\xi} \binom{\xi - 1}{\ell} \binom{ n - \xi}{n - k}  \Big[ \sum_{\theta = 0}^{\xi - 1} (-1)^{\theta} \binom{n-\theta}{n+1-\xi} f_{\theta}^{C}  + \sum_{\theta = 0}^{n - \xi - 1} (-1)^{n+\theta} \binom{n - \theta} {\xi}  f_{\theta}^C \Big] \\ 
    = & (-1)^k \sum_{\xi} (-1)^{\xi} \binom{\xi - 1}{\ell} \binom{ n - \xi}{n - k} [1+(-1)^n] \binom{n+1}{\xi} \\
    = & (-1)^{k}[1 + (-1)^n] \sum_{\xi} (-1)^{\xi} \binom{\xi - 1}{\ell} \binom{ n - \xi}{n - k}\binom{n+1}{\xi}.
\end{split}
\end{equation}



Since 
\begin{equation}
\label{eq:Psi_C}
\begin{split}
 & \sum_{\xi} (-1)^{\xi} \binom{\xi-1}{\ell} \binom{n-\xi}{n-k} \binom{n+1}{\xi} \\ 
& = \sum_{\xi} (-1)^{\xi} \binom{\xi-1}{\ell} \binom{n-\xi}{n-k} \binom{n}{\xi} + \sum_{\xi} (-1)^{\xi} \binom{\xi-1}{\ell} \binom{n-\xi}{n-k} \binom{n}{\xi-1} \\
& =  \sum_{\xi} (-1)^{\xi} \binom{\xi - 1}{\ell} \binom{k}{\xi} \binom{n}{k} + \sum_{\xi} (-1)^{\xi} \binom{n}{\ell} \binom{n-\ell}{n-\xi +1} \binom{n - \xi}{n - k} \\
& = \sum_{\xi} (-1)^{\ell + 1} \binom{-\ell - 1}{-\xi} \binom{k}{\xi} \binom{n}{k} +  (-1)^{k} \sum_{\xi} \binom{n}{\ell} \binom{n-\ell}{n-\xi +1} \binom{-n+k-1}{-n + \xi -1} \\
& = (-1)^{\ell + 1} \binom{n}{k} + (-1)^{k} \binom{n}{\ell}.
\end{split}	
\end{equation}
Here, we apply $\binom{a}{b} \binom{b}{c} = \binom{a}{c} \binom{a-c}{b-c}$ in the third line, and $\binom{a}{b} = (-1)^{a+b} \binom{-b-1}{-a-1}$ in the fourth line.

Therefore, by \eqref{eq:simplified-PhiC}, we conclude the result. 
\end{proof}

Next, we deal with $\Phi^{\partial}$  \eqref{eq:Phi-partial}:
\begin{equation}
 \Phi^{\partial} := \sum_{s = 0}^{p-1} \sum_{\theta = 0}^{k} \binom{\theta}{s} \binom{n-\theta}{n - \ell -s}  f_{\theta-1}^{\partial} + (-1)^{n+p-1}\sum_{s = 0}^{p-1} \sum_{\theta = 0}^{n - \ell}  \binom{\theta}{s} \binom{n - \theta}{k-s} (f_{\theta}^{\partial} + f_{\theta-1}^{\partial}).
\end{equation}

\begin{proposition}
\label{prop:Phi-partial}
$$\Phi^{\partial} = (-1)^n \left(\binom{n}{\ell} + (-1)^{\ell - k + 1} \binom{n}{k}\right).$$	
\end{proposition}
\begin{proof}
By \eqref{eq:PhiC-1} in the proof of\Cref{lem:theta-n-theta}, it holds that 
\begin{equation}
\begin{split}
\Phi^{\partial} = (-1)^k \sum_{\xi} (-1)^{\xi} \binom{\xi - 1}{\ell} \binom{ n - \xi}{n - k}  \Big[ \sum_{\theta = 0}^{n} (-1)^{\theta} \binom{n-\theta}{n+1-\xi} f_{\theta}^{\partial}  + \sum_{\theta = 0}^{n} (-1)^{n+\theta} \binom{n - \theta} {\xi}  (f_{\theta}^\partial + f_{\theta-1}^{\partial}) \Big].
\end{split} 
\end{equation}

We now define 

$$\Psi^{\partial} = \sum_{\theta = 0}^{n} (-1)^{\theta} \binom{n-\theta}{n+1-\xi} f_{\theta}^{\partial}  + \sum_{\theta = 0}^{n} (-1)^{n+\theta} \binom{n - \theta} {\xi}  (f_{\theta}^\partial + f_{\theta-1}^{\partial}).$$

By \Cref{eq:DS-partial}, we have 

\begin{equation}
\begin{split}
\sum_{\theta = 0}^{n} (-1)^{\theta} \binom{n  -\theta}{n+ 1 - \xi } f_{\theta-1}^{\partial}=   (-1)^{n} \sum_{\theta = 0}^{n} (-1)^{\theta}  \binom{n-\theta}{\xi - 1} f_{\theta-1}^{\partial}.
\end{split} 
\end{equation}
Therefore, it holds that 
\begin{equation}
\begin{split}
    \Psi^{\partial} = &  (-1)^n  \sum_{\theta = 0}^{n} (-1)^{\theta} \left[  \binom{n-\theta}{\xi} f_{\theta}^{\partial} +\binom{n-\theta}{\xi}  f_{\theta-1}^{\partial} + \binom{n-\theta}{\xi-1} f_{\theta-1}^{\partial}\right] \\ 
     = & (-1)^n  \sum_{\theta = 0}^{n} (-1)^{\theta} \left[ \binom{n-\theta}{\xi} f_{\theta}^{\partial}  + \binom{n+1 - \theta}{\xi} f_{\theta - 1}\right]\\
     = & (-1)^n \binom{n+1}{\xi}.
    \end{split}
\end{equation}
By \eqref{eq:Psi_C}, we conclude the result.
\end{proof}
Combining \Cref{prop:Phi-C} and \eqref{eq:Phi-partial}, we have the following result.
\begin{theorem}
    $$\Phi = \Phi^C - \Phi^{\partial} = (\binom{n}{\ell} + (-1)^{\ell - k + 1} \binom{n}{k}).$$
\end{theorem}


\section{High-order Regge elements}
\label{sub:regge}
To close this subsection, we prove that when $k = \ell = 1, p = 1$, the above construction recovers the high-order Regge element in any dimension, cf. \cite{li2018regge}.

For $\mathcal P_r\alt^{1,1}$ form, the numbers of degrees of freedom on simplex $K$ are
\begin{equation}
\footnotesize
\begin{split}
 \dim \mathcal B_{r}  \alt^{1,1}(K) = &   \sum_{m =k}^n \binom{n+1}{m+1} \Big[ \binom{r+1}{r} \binom{r - 1}{m - 1} \Big] \cdot \binom{m}{n-1} \\ 
= &  \binom{n+1}{n} \binom{r+1}{r} \binom{r - 1}{n-2}  \binom{n-1}{n-1} +  \binom{n+1}{n+1} \binom{r+1}{r} \binom{r - 1}{n-1}  \binom{n}{n-1} \\ 
= & (n+1)(r+1) \binom{r-1}{n-2} + n(r+1) \binom{r-1}{n-1} \\ 
= & n(r+1) \binom{r}{n-1} + (r+1) \binom{r-1}{n-2} \\ 
= & n^2 \binom{r+1}{n} +  (r+1) \binom{r-1}{n-2}
\end{split}
\end{equation}

For $\mathcal P_r\alt^{0,2}$, the numbers of the degrees of freedom on simplex $K$ are
\begin{equation}
\footnotesize
\begin{split}
    \dim \mathcal B_{r}  \alt^{0,2}(K) = &  \sum_{m =k}^n \binom{n+1}{m+1} \Big[ \binom{r}{r} \binom{r-1}{m} \Big] \cdot \binom{m}{n-2},
    \\
    = & \binom{n+1}{n-1} \binom{r-1}{n-2} \binom{n-2}{n-2} + \binom{n+1}{n} \binom{r-1}{n-1} \binom{n-1}{n-2} + \binom{n+1}{n+1} \binom{r-1}{n} \binom{n}{n-2} \\
    = & \binom{n+1}{2} \binom{r-1}{n-2} + (n^2-1) \binom{r-1}{n-1} + \binom{n}{2} \binom{r-1}{n} \\ 
    = & \binom{n+1}{2} \binom{r}{n-1} + \binom{n}{2} \binom{r}{n} - \binom{r-1}{n-1}.
\end{split}
\end{equation}

For $\mathcal P_r\mathbb W^{1,1}$ form, the numbers of degrees of freedom on simplex $K$ are 
\begin{equation}
\begin{split}
   &   \dim \mathcal B_{r}  \alt^{1,1}(K) - \dim \mathcal B_{r}  \alt^{0,2}(K)  \\
    = & n^2 \binom{r+1}{n} +  (r+1) \binom{r-1}{n-2} - \binom{n+1}{2} \binom{r}{n-1} -  \binom{n}{2} \binom{r}{n} + \binom{r-1}{n-1} \\ 
    = & \binom{n}{2} \binom{r}{n-1} + \binom{n+1}{2} \binom{r}{n} + \binom{r}{n-1} + r\binom{r-1}{n-2} \\ 
    = & \binom{n}{2} \binom{r}{n-1} + \binom{n+1}{2} \binom{r}{n} + \binom{r}{n-1} + (n-1)\binom{r}{n-1} \\
    = & \binom{n}{2} \binom{r}{n-1} + \binom{n+1}{2} \binom{r}{n} + n \binom{r}{n-1} \\ 
    = & \binom{n+1}{2} \binom{r+1}{n}
\end{split}
\end{equation}
which is equal to $\dim \mathcal P_{r-n+1} \mathbb W^{1,1} $, the dimension of degrees of freedom of Regge elements in \cite{li2018regge}. It is not difficult to check the two finite elements give the same space.

\end{document}